\documentclass{amsart}
\usepackage{amscd,amssymb,amsmath,amsthm}
\usepackage[dvips]{graphicx}
\usepackage[all]{xy}
\theoremstyle{plain}
\newtheorem{definition}{Definition}
\newtheorem{proposition}{Proposition}
\newtheorem{theorem}[proposition]{Theorem}

\newtheorem{lemma}[proposition]{Lemma}
\newtheorem{remark}[proposition]{Remark}
\newtheorem*{proposition*}{Proposition}
\newtheorem*{theorem*}{Theorem}
\newtheorem*{corollary*}{Corollary}
\newtheorem*{lemma*}{Lemma}
\newtheorem*{remark*}{Remark}
\newtheorem*{example*}{Example}

\newcommand{\Z}{\mathbb{Z}}
\newcommand{\Q}{\mathbb{Q}}
\newcommand{\R}{\mathbb{R}}
\newcommand{\C}{\mathbb{C}}

\begin{document}

\title{Point-Splitting Perturbative Chern-Simons II}
 
\author{Vito Iacovino}


\email{vito.iacovino@gmail.com}

\date{version: \today}


\begin{abstract}
We extend the formalism of Point-Splitting-Perturbative-Chern-Simons to include Wilson Loops. 
\end{abstract}

\maketitle

\section{Introduction}

Let $M$ be a compact oriented manifold. In a preview paper \cite{PSPCS1} we introduced the mathematical theory of pure (i.e., without Wilson loops) Point-Splitting Perturbative Chern-Simons ($PSPCS$) on $M$.
In this paper we extend the formalism of \cite{PSPCS1} to include Wilson Loops. 

In this paper we use the same notion of $PS$-cycle used in \cite{PSPCS1}, which depends on the choice of an annulus data,  The difference with \cite{PSPCS1} arise in the definition of nice-$PS$-cycle where we drop the assumption made in \cite{PSPCS1} roughly stating that the loop associated to each vertex shrink to a point. In order to make this condition precise we introduce the notion of limit-support of a nice-$PS$-cycle (whereas in \cite{PSPCS1} we limited ourself to consider only trivial limit-support).  
Point-Splitting-Homology is defined as the space of equivalence classes of nice-$PS$-cycles, where we quotient by isotopies that do not deform the limit-support (that is, we consider only small isotopies). We denote it as $PSH(M| U)$, where $U$ i a smooth Euler Structure $U$ corresponding to the annulus data (see \cite{PSPCS1}).
To each oriented embedded link $\mathbf{w}$ it is associated a subspace $PSH(M| \mathbf{w}, U)$ of $PSH(M| U)$. The subspaces of $PSH(M| U)$ generated by $PSH(M| \mathbf{w}, U)$ for different $\mathbf{w}$ is dense in $PSH(M| U)$.

$PSH(M| U)$ comes with a natural notion of isotopy, which in particular deform the limit-support. 
To an isotopy of links correspond an isotopy of the associated subspaces $PSH(M| \mathbf{w}, U)$. However the notion of isotopy in $PSH$ is much more general of the isotopy of links and its formulation does not refer to links.
This fact is of critical importance in order to understand Skein relations:  classically the crossing of the link or the twisting of the frame are singular processes, however in $PSH$ each process is translated in a \emph{smooth} process. 
The Skein relations are translated in the existence of small isotopies in $PSH$ between certain $PSH$-classes. 
The $PS$-Homology provides the natural ambient space where the Skein relations live. 

As in \cite{PSPCS1} to each $PS$-cycle $Z$ it is associated a partition function $\mathfrak{P}(Z)$ which satisfies the Quantum Master Equation. 
The construction of  \cite{PSPCS1}  extends directly in the context of this paper. In the definition of  \cite{PSPCS1} roughly each vertex of the graph is replaced by a small loop, here we allow also not small loops, the formula used in \cite{PSPCS1} applies in the same way.  We stress that the picture in $PSPCS$ is quite different from the one of the standard approach where the vertices of the graph and the components of the Wilson loop are treated on a different footing. 

Next we extend the definition of Coherent $PSH$-class of \cite{PSPCS1}. We fix an oriented embedded link $\mathbf{w}$ and a frame $\mathbf{fr}$ of $M$ compatible with $\mathbf{w}$ (as in \cite{PSPCS1} we require that the Euler Structure $U$ is compatible with $\mathbf{fr}$). We extend the notion of Flat-Approaching-$PS$-cycle in this context. With this definition the argument of \cite{PSPCS1} applies and yields an element $[Z^{\mathbf{w}, \mathbf{fr}}] \in PSH(M| \mathbf{w}, U)$. 
Considering only frames of $M$ compatible with $\mathbf{w}$ may be restrictive since not each frame of $\mathbf{w}$ may be extended to a frame of $M$.
In Chern-Simons Theory it is standard to consider frames of $\mathbf{w}$ introducing  \emph{normalized} expectation values of the Wilson Loop defined quotienting by the pure (i.e. without Wilson Loops) Chern-Simons Partition function. The anomaly of pure Chern-Simons is automatically cancelled, whereas the anomaly associated to the link is cancelled adding correction terms defined using the frame of $\mathbf{w}$. 
In order to produce objects depending on framed links in $PSPCS$ 
we introduce a suitable quotient of $PSH$ which we call normalized-$PSH$ and we denote it by $PSH^{\intercal}(M| U)$. We show that  $PSH^{\intercal}(M| \mathbf{w}, U) $  is a rank one free modulo over the ring of formal power series on two variables $\Q[[\hbar,a]]$. To a frame $\nu$ of $\mathbf{w}$ and a compatible Euler Structure $U$  it is  associated canonically a normalized-$PSH$-class  $[Z^{(\mathbf{w}, \nu,U)}] \in PSH^{\intercal}(M|  U)$. This class is a generator of $PSH^{\intercal}(M| \mathbf{w}, U) $ as $\Q[[\hbar,a]]$-module.

When the Euler Structure $U$ is fixed, the compatibility condition between $\nu$ and $U$ used to define the class $[Z^{(\mathbf{w}, \nu,U)}]$ put a quite strong condition on the frame. We remove this condition introducing a new geometric data which we call Euler Alignment.
Given a frame $\nu$,  an Euler Alignment $\kappa$ determines an essential unique deformation of $U$ to an Euler Structure compatible with $\nu$.
The result mentioned above yields a canonical normalized $PSH$-class $[Z^{(\mathbf{w}, \nu, \kappa)}] \in PSH^{\intercal}(M|  U)$ associated to the triple $(\mathbf{w}, \nu, \kappa)$.




A well known proposal in the physical literature states that the normalized expectation values of Wilson Loops satisfies the Skein relations  (see \cite{jones}). This proposal motivated the mathematical notion of Skein Module.
The standard definition of Skein involves oriented embedded links equipped with an isotopy class of frames, and comes with a notion of isotopy. Here we consider the Skein before quotienting by isotopies. In the context of $PSPCS$ this definition is modified in order to include the extra geometric data provided by the isotopy classes of Euler Alignments.
We show that  the normalized $PSH$-classes $[Z^{(\mathbf{w}, \nu, \kappa)}] $ satisfy an extended version of the skein relations.
We use this result to show that $PSH^{\intercal}(M|  U)$ is isomorphic (up to small isotopies) to the extended Skein. This correspondence is compatible with isotopies. Considering a suitable completion of the Skein (which also includes singular links) we obtain a strict isomorphism.

\section{Point Splitting Homology}

In this section we define Point-Slitting-Homology. We adapt the definition of \cite{PSPCS1} using an extended version of the decorated graphs introduced in \cite{PSPCS1}.  
The definition of nice $PS$-cycles of \cite{PSPCS1} is generalized in order to include a not trivial limit support.

\subsection{Decorated Graphs} \label{graphs-section}

A decorated graph  $G $ is defined by the data 
\begin{equation} \label{array-point-splitting}
    ( \chi_{\vartriangle} , d_{\vartriangle} , V_{\vartriangle}, Comp_0 , (V_c,d_c, g_c)_c , (H_v )_v  , E )
\end{equation} 
where 
\begin{itemize}
\item $ V_{\vartriangle}$ is a  finite set, $ \chi_{\vartriangle}  \in \Z_{\leq 0}, d_{\vartriangle}  \in \Z_{\geq 0}$  ; 
\item for each $c \in Comp_0 $, $V_c$ is a finite sets and $d_c, g_c \in \Z_{\geq 0}$;

Set 
$$V(G)= V_{\vartriangle} \sqcup \bigsqcup_{c \in Comp_0} V_c  .$$ 
\item for each $v \in V(G)$,  $H_v$ is a finite cyclically ordered set; 

Set 
$$H(G)= \bigsqcup_{v \in V(G)} H_v .$$
\item $E$ is a partition of $H$ in subset of cardinality two or one.  The sets of cardinality two are called internal edges $E^{in}(G)$, the sets of cardinality one are called external edges $E^{ex}(G)$;
\end{itemize}
For each $c \in Comp_0$ we set 
$$\chi_c= 2- 2g_c - |V_c|  - d_c.$$
We say that $G$ is stable if
$$ \chi_c - \frac{1}{2} |H_c| <0 \quad \forall c \in Comp_0.$$

There is an obvious notion of isomorphism of decorated graphs. 
We denote by $ \mathfrak{G}$ the set of isomorphism classes of stable decorated graphs.



Set 
$$ \chi(G) = \chi_{\vartriangle}  + \sum_{c \in Comp_0} \chi_c  - |E^{in}(G)|, \quad   d(G)= d_{\vartriangle} + \sum_c d_c  .$$
We denote by $\mathfrak{G}(\chi)$ the set of stable graphs $G$ with $\chi(G)= \chi$.





\subsubsection{Operation $\delta_v$, $v \in V(G)$}

Given $G \in \mathfrak{G}$ and $v \in V(G)$ with $H_v = \emptyset$ we define the graph $\delta_{v} G$ as follows:
\begin{itemize}
\item when $v \in V_{c}$ for $c \in Comp_0(G)$ ,  $\delta_{v} G$ is defined discarding $v$ from $V_{c}$ and replacing $d_{c}$ with $d_{c} +1$;
\item when $v_0 \in V_{\vartriangle}$ ,  $\delta_{v} G$ is defined discarding $v$ from $V_{\vartriangle}$ and replacing $d_{\vartriangle}$ with $d_{\vartriangle} +1$. 
\end{itemize}

\subsubsection{Operation $\delta_e$,  $e \in E^{in}(G) $}

Given $G \in \mathfrak{G}$  and $e = \{ h_1,h_2 \} \in E^{in}(G) $ we define the graph $\delta_e G$  as follows:


\begin{itemize}
\item
Assume $h_1 \in H_{v_1}$ and $h_2 \in H_{v_2}$ with $v_1 , v_2 \in V(G), v_1 \neq v_2$.  Define the order sets  $I_1$ and $I_2$  from  
$$H_{v_1 }= \{ h_1 , I_1  \} ,H_{v_2 }= \{ h_2 , I_2  \},$$ 
where the identities are considered as cyclically ordered sets. $V(\delta_e G)$ is obtained from $V(G)$ discarding $v_1,v_2$  and adding a  new vertex, which we denote by $v_0$. Set 
$$ H_{v_0} =  \{ I_1 , I_2  \} .$$
We have the sub-cases:
\begin{itemize}
\item $v_1 \in V_{c_1}$, $v_2 \in V_{c_2}$ for  $c_1 , c_2 \in Comp_0(G)$ with $c_1 \neq c_2$: $Comp(\delta_e G)$ is obtained from $Comp(G)$ replacing the components $c_1$ and $c_2$ with a unique component. Denote by $c_0$ the new component. $V_{c_0}$ is obtained from $V_{c_1} \sqcup V_{c_2} $ replacing the vertices $v_1$ and $v_2$ with $v_0$. We set
$d_{c_0}=d_{c_1} + d_{c_2}$, $  g_{c_0} = g_{c_1}+ g_{c_2}$.

\item $v_1 \in V_{\vartriangle}$, $v_2 \in V_{c}$ for  $ c \in Comp_0(G)$: $Comp_0(\delta_e G)$ is obtained discarding $c$  from $Comp_0(G)$, $V_{\vartriangle}$ is obtained from $V_{\vartriangle} \sqcup V_{c} $ replacing the vertices $v_1$ and $v_2$ with $v_0$,
$\chi_{\vartriangle}(\delta_e G) = \chi_{\vartriangle} + \chi_{c_0} -1$,
$d_{\vartriangle}(\delta_e G)=d_{\vartriangle} + d_{c_0}$.

\item $v_1,v_2 \in V_{c_0}$ for some $c_0  \in Comp_0(G)$:  $Comp_0(\delta_e G) =Comp_0(G)$,
$V_{c_0}$ is obtained replacing the vertices $v_1$ and $v_2$ with $v_0$, the genus $g_{c_0}$ is increased by one, 
 $d_{c_0}$ does not change.

\item $v_1,v_2 \in V_{\vartriangle}$:  
$V_{\vartriangle}$ is obtained replacing the vertices $v_1$ and $v_2$ with $v_0$,
$\chi_{\vartriangle}(\delta_e G) = \chi_{\vartriangle}  -1$, $d_{\vartriangle}(\delta_e(G))= d_{\vartriangle}(G)$.


\end{itemize}
\item
Assume $h_1,h_2 \in H_{v_0}$ for $v_0 \in V(G)$. Define the order sets  $I_1$ and $I_2$  from the relation 
$$H_{v_0 }= \{ h_1 , I_1 , h_2, I_2 \},$$ 
where the identity is considered as an identity of cyclic order sets .    $V(\delta_e G)$ is obtained from $V(G)$ replacing $v_0$ with two new vertices. Denote by $v_1, v_2$ the new vertices. We set 
$$ H_{v_1} =  I_1 \text{    }  H_{v_2} =  I_2 ,$$
as cyclic order sets.
We set $Comp_0(\delta_e G) = Comp_0(G)$. We have the sub-cases:
\begin{itemize}
\item $v_0 \in V_{c_0}$ with  $c_0 \in Comp_0(G)$: $V_{c_0}(\delta_e G)$ is obtained replacing $v_0$ with $v_1, v_2$,    $g_{c_0}$ and $d_{c_0}$ do not change. 

\item $v_0 \in V_{\vartriangle}$: $V_{\vartriangle}(\delta_e G)$ is obtained replacing $v_0$ with $v_1, v_2$,    
$\chi_{\vartriangle}(\delta_e G) = \chi_{\vartriangle}(G)  -1$.  $d_{\vartriangle}$ does not change.


\end{itemize}

\end{itemize}

\subsubsection{Marked Graphs} \label{marking-section}

Define the set of  decorated graphs $\mathfrak{G}_l$  as the set of pairs $(G,m)$, where $G \in \mathfrak{G}$ and  
$$m= \{ E_0, E_1,..., E_l \}$$ 
with $E_0 \subset E_1 \subset ... \subset E_l \subset E(G)$ be an increasing finite sequence of subsets of $E(G)$ with $E^{ex}(G) \subseteq E_0$.
The definition the operator $\delta_e$  works on $\mathfrak{G}_l$ for $e \in E^{in}(G) \setminus E_l$.  

 
 Denote
 $$ \mathfrak{G}_* = \sqcup_l  \mathfrak{G}_l .$$

Let  $cut_{E_0}G$ be the graph obtained cutting the edges $E_0$. So $E^{in }(cut_{E_0}G)= E^{in}(G) \setminus E_0^{in}, H(cut_{E_0}G)= H(G)$.  
We consider the decorated graphs up to the equivalence relation generated by
$$ (G,m ) \sim ( cut_{E_0}G, cut_{E_0}m   ) $$ 
where 
$$cut_{E_0}m =   \{ E^{ex}(cut_{E_0}G), E_1,..., E_l \}.$$
 
\subsection{Point-Splitting Cycles}  \label{PS-section}

As in \cite{PSPCS1} , in the following we consider singular chains up to triangulations  and reparametrizations.

Annulus Data are defined as in \cite{PSPCS1}. Point Splitting Cycles  are defined as in \cite{PSPCS1}, using the graphs of  section \ref{graphs-section} in place of the graphs considered in \cite{PSPCS1}:

Fix an Annulus Data $\underline{Z}^{\circledcirc}$. 
A point splitting cycle  $Z$ with annulus data $\underline{Z}^{\circledcirc}$ consists on a collection of chains
\begin{equation} \label{collection-chains-definition}
( Z_{G,m} )_{(G,m) \in \mathfrak{G}_*}. 
\end{equation}
we require that 
\begin{itemize}
\item  the transversality condition is satisfied;
\item  the forgetful compatibility is satisfied  with annulus data $\underline{Z}^{\circledcirc}$;
\item the cycle condition is satisfied .
\end{itemize}

The transversality condition, forgetful compatibility condition and cycle condition are stated as in \cite{PSPCS1}.

We denote by $ \mathcal{Z}(M|  {Z}^{\circledcirc}) $ the vector space of $PS$-cycles with annulus data $\underline{Z}^{\circledcirc}$.



\subsubsection{Isotopies} \label{isotopies-section}

Similarly we can consider isotopies of $PS$-cycles:

Let $\tilde{Z}^{\circledcirc}$ be an isotopy between annulus data ${Z}^{\circledcirc,0}$ and  ${Z}^{\circledcirc,1}$. Take $Z^0 \in \mathcal{Z}(M|  {Z}^{\circledcirc,0})$ and $Z^1 \in \mathcal{Z}(M|  {Z}^{\circledcirc,1})$. An isotopy  $\tilde{Z} \in \tilde{\mathcal{Z}}$ of $PS$-cycles  between $Z^0$ and $Z^1$  consists in a collection of Borel-Moore chains
\begin{equation} \label{collection-chains-isotopy}
\tilde{Z}=  ( \tilde{Z}_{G,m} )_{G,m} ,
\end{equation}
with
$$\tilde{Z}_{G,m} \in C_{|H(G)| + |m|+1} ( M^{H(G)} \times \R  , \mathfrak{o}_{H(G)})^{\text{Aut}(G,m)}  .$$
We require that
\begin{itemize}
\item there exists  $\lambda^{\dagger} >0$  big enough such that
$$ \tilde{Z}^{<-\lambda^{\dagger}} =  Z^0  \times  \R_{<-\lambda^{\dagger}}  , \quad \tilde{Z}^{>\lambda^{\dagger}} =   Z^1  \times \R_{>\lambda^{\dagger}}   ;$$

 \item 
the transversality condition is satisfied;


\item the forgetful compatibility condition is satisfied.

\item the cycle condition is satisfied.


\end{itemize}

We denote the space of isotopies of $PS$-cycles with $ \tilde{\mathcal{Z}}(M| \tilde{Z}^{\circledcirc}) $.

\subsubsection{Isotopy of Isotopies}

Isotopy of isotopies of $PS$-cycles are defined as in the case of $PS$-cycles of \cite{PSPCS1}, using the decorated graphs introduced in this paper.

For simplicity  of notations below we omit  the dependence on the annulus data. We fix an isotopy of isotopies of annulus data.

\begin{definition} 
Let $ Z^{0, 0},   Z^{1, 0}  ,    Z^{ 0, 1}  ,    Z^{ 1 , 1} $ be $PS$-cycles. Let $ Z^{0, \bullet}$ be an isotopy between $Z^{0, 0}$ and $  Z^{ 0, 1}$,  $ Z^{1, \bullet}$ isotopy between $Z^{1, 0}$ and $  Z^{ 1, 1}$, $ Z^{ \bullet,0}$ isotopy between $Z^{0, 0}$ and $  Z^{ 1, 0}$,  $ Z^{ \bullet,1}$ isotopy between $Z^{0, 1}$ and $  Z^{ 1, 1}$.

An isotopy of isotopies of $PS$-cycles between $ Z^{0, \bullet},   Z^{1, \bullet}  ,    Z^{ \bullet, 0}  ,    Z^{ \bullet , 1} $ consists in a collection 
of Borel-Moore chains
\begin{equation} \label{collection-chains-isotopy-isotopy}
\tilde{ \tilde{Z}}=  (\tilde{\tilde{Z}}_{G,m} )_{G,m} ,
\end{equation}
with
$$\tilde{\tilde{Z}}_{G,m} \in C_{|H(G)| + |m|+2} (M^{H(G)} \times \R_{\lambda} \times \R_{\lambda'}  , \mathfrak{o}_{H(G)})^{\text{Aut}(G,m)}  .$$
such that
\begin{itemize}


\item
There exists  $\lambda^{\dagger} \gg 0$ big enough such that:
\begin{itemize}
\item $\tilde{\tilde{Z}}^{\lambda' <-\lambda^{\dagger} } =  Z_{ G,m}^{0, \bullet} \times \R_{\lambda'<-\lambda^{\dagger}}$;
\item $\tilde{\tilde{Z}}^{\lambda' >\lambda^{\dagger} } =  Z_{ G,m}^{1, \bullet} \times \R_{\lambda'>\lambda^{\dagger}} $;
\item  $\tilde{\tilde{Z}} ^{\lambda <-\lambda^{\dagger} } =  Z_{ G, m}^{ \bullet, 0 } \times \R_{\lambda < -\lambda^{\dagger}}$;
\item $\tilde{\tilde{Z}}^{ \lambda >\lambda^{\dagger} } =  Z_{ G,m}^{ \bullet, 1} \times  \R_{\lambda >\lambda^{\dagger}}$  .
\end{itemize}

\item  the transversality condition is satisfied;
\item forgetful compatibility condition is satisfied;

\item the cycle condition is satisfied:


\end{itemize}

We denote by $  \tilde{\tilde{ \mathcal{Z}}}$ the vector space of isotopies of isotopies.

\end{definition}

\subsubsection{Extension Lemmas}
 
 The extension Lemmas of \cite{PSPCS1} can be easily adapted to the framework of this paper. 
 
 The partial order on the graphs and the truncated set of $PS$-cycles $\mathcal{Z}_{\prec G}$ are defined as in \cite{PSPCS1} .




\begin{lemma} \label{extension-PS-lemma}

Let $G \in \mathfrak{G}$. For any $Z_{\prec G} \in \mathcal{Z}_{\prec G}$ there exists  $Z_{\preccurlyeq G}  \in \mathcal{Z}_{\preccurlyeq G}$ extending $Z_{\prec G}$. 

\end{lemma}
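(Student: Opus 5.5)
The plan is to follow the inductive extension argument of \cite{PSPCS1}: construct the chain $Z_{G,m}$ for the single new graph $G$ (and its markings $m$), given all chains $Z_{G',m'}$ with $G' \prec G$. The cycle condition prescribes the boundary of $Z_{G,m}$. It is a sum of terms $\delta_v$, $\delta_e$, contributions from the annulus data through forgetful compatibility, and marking terms. All of these involve only graphs strictly smaller than $G$ in the partial order, or markings of $G$ with fewer edges in $E_l$. So the prescribed boundary $B_{G,m}$ is already determined by $Z_{\prec G}$. I would order the markings $m$ by $|m|$ decreasing (equivalently, by cut equivalence reduce to $E_0 = E^{ex}$) and build $Z_{G,m}$ one marking at a time.

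\textbf{Step 1: $B_{G,m}$ is closed.} Using the cycle condition for the graphs $\prec G$ already in place, I would check $\partial B_{G,m}=0$. This reduces to the identities $\delta\circ\delta = 0$ for the operations $\delta_v,\delta_e$ on the decorated graphs of Section \ref{graphs-section}. The new ingredients relative to \cite{PSPCS1} are the components $c\in Comp_0$ carrying $(d_c,g_c)$ and the special component $\vartriangle$ with $(\chi_\vartriangle,d_\vartriangle)$. I would verify case by case, for the sub-cases of $\delta_e$ listed above, that pairs of operations commute with opposite signs. In each case the invariants $\chi(G)$ and $d(G)$ transform consistently; this is bookkeeping, since $\chi$ is additive and $\delta_e$ lowers $\chi_\vartriangle$ or raises genus exactly as joining or splitting does.

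\textbf{Step 2: $B_{G,m}$ is exact.} Next I need a chain with boundary $B_{G,m}$ which is $\mathrm{Aut}(G,m)$-invariant and satisfies transversality and forgetful compatibility. As in \cite{PSPCS1}, the forgetful compatibility near the diagonals fixes $Z_{G,m}$ in a neighborhood determined by the annulus data, and we only need to fill the remaining part. Away from that neighborhood the chain lives in $M^{H(G)}$ with degree $|H(G)|+|m|$. I would argue that $B_{G,m}$ bounds in the relevant relative (Borel--Moore) homology in one of two ways. If $|m|\ge 1$, the degree exceeds $\dim M^{H(G)}$ over the relevant strata, so the obstruction group vanishes. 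Otherwise I would use the cone/homotopy construction of \cite{PSPCS1}: the class of $B_{G,m}$ is the boundary restriction of a cycle constructed from lower data. Averaging over $\mathrm{Aut}(G,m)$ (we work over $\Q$) gives invariance. A generic perturbation relative to the already fixed boundary gives transversality.

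\textbf{Main obstacle.} I expect the main obstacle to be the vanishing of the obstruction class in Step 2 when the limit support is allowed to be nontrivial. In \cite{PSPCS1} the vertex loops shrink to points, so the fill-in happens near the small diagonal, which is contractible. Here that local picture is lost. I would handle it by noting that the extension lemma concerns $PS$-cycles, not nice ones. The annulus data and the forgetful compatibility are literally the same as in \cite{PSPCS1}, and the limit-support condition enters only in the definition of nice-$PS$-cycle. So the obstruction computation of \cite{PSPCS1} should transfer verbatim, with the graph set of Section \ref{graphs-section} replacing the old one.
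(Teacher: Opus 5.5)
\textbf{There is a genuine gap in Step 2, which is where all the content of the lemma lies.}

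Your dimension count is false. $Z_{G,m}$ has degree $|H(G)|+|m|$, and $|m|\le |E^{in}(G)|+1\le |H(G)|/2+1$. So for $H(G)\neq\emptyset$ the degree is always well below $\dim M^{H(G)}=3|H(G)|$. More importantly, forgetful compatibility requires $Z_{G,m}=ev_*\underline{Z}_{G,m}$, with $\underline{Z}_{G,m}$ a chain on the multi-loop space $\mathfrak{L}_{\hat{V}(G)}(M)$. Any obstruction class you form therefore lives in the homology of an infinite-dimensional space. No dimension bound kills it, and the obstructions of Subsection \ref{obstruction-section} show that such classes need not vanish. Your other branch, ``the cone/homotopy construction of \cite{PSPCS1}'', is exactly the thing that has to be supplied.

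You also misdescribe forgetful compatibility. It concerns forgetting external edges, with the annulus data entering only for unstable removals; it does not prescribe $Z_{G,m}$ near diagonals. For $E^{ex}(G)\neq\emptyset$ it simply determines the extension. For $H(G)=\emptyset$ the cycle condition sits in degree $-1$ and is vacuous. The real case is therefore $E^{ex}(G)=\emptyset$, $H(G)\neq\emptyset$.

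\textbf{The missing idea: the markings make the extension unobstructed by construction, so no vanishing theorem is needed.}

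Edges in $E_l$ are exempt from transversality. Moreover $(G,\{E(G)\})\sim(cut_{E(G)}G,\dots)$ has only external edges, so its chain is already determined by lower data through forgetful compatibility. The paper recalls the argument after (\ref{trivial-cycle}) for the chains $\underline{Y}_G$, and it runs as follows:
\begin{itemize}
\item Induct on $|m|$ \emph{increasing}, over markings with $E(G)\notin m$. The cycle condition for $(G,m)$ involves the shorter $Z_{G,\partial_i m}$, so your decreasing order is backwards.
\item At each step build $Z_{G,m}$ and $Z_{G,m\sqcup\{E(G)\}}$ together. Take $Z_{G,m}$ to be a small transversal perturbation of the preliminary chain $(-1)^l\bigl(\sum_i(-1)^iZ_{G,\partial_i m\sqcup\{E(G)\}}+\text{lower }\delta\text{-terms}\bigr)$.
\item Take $Z_{G,m\sqcup\{E(G)\}}$ to be the cobordism produced by that perturbation.
\end{itemize}
The actual role of your Step 1 computation is to show that this preliminary chain has the prescribed boundary; it does not feed an obstruction class.

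Nothing in this uses contractibility near the small diagonal, so the limit support never enters. You are right that the lemma concerns plain rather than nice $PS$-cycles. But the reason it adapts from \cite{PSPCS1} is this marking/cobordism mechanism, not the transfer of an obstruction computation.
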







  \begin{lemma} \label{extension-PS-lemma-isotopy}
  
Let $G \in \mathfrak{G}$ with $H(G) \neq \emptyset$. 
 Let $Z_{\preccurlyeq G}^0, Z_{\preccurlyeq G}^1 \in  \mathcal{Z}_{\preccurlyeq G} $ and  let  $\tilde{Z} _{\prec G} \in  \tilde{ \mathcal{Z}}_{\prec G} $ be an isotopy between
 $Z_{\prec G}^0$ and $ Z_{\prec G}^1$. 
 There exists  
 $\tilde{Z} _{\preccurlyeq G} \in  \tilde{ \mathcal{Z}}_{\preccurlyeq G} $ isotopy between 
 $Z_{\preccurlyeq G}^0$ and $Z_{\preccurlyeq G}^1 $ extending $\tilde{Z} _{\prec G} $.     
 \end{lemma}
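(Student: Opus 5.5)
The plan is to follow the inductive extension argument of \cite{PSPCS1}, with the time direction $\R$ added. The data to construct is a single equivariant Borel--Moore chain
$$\tilde{Z}_{G,m} \in C_{|H(G)|+|m|+1}(M^{H(G)}\times \R, \mathfrak{o}_{H(G)})^{\text{Aut}(G,m)}$$
for each marking $m$ of $G$. The lower-order pieces are already supplied by $\tilde{Z}_{\prec G}$, and the two ends are prescribed by $Z^0_{\preccurlyeq G}$ and $Z^1_{\preccurlyeq G}$.

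\textbf{Step 1: build the boundary cycle.} Proceed by induction on $|m|$, treating first the unmarked part and then the marked chains. Form the chain $B_{G,m}$ that the cycle condition prescribes for $\partial \tilde{Z}_{G,m}$. Its pieces are:
\begin{itemize}
\item the $\delta_e$ and $\delta_v$ contributions, which only involve graphs $\prec G$ and are therefore known from $\tilde{Z}_{\prec G}$;
\item the forgetful/annulus-data contributions, which are determined by the isotopy $\tilde{Z}^{\circledcirc}$;
\item the terms coming from lower markings, already constructed at earlier stages of the induction.
\end{itemize}
On $\R_{<-\lambda^\dagger}$ and $\R_{>\lambda^\dagger}$, set $\tilde{Z}_{G,m}$ equal to $Z^0_{G,m}\times\R$ and $Z^1_{G,m}\times \R$ respectively. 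Since $Z^0$ and $Z^1$ themselves satisfy the cycle condition, these ends are compatible with $B_{G,m}$. Subtracting them reduces the problem to finding a compactly supported (in $\R$) chain $C$ with prescribed boundary $B'$ on $M^{H(G)}\times[-\lambda^\dagger,\lambda^\dagger]$, relative to the ends.

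\textbf{Step 2: check that $B'$ is a cycle.} Verify $\partial B'=0$ using the identity $\partial^2=0$ for the combinatorial operators. Concretely, $\delta_e\delta_{e'}$ and $\delta_{e'}\delta_e$ cancel with signs, and the operations commute with the forgetful maps, exactly as in \cite{PSPCS1}. The new graph data ($V_\vartriangle$, $Comp_0$, the genus and $d$ bookkeeping) enter only through the definition of the $\delta$ operators, so the cancellations are formal.

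\textbf{Step 3: show the relevant relative homology vanishes.} The cycle $B'$ lives in
$$H_{|H(G)|+|m|}\bigl(M^{H(G)}\times[-\lambda^\dagger,\lambda^\dagger],\ M^{H(G)}\times\{\pm\lambda^\dagger\}\bigr)\cong H_{|H(G)|+|m|-1}(M^{H(G)}),$$
with local coefficients $\mathfrak{o}_{H(G)}$.

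This is the step I expect to be the main obstacle. The hypothesis $H(G)\neq\emptyset$ should be used here, as in \cite{PSPCS1}, and the key point is that $B'$ is supported near the diagonals or annulus regions where the chains were constructed. I would first try to show that $B'$ vanishes in the appropriate relative or local homology by a degree count. If that fails, I would absorb any class into a correction via the equivariant averaging freedom: choose $Z_{G,m}$ at top level as a cone along $\R$.

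A cleaner alternative avoids homology entirely. Take $C$ to be a straight-line-in-$\lambda$ interpolation, namely $\tilde{Z}_{\prec G}$ restricted to the boundary strata, plus a cone $Z^0_{G,m}\times(-\infty,0] + Z^1_{G,m}\times[0,\infty)$. One then checks that the mismatch at $\lambda=0$, namely $Z^1_{G,m}-Z^0_{G,m}$ minus the boundary term, is itself null-homologous. This holds because both $Z^0_{G,m}$ and $Z^1_{G,m}$ solve the same boundary equation up to the given lower-order isotopy, and since $H(G)\neq\emptyset$ the top-degree obstruction lands in a degree exceeding $\dim M^{H(G)}$ minus the fiber direction.

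\textbf{Step 4: transversality and symmetry.} Perturb $C$ generically, keeping it relative to the ends and the boundary strata, so that the transversality condition holds. Then average over $\text{Aut}(G,m)$ to restore equivariance; averaging is legitimate because we work with rational coefficients.
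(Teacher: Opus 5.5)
Your Step 3 carries the entire content of the lemma, and it is not established. The group you land in, $H_{|H(G)|+|m|-1}(M^{H(G)};\mathfrak{o}_{H(G)})$, is nonzero in general (already for $M=S^3$). The degree count you invoke is false, since $|H(G)|+|m|-1$ lies far below $\dim M^{H(G)}=3|H(G)|$.

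The ``cleaner alternative'' does not help either. $Z^0_{G,m}$ and $Z^1_{G,m}$ are given, and two chains solving the same boundary equation differ by a cycle that need not bound. That difference \emph{is} the obstruction, not a reason for it to vanish. A telling symptom is that $H(G)\neq\emptyset$ plays no real role in your argument, whereas for $H(G)=\emptyset$ the analogous extension genuinely can fail (this is the obstruction $\mathfrak{obs}_{(G,\mathbf{w})}$ of Subsection \ref{obstruction-section}).

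Step 1 also misreads forgetful compatibility. It is not a source of boundary terms but a condition on the lifts $\underline{\tilde{Z}}_{G,m}$ to $\mathfrak{L}_{\hat{V}(G)}(M)\times\R$. When $E^{ex}(G)\neq\emptyset$ it essentially determines $\underline{\tilde{Z}}_{G,m}$: either as a fiber product with $\underline{\tilde{Z}}_{forget_e(G,m)}$, or as a product with the isotopy of annulus data. That leaves no room for the free perturbation and averaging of your Step 4.

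The missing idea is the marking structure, which is what the paper relies on. The proof is deferred to \cite{PSPCS1}, but the mechanism is visible in the paper's construction of $\underline{Y}_G$ right after Lemma \ref{extension-isotopies-isotopies}.

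If $E^{ex}(G)=\emptyset$, then $H(G)\neq\emptyset$ forces $E(G)=E^{in}(G)\neq\emptyset$. Hence every marking $m$ not containing $E(G)$ extends to $m\sqcup\{E(G)\}$. The chains indexed by $m\sqcup\{E(G)\}$ have three useful features:
\begin{itemize}
\item they carry no transversality requirement, since that condition only concerns $E'\subset E^{in}(G)\setminus E_l$;
\item their last face is the $m$-chain;
\item $(G,\{E(G)\})\sim(cut_{E(G)}G,\{E^{ex}(cut_{E(G)}G)\})$ falls under the forgetful-compatibility case.
\end{itemize}

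So the marked complex is a cone with apex $E(G)$. Inductively in $|m|$, the boundary data you must fill is exhibited as the boundary of a chain assembled from already-constructed pieces. You then take $\tilde{Z}_{G,m}$ to be a small transverse perturbation of that chain, matching $Z^0\times\R_{<-\lambda^{\dagger}}$ and $Z^1\times\R_{>\lambda^{\dagger}}$, and record the perturbation cobordism in the slot $m\sqcup\{E(G)\}$.

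This is exactly where $H(G)\neq\emptyset$ enters, and why no homological obstruction can appear. An argument carried out purely in $H_*(M^{H(G)}\times\R)$, without the fully marked chains, cannot see it.
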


An application of these two Lemmas yields the following:

  \begin{lemma} \label{extension-isotopies-inductive}
 Let $\underline{Z}_{\prec G}^0 \in  \mathcal{Z}_{\prec G} $,  $\underline{Z}_{\prec G}^1 \in  \mathcal{Z}_{\prec G} $ and    $\tilde{Z} _{\prec G} \in  \tilde{ \mathcal{Z}}_{\prec G} $  isotopy between $\underline{Z}_{\prec G}^0$ and $\underline{Z}_{\prec G}^1$.
 Let $\underline{Z}_{\preccurlyeq G}^0    \in  \mathcal{Z}_{\preccurlyeq G} $ extending $\underline{Z}_{\prec G}^0 $.
 

There exist
$\underline{Z}_{\preccurlyeq G}^1 \in  \mathcal{Z}_{\preccurlyeq G} $ and 
 $\tilde{Z} _{\preccurlyeq G} \in  \tilde{ \mathcal{Z}}_{\preccurlyeq G} $  isotopy between $\underline{Z}_{\preccurlyeq G}^0$ and $\underline{Z}_{\preccurlyeq G}^1  $ such that
\begin{itemize} 
\item $\underline{Z}_{\preccurlyeq G}^1  $ extends  $\underline{Z}_{\prec G}^1  $;
\item  $\tilde{Z} _{\preccurlyeq G}  $ extends $\tilde{Z} _{\prec G}  $. 
 \end{itemize}
 \end{lemma}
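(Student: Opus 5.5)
The plan is to combine Lemma \ref{extension-PS-lemma} and Lemma \ref{extension-PS-lemma-isotopy} through a restriction argument. The only genuinely new object is $\underline{Z}^1_{\preccurlyeq G}$, and I would produce it by first extending the isotopy over all of $\R$ and only then cutting it off at the end $\lambda \to +\infty$.

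\emph{Step 1: extend the isotopy without fixing its endpoint.} Regard $\tilde{Z}_{\prec G}$ as an element of $\mathcal{Z}_{\prec G}$ for the manifold $M \times \R$. Here "$\R$" means the extra parameter direction, with the annulus data replaced by the given isotopy of annulus data. The chain $\tilde{Z}_G$ is required to agree with $\underline{Z}^0_G \times \R_{<-\lambda^\dagger}$ on the negative end and is left unconstrained near $+\infty$. I would run the proof of Lemma \ref{extension-PS-lemma} in this relative setting. The boundary prescribed for $\tilde{Z}_G$ by the cycle and forgetful conditions is determined by $\tilde{Z}_{\prec G}$ and the annulus data. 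On the region $\lambda<-\lambda^\dagger$ this prescription already equals $\partial(\underline{Z}^0_G \times \R)$, because $\underline{Z}^0_{\preccurlyeq G}$ satisfies the cycle condition. So we fix $\tilde{Z}_G = \underline{Z}^0_G\times\R$ for $\lambda<-\lambda^\dagger$ and extend over $\lambda\ge -\lambda^\dagger$. The obstruction to this extension is the same cycle that is shown to vanish in Lemma \ref{extension-PS-lemma}, now taken relative to the negative end. Transversality is achieved by a small generic perturbation supported away from the negative end.

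\emph{Step 2: make the positive end cylindrical.} By enlarging $\lambda^\dagger$ and applying a reparametrization in $\lambda$, I would arrange that $\tilde{Z}_{\prec G}$ is already product-like for $\lambda>\lambda^\dagger$. Take a generic level $\lambda_1>\lambda^\dagger$ at which $\tilde{Z}_G$ is transverse to the slice. Define $\underline{Z}^1_G := \tilde{Z}_G\cap\{\lambda=\lambda_1\}$. Then replace $\tilde{Z}_G$ on $\lambda>\lambda_1$ by $\underline{Z}^1_G\times\R_{>\lambda_1}$, smoothing by a collar. Since the slice of a chain satisfying the cycle, forgetful and transversality conditions over $M\times\R$ satisfies the same conditions over $M$, the collection $\underline{Z}^1_{\preccurlyeq G}$ lies in $\mathcal{Z}_{\preccurlyeq G}$. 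It extends $\underline{Z}^1_{\prec G}$ because $\tilde{Z}_{\prec G}$ restricts to $\underline{Z}^1_{\prec G}$ on that slice.

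\emph{Step 3: conclude.} After the reparametrization $\lambda_1\mapsto \lambda^\dagger$, the modified $\tilde{Z}_{\preccurlyeq G}$ is an isotopy between $\underline{Z}^0_{\preccurlyeq G}$ and $\underline{Z}^1_{\preccurlyeq G}$. It extends $\tilde{Z}_{\prec G}$, since nothing at lower levels was altered beyond reparametrization, which is harmless because chains are considered up to reparametrization.

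The main obstacle is Step 1: showing that the obstruction class relative to one end vanishes. I would expect this to follow by reusing, essentially verbatim, the argument of Lemma \ref{extension-PS-lemma}, since the $\R$ factor only adds a contractible direction with a collar. Alternatively one could bypass it as follows. Apply Lemma \ref{extension-PS-lemma} to obtain some extension $\underline{Z}^1_{\preccurlyeq G}$ of $\underline{Z}^1_{\prec G}$, then invoke Lemma \ref{extension-PS-lemma-isotopy}; this route needs $H(G)\neq\emptyset$. The edge case $H(G)=\emptyset$ would then be handled by the slicing construction above. The slicing construction is the more robust argument for that case, since there the top chain carries no configuration variables.
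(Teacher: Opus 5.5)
Your argument is correct, but your main line runs in the opposite order from the paper's. The paper's proof is only the remark that the lemma is ``an application of'' Lemmas \ref{extension-PS-lemma} and \ref{extension-PS-lemma-isotopy}: first extend $\underline{Z}^1_{\prec G}$ to some $\underline{Z}^1_{\preccurlyeq G}$, then join the two fixed endpoints by Lemma \ref{extension-PS-lemma-isotopy}. That is exactly your fallback. Your Steps 1--3 instead extend the isotopy with only the negative end pinned, define $\underline{Z}^1_G$ as a generic slice, and then glue in the cylinder.

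\textbf{What your route buys.} The gain is in the case $H(G)=\emptyset$. The statement allows it, Lemma \ref{extension-PS-lemma-isotopy} excludes it, and the paper's one-line proof passes over it. In that case an arbitrary extension $\underline{Z}^1_G$ from Lemma \ref{extension-PS-lemma} generally cannot be joined to $\underline{Z}^0_G$. Your slice produces, up to small isotopy, precisely the endpoint for which the obstruction $\mathfrak{Obs}_G$ of Subsection \ref{obstruction-section} vanishes.

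You should say explicitly why Step 1 is unobstructed there. The prescribed boundary is a $0$-chain supported in $|\lambda|<\lambda^\dagger$, since outside that slab it is a $(-1)$-chain times $\R$. It can therefore be capped by rays running to $\lambda=+\infty$. Because this filling moves only in the $\lambda$-direction, it also gives for free the limit-support control that Lemma \ref{extension-nice} and Proposition \ref{transfer-annulus} later demand of this construction.

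\textbf{What it costs.} For $H(G)\neq\emptyset$, your Step 1 merely asserts a one-free-end version of Lemma \ref{extension-PS-lemma}. The proof of that lemma in the earlier paper is not purely homological, since forgetful compatibility and transversality must be maintained. The natural justification of the one-free-end version is the paper's route itself. So the clean combined proof is the two lemmas for $H(G)\neq\emptyset$ and slicing for $H(G)=\emptyset$.

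\textbf{Minor points.}
\begin{itemize}
\item The reparametrization in Step 3 is unnecessary; simply take $\lambda_1$ as the new $\lambda^\dagger$.
\item ``PS-cycle on $M\times\R$'' must be read as the one-parameter object on $M^{H(G)}\times\R$, with a single common parameter, not as a chain on $(M\times\R)^{H(G)}$.
\end{itemize}
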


We also have the analogous for isotopies of isotopies:

\begin{lemma} \label{extension-isotopies-isotopies}

Let  $ Z_{ \preccurlyeq G}^{0, 0},   Z_{ \preccurlyeq G}^{1, 0}  ,    Z_{ \preccurlyeq G}^{ 0, 1}  ,    Z_{ \preccurlyeq G}^{ 1 , 1} $  be $PS$-cycles . Let $ Z_{ \preccurlyeq G}^{0, \bullet}$ be an isotopy between $Z_{ \preccurlyeq G}^{0, 0}$ and $  Z_{ \preccurlyeq G}^{ 0, 1}$,  $ Z_{ \preccurlyeq G}^{ \bullet,0}$ be an isotopy between $Z_{ \preccurlyeq G}^{0, 0}$ and $  Z_{ \preccurlyeq G}^{ 1, 0}$,  $ Z_{ \preccurlyeq G}^{ \bullet,1}$ be an isotopy between $Z_{ \preccurlyeq G}^{0, 1}$ and $  Z_{ \preccurlyeq G}^{ 1, 1}$,
$ Z_{ \prec G}^{1, \bullet}$ be an isotopy between $Z_{ \prec G}^{1, 0}$ and $  Z_{ \prec G}^{ 1, 1}$.

Let $\tilde{\tilde{Z}} _{\prec G} \in  \tilde{\tilde{ \mathcal{Z}}}_{\prec G} $  be an isotopy of isotopies between  $Z_{ \prec G}^{ \bullet ,0 },  Z_{ \prec G}^{ \bullet ,1 }, Z_{ \prec G}^{0 ,\bullet} ,Z_{ \prec G}^{1 ,\bullet} $. 

There exists $Z_{ \preccurlyeq G}^{1, \bullet} \in \tilde{ \mathcal{Z}}_{\preccurlyeq G} $ extending $Z_{ \prec G}^{1, \bullet}$ and  $\tilde{\tilde{Z}}_{\preccurlyeq G} \in  \tilde{\tilde{ \mathcal{Z}}}_{\preccurlyeq G} $ isotopy of isotopies between  $Z_{ \preccurlyeq G}^{ \bullet ,0 },  Z_{ \preccurlyeq  G}^{ \bullet ,1 }, Z_{ \preccurlyeq  G}^{0 ,\bullet} ,Z_{ \preccurlyeq  G}^{1 ,\bullet} $ extending
$\tilde{\tilde{Z}} _{\prec G} $. 

 \end{lemma}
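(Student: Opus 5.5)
The plan is to reduce Lemma \ref{extension-isotopies-isotopies} to the two basic extension statements, Lemma \ref{extension-PS-lemma} and Lemma \ref{extension-PS-lemma-isotopy}, in their versions for one extra parameter. It mirrors the way Lemma \ref{extension-isotopies-inductive} follows from them. We work on the square $\R_{\lambda}\times\R_{\lambda'}$. The data are the chain $\tilde{\tilde{Z}}_{\prec G}$, the three full boundary isotopies $Z^{0,\bullet}_{\preccurlyeq G}$, $Z^{\bullet,0}_{\preccurlyeq G}$, $Z^{\bullet,1}_{\preccurlyeq G}$, and the truncated fourth side $Z^{1,\bullet}_{\prec G}$. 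The idea is that three sides of a square form a deformation retract of the square. So we produce the $G$-component of $\tilde{\tilde{Z}}$ first, and afterwards read off the missing side $Z^{1,\bullet}_{G}$ as its restriction to $\lambda'>\lambda^{\dagger}$.

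First, I would reparametrize. Choose a diffeomorphism of the closed square onto a region containing three sides, collapsing the side $\lambda'\to+\infty$ onto a neighbourhood of the other three. Equivalently, view $\R_{\lambda}\times\R_{\lambda'}$ with $\lambda'$ as a new isotopy parameter over the family parametrized by $\lambda$. With this, the problem becomes an extension problem of isotopy type in the variable $\lambda'$. The initial condition at $\lambda'<-\lambda^{\dagger}$ is $Z^{0,\bullet}_{\preccurlyeq G}$, and the lateral conditions at $\lambda\to\pm\infty$ are $Z^{\bullet,0}_{\preccurlyeq G}$ and $Z^{\bullet,1}_{\preccurlyeq G}$. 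The truncated part $\tilde{\tilde{Z}}_{\prec G}$ is already prescribed, and the end $\lambda'\to+\infty$ is free. This is exactly the structure of Lemma \ref{extension-isotopies-inductive}, one dimension up, with a free endpoint.

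Second, for the $G$-component, I would proceed as in the proof of Lemma \ref{extension-PS-lemma}. The cycle condition and the forgetful compatibility determine the boundary $\partial\tilde{\tilde{Z}}_{G,m}$ in terms of the already constructed components for graphs $\prec G$: the contributions from $\delta_e$, $\delta_v$ and the markings. Near the three fixed sides of the square we set $\tilde{\tilde{Z}}_{G,m}$ equal to the given products $Z\times\R$. The prescribed boundary is then a cycle relative to these collars, by the cycle condition already satisfied by $\tilde{\tilde{Z}}_{\prec G}$. It must be bounded by a chain that is product-like near the three sides. We choose the filling by the cone/extension construction in the $\lambda'$ direction, namely a chain obtained by pushing along $\lambda'$ toward $+\infty$. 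No obstruction appears because the fourth side is free: a relative cycle on a space that retracts onto the fixed part always bounds. Then perturb in general position to achieve the transversality condition, and average over $\mathrm{Aut}(G,m)$ to obtain invariance. Finally, define $Z^{1,\bullet}_{G,m}$ as the slice $\lambda'>\lambda^{\dagger}$ after making the chain $\lambda'$-independent there. Check that it is an isotopy between $Z^{1,0}_G$ and $Z^{1,1}_G$ extending $Z^{1,\bullet}_{\prec G}$, and that the forgetful compatibility holds with the fixed isotopy of isotopies of annulus data.

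The main obstacle is the compatibility of the forgetful condition and transversality with the product structure simultaneously on all three fixed sides, especially at the corners where two collars meet. I would handle this first by choosing $\lambda^{\dagger}$ large and using the corner-product structure: near a corner, the data is $Z^{0,0}_{G}\times\R^2$, which is consistent. Then I would apply the relative transversality perturbation from \cite{PSPCS1}, supported away from the collars. The case $H(G)=\emptyset$ requires separate care, as in Lemma \ref{extension-PS-lemma-isotopy}. There the component is a number-valued chain, and the free side again absorbs any discrepancy.
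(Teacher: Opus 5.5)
Your proposal is correct and follows the route the paper intends. The paper gives no separate proof: it presents the lemma as the isotopy-of-isotopies analogue of Lemma~\ref{extension-isotopies-inductive}, i.e.\ the extension lemmas run with one more parameter. Your construction of $\tilde{\tilde{Z}}_G$ with the side $\lambda'\to+\infty$ left free, so that $Z^{1,\bullet}_G$ absorbs what would otherwise be the $H(G)=\emptyset$ obstruction, is a faithful fleshing-out of that.

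Two small corrections. First, in this paper the $H(G)=\emptyset$ components are chains on the multi-loop space rather than numbers, since $V(G)$ may be nonempty. Second, the vanishing your pushing toward $\lambda'\to+\infty$ actually realizes is $H_*(X\times Q, X\times F)=0$, for the square $Q$ relative to its free side $F$, not relative to the fixed part; this holds for any base $X$, so it covers the multi-loop case unchanged.
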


In the following we use the notation $\underline{Z}_{G}$ for the array $(\underline{Y}_{G,m})_m$.

In the case  $H(G) \neq \emptyset$ the extension of Lemma  \ref{extension-PS-lemma} is essentially unique in the following sense: if $Z_{\preccurlyeq G}  \in \mathcal{Z}_{\preccurlyeq G}$ with vanishing restriction $Z_{\prec G} \in \mathcal{Z}_{\prec G}$ there exists $\underline{Y}_{G} = (\underline{Y}_{G,m})_m$ 
with $\underline{Y}_{G,m} \in  C_{|H(G)| + |m|+1}(\mathfrak{L}_{\hat{V}(G)}(M) , \mathfrak{o}(H(G)))^{\text{Aut}(G,m)} $ such that 
 \begin{equation} \label{trivial-cycle}
 \underline{Z}_{G,m} = \partial^{red}  \underline{Y}_{G,m}  -   \sum_{0 \leq i \leq |m|} (-1)^i {\underline{Y}}_{G, \partial_i m}   .
  \end{equation}
This fact can be seen as a consequence of Lemma \ref{extension-PS-lemma-isotopy}. $\underline{Y}_{G}$  can be constructed directly using a simplified version of the inductive argument of the proof of Lemma  \ref{extension-PS-lemma} as follows. The case $E^{ex}(G) \neq \emptyset$ is trivial since we use forgetful compatibility. Let us assume $E^{ex}(G) = \emptyset$. 
Assume we have defined $\underline{Y}_{G, m'}$ and $\underline{Y}_{G,m' \sqcup \{ E(G)\}}$ for each $|m'| <|m|$ with $E(G) \notin m'$.
Define 
$  \underline{Y}_{G,  m  }$ as a small perturbation of  $  (-1)^l(   \sum_i (-1)^i \underline{Y}_{G, \partial_i m \sqcup \{ E(G) \}}+  \underline{Z}_{G,m}$ and 
$\underline{Z}_{G,m \sqcup \{ E(G)\}}$ as the chain associated to the related cobordism.



\subsection{Nice $PS$-cycles}   \label{nice-section}

We now extend the definition of Nice $PS$-cycle given in \cite{PSPCS1}.

\subsubsection{Limit Support}

In order to extend the  definition of Nice $PS$-cycle of \cite{PSPCS1}  we need to define the notion of limit support of a sequence of chains on the multi-loop space.   In the context of   \cite{PSPCS1} is only considered trivial limit support.

For $G \in \mathfrak{G}$ and $\underline{C}= ( {\underline{C}}^{(n)} )_n$ a sequence of chains  on $\mathfrak{L}_GM$, 
we say that the limit support of $\underline{C}$
is finite if there exists a finite set of multi-loops
$\{ \mathbf{w}^j\}_{j \in J}$  such that for each $\epsilon>0$, the following happen for  $n$ big enough. For each $(w_v)_v \in supp (C^{(n)})$ there exists  
 $I \subset V_{\vartriangle}(G)$ such that
\begin{itemize}
\item  for each $c \in Comp_0$, $Diam(\sqcup_{v \in V_c} supp(w_v)) < \epsilon $;
\item   for each $v \in V_{\vartriangle} \setminus I$, $Diam(supp(w_v)) < \epsilon $;
\item there exists  $j \in J$ such that
$(w_v)_{v \in I}$ is $\epsilon$-close    to $\mathbf{w}^j$     in the $C^0$ topology .
\end{itemize}
We set 
$$\lim supp(\underline{C}) =  \{ \mathbf{w}^j\}_{j \in J} $$
where $\{ \mathbf{w}^j\}_{j \in J}$ is  the minimal set of multi-loops for which the above condition is satisfied.

\subsubsection{Nice $PS$-cycles}

\begin{definition} 
 A nice annulus data $\underline{Z}^{\circledcirc, \diamond}$ consists in a sequence of annulus data $(\underline{Z}^{\circledcirc, (n)})_n$ such that 
 \begin{itemize}
\item $ \lim Supp(\underline{Z}_{G}^{\circledcirc, \diamond})= \emptyset $ for each $G \in Ann0$ ;
\item there exists  a sequence $(\tilde{\underline{Z}}^{\circledcirc, int, (n)})_n$, where $ \tilde{\underline{Z}}^{\circledcirc, int, (n)}$  is an isotopy of annulus data between $ \tilde{\underline{Z}}^{\circledcirc,  (n-1)} $ and $ \tilde{\underline{Z}}^{\circledcirc,  (n)}$, with
$ \lim Supp( pr_*(\tilde{\underline{Z}}^{\circledcirc, int}) =  \emptyset.$
\end{itemize} 
\end{definition}

  \begin{definition}
 Fix  a  nice annulus data $\underline{Z}^{\circledcirc, \diamond}$.  
  A nice $PS$-cycle  $Z^{\diamond } $  consists in a sequence of $PS$-cycles $( Z^{(n)} )_n$ with
\begin{itemize}
\item $Z^{(n)} \in \mathcal{Z}(M| \underline{Z}^{\circledcirc, (n)})$ for each $n$;
\item   $\lim Supp(\underline{Z}_{G,m})$  finite for each $(G,m) \in \mathfrak{G}_*$;
\item  there exists a sequence $(\tilde{Z}^{ int, (n)})_n$, where  $\tilde{Z}^{int, (n)}  \in \tilde{\mathcal{Z}}(M| \tilde{\underline{Z}}^{\circledcirc, int, (n)})$  isotopy of  $PS$-cycles   between $Z^{(n-1)} $ and $Z^{(n)}$, with
$$ \lim Supp( pr_*(\tilde{\underline{Z}}_{G,m}^{int})) =   \lim Supp(\underline{Z}_{G,m})  \qquad \forall (G,m) \in \mathfrak{G}_*.$$
\end{itemize} 
\end{definition}
We denote by $\mathcal{Z}(M| \underline{Z}^{\circledcirc, \circ})$ the space of nice $PS$-cycles with annulus data  $\underline{Z}^{\circledcirc, \diamond}$ . 



\begin{lemma} \label{extension-nice}
For any $\underline{Z}_{\prec G}^{\diamond} \in  \mathcal{Z}_{\prec G}^{\diamond} $ there exists $\underline{Z}_{\preccurlyeq G}^{\diamond} \in  \mathcal{Z}_{\preccurlyeq G}^{\diamond} $ extending $\underline{Z}_{\prec G}^{\diamond}$ with 
\begin{equation} \label{lim-supp-inductive}
 \lim Supp (\underline{Z}_{\preccurlyeq G}^{\diamond})= \lim Supp (\underline{Z}_{\prec G}^{\diamond}).
 \end{equation}
\end{lemma}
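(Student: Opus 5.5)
We extend the nice $PS$-cycle one level at a time in $n$, adapting the inductive argument of Lemma \ref{extension-PS-lemma} and Lemma \ref{extension-isotopies-inductive}, while controlling supports so that the new chains $\underline{Z}_{G,m}^{(n)}$ and the interpolating isotopies $\tilde{Z}^{int,(n)}_{G,m}$ contribute no new limit support. Write $\underline{Z}_{\prec G}^{\diamond} = (Z^{(n)}_{\prec G})_n$ with interpolating isotopies $\tilde{Z}^{int,(n)}_{\prec G}$.

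First, for $n=0$, apply Lemma \ref{extension-PS-lemma} to obtain some extension $Z^{(0)}_{\preccurlyeq G}$. Then, inductively in $n$, apply Lemma \ref{extension-isotopies-inductive} to the pair $Z^{(n-1)}_{\preccurlyeq G}$ and $\tilde{Z}^{int,(n)}_{\prec G}$. This produces an extension $Z^{(n)}_{\preccurlyeq G}$ of $Z^{(n)}_{\prec G}$ together with an isotopy $\tilde{Z}^{int,(n)}_{\preccurlyeq G}$ extending $\tilde{Z}^{int,(n)}_{\prec G}$.

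As written, this gives no control on supports, so the key point is to make the choices at level $G$ localized. The chains at $G$ are built in the extension lemmas by filling the boundary data prescribed by the cycle condition and forgetful compatibility. Here $\partial^{red}$, the $\delta_v$, $\delta_e$ faces and the annulus data all involve only graphs $\prec G$ or the nice annulus data.

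Two cases arise.

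\emph{Case $E^{ex}(G)\neq\emptyset$.} Forgetful compatibility determines $Z_{G,m}$ from lower data. Its limit support is then already contained in $\lim Supp(\underline{Z}_{\prec G}^{\diamond})$ together with $\lim Supp(\underline{Z}^{\circledcirc,\diamond})=\emptyset$.

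\emph{Case $E^{ex}(G)=\emptyset$.} We construct the filling $\underline{Y}$ as a small perturbation of a cone, or cobordism, on the boundary data, as in the construction of $\underline{Y}_G$ following equation (\ref{trivial-cycle}). We choose it inside an $\epsilon_n$-neighbourhood of the support of the boundary data, with $\epsilon_n\to 0$. In the multi-loop space this means the following for each $(w_v)_v$ in the new chain. Components $c\in Comp_0$ keep diameter $<\epsilon_n$, and vertices of $V_{\vartriangle}$ outside the relevant $I$ stay small. The remaining loops $(w_v)_{v\in I}$ stay $\epsilon_n$-close in $C^0$ to loops already occurring in the boundary. By the induction hypothesis, those boundary loops are in turn close to some $\mathbf{w}^j\in \lim Supp(\underline{Z}_{\prec G}^{\diamond})$ for $n$ large.

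The same localization is done for $\tilde{Z}^{int,(n)}_{G,m}$, whose projection $pr_*$ then has limit support inside the same finite set. This gives the equality (\ref{lim-supp-inductive}). The reverse inclusion is automatic, since the restriction to $\prec G$ is unchanged.

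The main obstacle is making the "small perturbation near the cone" compatible with the transversality condition and with the $\text{Aut}(G,m)$-invariance, while keeping the support localized. For $G\in\mathfrak{G}_*$ the boundary data may itself be degenerate near limit loops, which happens when limit loops of different vertices coincide. The first attempt is to perform the cone construction fibrewise over the support of the boundary data in $\mathfrak{L}_{\hat V(G)}M$, using a contraction within the $\epsilon_n$-tubular region; generic transversality is then obtained by perturbations of size $<\epsilon_n$. A second point is that the isotopy $\tilde{Z}^{int,(n)}$ must interpolate within the same region. For this we use Lemma \ref{extension-PS-lemma-isotopy} with the same localized filling, applied to the difference of consecutive extensions: by (\ref{trivial-cycle}) this difference is a boundary of a localized $\underline{Y}$.
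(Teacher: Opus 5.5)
Your skeleton is exactly the paper's proof: extend at $n=0$ with Lemma \ref{extension-PS-lemma}, then iterate Lemma \ref{extension-isotopies-inductive} along the given $\tilde{Z}^{int,(n)}_{\prec G}$. The paper then only asserts that this construction can be adapted to preserve the limit support. The gap is in your account of that adaptation, which treats every new chain at level $G$ as a localized filling of lower data.

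That picture breaks down for graphs with $H(G)=\emptyset$. By Subsection \ref{obstruction-section}, these are exactly the graphs where extending isotopies is obstructed. There you cannot build $\tilde{Z}^{int,(n)}_G$ by filling ``the difference of consecutive extensions'', for three reasons:
\begin{itemize}
\item (\ref{trivial-cycle}) requires $H(G)\neq\emptyset$ and a vanishing restriction to $\prec G$, but $Z^{(n-1)}_{\preccurlyeq G}$ and $Z^{(n)}_{\preccurlyeq G}$ extend different lower data;
\item Lemma \ref{extension-PS-lemma-isotopy} explicitly excludes $H(G)=\emptyset$;
\item an isotopy between independently chosen extensions exists only if every $\mathfrak{obs}_{(G,\mathbf{w})}$ vanishes.
\end{itemize}

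The missing idea is the mechanism behind Lemma \ref{extension-isotopies-inductive} at these graphs. There $Z^{(n)}_G$ is not a fresh filling; it is chosen to absorb the obstruction, so that up to small isotopy
$$Z^{(n)}_G = Z^{(n-1)}_G - pr_*\bigl(\textstyle\sum \delta_{e'}\tilde{Z}^{int,(n)}_{G'} + \sum\delta_{v'}\tilde{Z}^{int,(n)}_{G'}\bigr).$$
The limit-support control then does not come from localizing fillings. It comes from the niceness hypothesis $\lim Supp(pr_*\tilde{Z}^{int}_{G'})=\lim Supp(Z_{G'})$ for $G'\prec G$. This makes these corrections, for $n$ large and up to small isotopy, combinations of multi-loops in $\lim Supp(\underline{Z}^{\diamond}_{\prec G})$.

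Moreover, these level-$G$ chains accumulate in $n$. Your rule ``stay within $\epsilon_n$ of the boundary data'' would therefore let $Z^{(0)}_G$, and the corrections made at early stages, survive for all $n$. At those early stages the lower isotopies are not yet near the limit support, so these surviving terms would enlarge the limit support. You have to dispose of them by choosing $Z^{(0)}_G$, and the $1$-chains $\tilde{Z}^{int,(n)}_G$ at finitely many stages, appropriately.

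For $H(G)\neq\emptyset$ your localized-filling picture is a plausible reading of ``easy to adapt''. Even there, the correct tool for the interpolating isotopy is Lemma \ref{extension-PS-lemma-isotopy} relative to $\tilde{Z}^{int,(n)}_{\prec G}$, not (\ref{trivial-cycle}).
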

\begin{proof}
Take  $\underline{Z}_{\preccurlyeq G}^{(0)} \in  \mathcal{Z}_{\preccurlyeq G}$ extending $\underline{Z}_{\preccurlyeq G}^{(0)} \in  \mathcal{Z}_{\preccurlyeq G}$. Using induction on $n$, assume we have defined $\underline{Z}_{\preccurlyeq G}^{(n)}$. Apply Lemma  \ref{extension-isotopies-inductive} 
to $\underline{Z}_{\preccurlyeq G}^{(n)}, \tilde{ \underline{Z}}^{int, (n)}_{\prec G}$ to obtain $\underline{Z}_{\preccurlyeq G}^{(n+1)}, \tilde{ \underline{Z}}^{int, (n)}_{\preccurlyeq G}$. It is easy to adapt  the proof of Lemma  \ref{extension-isotopies-inductive} in order to satisfies condition  (\ref{lim-supp-inductive}).
\end{proof}



\subsubsection{Isotopies}

To define isotopies of nice $PS$-cycles we need first to define the one-parameter family version of the limit support introduced above. 

We consider families of multi loops $ \mathfrak{L}_GM \times \R$ of the type:
\begin{equation} \label{basic-family-loops}
\{ (\mathbf{w}^\lambda,\lambda) \}_{ a \leq  \lambda \leq b}
\end{equation}
where $\mathbf{w}^\lambda$ is a piecewise smooth one parameter family of multi-loops, $a$ is a real number or  $ - \infty$,   $b$ is a real number or  $ + \infty$. We assume that $\mathbf{w}^\lambda$ is independent on $\lambda$ for $\lambda \gg 0$ and $\lambda \ll 0$.


Let $\tilde{\underline{C}}= ( {\tilde{\underline{C}}}^{(n)} )_n$ be a sequence of chains on $ \mathfrak{L}_GM \times \R$.  We say that the limit support $\lim \tilde{supp}(\tilde{\underline{C}}) $ of $\tilde{\underline{C}}$
is finite if there exists a set  
 $\{ (\mathbf{w}^j, \lambda^j) \}_{j \in J} $ which is union of a finite numbers of families as (\ref{basic-family-loops})
 such that for each $\epsilon>0$, for  $n$ big enough, for each $( (w_v)_v, \lambda) \in supp (\tilde{C}^{(n)})$ there exists   $I \subset V_{\vartriangle}(G)$ such that
\begin{itemize}
\item  for each $c \in Comp_0$, $Diam(\sqcup_{v \in V_c} supp(w_v)) < \epsilon $;
\item   for each $v \in V_{\vartriangle} \setminus I$, $Diam(supp(w_v)) < \epsilon $;
\item 
there exists  $j \in J$ such that 
$( (w_v)_{v \in I}, \lambda)$ is close    to $( \mathbf{w}^j, \lambda^j)$ in the $C^0$ topology
\end{itemize}
 We write
$$\lim \tilde{supp}(\tilde{\underline{C}}) =  \{ ( \mathbf{w}^j ,\lambda^j) \}_{j \in J} ,$$
where $ \{ ( \mathbf{w}^j, \lambda^j) \}_{j \in J}$ is the minimal set such that the properties above hold.


\begin{definition}
Let $\tilde{\underline{Z}}^{\circledcirc, \diamond}$ be an isotopy of nice annulus data  between nice annulus data  $\underline{Z}^{0, \circledcirc, \diamond}$ and $\underline{Z}^{1,\circledcirc, \diamond}$.

For  $Z^{0,\diamond} \in \mathcal{Z}(M|   Z^{\circledcirc, 0 ,\diamond}    )$ and  $Z^{1 , \diamond} \in \mathcal{Z}(M|   Z^{\circledcirc, 1 ,\diamond}    )$ 
an isotopy  of nice $PS$-cycle between   $Z^{0, \diamond} $ and $Z^{1, \diamond} $  with annullus data $\tilde{\underline{Z}}^{\circledcirc, \diamond}$ consists in a sequence $\tilde{Z}^{\diamond }= ( \tilde Z^{(n)} )_n$ with $\tilde{Z}^{(n)} \in \tilde{\mathcal{Z}}(M| \tilde{\underline{Z}}^{\circledcirc, (n)})$   isotopy of $PS$-cycles between $Z^{0, (n)}$ and $Z^{1,(n)}$ such that
$\lim \tilde{Supp} (\tilde Z_{G,m})$ is  finite  for each $(G,m)$.
\end{definition}

We denote by $ \tilde{\mathcal{Z}}(M| \tilde{\underline{Z}}^{\circledcirc, \diamond})$ the vector space of isotopies of nice $PS$-cycle  with nice annullus data $\tilde{\underline{Z}}^{\circledcirc, \diamond}$.

\subsubsection{Obstructions}  \label{obstruction-section}

 
Let $Z^{0,\diamond} , Z^{1 , \diamond} \in \mathcal{Z}(M|   Z^{ \circledcirc ,\diamond}    )$ . 
 Let $\tilde{\underline{Z}}_{\prec G} \in  \tilde{\mathcal{Z}}_{\prec G}(M| \tilde{\underline{Z}}^{\circledcirc, \diamond})$ be an isotopy between   $Z_{\prec G}^{0,\diamond} , Z_{\prec G}^{1 , \diamond}$ .  
 In this subsection we determine the conditions for which there exists an isotopy $\tilde{\underline{Z}}_{\preccurlyeq G}$ extending $\tilde{\underline{Z}}_{\prec G}$ extending the result of \cite{PSPCS1}.    
  
As in \cite{PSPCS1}  the obstruction to the existence of $\tilde{\underline{Z}}_{\prec G}^{(n)}$ is concentrated on the graphs $G$ with  $H(G) = \emptyset$. The main difference with \cite{PSPCS1}  is that now we need to consider the possibility that   $V(G) \neq \emptyset$. 



Fix $\lambda^{\dagger}$ big enough as in definition of isotopy. 
Denote with $pr$ the projection orthogonal to the $\lambda$. variable restricted to $(-\lambda^{\dagger}, \lambda^{\dagger})$.
Define the obstruction as
$$\mathfrak{Obs}_G^{(n)} =  \underline{Z}^{1,(n)}_G -  \underline{Z}^{0,(n)}_G + pr_*( \sum_{(G', e')}  \delta_{e'} \tilde{\underline{Z}}_{G'}^{(n)}|_{(-\lambda^{\dagger}, \lambda^{\dagger})} +   \sum_{(G', v')} \delta_{v'} \tilde{\underline{Z}}_{G'}^{(n)}|_{(-\lambda^{\dagger}, \lambda^{\dagger})   }) .$$
(Here we have restricted the isotopy on the interval $(-\lambda^{\dagger}, \lambda^{\dagger})$)


From the definition of limit support, for $n$ big enough, 
there exists a collection of rational numbers $(\mathfrak{obs}_{(G, \mathbf{w})}^{(n)})_{\mathbf{w} \in  \lim supp(\mathfrak{Obs}_G)} $
such that  $\mathfrak{Obs}_G^{(n)} $ agrees with
$$   \sum_{\mathbf{w} \in \in \lim supp(\mathfrak{Obs}_G)} (\mathfrak{obs}_{(G, \mathbf{w})} ) \mathbf{w}$$
up to a small isotopy.


We have the following
\begin{lemma}
The obstruction $\mathfrak{obs}_{(G, \mathbf{w})}^{(n)}$ does not depend on $n$. $\tilde{\underline{Z}}_{ G}$ exists if and only if $\mathfrak{obs}_{(G, \mathbf{w})} =0$ for each $\mathbf{w} \in \lim supp(\mathfrak{Obs}_G)$.
\end{lemma}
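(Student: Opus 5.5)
The plan is to view $\mathfrak{Obs}_G^{(n)}$ as a $0$-dimensional cycle (relative to the degenerate boundary strata) on the space of multi-loops $\mathfrak{L}_{\hat V(G)}M$, where $H(G)=\emptyset$, and to reduce both claims to the homology of a neighbourhood of $\lim supp(\mathfrak{Obs}_G)$.

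First I will check that $\mathfrak{Obs}_G^{(n)}$ is closed. The cycle condition for $Z^{0,(n)}$ and $Z^{1,(n)}$ together with the cycle condition for $\tilde{\underline Z}^{(n)}_{\prec G}$ shows this: the boundary of the pushforward $pr_*$ of the $\delta_{e'}$ and $\delta_{v'}$ terms over $(-\lambda^\dagger,\lambda^\dagger)$ is the difference of the ends, which is exactly $Z^1_G-Z^0_G$. The remaining boundary terms cancel in pairs, by the identities $\delta\delta=0$ on graphs, exactly as in \cite{PSPCS1}.

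By finiteness of the limit supports, for $n$ large the support of $\mathfrak{Obs}_G^{(n)}$ lies in an $\epsilon$-neighbourhood of the finite set $\lim supp(\mathfrak{Obs}_G)$. In each such neighbourhood the vertices in $V_{\vartriangle}\setminus I$ and the components of $Comp_0$ shrink to points, and the vertices in $I$ are $C^0$-close to the fixed multi-loop $\mathbf w$. This neighbourhood retracts onto a contractible piece together with the point/collapsed configurations, so its relevant homology in the degree of $\mathfrak{Obs}_G$ is one-dimensional, generated by the class of $\mathbf w$. This defines $\mathfrak{obs}^{(n)}_{(G,\mathbf w)}\in\Q$ as the coefficient of $\mathfrak{Obs}^{(n)}_G$ in this local homology, which gives the stated agreement up to small isotopy.

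To prove independence of $n$, I will use the interpolating isotopies $\tilde Z^{int,(n)}$ of $Z^0$ and $Z^1$, and $\tilde{\underline Z}^{(n)}$ of the isotopy itself. Their limit supports coincide with those of the cycles, so the one-parameter family of obstructions stays inside the same $\epsilon$-neighbourhoods. Integrating the cycle condition along these isotopies gives a homology between $\mathfrak{Obs}^{(n-1)}_G$ and $\mathfrak{Obs}^{(n)}_G$ inside the local neighbourhood, and hence equality of the coefficients. This is where I expect the main difficulty: one must ensure that the isotopies of isotopies of Lemma \ref{extension-isotopies-isotopies} can be chosen with limit support controlled, the analogue of (\ref{lim-supp-inductive}), so that nothing escapes the neighbourhood. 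I would handle this by adapting the proof of Lemma \ref{extension-nice} in the same way.

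For the equivalence, suppose first that $\tilde{\underline Z}_G$ exists. The cycle condition for $\tilde Z_G$ over $(-\lambda^\dagger,\lambda^\dagger)$ exhibits $\mathfrak{Obs}_G^{(n)}$ as a boundary $\partial^{red}pr_*\tilde Z_G$ of a chain with finite limit support. Hence all local coefficients vanish. Conversely, suppose all $\mathfrak{obs}_{(G,\mathbf w)}=0$. Then for each $n$ I choose a bounding chain $Y^{(n)}$ supported near $\lim supp$, using the local acyclicity above. As in the construction of $\underline Y_G$ before (\ref{trivial-cycle}), I will handle the markings $m$ inductively. I then set $\tilde{\underline Z}_G^{(n)}$ to be the cylinder on $Z^0_G$ for $\lambda<-\lambda^\dagger$, the cylinder on $Z^1_G$ for $\lambda>\lambda^\dagger$, and $Y^{(n)}$ glued in the middle. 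Compatibility between different $n$ follows from the same argument used for independence of $n$.
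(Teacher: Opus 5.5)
\paragraph{Verdict.} Your localization is the right adaptation of the argument the paper defers to \cite{PSPCS1}. For $H(G)=\emptyset$ every term of $\mathfrak{Obs}^{(n)}_G$ is a $0$-chain on the multi-loop space. The number $\mathfrak{obs}^{(n)}_{(G,\mathbf w)}$ is its degree in a path-connected neighbourhood of $\mathbf w$. The ``if'' direction works essentially as you describe, once you cone the $\delta$-terms (which sit at various values of $\lambda$) to a single level and fill with a bounding $1$-chain near each $\mathbf w$.

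\paragraph{The gap: independence of $n$.} An isotopy of nice $PS$-cycles, as defined here, is only a sequence $(\tilde Z^{(n)})_n$ with finite $\lim \tilde{supp}$. Unlike a nice $PS$-cycle it carries no interpolating data, so nothing relates $\tilde{\underline Z}^{(n-1)}_{\prec G}$ to $\tilde{\underline Z}^{(n)}_{\prec G}$, and the $\delta$-terms in $\mathfrak{Obs}_G$ depend on that choice.

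To obtain a homology between $\mathfrak{Obs}^{(n-1)}_G$ and $\mathfrak{Obs}^{(n)}_G$ by ``integrating the cycle condition'', you need an isotopy of isotopies on $\prec G$ with four prescribed sides: the interpolating isotopies $\tilde Z^{int,(n)}$ of $Z^0$ and of $Z^1$, together with $\tilde{\underline Z}^{(n-1)}_{\prec G}$ and $\tilde{\underline Z}^{(n)}_{\prec G}$.

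Lemma \ref{extension-isotopies-isotopies} does not provide this. It extends a given $\tilde{\tilde Z}_{\prec G}$ by \emph{constructing} the side $Z^{1,\bullet}$ at $G$. Applied inductively, it yields a square whose fourth side is some new isotopy, not $\tilde{\underline Z}^{(n)}_{\prec G}$. Filling a square with all four sides fixed meets an obstruction of the same type one degree up: a $1$-cycle near the limit support at the lower graphs with $H=\emptyset$. So the step is circular. The real difficulty is existence of the filling, not control of its limit support as you suggest.

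\paragraph{What would close it.} You need one of two extra inputs.
\begin{itemize}
\item An invariance argument showing that the local degree does not see the choice of lower isotopy. The one place the paper argues independence of $n$ (for $q$, at the end of the proof of Proposition \ref{one-dimension}) uses an injectivity-type argument of this kind, not a filling: the star sum cancels the $\delta_{v'}$ terms, and the sheared chain $\tilde{\underline Z}^{\star,\prime}$ does the rest.
\item A restriction to isotopies produced recursively in $n$, as in Lemma \ref{extension-nice} and Proposition \ref{transfer-annulus}, where connecting isotopies of isotopies can be built alongside.
\end{itemize}

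\paragraph{Two smaller points.} First, your closedness step is vacuous, since the terms are $0$-chains. The relation it asserts between $Z^1_G-Z^0_G$ and the $\delta$-terms is exactly what the existence of $\tilde{\underline Z}_G$ would give; it is not a consequence of the lower cycle conditions.

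Second, in the ``only if'' direction, a bounding chain forces each local degree to vanish only if its projected limit support is a finite set of multi-loops (a small isotopy). If the extension may have limit support moving in a family as in (\ref{basic-family-loops}), that family can join two elements of $\lim supp(\mathfrak{Obs}_G)$. Then only the sum of their coefficients is forced to vanish, so you should say which class of isotopies you extend in.
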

The Lemma is proved as in \cite{PSPCS1}.

\subsection{Point-Splitting-Homology}

We say that an isotopy $\tilde{Z}  \in \tilde{\mathcal{Z}}(M| {\underline{Z}}^{\circledcirc})$ has finite limit support if $\lim Supp(pr_*(\tilde{Z}_{G,m}))$ is finite for each $(G,m)$ , where $pr :   \mathfrak{L}_{ G}(M) \times [0,1] \rightarrow   \mathfrak{L}_{ G}(M)  $ is the obvious projection.
Note that this condition implies that $\lim Supp (Z^{0, \diamond}  ) =  \lim Supp (Z^{1, \diamond}  ) $.

\begin{definition}

 The Point Splitting Homology  $PSH(M| \underline{Z}^{\circledcirc, \diamond})$ is defined as the vector space of  nice $PS$-cycles $ \mathcal{Z}(M|   Z^{\circledcirc ,\diamond}    )$ modulo isotopies with finite limit support.


\end{definition}

We have the following
 \begin{proposition} \label{transfer-annulus}
 Let $\tilde{Z}^{\circledcirc, \diamond}$ be an isotopy between nice annulus data  $Z^{\circledcirc, 0 , \diamond }$  and  $Z^{\circledcirc, 1 , \diamond}$.




 
 For each $Z^{0, \diamond} \in \mathcal{Z}(M|   Z^{\circledcirc, 0 ,\diamond}    )$ there exists $Z^{1, \diamond} \in \mathcal{Z}(M|   Z^{\circledcirc, 1 ,\diamond}    )$ and $\tilde{Z}^{\diamond} \in \tilde{ \mathcal{Z}}(M|   \tilde{Z}^{\circledcirc , \diamond}    )$  isotopy between $Z^0$ and $Z^1  $ with 
  \begin{equation} \label{limit-supp-transfer-annulus}
  \lim Supp (Z^1)= \lim Supp (pr_* \tilde{Z}) = \lim Supp (Z^0). 
  \end{equation}
 
 The  $PSH$-class $[Z^1] \in PSH(M|   Z^{\circledcirc, 1 ,\diamond}    )$ is determined by the $PSH$-class $[Z^0] \in PSH(M|   Z^{\circledcirc, 0,\diamond}    )$. We obtain a map
 \begin{equation} \label{transfer-map-annulus}
 transfer_{  \tilde{Z}^{\circledcirc , \diamond} }   : PSH(M| Z^{\circledcirc, 0 , \diamond}  )  \rightarrow PSH(M| Z^{\circledcirc, 1 , \diamond }  ) .
  \end{equation}
Take $\tilde{Z}^{\circledcirc, \prime }$ another isotopy between nice annulus data  $Z^{\circledcirc, \diamond, 0 }$  and  $Z^{\circledcirc, 1 }$. If $\tilde{Z}^{\circledcirc, \diamond, \prime }$  is related to  $\tilde{Z}^{\circledcirc , \diamond}$ by an isotopy of isotopies we have
$$transfer_{  \tilde{Z}^{\circledcirc , \diamond} }  =  transfer_{  \tilde{Z}^{\circledcirc , \diamond, \prime} } .$$
 
 \end{proposition}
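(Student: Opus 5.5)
The plan is to build $Z^{1,\diamond}$ and $\tilde{Z}^{\diamond}$ by a double induction. The outer induction runs over the partial order $\prec$ on $\mathfrak{G}$; the inner induction runs over the sequence index $n$. This mirrors the proof of Lemma \ref{extension-nice}, now with the annulus data moving along $\tilde{Z}^{\circledcirc,\diamond}$.

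\textbf{Existence.} Suppose $Z^{1,(n)}_{\prec G}$ and $\tilde{Z}^{(n)}_{\prec G}$ have been constructed for all $n$, with the limit supports required by (\ref{limit-supp-transfer-annulus}). We extend them to $G$ in two cases.

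When $H(G)\neq\emptyset$, we apply Lemma \ref{extension-isotopies-inductive} to $Z^{0,(n)}_{\preccurlyeq G}$ and $\tilde{Z}^{(n)}_{\prec G}$. This produces $Z^{1,(n)}_{\preccurlyeq G}$ and $\tilde{Z}^{(n)}_{\preccurlyeq G}$.

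When $H(G)=\emptyset$, the obstructions of Section \ref{obstruction-section} could appear. Here the whole chain $Z_{G,m}$ is forced by the cycle and forgetful conditions, so we simply define $Z^{1,(n)}_G$ as minus the pushforward of the boundary terms $\delta_{e'}\tilde Z_{G'}$ and $\delta_{v'}\tilde Z_{G'}$ of the isotopy, added to $Z^{0,(n)}_G$. In other words, we choose $Z^1_G$ so that $\mathfrak{Obs}_G=0$. This is possible because the target $Z^1$ is free, unlike the situation in the obstruction Lemma, where both ends are fixed.

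In both cases the limit-support condition is arranged as in the proof of Lemma \ref{extension-nice}. At stage $n$ we perturb only inside $\epsilon_n$-neighbourhoods of the existing limit support. We also use that $\lim Supp(pr_*\tilde{Z}^{\circledcirc})=\emptyset$ for the annulus part. Since the isotopy contributes nothing new in the limit, $\lim Supp(Z^1)=\lim Supp(pr_*\tilde Z)=\lim Supp(Z^0)$.

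\textbf{Niceness of $Z^{1,\diamond}$.} We also need to show that $Z^{1,\diamond}$ is nice, that is, to produce interpolating isotopies $\tilde{Z}^{1,int,(n)}$ between $Z^{1,(n-1)}$ and $Z^{1,(n)}$. We plan to obtain them by composing three isotopies: the reverse of $\tilde Z^{(n-1)}$, then $\tilde Z^{0,int,(n)}$, then $\tilde Z^{(n)}$. To make this composite lie over the annulus isotopy $\tilde{Z}^{\circledcirc,1,int,(n)}$, we correct it with an isotopy of isotopies from Lemma \ref{extension-isotopies-isotopies}. Its limit support is controlled by the same $\epsilon_n$-argument.

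\textbf{Well-definedness.} We must show that $[Z^1]$ depends only on $[Z^0]$, on the choices made, and on $\tilde Z^{\circledcirc,\diamond}$ only up to isotopy of isotopies. Suppose we are given $Z^0$ and $Z^{0\prime}$ related by an isotopy with finite limit support (or the same $Z^0$ with two different constructions), and two transfers $Z^1$, $Z^{1\prime}$. Lemma \ref{extension-isotopies-isotopies} is applied graph by graph and index by index, starting from the square whose sides are the known isotopies and the annulus isotopy of isotopies. It yields the missing fourth side, an isotopy between $Z^1$ and $Z^{1\prime}$, together with the filling. The same argument gives independence of the choice of $\tilde{Z}^{\circledcirc,\diamond}$ within its isotopy-of-isotopies class.

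\textbf{Main obstacle.} The hard step in this last argument is again the graphs with $H(G)=\emptyset$. There the fourth side is not free, and the obstruction Lemma requires $\mathfrak{obs}_{(G,\mathbf w)}=0$. We plan to show this vanishing by computing $\mathfrak{obs}$ as the boundary of the two-parameter chain $\tilde{\tilde Z}_G$ projected to $M^{\emptyset}\times(\text{multi-loops})$. By Stokes this boundary is zero, because the other three sides were built in the existence step to have vanishing obstruction. The remaining difficulty is the $n$-independence and the limit-support bookkeeping for two-parameter families, which we handle as in the one-parameter obstruction Lemma.
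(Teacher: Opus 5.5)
Your proposal is correct and follows the same route as the paper. Existence is by induction over graphs via Lemma \ref{extension-isotopies-inductive}; your separate construction for $H(G)=\emptyset$, choosing $Z^1_G$ so that $\mathfrak{Obs}_G=0$, is just what that lemma amounts to there, since the target end is free. Independence then comes from inductive application of Lemma \ref{extension-isotopies-isotopies}, which yields an isotopy between $Z^1$ and $Z^{1\prime}$. The $H(G)=\emptyset$ difficulty you flag in the last step is already absorbed by Lemma \ref{extension-isotopies-isotopies}, which carries no $H(G)\neq\emptyset$ hypothesis; your Stokes argument is essentially why it holds there. Your treatment of the interpolating isotopies of $Z^{1,\diamond}$, and of dependence only on the class $[Z^0]$, spells out points the paper's proof leaves implicit.
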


 \begin{proof}

As usual, we proceed by induction on  graphs. Assume we have defined $\tilde{\underline{Z}}_{\prec G}^{(n)}, {\underline{Z}}_{\prec G}^{1,(n)}$ for each $n$.
Apply  Lemma  \ref{extension-isotopies-inductive} to ${\underline{Z}}_{\preccurlyeq G}^{0,(n)}, \tilde{\underline{Z}}_{\prec G}^{(n)}$ to obtain 
${\underline{Z}}_{\preccurlyeq G}^{1,(n)}, \tilde{\underline{Z}}_{\preccurlyeq G}^{(n)}$.
The construction can be made such that (\ref{limit-supp-transfer-annulus}) is satisfied.
 


Let $Z^{1,\prime} \in \mathcal{Z}(M|   Z^{\circledcirc, \diamond, 1}    )$ and  $\tilde{Z}^{\prime} \in \tilde{ \mathcal{Z}}(M|   \tilde{Z}^{\circledcirc }    )$ be a $PS$-cycle and an isotopy of $PS$-cycle obtained from $\tilde{Z}^{\circledcirc, \prime }$.  Applying inductively Lemma \ref{extension-isotopies-isotopies}  we can define an isotopy of isotopies between $\tilde{Z}$ and $\tilde{Z}^{\prime}$. In particular, it yields an isotopy between $Z^1$ and $Z^{1,\prime}$. 


\end{proof}

\subsubsection{Euler Structures}

In \cite{PSPCS1} we have proved a relation between Annulus data and Euler Structures.  In order to understand this relation it is useful to consider an alternative formulation of the notion of Euler Structure. We consider pairs $U^{\blacktriangle}=(U_0 , U_1)$ where
\begin{itemize}
\item
 $U_0$ is a three chain on $M \times M$ close in the $C^0$ topology to $Diag_M$ and transversal to $Diag_M$;
\item $U_1$ is a one-chain on $M$ with
$$\partial U_1 = U_0 \cap Diag_M.$$
Here we consider $U_0 \cap Diag_M$ as a chain on $Diag_M \cong M$.  
\end{itemize}

There is an obvious notion of isotopy of Euler Structures. Denote by $\mathfrak{Eul}(M)^{\blacktriangle}$  the set of isotopy classes of the $U^{\blacktriangle}$ .  The group $H_1(M, \Z)$ acts  on $\mathfrak{Eul}(M)^{\blacktriangle}$ by addiction on $U_1$.
With this action, $\mathfrak{Eul}(M)^{\blacktriangle}$ is a torsor on $H_1(M, \Z)$.  
It is easy to see that  
$  \mathfrak{Eul}(M) \cong \mathfrak{Eul}(M)^{\blacktriangle},$ where $\mathfrak{Eul}(M)$ denotes the space of homology classes of the usual Euler Structures. 

To an annulus data $Z^{\circledcirc}$ in \cite{PSPCS1} we associated an Euler Structure $U^{\blacktriangle}$,  canonically defined up to a set of contractible choices.  

Proposition \ref{transfer-annulus} tells us that $PSH(M |   U^{\blacktriangle} ) $ is well defined.


\subsection{Factorization Property}

In this section we extend the factorization property considered in \cite{PSPCS1} to the $PSH$-classes considered in this paper. 

Let $\mathfrak{G}^{\divideontimes} \subset \mathfrak{G}$ be the set of decorated  graphs $G \in \mathfrak{G}$ with $V_{\vartriangle}  =\emptyset, \chi_{\vartriangle}=d_{\vartriangle} =0$. The set $\mathfrak{G}^{\divideontimes}$ may be identified with the set graphs used in \cite{PSPCS1}.

\subsubsection{Factorization $PS$-cycles} \label{factorization-cycles-section}

 Set 
$$\mathfrak{G}^{!}=  \mathfrak{G} \times \mathfrak{G}^{\divideontimes}.$$
Denote by $G^1 \oplus G^2$ the disjoint union of the decorated graphs $G^1 \in \mathfrak{G} $ and $G^2 \in \mathfrak{G}^{\divideontimes}$.

Let $\mathfrak{G}^{!}_*$ the set of pairs $((G^1,G^2) , m)$ with $(G^1,G^2) \in \mathfrak{G}^{!}$ and $m$ is a marking  for $G^1 \oplus G^2$   as in  Subsection \ref{marking-section} . Note that 
define $m$ is equivalent to define  a marking $m^1$ for $G^1$ and a marking $m^2$ for $G^2$. We write $m= m^1 \oplus m^2$.

Fix an  annulus data $\underline{Z}^{\circledcirc}$.
A factorization $PS$-cycles $Z^!$ 
with annulus data $\underline{Z}^{\circledcirc}$  
consists on a collection of chains
$$( Z^!_{(G^1,G^2) , m)} \}_{((G^1,G^2),m) \in \mathfrak{G}_*^!} $$
with
$$ Z^!_{(G^1,G^2) , m)}  \in C_{|H(G^1)| + |H(G^2)|+ |m|} ( M^{H(G^1) \sqcup H(G^2)} , \mathfrak{o}_{H(G^1) \sqcup H(G^2)})^{\text{Aut}(G^1,G^2) , m)}  .$$
such that
\begin{itemize}
\item the transversality condition is satisfied; 
\item  the forgetful compatibility is satisfied  with annulus data $\underline{Z}^{\circledcirc}$ ;
\item the cycle condition is satisfied .
\end{itemize}




The transversality condition, the forgetful compatibility condition and the cycle condition for factorization $PS$-cycles are defined analogously to Subsection \ref{PS-section}.




 We denote by $\mathcal{Z}^!$  the vector space of factorization $PS$-cycles.

Isotopies of  factorization $PS$-cycles can be defined analogously  modifying the definition of isotopies of  subsection ( \ref{isotopies-section}).


\subsubsection{Factorization Map}

Define the factorization map between the usual space of $PS$-cycles and the factorization $PS$-cycles:
\begin{equation} \label{factorization-map}
 \mathbf{fact} :  \mathcal{Z} \rightarrow  \mathcal{Z}^!
\end{equation}
$$    \mathbf{fact}(Z)_ {( G^1,G^2),m  ) }  := Z_{(G^1 \oplus G^2,  m) }. $$

\begin{lemma}
For each $PS$-cycle $Z$,  $   \mathbf{fact}(Z)$ is a factorization $PS$-cycle.
\end{lemma}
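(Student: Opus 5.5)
The lemma should follow by checking, one at a time, the three defining conditions of a factorization $PS$-cycle for $\mathbf{fact}(Z)$. Each reduces to the corresponding condition for $Z$ on the graph $G^1 \oplus G^2$. The central observation is that the assignment
$$((G^1,G^2),m^1 \oplus m^2) \mapsto (G^1 \oplus G^2, m^1 \oplus m^2)$$
is compatible with all structure used in Subsection \ref{PS-section}.

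First, the chains have the right type.
\begin{itemize}
\item $H(G^1 \oplus G^2) = H(G^1) \sqcup H(G^2)$ and $|m| = |m^1 \oplus m^2|$, so $Z_{(G^1\oplus G^2,m)}$ lies in $C_{|H(G^1)|+|H(G^2)|+|m|}(M^{H(G^1)\sqcup H(G^2)},\mathfrak{o}_{H(G^1)\sqcup H(G^2)})$.
\item $\text{Aut}((G^1,G^2),m)$ is a subgroup of $\text{Aut}(G^1\oplus G^2,m)$: it consists of automorphisms preserving the splitting. Invariance under the larger group therefore implies invariance under the smaller.
\item Stability of $G^1 \oplus G^2$ follows from stability of $G^1$ and $G^2$ separately, because the stability condition is imposed componentwise on $Comp_0$.
\end{itemize}

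Second, the transversality condition and the forgetful compatibility condition. Both are formulated graph by graph: transversality with respect to the relevant diagonals and boundary strata, and forgetful compatibility with respect to external edges and the annulus data. For $\mathbf{fact}(Z)$ they are literally the conditions for $Z$ at $G^1 \oplus G^2$.

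The only point to check is that the forgetful operations on external edges preserve the split form. Forgetting an external edge of $G^1 \oplus G^2$ forgets it either in $G^1$ or in $G^2$. Removing an edge from $G^2$ keeps $V_{\vartriangle}=\emptyset$ and $\chi_{\vartriangle}=d_{\vartriangle}=0$, so the result still lies in $\mathfrak{G}^{\divideontimes}$.

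Third, the cycle condition, which I expect to be the main step. The boundary of $Z_{(G,m)}$ is expressed through:
\begin{itemize}
\item the face terms $Z_{(G,\partial_i m)}$;
\item terms $\delta_e Z_{(G',m')}$ over pairs $(G',e)$ with $\delta_e G' = G$;
\item terms $\delta_v Z_{(G',m')}$ over pairs $(G',v)$ with $\delta_v G' = G$.
\end{itemize}
Take $G=G^1\oplus G^2$. I would show that every such $G'$ is again of the form $G'^1\oplus G'^2$ with $G'^2\in\mathfrak{G}^{\divideontimes}$, and that the corresponding terms are exactly those appearing in the factorization cycle condition.

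The operations $\delta_e$ and $\delta_v$ that produce a disjoint union $G^1\oplus G^2$ act within a single summand, except in one case: $\delta_e$ with $v_1,v_2$ in different components merges two components into one. The inverse of that merge can only split a component of $G$, so it never creates an edge between the two summands. The delicate subcases are the merges involving $V_{\vartriangle}$. Any such $G'$ has $V_{\vartriangle}$ nonempty, and since $G^2$ has empty $V_{\vartriangle}$ by definition of $\mathfrak{G}^{\divideontimes}$, such a $G'$ must contribute to the $G^1$ part. Consequently $G'^2$ stays in $\mathfrak{G}^{\divideontimes}$.

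Finally, after matching the terms, I would verify that the orientation signs and the marking conventions agree, using $m = m^1\oplus m^2$ and the compatibility of the equivalence $(G,m)\sim(cut_{E_0}G,cut_{E_0}m)$ with disjoint unions. This bookkeeping is routine once the bijection of strata is established.
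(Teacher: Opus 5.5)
Your proposal is correct and follows the same approach as the paper. The paper states this lemma without proof, because the factorization conditions are defined ``analogously'', that is, as the conditions on $Z$ at $G^1\oplus G^2$; the only real content is what you check, namely that $\delta_e$, $\delta_v$, forgetting external edges and cutting all respect the splitting, with anything touching $V_{\vartriangle}$ forced onto the $G^1$ side. One small correction: the merge case is not actually an exception, since the edge $e$ itself joins $c_1$ and $c_2$ in $G'$ and so forces them into the same summand. This is exactly why the splitting of $G'$ induced from that of $G$ is unique.
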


We extend (\ref{factorization-map}) to isotopies
\begin{equation} \label{factorization-map-isotopy}
 \mathbf{fact} : \tilde{\mathcal{Z}} \rightarrow  \tilde{\mathcal{Z}}^!
\end{equation}
in a straightforward way.


\subsubsection{Product}

Define the  product
\begin{equation} \label{product-factorization}
  \boxtimes : \mathcal{Z}  \times  \mathcal{Z}^{\divideontimes} \rightarrow \mathcal{Z}^! ,
  \end{equation}
$$   (Z^1, Z^2) \rightarrow  Z^1\boxtimes Z^2$$
\begin{equation} \label{cup-formula}
(Z^1\boxtimes Z^2)_{(G^1,G^2), m } := \sum_{0 \leq r \leq l}  Z^1_{(G^1, m^1_{[0,r]})} \times  Z^2_{(G^2, m^2_{[r,l]})}. 
\end{equation}
Here we have used the notation $m_{[a,b]} := \{ E_i \}_{a \leq i \leq b}$ when $m = \{ E_i \}_{0 \leq i \leq l}$ and $0 \leq a \leq b \leq l$.

\begin{lemma}
For each $PS$-cycles $Z^1, Z^2$,  $ Z^1\boxtimes Z^2$ is a factorization $PS$-cycle.
\end{lemma}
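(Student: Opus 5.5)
We check the three defining properties of a factorization $PS$-cycle directly on the formula (\ref{cup-formula}), using that $Z^1$ is a $PS$-cycle on the graphs $\mathfrak{G}$ and $Z^2$ is a $PS$-cycle on $\mathfrak{G}^{\divideontimes}$. This is the standard Alexander--Whitney/cup-product argument: the chain $Z^1\boxtimes Z^2$ on $M^{H(G^1)}\times M^{H(G^2)}$ is a sum of products, and each condition splits into a condition on each factor.

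\emph{Degrees and symmetry.} First we check degrees. Write $m=\{E_0,\dots,E_l\}$, so $|m|=l$ (or $l+1$, depending on the convention of \cite{PSPCS1}; the bookkeeping is the same). The term $Z^1_{(G^1,m^1_{[0,r]})}\times Z^2_{(G^2,m^2_{[r,l]})}$ has dimension $|H(G^1)|+r+|H(G^2)|+(l-r)=|H(G^1)|+|H(G^2)|+|m|$, as required. The chain is invariant under $\text{Aut}((G^1,G^2),m)$, because this group is contained in $\text{Aut}(G^1,m^1)\times\text{Aut}(G^2,m^2)$. This holds since $G^2\in\mathfrak{G}^{\divideontimes}$ has no $\vartriangle$-part, so automorphisms cannot mix the two summands in the relevant way. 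Orientation signs come from $\mathfrak{o}_{H(G^1)\sqcup H(G^2)}\cong\mathfrak{o}_{H(G^1)}\otimes\mathfrak{o}_{H(G^2)}$, and we fix the sign convention for (\ref{cup-formula}) accordingly.

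\emph{Transversality and forgetful compatibility.} Transversality is a condition on intersections with the diagonals indexed by edges of $G^1\oplus G^2$. Each edge lies in $G^1$ or in $G^2$, so transversality of a product follows from transversality of each factor, after a generic perturbation if needed. Forgetful compatibility concerns external edges, and $E^{ex}(G^1\oplus G^2)=E^{ex}(G^1)\sqcup E^{ex}(G^2)\subseteq E_0$. Forgetting an external half-edge therefore acts on only one factor, and compatibility with $\underline{Z}^{\circledcirc}$ for that factor gives it for the product. Invariance under the cut equivalence $(G,m)\sim(cut_{E_0}G,cut_{E_0}m)$ is likewise compatible, since cutting $E_0$ splits as cutting $E_0^1$ and $E_0^2$.

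\emph{Cycle condition (the main point).} We compute the reduced boundary using the Leibniz rule:
$$\partial^{red}(A\times B)=\partial^{red}A\times B\pm A\times\partial^{red}B .$$
We substitute the cycle condition for $Z^1$ on $(G^1,m^1_{[0,r]})$ and for $Z^2$ on $(G^2,m^2_{[r,l]})$. Each boundary then splits into two kinds of terms. The first kind are the $\delta_e,\delta_v$ terms (edge and vertex degenerations, together with the annulus contributions). These occur inside $G^1$ or inside $G^2$, and since $\delta_e(G^1\oplus G^2)=\delta_eG^1\oplus G^2$ or $G^1\oplus\delta_eG^2$, they reassemble into the $\delta$-terms of the cycle condition for $(G^1,G^2)$. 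The second kind are the face terms $\sum_i(-1)^i Z_{G,\partial_i m}$. For the first factor these are the faces $\partial_i$ with $i\le r$, and for the second the faces with $i\ge r$.

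The required identity is then
$$\sum_r\sum_i(-1)^i(\cdots)_{\partial_i m}=\sum_i(-1)^i(Z^1\boxtimes Z^2)_{\partial_i m}.$$
For this we use the simplicial identity $(\partial_i m)^1_{[0,r]}=\partial_i(m^1_{[0,r+1]})$ for $i\le r$, and the analogous identity on the second factor. The terms in which the face index equals the splitting index $r$ appear twice, once from each factor, with opposite signs, and so they cancel. This is exactly the Alexander--Whitney computation.

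The main obstacle is the sign bookkeeping and the precise matching in this last step. This includes checking that the $\delta_v$ operation, which exists only for vertices with $H_v=\emptyset$, creates no cross terms between $G^1$ and $G^2$. No edge joins the two summands, so this should hold, but I would verify it case by case against the list of operations $\delta_e$.
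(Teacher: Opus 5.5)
The paper states this lemma without proof, so there is no competing argument to compare against. Your factorwise verification is the argument the paper takes for granted, and it is correct in substance. Formula (\ref{cup-formula}) is the Alexander--Whitney map for the product of the simplicial sets of markings of $G^1$ and $G^2$. This presupposes that markings may be non-strict, which the paper's identification $m=m^1\oplus m^2$ already requires. Several details need fixing when you write it out.

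\emph{Degree convention.} The dimension count works only for $|m|=l$, which is the paper's convention: the faces are $\partial_i$ with $0\le i\le |m|$. With $|m|=l+1$ the two sides differ by one, so drop that parenthetical.

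\emph{Which faces cancel.} The leftover faces cancel between adjacent splitting indices, not within a fixed $r$. The last face of $Z^1_{(G^1,m^1_{[0,r]})}$ and the first face of $Z^2_{(G^2,m^2_{[r-1,l]})}$ both produce $Z^1_{(G^1,m^1_{[0,r-1]})}\times Z^2_{(G^2,m^2_{[r,l]})}$.

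\emph{Signs.} Take the face signs $(-1)^i$ of the cycle condition and the rule $\partial(A\times B)=\partial A\times B+(-1)^{\deg A}A\times\partial B$. Then this cancellation, and the matching of the second-factor faces, both require $|H(G^1)|$ to be even. In general you must put a sign such as $(-1)^{|H(G^1)|(l-r)}$ in front of the $r$-th term of (\ref{cup-formula}). You must also require $\delta_{e'}(A\times Y)=(-1)^{\deg A}A\times\delta_{e'}Y$ for $e'$ an edge of the second factor.

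\emph{Direction of the $\delta$-argument.} The direction you need is the converse of the one you state. Every $(G',e')$ with $\delta_{e'}G'=G^1\oplus G^2$, and every $(G',v')$ with $\delta_{v'}G'=G^1\oplus G^2$, must split uniquely as $G'^1\oplus G^2$ or $G^1\oplus G'^2$. This holds because $\delta_{e'}$ places the components (or the $\vartriangle$-part) of both endpoints into the component of the new vertex or vertices. Likewise, $\delta_{v'}$ only removes a vertex from its own component.

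\emph{Transversality.} No perturbation is needed, and none is allowed, since the chain is given by an explicit formula. For $E'=E'^1\sqcup E'^2$ the locus $\cap_{e\in E'}\pi_e^{-1}(Diag)$ is a product, and a product of transverse chains is transverse to it.

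\emph{Automorphisms.} The identity $\text{Aut}((G^1,G^2),m)=\text{Aut}(G^1,m^1)\times\text{Aut}(G^2,m^2)$ holds because the splitting is part of the data. The absence of a $\vartriangle$-part in $G^2$ would not, by itself, prevent swapping isomorphic $Comp_0$-components between the two factors.
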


We extend the product (\ref{product-factorization} ) to isotopies
\begin{equation} \label{product-factorization-isotopies}
  \boxtimes : \tilde{\mathcal{Z}}  \times  \tilde{\mathcal{Z}}^{\divideontimes} \rightarrow \tilde{\mathcal{Z}}^! ,
  \end{equation}

\subsubsection{Nice PS-cycles}

Nice factorization $PS$-cycles $\mathcal{Z}^{!, \diamond}$ can be defined as in section  \ref{PS-section} using  factorization $PS$-cycles  instead of $PS$-cycles. The isotopies $\tilde{Z}^{int,(n)}$ used in the definition are isotopies of  factorization $PS$-cycles.

From $\mathcal{Z}^{!, \diamond}$ we can define the factorization $PS$-homology $PSH^!(M  | \underline{Z}^{\circledcirc, \diamond}  )$ taking the equivalence classes.


  \begin{lemma}
 For ${Z}^{ \diamond}  = (Z^{(n)})_n \in \mathcal{Z}^{\diamond}$  
 $$ \mathbf{fact}(Z^{\diamond}):=(\mathbf{fact}(Z^{(n)}))_n$$
  defines a nice factorization $PS$-cycle. 
\end{lemma}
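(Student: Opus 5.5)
The plan is to check the three defining conditions of a nice factorization $PS$-cycle for the sequence $(\mathbf{fact}(Z^{(n)}))_n$, reducing each one to the corresponding property of $Z^{\diamond}$.

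\emph{Step 1: each term is a factorization $PS$-cycle.} For each $n$, $Z^{(n)} \in \mathcal{Z}(M|\underline{Z}^{\circledcirc,(n)})$. The preceding Lemma then gives that $\mathbf{fact}(Z^{(n)})$ is a factorization $PS$-cycle with the same annulus data $\underline{Z}^{\circledcirc,(n)}$. The nice annulus data is unchanged, so the first condition holds.

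\emph{Step 2: finite limit support.} By definition,
$$\mathbf{fact}(Z^{(n)})_{((G^1,G^2),m)} = Z^{(n)}_{(G^1\oplus G^2,m)}.$$
The multi-loop space relevant for $(G^1,G^2)$ is the same as for $G^1\oplus G^2$. The components of $G^1 \oplus G^2$ are the components of $G^1$ together with those of $G^2$, and its $\vartriangle$-part is that of $G^1$, since $G^2 \in \mathfrak{G}^{\divideontimes}$ has $V_\vartriangle = \emptyset$. The diameter conditions in the definition of limit support therefore read identically for the two indexings, with $I \subset V_\vartriangle(G^1)$. Hence
$$\lim Supp\bigl(\mathbf{fact}(Z)_{((G^1,G^2),m)}\bigr) = \lim Supp\bigl(Z_{(G^1\oplus G^2,m)}\bigr),$$
and the right-hand side is finite by hypothesis. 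Also, every loop attached to a vertex of $G^2$ shrinks to a point, consistent with the definition of \cite{PSPCS1}.

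\emph{Step 3: interpolating isotopies.} Take the isotopies $\tilde{Z}^{int,(n)}$ between $Z^{(n-1)}$ and $Z^{(n)}$ provided by niceness of $Z^{\diamond}$. Then $\mathbf{fact}(\tilde{Z}^{int,(n)})$, defined through the isotopy version (\ref{factorization-map-isotopy}), is an isotopy of factorization $PS$-cycles between $\mathbf{fact}(Z^{(n-1)})$ and $\mathbf{fact}(Z^{(n)})$. It is again given by reindexing, so $pr_*$ commutes with $\mathbf{fact}$. Applying the identification of Step 2 to the projected chains gives
$$\lim Supp\bigl(pr_*\,\mathbf{fact}(\tilde{Z}^{int})_{((G^1,G^2),m)}\bigr) = \lim Supp\bigl(pr_*\,\tilde{Z}^{int}_{(G^1\oplus G^2,m)}\bigr) = \lim Supp\bigl(Z_{(G^1\oplus G^2,m)}\bigr),$$
which is exactly the required equality.

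\emph{Main obstacle.} The step needing the most care is the claim that $\mathbf{fact}$ on isotopies really yields isotopies of factorization $PS$-cycles. This means checking that the transversality, forgetful compatibility and cycle conditions for $\tilde{\mathcal{Z}}^!$ are just the restrictions of those for $\tilde{\mathcal{Z}}$ to graphs of the form $G^1 \oplus G^2$. The key point is that the operators $\delta_e$ and $\delta_v$ preserve this form, or else produce terms that the definition of $\mathcal{Z}^!$ accounts for in the same way. I would verify this by mirroring the proof of the previous Lemma on the isotopy chains $\tilde{Z}_{G,m}$ over $M^{H(G)}\times\R$. Beyond that, everything is bookkeeping of indices.
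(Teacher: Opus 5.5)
Your proposal is correct and is essentially the argument the paper intends. The paper states this lemma without proof, treating it as immediate from three facts: the preceding lemma (that $\mathbf{fact}(Z)$ is a factorization $PS$-cycle), the extension (\ref{factorization-map-isotopy}) of $\mathbf{fact}$ to isotopies (which the paper simply asserts holds ``in a straightforward way'', so the step you flag as the main obstacle can be cited rather than reproved), and the fact that $\mathbf{fact}$ is a pure reindexing $((G^1,G^2),m)\mapsto(G^1\oplus G^2,m)$, which leaves limit supports and their $pr_*$-versions unchanged.
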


\begin{lemma}
For ${Z}^{ 1, \diamond}  = (Z^{1, (n)})_n \in   \mathcal{Z}^{ \diamond}$ and $ {Z}^{ 2, \diamond}  = (Z^{2, (n)})_n \in   \mathcal{Z}^{ \divideontimes, \diamond}$  , 
$$Z^{1, \diamond} \boxtimes Z^{2, \diamond}:= ( Z^{1,(n)} \boxtimes Z^{2,(n)})_n$$
 defines a nice factorization $PS$-cycle. 
\end{lemma}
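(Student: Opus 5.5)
We have to show that $Z^{1,\diamond}\boxtimes Z^{2,\diamond}=(Z^{1,(n)}\boxtimes Z^{2,(n)})_n$ satisfies the three conditions in the definition of nice factorization $PS$-cycle. Throughout, the annulus data is the nice annulus data $\underline{Z}^{\circledcirc,\diamond}$ shared by $Z^{1,\diamond}$ and $Z^{2,\diamond}$.

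\textbf{Step 1 (each term is a factorization cycle).} By the preceding lemma on the product (\ref{cup-formula}), each $Z^{1,(n)}\boxtimes Z^{2,(n)}$ is a factorization $PS$-cycle. Its annulus data is $\underline{Z}^{\circledcirc,(n)}$, because forgetful compatibility holds factorwise.

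\textbf{Step 2 (finite limit support).} Fix $((G^1,G^2),m)$. The support of $Z^{1,(n)}_{(G^1,m^1_{[0,r]})}\times Z^{2,(n)}_{(G^2,m^2_{[r,l]})}$ is a product of supports. There are only finitely many $r$.

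\begin{itemize}
\item Since $G^2\in\mathfrak{G}^{\divideontimes}$, we have $V_{\vartriangle}(G^2)=\emptyset$. So the finiteness of limit support for $Z^{2,\diamond}$ says that all loops of $G^2$ concentrate in $\epsilon$-small sets for each component $c\in Comp_0(G^2)$.
\item For $G^1$, there are finitely many limit multi-loops $\mathbf{w}^j$ on subsets $I\subset V_{\vartriangle}(G^1)$.
\end{itemize}

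Since $V_{\vartriangle}(G^1\oplus G^2)=V_{\vartriangle}(G^1)$, the same sets $I$ and the same $\mathbf{w}^j$ witness the definition for the product. The diameter conditions hold componentwise. Hence
$$\lim Supp\bigl((Z^1\boxtimes Z^2)_{(G^1,G^2),m}\bigr)\subseteq\bigcup_r \lim Supp\bigl(Z^1_{(G^1,m^1_{[0,r]})}\bigr),$$
which is finite.

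\textbf{Step 3 (interpolating isotopies).} Let $\tilde Z^{1,int,(n)}$ and $\tilde Z^{2,int,(n)}$ be the interpolating isotopies. I take $\tilde Z^{1,int,(n)}\boxtimes\tilde Z^{2,int,(n)}$, using the extension (\ref{product-factorization-isotopies}) of the product to isotopies. This is an isotopy of factorization $PS$-cycles between $Z^{1,(n-1)}\boxtimes Z^{2,(n-1)}$ and $Z^{1,(n)}\boxtimes Z^{2,(n)}$, over the isotopy $\tilde{\underline{Z}}^{\circledcirc,int,(n)}$ of annulus data.

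One technical point: a product of two isotopies has a two-parameter base $\R\times\R$. It must be restricted to the diagonal $\R$, or equivalently the two isotopies reparametrized so they are supported on disjoint $\lambda$-intervals. I assume this is exactly how (\ref{product-factorization-isotopies}) is set up.

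The required equality of limit supports then follows as in Step 2. Apply the same argument to $pr_*$ of the product isotopy, and use the hypotheses on $Z^{1,\diamond}$ and $Z^{2,\diamond}$:
$$\lim Supp(pr_*\tilde Z^{1,int})=\lim Supp(Z^1), \qquad \lim Supp(pr_*\tilde Z^{2,int})=\lim Supp(Z^2).$$

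\textbf{Main obstacle.} Steps 2 and 3 give the inclusion $\subseteq$; the delicate point is the reverse inclusion in Step 3. The product construction could in principle cancel or create limit multi-loops, so equality of limit supports under $pr_*$ needs justification.

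I would argue via minimality of the witnessing set. The $G^2$ factor contributes only shrinking loops, and $pr_*$ commutes with the product in the $G^1$ directions. Therefore the minimal witnessing set for the product equals the union over $r$ of the minimal sets for the $G^1$ factors, both for the cycle and for $pr_*$ of the isotopy. Since these unions agree by hypothesis on $Z^{1,\diamond}$, equality follows.
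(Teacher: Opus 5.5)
\textbf{Verdict: the last step has a genuine gap.} The paper states this lemma without proof, as a routine check. Your Steps 1 and 2 are that check and are fine; in particular the bound $\lim Supp\subseteq\bigcup_r \lim Supp(Z^1_{(G^1,m^1_{[0,r]})})$ is correct.

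\textbf{Where it fails.} The gap is in your final paragraph, the only place where the required \emph{equality} of limit supports is addressed.

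\emph{The cycle side.} You assert that the minimal witnessing set of the product is $\bigcup_r \lim Supp(Z^1_{(G^1,m^1_{[0,r]})})$. This is false, because a summand whose $G^2$-factor vanishes contributes nothing. For instance, take $l=0$ and a pair $(G^1,G^2)$ with $Z^{2,(n)}_{(G^2,m^2)}=0$ while $Z^1_{(G^1,m^1)}$ has nontrivial limit support. Then the product chain is $0$ and its limit support is empty.

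\emph{The isotopy side.} Here the claim fails for two reasons:
\begin{itemize}
\item The relevant summands are those whose factor $\tilde Z^{2,int,(n)}_{(G^2,m^2_{[r,l]})}$ is nonzero. This is a different condition from $Z^{2,(n)}_{(G^2,m^2_{[r,l]})}\neq 0$.
\item $pr_*$ does not commute with a product taken over the diagonal in $\lambda$. One only has $\mathrm{supp}\, pr_*(\tilde A\times_{\R}\tilde B)\subseteq pr(\mathrm{supp}\,\tilde A)\times pr(\mathrm{supp}\,\tilde B)$, since $\tilde A$ may approach a given limit loop only at parameters where $\tilde B$ vanishes.
\end{itemize}
Tellingly, your closing argument uses only the hypothesis on $Z^{1,\diamond}$ and never uses $\lim Supp(pr_*\tilde Z^{2,int})=\lim Supp(Z^2)$.

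\textbf{A workable version.}
\begin{itemize}
\item Get $\supseteq$ from the ends of the product isotopy, where it coincides with $(Z^{1,(n)}\boxtimes Z^{2,(n)})\times\R$. Do not get it from commuting $pr_*$ with the product.
\item Get $\subseteq$ from the upper bound above. It places the limit support of $pr_*$ of the $r$-th summand inside $\lim Supp(pr_*\tilde Z^{1,int}_{(G^1,m^1_{[0,r]})})=\lim Supp(Z^1_{(G^1,m^1_{[0,r]})})$. It also makes that limit support empty when the $\tilde Z^{2,int}$-factor is eventually zero.
\item On the cycle side, the $r$-th summand has limit support exactly $\lim Supp(Z^1_{(G^1,m^1_{[0,r]})})$ when its $G^2$-factor is nonzero, since that factor only contributes shrinking loops. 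Summands with different $r$ cannot cancel, because their bidegrees $(|H(G^1)|+r,\,|H(G^2)|+l-r)$ differ.
\item What remains is to show that a $G^2$-factor of the interpolating isotopy can survive for large $n$ only when the corresponding factor of $Z^2$ does. This is what the hypothesis on $Z^{2,\diamond}$ must supply. Read literally, since $V_{\vartriangle}(G^2)=\emptyset$, the limit support of a $G^2$-chain is either empty or the single empty multi-loop, so the hypothesis says precisely this.
\end{itemize}

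\textbf{A smaller point.} Your two descriptions of $\tilde Z^{1,int}\boxtimes\tilde Z^{2,int}$ are not equivalent. Forgetful compatibility ties both factors to the same $\lambda$-slice of $\tilde{\underline Z}^{\circledcirc,int,(n)}$. So staggering the two isotopies on disjoint $\lambda$-intervals does not give an isotopy over that annulus-data isotopy; only the fibre product over the diagonal does.
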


\begin{lemma}
The product  (\ref{cup-formula}) induces  a product in $   PSH(M| \underline{Z}^{\circledcirc, \diamond})$
\begin{equation} \label{product-nice-homology}
    PSH(M| \underline{Z}^{\circledcirc, \diamond}) \boxtimes PSH^{\divideontimes}(M| \underline{Z}^{\circledcirc, \diamond}  ) \rightarrow PSH^!(M  | \underline{Z}^{\circledcirc, \diamond}  ).
\end{equation}

\end{lemma}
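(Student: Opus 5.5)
The map will be defined on representatives by $([Z^{1,\diamond}],[Z^{2,\diamond}]) \mapsto [Z^{1,\diamond}\boxtimes Z^{2,\diamond}]$. By the preceding Lemma, $Z^{1,\diamond}\boxtimes Z^{2,\diamond}$ is a nice factorization $PS$-cycle, so it has a class in $PSH^!(M|\underline{Z}^{\circledcirc,\diamond})$. Bilinearity is immediate from formula (\ref{cup-formula}), since each term is a product of chains. What remains is well-definedness: if $Z^{1,\diamond}$ is replaced by an isotopic nice $PS$-cycle (via an isotopy with finite limit support), or $Z^{2,\diamond}$ likewise, the product class must not change. By symmetry of the argument and transitivity of the equivalence relation, it suffices to vary one factor at a time.

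Suppose $\tilde Z^{1,\diamond}=(\tilde Z^{1,(n)})_n$ is an isotopy with finite limit support between $Z^{1,\diamond}$ and $Z^{1,\diamond\prime}$. Regard $Z^{2,(n)}$ as the constant isotopy $Z^{2,(n)}\times\R$. The extension (\ref{product-factorization-isotopies}) of the product to isotopies then gives, for each $n$, an isotopy of factorization $PS$-cycles
\[
\tilde Z^{1,(n)}\boxtimes (Z^{2,(n)}\times \R),
\]
with chains $\sum_r \tilde Z^1_{(G^1,m^1_{[0,r]})}\times Z^2_{(G^2,m^2_{[r,l]})}$, where the $\R$-coordinate comes from the first factor. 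The ends for $\lambda<-\lambda^\dagger$ and $\lambda>\lambda^\dagger$ are $Z^{1,(n)}\boxtimes Z^{2,(n)}$ and $Z^{1,(n)\prime}\boxtimes Z^{2,(n)}$, because the asymptotic condition of the first factor passes directly through the product. Transversality, forgetful compatibility and the cycle condition are checked exactly as in the Lemma asserting that $Z^1\boxtimes Z^2$ is a factorization $PS$-cycle. For the cycle condition, the Leibniz rule for the boundary of a product, combined with the cut/boundary terms $\delta_e,\delta_v$ acting on $G^1\oplus G^2$, shows that each edge or vertex operation lives in one summand. The terms $\partial_i m$ at the splitting index $r$ telescope between consecutive $r$.

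The main obstacle is limit support. An isotopy in $PSH^!$ must have finite limit support after projection. Since $G^2\in\mathfrak{G}^{\divideontimes}$ has $V_\vartriangle=\emptyset$, the definition of limit support forces all loops of $Z^{2,(n)}$ to shrink to points. Consequently the limit support of the product is exactly that of the first factor, with the $G^2$-loops contributing only collapsed (point) multiloops, i.e.
\[
\lim Supp\bigl(pr_*(\tilde Z^{1}\boxtimes Z^2)_{(G^1,G^2),m}\bigr)=\bigcup_r \lim Supp\bigl(pr_*\tilde Z^1_{(G^1,m^1_{[0,r]})}\bigr)
\]
up to point data, which is a finite set. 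I also expect a compatibility issue with the interpolating isotopies $\tilde Z^{int,(n)}$ in the niceness definition: the products $\tilde Z^{1,int,(n)}\boxtimes\tilde Z^{2,int,(n)}$ need to be reparametrized into a single isotopy, for instance by running the two isotopies consecutively in $\lambda$. I would handle this by concatenation, which does not enlarge limit supports.

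The case of varying the second factor is identical, with the constant isotopy placed in the first factor. Here the limit support is automatically trivial in the $G^2$ directions, so that step is easier. The two cases together give well-definedness of (\ref{product-nice-homology}).
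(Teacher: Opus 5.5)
Your proposal is correct and follows the route the paper implicitly intends. The paper states this lemma without proof, right after extending $\boxtimes$ to isotopies in (\ref{product-factorization-isotopies}): an isotopy in one factor paired with the constant isotopy in the other gives an isotopy of nice factorization $PS$-cycles, and its limit support stays finite because $G^2\in\mathfrak{G}^{\divideontimes}$ has $V_{\vartriangle}(G^2)=\emptyset$ and so contributes only shrinking $Comp_0$-loops.

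Your aside on $\tilde Z^{int,(n)}$ is unnecessary. Niceness of $Z^{1,\diamond}\boxtimes Z^{2,\diamond}$ is already the preceding lemma, and isotopies of nice $PS$-cycles require no interpolating isotopies. Concatenation would also be the wrong fix there: both factors carry the same moving annulus data $\tilde{\underline{Z}}^{\circledcirc,int,(n)}$, so they must be run simultaneously, i.e.\ fibered over $\lambda$.
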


\subsubsection{Factorization Property}

\begin{definition}
We say that   $[Z^{\diamond}] \in    PSH(M| \underline{Z}^{\circledcirc, \diamond})$
 satisfies the factorization property if 
$$  \mathbf{fact}([Z^{\diamond}]) = [Z^{\diamond}] \boxtimes [Z^{\diamond, \divideontimes}] .$$
Here $Z^{\diamond, \divideontimes}$ is the restriction of $Z^{\diamond}$ to $\mathfrak{G}^{\divideontimes}$.
\end{definition}

Note that the factorization property $[Z^{\diamond}]$ implies in particular that  $ [Z^{\diamond, \divideontimes}] $ satisfies the factorization property in the sense of \cite{PSPCS1}.

\section{Partition Function }

Let $M$ be a compact oriented three-manifold and let $\mathfrak{g}$ be a principal flat vector bundle over $M$.  Denote by $H^*(M, \mathfrak{g})$  the cohomology  of $M$ with coefficients in $\mathfrak{g}$ endowed with the odd symplectic form induced by the pairing
$$\langle \alpha \otimes X,  \alpha' \otimes X' \rangle =
 \int_M \alpha \wedge \alpha'  \langle  X,  X' \rangle_{\mathfrak{g}} .$$
I
In \cite{PSPCS1} to a $PS$-cycle $Z$  of pure Chern-Simons it is associated a partition function $\mathfrak{P}(Z)$ satisfying the Quantum Master Equation. In this section we show how to slightly modify the construction of \cite{PSPCS1} in order to extend the definition of Partition Function to the $PS$-cycles considered in this paper. 
In the context of $PSPCS$ the role played by the Wilson loops and vertices of the graphs are treated on the same footing. 




\subsection{Partition Function}

In this section we define the partition function of a $PSH$-class as a formal power series on the variable $x \in (H^*(M, \mathfrak{g})[1])^*$. The construction follows the same lines of the case without Wilson loops considered in \cite{PSPCS1}, in contrast with  the standard approach to $PCS$ where a configuration space of points is introduced for each embedded link.

As in \cite{PSPCS1}, the partition function will depend not only on the choice of flat connection of the $\mathfrak{g}$-bundle, but also on the choice of a representation
$$ \rho :  \mathfrak{g} \rightarrow \mathfrak{gl}(n, \C) .$$
In the following we implicitly identify $\mathfrak{g}$ with a sub-Lie-algebra of $\mathfrak{gl}(n, \C)$ using $\rho$.

As in \cite{PSPCS1} we fix a finite dimensional vector subspace $\mathcal{A} < \Omega^*(M, \mathfrak{g})$ of the space of closed differential forms of $M$ with value in $\mathfrak{g}$ such that its map in cohomology of $M$ yields an isomorphism:
\begin{equation} \label{harmonic-isomorphism}
   \mathcal{A} \xrightarrow{\,\raisebox{-0.4ex}{$\sim$}\,} H^*(M, \mathfrak{g}) .
   \end{equation}
Equivalently we have  a linear map
$$ r_{\mathcal{A}}: H^*(M, \mathfrak{g}) \rightarrow \Omega^*(M, \mathfrak{g})$$
such that 
$$ r_{\mathcal{A}}([\alpha])= \alpha \quad \forall \alpha \in \mathcal{A} .$$

Applying  ( \ref{harmonic-isomorphism}) to the first factor of   $Identity \in H^*(M, \mathfrak{g})[1] \otimes (H^*(M, \mathfrak{g})[1])^*$ we obtain an element 
$$\mathfrak{x} \in \mathcal{A} \otimes \R[x] .$$

\subsubsection{Trace Map}

In the context of this paper we cannot use the trace map of \cite{PSPCS1}. We now generalize the trace map to the case that the loops are not necessary small.

Let $G \in \mathfrak{G}$. 
For $h \in H(G)$ denote by $\pi_h : M^{H(G)} \rightarrow M $  the projection on the point $h$, set
$$ \mathfrak{g}_h = \pi_h^* (\mathfrak{g}) . $$

Define the bundle $\mathfrak{g}_{H(G)}$ over $M^{H(G)}$
$$ \mathfrak{g}_{H(G)} = \bigwedge (\bigoplus_{h \in H(G)} \mathfrak{g}_h) .$$

Denote by $\mathfrak{g}_{H(G)}^{\vee}$ the dual of the vector bundle $\mathfrak{g}_{H(G)} $.

For $(w_v,t_v)_v \in \mathfrak{L}_{G}(M) $, the map
$$ \bigwedge_{h \in H(G)} X_h \rightarrow    \prod_v  \text{Tr} ( \prod_{h \in H_v}^{cyclic} ( X_{h} \text{hol}_{w_v^h}  ) ), $$
with
$$ X_h \in \mathfrak{g}_{w_v(t_v(h))}  \quad \forall h \in H(G),$$
defines  an element of the fiber of $\mathfrak{g}_{H(G)}^{\vee}$ on the point $(w_v(t_v(h)))_h \in M^{H(G)}$, with coefficients in $  \mathfrak{o}(H(G))$.
 Here we have used the notation $w_v^h$ for the restriction of $w_v$ to the interval between the marked point $h$ and the next marked point.



We obtain a map
\begin{equation} \label{multi-loop-fiber-map}
C_*(\mathfrak{L}_{G}(M) , \mathfrak{o}(H(G))^{\text{Aut}(G,m)} \rightarrow C_*(M^{H(G)},(\mathfrak{g}_{H(G)})^{\vee} )^{\text{Aut}(G,m)}.
\end{equation}

\subsubsection{Partition Function}

Fix a propagator $P \in \Omega_2(\text{Conf}_2(M),   \pi_1^*(\mathfrak{g}) \otimes \pi_2^*(\mathfrak{g})  )$  as in \cite{PSPCS1}.  The construction of $P$ depends on the choice of the sub-space $\mathcal{A}$ in  (\ref{harmonic-isomorphism}) and a differential form representing the generator of $H^2(\partial \text{Conf}_2(M) , \Z )^-$. Different choices leads to isotopic propagators.

As in \cite{PSPCS1}, to a decorated graph $(G,m)$  we can associate a manifold with corners $Conf_{G,m}(M)$.


To any pair $(G,m)$ such that, for each $k$ with $ 1 \leq k \leq l$, we have
  $$E_k= E_0 \sqcup \{e_1,e_2,  . . . ,e_k   \}, $$
for some ordered subset $\{e_1,e_2,  . . . ,e_l  \}$  of $E^{in}$, we associate a differential form
$$\underline{\Omega}_{G,m} \in  \R[x] \otimes \Omega^*(Conf_{G,m}(M),  \mathfrak{g}_{H(G)}  ) $$
defined by
\begin{equation} \label{graph}
\underline{\Omega}_{G,m}   = (-1)^l \bigwedge_{e \in E(G) \setminus E_l} \pi_e^*(P_s) \wedge \bigwedge_{i} \pi_{e_i}^*(dP_s) \wedge \bigwedge_{e \in E^{ext}(G)}  \pi_e^*( \mathfrak{x} )  .
\end{equation}
Here $P_s$ is the super-propagator corresponding to $P$, i.e. , the image of $P$ under the map $ \pi_1^*(\mathfrak{g}) \otimes \pi_2^*(\mathfrak{g})   \rightarrow \bigwedge(\pi_1^*(\mathfrak{g}) \oplus \pi_2^*(\mathfrak{g})) .$

As in \cite{PSPCS1}, the transversality condition implies that the the image of $ \underline{Z}_{G,m} $ by the map  (\ref{multi-loop-fiber-map}) can be lifted to the blow-up  $Conf_{G,m}(M))$. We use this to define the pairing $ \langle   \underline{\Omega}_{G,m} ,   \underline{Z}_{G,m}  \rangle$.

 
 The partition function of the $PS$-cycles $Z$ is defined as the formal power series
\begin{equation} \label{partition-definition}
    \mathfrak{P} (Z) :=  \sum_{G,m}  \hbar^{- \chi(G)}  N^{|D(G)|}  \langle   \underline{\Omega}_{G,m} ,   \underline{Z}_{G,m}  \rangle .
\end{equation}
Here,  as in \cite{PSPCS1} ,
$\mathfrak{P} (Z)$ defines a formal Laurent series $ \R((\hbar))[[x]]  $. Write
$$ \mathfrak{P} (Z) = \sum_{l,k} \hbar^l p_{k,l}(x) $$
where,  for each $k \geq 0$  , $p_{k,l}(x)$ is an homogeneus polynomial of degree $k$,  The stability condition on the graphs  tells us
$$ p_{k,l} \neq 0 \Rightarrow k + 3l \geq 0. $$

\subsubsection{Isotopies}

 The partition function for isotopies can be defined analogously as follows. 

Following \cite{PSPCS1}, take an isotopy of propagators $\tilde{P}$ .  To each  graph $(G,m)$ we associate the differential form
$$\tilde{\underline{\Omega}}_{G,m} \in  \R[x] \otimes \Omega^*(Conf_{G,m}(M) \times \R, \mathfrak{g}_{H(G)})   $$
defined by
\begin{equation} \label{graph-isotopy}
\tilde{\underline{\Omega}}_{G,m}   = (-1)^{|m|} \bigwedge_{e \in E(G) \setminus E_l} \tilde{\pi}_e^*(\tilde{P}_s) \wedge \bigwedge_{i} \tilde{\pi}_{e_i}^*(d \tilde{P}_s) \wedge \bigwedge_{e \in E^{ext}(G)}  \tilde{\pi}_e^*( \tilde{\mathfrak{x}} )  
\end{equation}


 To an isotopy $ \tilde{Z} = (\tilde{Z}_{G,m})_{G,m}$ of  $PS$-cycles it is associated the isotopy of partition functions defined by
\begin{equation} \label{partition-definition-isotopies}
    \mathfrak{P} (\tilde{Z}) :=  \sum_{G,m}  \hbar^{- \chi(G)}  N^{d(G)} \langle   \tilde{\underline{\Omega}}_{G,m} ,   \tilde{\underline{Z}}_{G,m}  \rangle
\end{equation}

As in \cite{PSPCS1}, 
$ \mathfrak{P} (\tilde{Z})$  defines a formal Laurent series with coefficients piecewise smooth differential forms on $\R$. We can write
$$ \mathfrak{P} (\tilde{Z}) = \sum_{l,k} \hbar^l \tilde{p}_{k,l}(x) $$
where now $\tilde{p}_{k,l}(x)$ is an homogeneus polynomial of degree $k$ with coefficients in piecewise smooth differential forms on $\R$.  The stability condition on the graphs  tells us
$$ \tilde{p}_{k,l} \neq 0 \Rightarrow k + 3l \geq 0 . $$

We can write
$$ \mathfrak{P} (\tilde{Z})= \mathfrak{Q} (\tilde{Z}) + \mathfrak{K} (\tilde{Z}) d \lambda , $$
where $\mathfrak{Q} (\tilde{Z})$ and $\mathfrak{K} (\tilde{Z}) $ are formal Laurent series with coefficients piecewise smooth functions on $\R$.

We have , for generic value of $\lambda$,
$$\mathfrak{Q} (\tilde{Z})= \{ \lambda \mapsto \mathfrak{P} ({Z}^{\lambda}) \}.$$
where $ \underline{{Z}}^{\lambda}$  is  the $PS$-cycle defined by the restriction of $\tilde{Z}$ to $\lambda$.

\begin{lemma}
$ \mathfrak{Q} (\tilde{Z})$ is a continuos function on the variable $\lambda$. 
\end{lemma}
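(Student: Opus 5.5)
The plan is to exploit the fact that $\mathfrak{Q}(\tilde{Z})$ is defined coefficientwise. It is enough to show that, for fixed $(k,l)$, the coefficient $\tilde{p}_{k,l}$ restricted to its $d\lambda$-free part is a continuous function of $\lambda$. By the stability remark ($k+3l\geq 0$) and the usual finiteness of graphs with prescribed $\chi(G)$ and number of external edges, only finitely many pairs $(G,m)$ contribute to a given coefficient. So the problem reduces to a single pairing
$$\lambda \mapsto \langle \tilde{\underline{\Omega}}_{G,m}, \tilde{\underline{Z}}_{G,m}|_{\lambda}\rangle ,$$
which, for generic $\lambda$, agrees with $\langle \underline{\Omega}^{\lambda}_{G,m}, \underline{Z}^{\lambda}_{G,m}\rangle$.

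For one such pairing, the first step is to lift $\tilde{\underline{Z}}_{G,m}$ to $Conf_{G,m}(M)\times\R$, composed with the trace map (\ref{multi-loop-fiber-map}). The transversality condition for isotopies makes this possible exactly as in the non-isotopy case. The only novelty is the trace map with non-small loops, but the holonomies $\text{hol}_{w_v^h}$ depend continuously on the loop in the $C^0$-sense along piecewise smooth families. Hence the coefficient system on the lifted chain is continuous.

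The pairing is then the integral of the form $\tilde{\underline{\Omega}}_{G,m}$ over the lifted chain. Slicing at $\lambda$ (fiber integration along $pr_\lambda$) gives a function $f(\lambda)$ that is piecewise smooth away from the finitely many critical values of $\lambda$ of the chain. The only possible failure of continuity is a jump at such a critical value $\lambda_0$.

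The main step is ruling out these jumps. A jump of $f$ at $\lambda_0$ would come from a component of the lifted chain contained in the slice $\{\lambda=\lambda_0\}$, with dimension equal to the degree of the integrand. I would argue as in \cite{PSPCS1}: the chains are Borel--Moore chains of dimension $|H(G)|+|m|+1$ on $M^{H(G)}\times\R$ and are considered up to triangulation. Generic position (which we may assume, modulo small perturbation allowed by Lemma \ref{extension-PS-lemma-isotopy}) makes each simplex transverse to the slices, so no top-dimensional piece sits inside a slice. Equivalently, by Stokes' theorem applied to the chain restricted to $[\lambda_0-\epsilon,\lambda_0+\epsilon]$, the difference $f(\lambda_0+\epsilon)-f(\lambda_0-\epsilon)$ equals the integral of $d\tilde{\underline{\Omega}}$ plus the boundary contributions coming from the cycle condition. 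The boundary contributions are controlled by the propagator terms ($dP_s$, via the marking $m$) and by the $\delta_e,\delta_v$ faces; they give integrals over a region of $\lambda$-width $2\epsilon$ of bounded integrands, so they tend to zero. The bound on the integrands uses compactness of $Conf_{G,m}(M)$ and boundedness of holonomy on compact families of loops, which holds since the limit-support families (\ref{basic-family-loops}) are piecewise smooth and constant for $|\lambda|\gg0$.

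The main obstacle I expect is exactly this boundedness near the boundary strata of $Conf_{G,m}(M)$, where large loops meet other points. There one must check that the pulled-back propagator remains integrable uniformly in $\lambda$. I would handle it with the blow-up description of $Conf_{G,m}(M)$: $P$ extends smoothly to the compactification, so the pulled-back form is smooth and bounded on the compact lifted chain.
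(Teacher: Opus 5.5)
The paper gives no argument of its own here. It only defers to \cite{PSPCS1}, the one new ingredient being the holonomy trace map, and you handle that ingredient correctly. The gap is in your main step.

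First, you cannot put $\tilde{Z}$ in general position with respect to the slices $\{\lambda=\text{const}\}$. The lemma is about an arbitrary given isotopy, and any perturbation changes the very function $\mathfrak{Q}(\tilde{Z})$ whose continuity is at stake: smearing out a piece concentrated in a slice turns a jump into a steep ramp, which does not show there was no jump. Moreover, Lemma \ref{extension-PS-lemma-isotopy} is an existence result for extensions, not a perturbation result for a given isotopy.

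Second, the Stokes variant is circular. The slab terms are bounded integrands integrated over the parts lying over $|\lambda-\lambda_0|<\epsilon$ of three kinds of chains: $\tilde{\underline{Z}}_{G,m}$ itself, the chains $\tilde{\underline{Z}}_{G,\partial_i m}$ and $\delta_{e'}\tilde{\underline{Z}}_{G',m'}$ forming its boundary, and the blown-up diagonal faces. Such a term is small only if the mass of these chains over the slab tends to $0$. That holds only if none of them has a top-dimensional piece contained in $\{\lambda=\lambda_0\}$, which is exactly what you needed to prove.

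Nothing in the definition of isotopy excludes such pieces. Take any isotopy $\tilde{Z}'$ between $Z^0$ and $Z^1$ and set $C=pr_*(\tilde{\underline{Z}}'|_{(-\lambda^{\dagger},\lambda^{\dagger})})$, the same kind of chain the paper uses in $\mathfrak{Obs}_G$. Then $\underline{Z}^0\times\R_{\le\lambda_0}+C\times\{\lambda_0\}+\underline{Z}^1\times\R_{\ge\lambda_0}$ satisfies the cycle condition, and the other conditions after a small perturbation of $C$ inside the slice. Its $\mathfrak{Q}$ jumps at $\lambda_0$ by $\mathfrak{P}(Z^1)-\mathfrak{P}(Z^0)$, which the QME for $\tilde{Z}'$ only forces to be $\hbar\Delta$-exact.

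In general, do the Stokes computation in the sense of currents and sum over all $(G,m)$, using the cycle-condition cancellations behind Proposition \ref{QME-proposition}. This gives $\partial_\lambda\mathfrak{Q}=\mp\hbar\Delta\mathfrak{K}$, so $\mathfrak{Q}$ is continuous exactly when $\mathfrak{K}$ has no atoms. That is the missing input. It has to enter as a standing general-position hypothesis on isotopies relative to the projection to $\R$; the paper tacitly assumes it when it says $\mathfrak{P}(\tilde{Z})$ has piecewise smooth coefficients. If you grant that hypothesis for all chains of the isotopy, including the faces $\delta_{e'}\tilde{\underline{Z}}_{G',m'}$, your slab estimate does go through.

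A smaller point: the finiteness you quote from the pure case is not automatic here. Since $|V_{\vartriangle}|$ and $d_{\vartriangle}$ do not enter $\chi(G)$, fixing $\chi(G)$ and the number of external edges does not leave finitely many $(G,m)$.
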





\begin{proposition} \label{QME-proposition}
$\mathfrak{P} (\tilde{Z})$ satisfies the $QME$:
\begin{equation} \label{master-equation}
    d_{\lambda} \mathfrak{P} (\tilde{Z}) + \hbar \Delta \mathfrak{P} (\tilde{Z}) =0
\end{equation}
\end{proposition}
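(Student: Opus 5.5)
The plan is to follow the proof of the analogous statement in \cite{PSPCS1} (the QME for pure Chern--Simons). The one genuinely new ingredient is the trace map (\ref{multi-loop-fiber-map}), which now carries holonomies along non-small loops. The starting point is Stokes' theorem applied graph by graph to $\langle \tilde{\underline{\Omega}}_{G,m}, \tilde{\underline{Z}}_{G,m}\rangle$ on $Conf_{G,m}(M)\times\R$:
\begin{equation*}
d_\lambda \langle \tilde{\underline{\Omega}}_{G,m}, \tilde{\underline{Z}}_{G,m}\rangle
= \pm\langle d\tilde{\underline{\Omega}}_{G,m}, \tilde{\underline{Z}}_{G,m}\rangle \pm \langle \tilde{\underline{\Omega}}_{G,m}, \partial \tilde{\underline{Z}}_{G,m}\rangle .
\end{equation*}
The term $d\tilde{\underline{\Omega}}_{G,m}$ is, by (\ref{graph-isotopy}), a sum over edges $e\in E(G)\setminus E_l$ of the form with $\tilde{P}_s$ replaced by $d\tilde{P}_s$. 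Each such summand is exactly the form attached to a marking $m'$ with one more step $E_{l+1}=E_l\sqcup\{e\}$. These terms are matched against the $\partial_i m$ terms of the cycle condition, so they cancel in the sum over $(G,m)$, with the sign $(-1)^{|m|}$ absorbing the orientation. The terms involving $d\tilde{\mathfrak{x}}$ vanish because $\mathfrak{x}$ is closed.

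Next I would expand $\partial\tilde{\underline{Z}}_{G,m}$ using the cycle condition for isotopies. It decomposes into three pieces: the $\partial^{red}$ part, the boundary strata of $Conf_{G,m}(M)$ at which two marked points collide, and the strata coming from the operations $\delta_e$ and $\delta_v$. The key local computation is at a collision of the two endpoints of an edge $e$. There the propagator restricts, by construction, to the generator of $H^2(\partial Conf_2(M))^-$ tensored with the Casimir, minus the term $\mathfrak{x}\otimes\mathfrak{x}$ coming from $\mathcal{A}$. Contracting the Casimir inside
\[
\prod_v \text{Tr}\Big(\prod^{cyclic}_{h\in H_v} X_h\,\text{hol}_{w_v^h}\Big)
\]
then does the following:
\begin{itemize}
\item if the two half-edges lie on the same vertex $v$, the trace splits into two traces, corresponding to the splitting $\delta_e$ of $v$ into $v_1,v_2$, with the holonomies concatenating along the two sub-arcs;
\item if they lie on distinct vertices, the two traces merge into one, corresponding to the merging $\delta_e$.
\end{itemize}
When the splitting produces an empty cyclic set $H_{v_i}=\emptyset$, the resulting factor is $\text{Tr}(\text{hol}_{w_{v_i}})$. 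This is precisely where the operation $\delta_v$ enters. The shift $d_c\mapsto d_c+1$ (respectively $d_{\vartriangle}\mapsto d_{\vartriangle}+1$) is compensated by the factor $N^{d(G)}$ in (\ref{partition-definition-isotopies}). Because the graph-changing operations do not alter $\chi(G)$ in the way the power of $\hbar$ requires, these collision terms assemble with $\hbar^{-\chi(G)}$ into $-\hbar\Delta\mathfrak{P}(\tilde{Z})$. The $\mathfrak{x}\otimes\mathfrak{x}$ correction produces the BV Laplacian acting on the external legs, exactly as in \cite{PSPCS1}.

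The remaining boundary contributions must vanish. These are the hidden faces (collisions of three or more points) and the faces where a loop $w_v$ degenerates, and they are handled by the transversality and forgetful compatibility conditions exactly as in \cite{PSPCS1}. For the annulus-type faces, forgetful compatibility with the annulus data identifies the contribution as a pairing with a degenerate chain.

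The main obstacle I expect is checking that the holonomy factors behave correctly at the collision faces when the loops are not small. One has to show that the restriction of the trace map to the face $\delta_e$ equals the trace map of $\delta_e G$ after Casimir contraction. This amounts to the identity
\[
\sum_a \text{Tr}(A\, t_a\, B\, t^a) = \text{Tr}(A)\,\text{Tr}(B)
\]
in the normalization fixed by the pairing on $\mathfrak{g}\subset\mathfrak{gl}(n,\C)$, applied with $A,B$ products of holonomies and $X_h$'s, together with its merging counterpart. It also requires the flat connection to ensure that the holonomy depends only on the two arcs and not on the blow-up direction. I would first verify this for $\mathfrak{gl}(n)$, where the Casimir identity is exact, and then treat the sign bookkeeping of the orientation $\mathfrak{o}(H(G))$ as in \cite{PSPCS1}.
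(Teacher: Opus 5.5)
Your global strategy is the one the paper intends. The paper gives no separate argument, only the assertion that the proof of \cite{PSPCS1} applies once the trace map is replaced by the holonomy-weighted one. The local identity you isolate is also the right new check: Casimir contraction on the collision face reproduces the trace map of $\delta_e G$, with the arcs concatenating. But your assignment of boundary terms is wrong, and as written the cancellation does not close.

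The collision faces cannot be the source of $\hbar\Delta\mathfrak{P}(\tilde{Z})$, for two reasons. First, $\Delta$ is second order in $x$, so it only sees external legs carrying $\mathfrak{x}$, whereas a collision face concerns an internal edge. Second, checking every sub-case of the definition gives $\chi(\delta_e G)=\chi(G)$ and $d(\delta_e G)=d(G)$, so no extra power of $\hbar$ is available there. Nor does the restriction of $P_s$ to the collision face contain an $\mathfrak{x}\otimes\mathfrak{x}$ term. Its integral over the $S^2$ fibre is just the Casimir; the kernel built from $\mathcal{A}$ lives in $dP_s$. The collision face of $(G',m)$ along an unmarked edge $e'\in E^{in}(G')\setminus E_l$ must instead cancel the term $\delta_{e'}\tilde{\underline{Z}}_{G',m}$ in the cycle condition for $(\delta_{e'}G',m)$, at the same order in $\hbar$ and $N$. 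That is exactly where your trace identity is used. Your plan sends these terms to $-\hbar\Delta\mathfrak{P}$ instead, so the $\delta_e$ terms of the cycle condition are never matched.

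The $\hbar\Delta$ term comes from the marking. Only the last face $\partial_{l+1}$ of $m'=m\sqcup\{E_l\sqcup\{e\}\}$ cancels against $d\tilde{\underline{\Omega}}_{G,m}$, not all the $\partial_i m$ faces. The intermediate faces, where two edges are added at once, cancel in pairs between markings that differ by swapping $e_i$ and $e_{i+1}$, because $dP_s$ is odd. The face $\partial_0$ drops $E_0$, so by $(G,m)\sim(cut_{E_0}G,cut_{E_0}m)$ it is the chain of $cut_{e_1}G$ paired against $dP_s(e_1)$. Up to sign that is the kernel built from $\mathcal{A}$, i.e.\ exactly what $\Delta$ produces from the two $\mathfrak{x}$'s on the new external legs. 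The factor $\hbar$ comes from $\chi(cut_{e_1}G)=\chi(G)+1$.

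Two further corrections.

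\emph{The operation $\delta_v$.} A splitting that creates an empty $H_{v_i}$ is still a $\delta_e$ term, with factor $\text{Tr}(\text{hol}_{w_{v_i}})$, and for a non-small loop this is not $N$. $\delta_v$ is a separate face of the cycle condition; the paper uses it when a loop without half-edges is shrunk. It is for those faces that the weight $N^{d(G)}$ compensates the trivial-holonomy trace $N$.

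\emph{The Casimir identities.} The completeness relations you quote are exact for the trace form on $\mathfrak{gl}(n)$. On a proper subalgebra such as $\mathfrak{sl}(n)$ they acquire $\frac{1}{n}$ corrections. So the splitting/merging rule for $\delta_e$ needs the $\mathfrak{gl}$-type Casimir as a hypothesis; it cannot be checked for $\mathfrak{gl}(n)$ and then extended to a general $\mathfrak{g}\subset\mathfrak{gl}(n,\C)$.
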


\section{Coherent Cycles} \label{coherent-section}

Fix an  an embedded oriented link
$$\mathbf{w} = ( w_v^{\sharp} )_{v \in V^{\sharp}} .$$  

We say that a frame $\mathbf{fr} \in \mathfrak{Fr}(M)$ is compatible with $\mathbf{w}$ 
 if there exists 
$\mathbf{a}=(a_1,a_2,a_3) \in \R^3$    such that $T_z \mathbf{w} = \langle a_1 \mathbf{fr}_1(z) +  a_1 \mathbf{fr}_2(z)  +  a_3 \mathbf{fr}_3(z) \rangle $ for each $z \in \mathbf{w}$.
 Denote by $ \mathfrak{Fr}(M, \mathbf{w}  )$ the set of frames  of $M$ compatible with $\mathbf{w}$.


We say that un Euler Structure $U$ is compatible with $\mathbf{fr}   \in \mathfrak{Fr}(M,   \mathbf{w}  ) $ if it is constant when written in the frame $\mathbf{fr} $, i.e., there exists $\mathbf{a}=(a_1,a_2,a_3) \in \R^3 \setminus \{ 0,0,0  \}$    such that  
$U =  a_1 \mathbf{fr}_1 +  a_1 \mathbf{fr}_2  +  a_3 \mathbf{fr}_3 $.
The space of Euler Structures  compatible with a frame $\mathbf{fr}  $  is homeomorphic to $ \R^3 \setminus \{ 0,0,0  \}$.

Two Euler Structures compatible with  $\mathbf{fr}$  are isotopic with isotopy determined up to isotopy of isotopies, hence Lemma \ref{deformation-cycles} yields canonical identifications among the spaces 
$PSH(  M|  \mathbf{w}, U ) $ when $U$ varies among the Euler Structures compatible with  $\mathbf{fr}$.
So we can set 
$$PSH(M |   \mathbf{fr}) = PSH(  M|   U) $$
for any $U$   compatible with  $\mathbf{fr}$. 

Let $\mathbf{w}^0$ and  $\mathbf{w}^1$ be oriented embedded links. Let $ \tilde{\mathbf{w}}$ be an isotopy between $\mathbf{w}^0$ and  $\mathbf{w}^1$.
Let $\mathbf{fr}^0,\mathbf{fr}^1   \in \mathfrak{Fr}(M  )  $ be frames compatible with $\mathbf{w}^0$ and  $\mathbf{w}^1$  respectively. To each isotopy $\tilde{\mathbf{fr}}$  between   $\mathbf{fr}^0$ and $\mathbf{fr}^1$ compatible with $ \tilde{\mathbf{w}}$ it is associated a map
$$ Transfer_{ \tilde{\mathbf{w}},   \tilde{\mathbf{fr}}   }:  PSH(M |     \mathbf{w}^0  , \mathbf{fr}^0) \rightarrow PSH(M |    \mathbf{w}^1, \mathbf{fr}^1)  .$$

\begin{theorem} \label{coherent-theorem}
Let $\mathbf{w}$  be an embedded oriented link.
To each $\mathbf{fr} \in \mathfrak{Fr}(M  )$ compatible with  $\mathbf{w}$ it is associated canonically a $PSH$-class 
$$ [Z^{  \mathbf{w}, \mathbf{fr} }   ] \in  PSH(M |    \mathbf{w},  \mathbf{fr}).$$

 Let $ \tilde{\mathbf{w}}$ be an isotopy between oriented embedded links $\mathbf{w}^0$ and  $\mathbf{w}^1$.
For each isotopy $\tilde{\mathbf{fr}}$  between frames  $\mathbf{fr}^0$ and $\mathbf{fr}^1$ compatible with $ \tilde{\mathbf{w}}$ we have
$$Transfer_{ \tilde{\mathbf{w}},  \tilde{\mathbf{fr}}}( [Z^{  \mathbf{w}^0, \mathbf{fr}^0 }   ]  )   =   [Z^{  \mathbf{w}^1, \mathbf{fr}^1 }   ]$$
\end{theorem}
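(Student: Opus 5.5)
The plan is to mimic the construction of coherent $PSH$-classes in \cite{PSPCS1}, adapted to the presence of the link $\mathbf{w}$. I will first define, for $\mathbf{fr}$ compatible with $\mathbf{w}$ and $U$ compatible with $\mathbf{fr}$, a Flat-Approaching nice $PS$-cycle $Z^{\diamond}=(Z^{(n)})_n$. Trivializing $TM$ by $\mathbf{fr}$ identifies a neighbourhood of each point with $\R^3$. The chains $\underline{Z}^{(n)}_{G,m}$ are built by translating, along the frame, a fixed local model on $\R^3$ rescaled by a factor $1/n$ (the flat approach). The vertices of $V_{\vartriangle}$ are allowed to follow the components $w^\sharp_v$ of $\mathbf{w}$, so that $\lim Supp(\underline{Z}_{G,m})\subset\{\text{sub-multi-loops of }\mathbf{w}\}$. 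Compatibility of $\mathbf{fr}$ with $\mathbf{w}$ (the tangent of $\mathbf{w}$ is a constant vector in the frame) is exactly what makes the local model along the link translation invariant. The same holds for the annulus data, which is what compatibility of $U$ with $\mathbf{fr}$ guarantees. The chains are constructed graph by graph, using the partial order $\prec$. At each step I extend by Lemma~\ref{extension-nice}, so that (\ref{lim-supp-inductive}) holds and no new limit support is created beyond $\mathbf{w}$.

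To get canonicity, I will show that any two such Flat-Approaching cycles $Z^{0,\diamond},Z^{1,\diamond}$ are isotopic with finite limit support. I build an isotopy $\tilde{Z}_{\preccurlyeq G}$ inductively, using Lemma~\ref{extension-PS-lemma-isotopy} when $H(G)\neq\emptyset$. When $H(G)=\emptyset$, I use the obstruction analysis of Subsection~\ref{obstruction-section}: an extension exists iff $\mathfrak{obs}_{(G,\mathbf{w}')}=0$ for every $\mathbf{w}'$ in the limit support.

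\textbf{Main obstacle.} The key step is showing that these obstructions vanish. Following \cite{PSPCS1}, I would argue by a scaling/symmetry argument. The obstruction is independent of $n$ by the obstruction lemma. Because everything is modelled on a flat, frame-translation-invariant configuration, the obstruction chain at a sub-link $\mathbf{w}'$ is invariant under a one-parameter family of rescalings. An $n$-independent rational number carried by a degree-zero class that is invariant under an orientation-reversing (or dimension-raising) symmetry must be zero. The new point compared with \cite{PSPCS1} is that $V(G)\neq\emptyset$. Here I expect to use that, along $\mathbf{w}'$, the local model splits as (translation along the tangent direction) $\times$ (transverse model). The transverse model then provides the extra free direction, which kills the obstruction exactly as in the vertex-free case. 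I also need independence of the choice of $U$ compatible with $\mathbf{fr}$. This follows from the canonical identifications of $PSH(M|\mathbf{fr})$ via Proposition~\ref{transfer-annulus}, together with the fact that the compatible $U$ form the connected, simply connected space $\R^3\setminus\{0\}$.

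For the isotopy statement, I will lift $(\tilde{\mathbf{w}},\tilde{\mathbf{fr}})$ to a family of Flat-Approaching cycles parametrized by $\lambda$. The resulting isotopy of nice $PS$-cycles has limit support the family $\{(\mathbf{w}^\lambda,\lambda)\}$, which is of the form (\ref{basic-family-loops}). Its endpoints are Flat-Approaching cycles for $(\mathbf{w}^0,\mathbf{fr}^0)$ and $(\mathbf{w}^1,\mathbf{fr}^1)$, so the transfer sends $[Z^{\mathbf{w}^0,\mathbf{fr}^0}]$ to $[Z^{\mathbf{w}^1,\mathbf{fr}^1}]$. Well-definedness of this transfer follows from the uniqueness part above, applied one parameter up, using Lemma~\ref{extension-isotopies-isotopies}. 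The same obstruction-vanishing argument is needed there, now on the $\R_\lambda$-family.
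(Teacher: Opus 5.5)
\textbf{There is a genuine gap.} The ingredients are right in spirit: a flat model rescaled by $1/n$ and transported by $\mathbf{fr}$, then a lift of $(\tilde{\mathbf{w}},\tilde{\mathbf{fr}})$. But the two load-bearing steps are either missing or would fail.

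\emph{The object you construct is not pinned down.} Building chains graph by graph with Lemma \ref{extension-nice} gives an essentially arbitrary nice cycle with limit support in $\mathbf{w}$. That lemma controls the limit support and nothing else, so neither convergence to a flat model nor any normalization survives the induction. Nor is there a single local model to translate. For a graph with vertices in $V_{\vartriangle}(G)\setminus V^{\sharp}$ or half-edges on the link, a point of the support can consist of several small clusters at macroscopically separated points of $\mathbf{w}$, in a cyclic order along it. The paper encodes exactly this with admissible partitions $\mathcal{P}$:
\begin{itemize}
\item flat cycles $Z^{\flat}_{G,m,\mathcal{P}}$ on $\R^3$, with the link modeled by the line $w^{\flat}$, the translation condition under $Transl_{\mathcal{P}}$, and the normalization of Proposition \ref{existence-flat-proposition};
\item flat-approaching cycles $Z^{\wr}_{G,m,\mathcal{P}}$ on $M$ satisfying support, cluster and convergence conditions;
\item Lemma \ref{coherent-existence-lemma}, which defines $Z^{(\mathbf{w},\mathbf{fr})}_{G,m}$ by gluing the $Z^{\wr}_{G,m,\mathcal{P}}$ over the cover of $\underline{M}_G^{<\rho^{\dagger}/n}$ by the sets $\underline{M}_G(\mathcal{P},\rho_{\mathcal{P}}/n,\eta_{\mathcal{P}}/n)$, with consistency coming from the cluster and non-overlapping conditions.
\end{itemize}
This gluing is the genuinely new step relative to the pure case, where the trivial partition sufficed.

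\emph{The step you call the main obstacle cannot be done your way.} Obstructions on $M$ at graphs with $H(G)=\emptyset$, $V(G)=V^{\sharp}$ are genuinely free. By Proposition \ref{one-dimension}, for $\underline{Z}_{G^{\heartsuit}}=\mathbf{w}$ the cycles $Z$ and $(1+\hbar)Z$ are not isotopic, although they have the same limit support. So some obstruction $r\,\mathbf{w}$ with $r\neq 0$ survives every attempted isotopy. It is carried by the single point $\mathbf{w}$ of the multi-loop space, which is fixed by any translation or reparametrization along the link. Hence no scaling or symmetry property of the obstruction chain on $M$ can force $r=0$.

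Even with a normalization imposed, the $\delta_{e'}$ and $\delta_{v'}$ terms in $\mathfrak{Obs}_G$ come from the isotopy on graphs with $H\neq\emptyset$. You build that isotopy with the generic Lemma \ref{extension-PS-lemma-isotopy}, so these terms are uncontrolled unless the isotopy is itself flat-approaching (cf.\ (\ref{convergence-implication})). Also, these objects have no rescaling invariance. The exact symmetry is translation under $Transl_{\mathcal{P}}$ in the flat model, and $Dil_n$ enters only the convergence condition.

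Accordingly, the paper never analyzes obstructions on $M$ for this theorem. Canonicity and the transfer identity come from three results:
\begin{itemize}
\item Proposition \ref{unicity-flat-proposition}: normalized flat cycles are unique up to isotopy, where exact translation invariance is available.
\item Proposition \ref{unicity-asymptotic-proposition}: a flat isotopy together with $(\tilde{\mathbf{w}},\tilde{\mathbf{fr}})$ lifts to an isotopy of flat-approaching cycles.
\item Lemma \ref{coherent-iso-existence-lemma}: that isotopy is glued, partition by partition, into an isotopy of nice $PS$-cycles between the two coherent cycles.
\end{itemize}
Your last paragraph has the right shape, but it should be this lift-and-glue argument rather than a repeat of the obstruction-vanishing claim.
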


The proof of the theorem follows the same lines as in \cite{PSPCS1}  using the notion of   flat $PS$-cycle and flat approaching $PS$-cycle. However these definition are slightly more complicated.

After the suitable modifications of the definitions the proofs of  Proposition \ref{existence-flat-proposition} ,
Proposition \ref{unicity-flat-proposition},
Proposition \ref{existence-asymptotic-proposition},
Proposition \ref{unicity-asymptotic-proposition} are analogous to the corresponding one in \cite{PSPCS1}. We obtain a flat-approaching-$PS$-cycle  $Z^{\wr}$ canonically defined up to isotopy.


 In    \cite{PSPCS1} the Coherent Cycle was obtained directly from $Z^{\wr}$ using the trivial partition for any graph. 
  In the context of this paper we need to make some extra work to define $Z_{  \mathbf{w}, \mathbf{fr} }  $  from $Z^{\wr}$ since
 for the graphs considered in this paper does not necessarily exists the trivial partition,. See subsection \ref{proof-coherent-section}.



\subsection{Flat $PS$-cycles} \label{flat-section}




Fix a finite set $V^{\sharp}$.  Denote by $\mathfrak{G}(V^{\sharp})$ the set of decorated graphs  $G \in \mathfrak{G}$ equipped with an injective map 
$$V^{\sharp} \hookrightarrow V_{\vartriangle}(G) . $$

Set 
$$V^+(G)=  (V(G) \setminus  V^{\sharp}) \sqcup \bigsqcup_{v^{\sharp} \in V^{\sharp}} H_{v^{\sharp}}.$$

\begin{definition}
An admissible partition $\mathcal{P}$ of  $V^+(G)$ is a partition of $V^+(G)$ equipped with an array 
$$ ( \mathcal{P}^{v^{\sharp}} )_{v^{\sharp} \in V^{\sharp}}$$
 where each $\mathcal{P}^{v^{\sharp}}$ is a subset of $\mathcal{P}$ endowed with a cyclic order. We assume the following conditions:
\begin{itemize}
\item  the sets $ \{ \mathcal{P}^{v^{\sharp}} \}_{v^{\sharp} \in V^{\sharp}}$ are disjoint;
\item  for any $v^{\sharp} \in V^{\sharp}$ we have  
$$H_{v^{\sharp}} \subseteq  \bigsqcup_{P \in \mathcal{P}^{v^{\sharp}}} P$$ 
and the cyclic order of $H_{v^{\sharp}}$ is compatible with the cyclic order of $ \mathcal{P}^{v^{\sharp}}$;
\item any element of $V_{\vartriangle}$ belongs to a $P \in \mathcal{P}^{v^{\sharp}}$ for some $v^{\sharp} \in V^{\sharp}$;
\item for each $P \in \mathcal{P}^{v^{\sharp}}$ there exists at least an element of $V_{\vartriangle}$ or of $H_{v^{\sharp}}$ belonging to $P$.
\end{itemize}

We denote by $\mathfrak{G}^{part}$  the set of pairs $(G, \mathcal{P})$ where $G \in \mathfrak{G}(V^{\sharp})$ and $ \mathcal{P}$ is an admissible partition of $V^+(G)$.
$\mathfrak{G}^{part}_*$ denotes the set of triples $(G, m,\mathcal{P})$ with $(G,m)  \in  \mathfrak{G}_* $   and $(G, \mathcal{P}) \in \mathfrak{G}^{part}$.

\end{definition}

We set
$$\mathcal{P}^0 =    \mathcal{P} \setminus  \bigsqcup_{v^{\sharp} \in V^{\sharp}} \mathcal{P}^{v^{\sharp}}. $$
From the conditions above it follows that an element $P \in  \mathcal{P}$ belongs to $\mathcal{P}^0$ if and only if $P \subset V_0(G)$.

We endow  the set of admissible partitions of $V^+(G)$ with the following partial order:  $\mathcal{P'} \preccurlyeq \mathcal{P}$  if  
\begin{itemize}
\item   for  each $v_1,v_2 \in V^+(G)$
 $$v_1 \sim_{\mathcal{P}'} v_2 \Rightarrow v_1 \sim_{\mathcal{P}} v_2 ;$$
\item the cyclic order of $ \mathcal{P}^{\prime, v^{\sharp}}$ is compatible with the  the cyclic order of $ \mathcal{P}^{ v^{\sharp}}$.
\end{itemize}

Let $Z^{\flat, \circledcirc}$ be a flat annulus data in the sense of  \cite{PSPCS1}.
A \emph{ flat $PS$-cycle} $Z^{\flat}$, with annuls data  $Z^{\flat, \circledcirc}$, 
 consists in a collection of Borel-Moore chains 
\begin{equation} \label{flat-complex}
   Z^{\flat}= ( Z^{\flat}_{G,m, \mathcal{P}} )_{(G,m, \mathcal{P} ) \in \mathfrak{G}^{part}_*}
\end{equation} 
$$Z^{\flat}_{G,m, \mathcal{P}} \in C_{|H(G)| + |m|}((\R^3)^{H(G)} , \mathfrak{o}_{H(G)}  )^{\text{Aut}(G,m, \mathcal{P})}$$ 
such that :
\begin{itemize}
\item  the transversality condition is satisfied;
\item  the  forgetful compatibility condition is satisfied;
\item the cycle condition is satisfied;
\item the support condition is satisfied;
\item the cluster condition is satisfied;
\item  the translation condition is satisfied.
\end{itemize}

The conditions appearing in the definition are extensions of the conditions appearing in \cite{PSPCS1}, which are explained below.

\subsubsection{Transversality Condition}

For each $h \in H(G)$ denote by $v_h$ the vertex to which $h$ is attached. Set 
$$E^{in}(G, \mathcal{P})  = \sqcup_j \{ \{h,h' \}  \in E^{in}(G) | v_{h} \sim_{\mathcal{P}} v_{h'} \} ,   \quad E(G, \mathcal{P}) = E^{in}(G, \mathcal{P}) \sqcup E^{ex}(G).$$

We say that the transversality condition is satisfied for $(G,m, \mathcal{P})$ if
$Z^{\flat}_{G,m, \mathcal{P}}$ is transversal  to   $ \cap_{e \in E'} \pi_e^{-1}(Diag)$ for each subset $E' \subset E^{in }(G , \mathcal{P})  \setminus E^l$;

\subsubsection{Forgetful Compatibility Condition}



   





As in \cite{PSPCS1}, to state the forgetful compatibility condition we need to define the multi-loop space associated to a partitioned graph. Differently to the case considered in \cite{PSPCS1}, here this object depends also on the partition. Namely, instead to use the set of vertices of the graph we need to introduce the set of vertices of a partitioned graph.

To $(G, \mathcal{P}) \in \mathfrak{G}^{part}$ the associated set  of vertices $V(\mathcal{P})$ is defined from $V(G)$   
replacing each $v^{\sharp}$ with the set $\mathcal{P}^{v^{\sharp}}$:
$$V(\mathcal{P})= (V(G) \setminus V^{\sharp}) \sqcup \bigsqcup_{v^{\sharp} \in V^{\sharp}} V(\mathcal{P},  v^{\sharp} ) $$
where 
$V(\mathcal{P},  v^{\sharp} )$ is a set in bijection with $ \mathcal{P}^{v^{\sharp}}$
$$V(\mathcal{P},  v^{\sharp} ) \leftrightarrow \mathcal{P}^{v^{\sharp}}.$$

Denote by
$$w^{\flat} : \R \rightarrow \R^3$$
the map
$$t \mapsto (t,0,0) .$$



\begin{definition}
For $(G, \mathcal{P}) \in \mathfrak{G}^{part}$, the multi-loop space  $\underline{\digamma}_{G,\mathcal{P}}$  is defined as the set of arrays $({w}_v , t_v)_{v \in V(\mathcal{P})}  $  where  
\begin{itemize}
    \item ${w}_v : \mathbb{S}^1 \rightarrow \R^3$,  $t_v : H_v  \hookrightarrow \mathbb{S}^1$  when $v \in  V \setminus V^{\sharp}$;
\item  ${w}_v : \R \rightarrow \R^3$, $t_v : H_v  \hookrightarrow \R$ when $v \in V(\mathcal{P},  v^{\sharp} ) $ for any $v^{\sharp} \in V^{\sharp}$. 
\end{itemize}
 We require that each $t_v$ is compatible with the cyclic order of $H_v$ .

\end{definition}



With these notations the  forgetful compatibility condition is formulated as in \cite{PSPCS1}.

We say that  (\ref{flat-complex}) is forgetful compatible if there exists a collections of Borel-Moore chains 
\begin{equation} \label{flat-complex-loop}
    (\underline{Z}^{\flat}_{G,m, \mathcal{P}} )_{(G,m, \mathcal{P}) \in \mathfrak{G}^{part}_*}
\end{equation}
$$\underline{Z}^{\flat}_{G,m, \mathcal{P}} \in C_{|H(G)| + |m|}(\underline{\digamma}_{G,\mathcal{P}} , \mathfrak{o}_{H(G)}  )^{\text{Aut}(G,m, \mathcal{P})}$$
such that
\begin{equation} \label{chains-ev}
ev_*( \underline{Z}^{\flat}_{G,m, \mathcal{P}}    )={Z}^{\flat}_{G,m, \mathcal{P}}
\end{equation}

For each external edge $e \in E^{ex}(G)$ we have (the decorated graph $G_e$ is defined as in \cite{PSPCS1}):
\begin{itemize}
\item If $G$ is stable after removing $e $  
    \begin{equation} \label{stable-forget-condition-flat}    
     \underline{Z}^{\flat}_{G,m, \mathcal{P}} = \underline{Z}^{\flat}_{forget_e(G.m), \mathcal{P}} \times_{\textbf{forget}_e} \underline{\digamma}_{G,\mathcal{P}}   .
     \end{equation}
      \item in the case $G$ is unstable after removing $e $ and $e$ is removable  we require
 \begin{equation} \label{flat-removible-forget-condition}
 \underline{Z}^{\flat}_{G,m, \mathcal{P}} = \underline{Z}^{\flat}_{forget_e(G,m), \mathcal{P}} \times \underline{Z}_{G_e}^{\flat, \circledcirc} .
 \end{equation}     
  when $G_e \in Ann0$, and
 \begin{equation} \label{flat-removible-forget-condition2}
 \underline{Z}^{\flat}_{G,m} = \underline{Z}^{\flat}_{forget_e(G,m), \mathcal{P}} \times \underline{Z}^{\flat}_{Disk0}  
 \end{equation}  
 when $G_e= Disk0$.

     \item In the case  $e$ is not removable we require that $ \underline{Z}^{\flat}_{G,m, \mathcal{P}}$ is close in the $C^0$-topology to $ \underline{Z}^{\flat}_{forget_e(G.m), \mathcal{P}} \times \underline{Z}_{Disk0}^{\flat, \circledcirc}$:
 \begin{equation} \label{unstable-forget-condition-flat}
 \underline{Z}^{\flat}_{G,m, \mathcal{P}} \approx \underline{Z}^{\flat}_{forget_e(G.m), \mathcal{P}} \times \underline{Z}_{Disk0}^{\flat, \circledcirc} .
 \end{equation}
  \end{itemize}


As in \cite{PSPCS1}, in the notation above we consider any partition  $\mathcal{P}$ of  $V^+(G)$ also as a partition of $V^+(forget_e(G))$, using the obvious correspondence (note that for any $e \in E^{ex}(G)$ we have $V^+(forget_e(G)) \subset V^+(G)$).

\subsubsection{Cycle Condition} \label{cycle-condition-section}

We say that the cycle condition is satisfied for $(G, m, \mathcal{P}) \in \mathfrak{G}_*^{part}$ if 
\begin{equation} \label{cycle-condition-loop}
 \partial  \underline{Z}_{G, m,\mathcal{P}}^{\flat}  =   \sum_i (-1)^i \underline{Z}_{G,  \partial_i m , \mathcal{P}}^{\flat} + \sum_{e'}  \delta_{e'} \underline{Z}_{G', m', \mathcal{P}'}^{\flat}.
 \end{equation}





\subsubsection{Support Condition} \label{support-condition-definition}

Let $(G, \mathcal{P}) \in \mathfrak{G}^{part}$. For each positive real number $\rho_0 >0$  define   $ \digamma_{G, { \mathcal{P} }   }^{< \rho_0}$ as the set of elements
 $( w_v , t_v)_{v \in V(\mathcal{P})} \in \underline{\digamma}_{G, { \mathcal{P} }   }$  with the following properties:
\begin{itemize}
 \item 
$ Diameter ( \sqcup_{v \in V_c} w_v(\mathbb{S}^1) ) < \rho_0$  for any $c \in  Comp_0$  ;
\item
for any $v \in V(\mathcal{P},  v^{\sharp} ) $  there exist two finite arrays 
\begin{equation} \label{covering-support-flat}
(I_j)_{j \in J^0} , (I_j)_{j \in J^1}
\end{equation}  
of  disjoint open connected subsets of $\R$, with the following properties:
\begin{itemize}
\item for any $j \in J^0$ we have
 (up to parametrizations) $w_v= w^{\flat}$ on $I_j$ ; 
\item
let $P$ be the element of $ \mathcal{P}^{v^{\sharp}}$ corresponding to $v$. There exists a decomposition in disjoint sets $ P \cap V_{\vartriangle}= \sqcup_{j \in J_1}  V_{\vartriangle,j}$ with 
$ I_j  \cap Im(t_v)   \neq \emptyset$ if $V_{\vartriangle,j} = \emptyset$ and such that
$$Diam(w_v(I_j) \sqcup \bigsqcup_{v \in V_{\vartriangle,j} }w_v(\mathbb{S}^1)) < \rho_0$$
for any $j \in J^1$;
\item $\R = \sqcup_{j \in J^0 \sqcup J^1} I_j. $
\end{itemize}
\end{itemize}



\begin{definition}
We say that the support condition is satisfied for $(G,m, \mathcal{P})$ if there exists  a real positive number $\rho^{\dagger}$ such that 
$$ Supp(  \underline{Z}_{G,m, \mathcal{P}}^{\flat} )  \subset  \underline{\digamma}_{G, \mathcal{P}}^{<\rho^{\dagger}}.$$
\end{definition}

For $z \in \R^3$ and $\rho>0$ denote by $D_{\rho, z}$ the disk of radius $\rho$ with center $z$.



For  $\mathbf{z} = (z_v)_{v \in V^+(G)} \in  (\R^3)^{V^+(G)}$
denote by  $\underline{D}_{\rho, \mathbf{z}}^{\flat}$
the set of elements $({w}_v,t_v)_v  \in  \underline{\digamma}_{G,\mathcal{P}}$
such that


\begin{itemize}
\item  for any $v \in V(G)  \setminus V^{\sharp}$ we have
 $$  w_v(S^1) \subset  D_{\rho,z_v} ;$$
\item for any
 $v \in V(\mathcal{P},  v^{\sharp} ) $
 there exist two arrays $(I_j)_{j \in J^0}, (I_j)_{j \in J^1}$  as in (\ref{covering-support-flat})
 such that 
 $$  w_v(I_j) \subset  D_{\rho,z_v} \quad \forall j \in J^1.$$ 
 \end{itemize}


 

\subsubsection{Cluster Condition}

Let $Diag_{\mathcal{P}}^{\flat}$  be the set of $\mathbf{z} = (z_v)_{v \in V^+(G)} \in  (\R^3)^{V^+(G)}$ such that
\begin{itemize}
\item  $z_{v_1} =z_{v_2}$ when $v_1 \sim_{\mathcal{P}} v_2$;
\item $z_v \in w^{\flat}$ for any $v \in \sqcup_{P \in \mathcal{P}^{v^{\sharp}}} P$. 
\end{itemize}
We sometime consider  $\mathbf{z} \in Diag_{\mathcal{P}}^{\flat}$  as an array $ (z_{v})_{v \in V(\mathcal{P}) } \in ((\R)^3)^{V(\mathcal{P})}$ or
$ (z_{P})_{P \in \mathcal{P} } \in ((\R)^3)^{\mathcal{P}}$.


We have
$$\mathcal{P}' \prec \mathcal{P} \Longleftrightarrow Diag_{  \mathcal{P}  }^{\flat} \subset Diag_{\mathcal{P}'}^{\flat}  .$$

\begin{definition}

Let $\mathcal{P} $ be a partition of $G \in \mathfrak{G}$. For  $  \mathcal{P}^{\triangleright} $ with  $\mathcal{P}  \prec  \mathcal{P}^{\triangleright} $ and $\eta>0$,  
$ Cut_{\mathcal{P}^{\triangleright} , \eta} ( Diag_{\mathcal{P} }^{\flat})$ is defined as the set of $\mathbf{z}  \in Diag_{\mathcal{P} }^{\flat}$ such that
\begin{itemize}
\item for any $v^{\sharp} \in V^{\sharp}$ and $P^{\triangleright}  \in \mathcal{P}^{\triangleright, v^{\sharp} }$, the order  on $ w^{\flat}$ of the array of points $(z_P)_{P \subset P^{\triangleright}}$ is compatible with the cyclic order of $ \mathcal{P}^{v^{\sharp}}$;
 \item  $d ( \mathbf{z}, Diag_{\mathcal{P}^{\triangleright  \prime}}^{\flat}  ) > \eta  $ for each $\mathcal{P}^{\triangleright \prime}$ with $\mathcal{P}  \prec \mathcal{P}^{\triangleright\prime} \preccurlyeq \mathcal{P}^{\triangleright} $.
\end{itemize}

\end{definition}

Observe that for any $\mathcal{P} \prec \mathcal{P}^{\triangleright} $ and $\rho>0$ there exists  $\eta$ big enough such that
\begin{equation} \label{disjoint-disks-nuclear}
  \mathbf{z} \in  Cut_{\mathcal{P}^{\triangleright} , \eta} ( Diag_{\mathcal{P} }^{\flat}),  v_1 \nsim_{\mathcal{P}^{\triangleright}} v_2 \Longrightarrow  {D}_{\rho, z_{ v_1}} \cap {D}_{\rho, z_{ v_2}} = \emptyset.
 \end{equation}


 Denote by   $\underline{D}_{\rho, \mathbf{z}}^{\flat} (\mathcal{P}^{\triangleright})$ the set of elements $({w}_v,t_v)_v \in \underline{D}_{\rho, \mathbf{z}}^{\flat}$ such that for each $v^{\sharp} \in V^{\sharp}$, $P \in V(\mathcal{P},  v^{\sharp} ) $ and ${P}^{\triangleright} \in \mathcal{P}^{\triangleright}$ with $P \subseteq  {P}^{\triangleright}$ the following happen : if $H_{v^{\sharp}} \cap P \neq H_{v^{\sharp}} \cap {P}^{\triangleright}$ then the ordering of the array  $t_P$ is compatible with the ordering of $H_{v^{\sharp}} \cap P$ obtained from the cyclic ordering of $H_{v^{\sharp}}$.

 
Property (\ref{disjoint-disks-nuclear}) tells us that, for $\eta$ big enough, we have an inclusion of sets
\begin{equation} \label{inclusion-disks-partion-loop-flat}
     \underline{D}_{\rho, \mathbf{z}}^{\flat} (\mathcal{P}^{\triangleright}) \hookrightarrow  \underline{ \digamma}_{G, \mathcal{P}^{\triangleright} } 
\end{equation}
which is compatible with the cycle condition in the sense that  on   the chains of $ \underline{D}_{\rho, \mathbf{z}}^{\flat}$ we have $\delta_e \neq 0$ for  any $e \notin  E(G, \mathcal{P})$.

\begin{definition}
For $(G,m, \mathcal{P}) \in \mathfrak{G}_*^{part} $, $\rho, \eta> 0$, 
we say that (\ref{flat-complex}) satisfies the  cluster condition for $(G,m, \mathcal{P}, \rho, \eta)$ if 
\begin{equation} \label{matching-nuclear}
\underline{Z}^{\flat}_{G,m, \mathcal{P}} \cap \underline{D}_{\rho ,  \mathbf{z}}^{\flat}  (\mathcal{P}^{\triangleright}) = \underline{Z}^{\flat}_{G,m, \mathcal{P}^{\triangleright}}  \cap \underline{D}_{\rho , \mathbf{z}}^{\flat} (\mathcal{P}^{\triangleright}),
  \end{equation}
for each $\mathcal{P}^{\triangleright} $ with $\mathcal{P} \prec \mathcal{P}^{\triangleright} $ and $ \mathbf{z} \in  Cut_{\mathcal{P}^{\triangleright} , \eta} ( Diag_{\mathcal{P} }^{\flat}) $.

We say that the cluster condition is satisfied for
$(G,m, \mathcal{P} , \rho)$ if  there exists $\eta$ big enough such that the cluster condition is satisfied for
$(G,m,  \mathcal{P}, \rho, \eta)$ .
\end{definition}

The cluster condition may be considered as a boundary condition at infinity for  $\underline{Z}^{\flat}_{G,m, \mathcal{P}}$ in the sense that it defines the chain outside a compact set in terms of the partitions  $\mathcal{P}' \prec \mathcal{P}$,

\subsubsection{Translation Condition}

The group $Transl_{\mathcal{P}}:=(\R^3)^{\mathcal{P}^0} \times \prod_{v^{\sharp}} (\R)^{\mathcal{P}^{v^{\sharp}}}$  acts on $\underline{\digamma}_{G,\mathcal{P}} $ by translation .   
This action yields an action of $Transl_{\mathcal{P}}$ on $ C_{|H(G)| + |m|}(\underline{\digamma}_{G,\mathcal{P}} , \mathfrak{o}_{H(G)}  )^{\text{Aut}(G,m, \mathcal{P})}$.

We say that the Translation Condition is satisfied for $(G,m, \mathcal{P})$ if $\underline{Z}^{\flat}_{G,m, \mathcal{P}}$ is invariant by $Transl_{\mathcal{P}}$ (up to triangulations and reparametrizations).


\subsubsection{Isotopies}

Isotopies of flat $PS$-cycles are defined adapting the definition of isotopies of $PS$-cycles.


Let $\tilde{Z}^{\flat, \circledcirc}$ be an isotopy between annulus data ${Z}^{\flat, \circledcirc,0}$ and  ${Z}^{\flat, \circledcirc,1}$. Take $Z^{\flat,0} \in \mathcal{Z}(M|  {Z}^{\flat, \circledcirc,0})$ and $Z^{\flat,1} \in \mathcal{Z}(M|  {Z}^{\flat, \circledcirc,1})$. An isotopy  
of flat $PS$-cycles
  between $Z^{\flat,0}$ and $Z^{\flat,1}$  consists in a collection of chains
\begin{equation} \label{collection-chains-isotopy-flat}
\tilde{Z}^{\flat}=  ( \tilde{Z}_{G,m, \mathcal{P}}^{\flat} )_{G,m, \mathcal{P}} ,
\end{equation}
with
$$\tilde{Z}_{G,m, \mathcal{P}}^{\flat} \in C_{|H(G)| + |m|+1} ( (\R^3)^{H(G)} \times \R  , \mathfrak{o}_{H(G)})^{\text{Aut}(G,m, \mathcal{P})}  .$$
We require that
\begin{itemize}
\item there exists  $\lambda^{\dagger} >0$  big enough such that
$$ \tilde{Z}^{\flat, <-\lambda^{\dagger}} =  Z^{\flat,0}  \times  \R_{<-\lambda^{\dagger}}  , \quad \tilde{Z}^{\flat, >\lambda^{\dagger}} =  Z^{\flat,1}  \times \R_{>\lambda^{\dagger}}   ;$$

 \item 
the transversality condition is satisfied;


\item the forgetful compatibility condition is satisfied.

\item the cycle condition is satisfied;

\item the translation condition is satisfied;

\item the support condition is satisfied;

\item the cluster condition is satisfied.


\end{itemize}

We denote by $\tilde{\mathcal{Z}}^{\flat}(M|  \tilde{Z}^{\flat, \circledcirc})$ the space of isotopies of flat $PS$-cycles. 



The transversality condition, forgetful compatibility condition,  cycle condition, translation condition and cluster condition are defined analogously to section  \ref{isotopies-section}.


In the cluster condition we consider the one parameter version of the condition (\ref{matching-nuclear}), namely we require that
\begin{equation} \label{matching-nuclear-iso}
\underline{Z}^{\flat}_{G,m, \mathcal{P}} \cap(  \underline{D}_{\rho ,  \mathbf{z}'} \times \R ) = \underline{Z}^{\flat}_{G,m, \mathcal{P'}}  \cap (  {\underline{D}}_{\rho , \mathbf{z}'}    \times \R ),
  \end{equation}
for each $ \mathbf{z} \in  Cut_{\mathcal{P}^{\triangleright} , \eta} ( Diag_{\mathcal{P} }^{\flat}) $.


\subsubsection{Existence and Unicity}

Denote by $G^{\heartsuit}$ the decorated graph defined by
$$ H(G^{\heartsuit})= \emptyset, \chi_{\vartriangle}(G^{\heartsuit})=0, d_{\vartriangle}(G^{\heartsuit})=0 , Comp_0(G^{\heartsuit})= \emptyset,  V_{\vartriangle}(G^{\heartsuit})= V^{\sharp} .$$

The following Propositions are extensions of the Propositions appearing in \cite{PSPCS1}, and they are proved in the same way.

\begin{proposition} \label{existence-flat-proposition}
There exists a flat $PS$-cycle $ {Z}^{\flat}$ such that 
    $$  {\underline{Z}}^{\flat}_{G^{\heartsuit}} = \mathbf{w}^{\flat}$$
    $$H(G)= \emptyset,  \underline{Z}_G^{\flat} \neq 0 \Rightarrow G = G^{\heartsuit}.$$
    $$  \chi_{\vartriangle}(G) \neq 0 \text{   or   } d(G) \neq 0 \Rightarrow     \underline{Z}_{G,m}^{\flat}.$$
    \end{proposition}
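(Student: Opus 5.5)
The plan is an induction over the partially ordered set of triples $(G,m,\mathcal{P}) \in \mathfrak{G}^{part}_*$, in the same spirit as the proof of Lemma \ref{extension-PS-lemma} and of the corresponding existence result for flat $PS$-cycles in \cite{PSPCS1}. We order triples first by the order on graphs used for $\mathcal{Z}_{\prec G}$. For a fixed graph we then order them by the partial order $\preccurlyeq$ on admissible partitions, so that coarser partitions (larger $Diag^{\flat}_{\mathcal{P}}$ in the reversed sense) are treated first. Finally, for fixed $(G,\mathcal{P})$ we order by $|m|$.

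The induction starts by setting $\underline{Z}^{\flat}_{G^{\heartsuit}} = \mathbf{w}^{\flat}$, that is, each $v^{\sharp}$ is the straight line $w^{\flat}$, with the unique partition. This chain trivially satisfies the support, cluster and translation conditions, and it is a cycle because $G^{\heartsuit}$ has no half-edges. We set $\underline{Z}^{\flat}_{G,m,\mathcal{P}}=0$ whenever $H(G)=\emptyset$ and $G \neq G^{\heartsuit}$, and whenever $\chi_{\vartriangle}(G)\neq 0$ or $d(G)\neq 0$. We must check that this is consistent with the cycle condition (\ref{cycle-condition-loop}). The operations $\delta_v,\delta_e$ can only increase $d$ or decrease $\chi_{\vartriangle}$, so a graph with vanishing $d$ and $\chi_{\vartriangle}$ never receives a boundary contribution from a graph where we have imposed vanishing. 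The graphs with $H(G)=\emptyset$ are exactly where the obstruction could live, and they are handled by this explicit choice.

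For $H(G)\neq\emptyset$ the inductive step proceeds in three cases, and the case distinction decides the construction.

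If $E^{ex}(G)\neq\emptyset$, the chain is forced by forgetful compatibility (\ref{stable-forget-condition-flat})--(\ref{flat-removible-forget-condition2}), or is taken as a small perturbation as in (\ref{unstable-forget-condition-flat}). The cycle, support, translation and cluster conditions are then inherited from the forgotten graph and from the flat annulus data.

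If $E^{ex}(G)=\emptyset$, the cluster condition (\ref{matching-nuclear}) prescribes $\underline{Z}^{\flat}_{G,m,\mathcal{P}}$ on the region $\bigcup_{\mathcal{P}^{\triangleright}\succ\mathcal{P}}\bigcup_{\mathbf{z}\in Cut_{\mathcal{P}^{\triangleright},\eta}}\underline{D}^{\flat}_{\rho,\mathbf{z}}(\mathcal{P}^{\triangleright})$, which is already constructed. This is a neighbourhood of infinity in $\underline{\digamma}_{G,\mathcal{P}}/Transl_{\mathcal{P}}$. The inclusion (\ref{inclusion-disks-partion-loop-flat}) guarantees that the prescriptions coming from different $\mathcal{P}^{\triangleright}$ agree on overlaps, because they agree for the common refinement. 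The right-hand side of (\ref{cycle-condition-loop}) is known by induction, and it is a cycle by the inductive cycle conditions together with $\partial^2=0$. We therefore look for a chain with this boundary which coincides with the prescribed chain near infinity, working in the quotient by $Transl_{\mathcal{P}}$ so that the translation condition holds automatically. Concretely, we cone off the boundary data inside the compact part $\{\mathbf{z}$ bounded modulo translations$\}$ and pull back along the translation action. After that, transversality is obtained by a generic small perturbation supported in the compact part.

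The main obstacle is to show that the relative cycle (boundary data together with the prescribed tail) is null-homologous in the relevant relative homology of $\underline{\digamma}^{<\rho^{\dagger}}_{G,\mathcal{P}}$ modulo translations. The support condition makes this space, modulo translations, deformation retract onto configurations where the clusters are small. Since every component and cluster is small, the loops in $\underline{\digamma}^{<\rho^{\dagger}}_{G,\mathcal{P}}$ are contractible relative to the line $w^{\flat}$. We first try to use this contractibility to show that the relevant homology in degree $|H(G)|+|m|-1>0$ vanishes whenever $H(G)\neq\emptyset$. Given that vanishing, the only surviving obstruction is the degree-zero class at $G^{\heartsuit}$, and it has been absorbed by the choice $\underline{Z}^{\flat}_{G^{\heartsuit}}=\mathbf{w}^{\flat}$. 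The remaining checks, namely the support condition for the constructed chain, which is preserved because the coning happens inside $\underline{\digamma}^{<\rho^{\dagger}}$, and the $\text{Aut}(G,m,\mathcal{P})$-invariance obtained by averaging, are routine and follow \cite{PSPCS1}.
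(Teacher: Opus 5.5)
The paper gives no separate argument here; it says the proposition is proved as in \cite{PSPCS1}. Measured against that inductive scheme, your outline has two genuine gaps. The first is that it runs the cluster step backwards.

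\textbf{The partition induction is reversed.} In (\ref{matching-nuclear}), take $\mathbf{z}\in Cut_{\mathcal{P}^{\triangleright},\eta}(Diag^{\flat}_{\mathcal{P}})$. The set $\underline{D}^{\flat}_{\rho,\mathbf{z}}(\mathcal{P}^{\triangleright})$ consists of configurations in which each part of the \emph{finer} partition $\mathcal{P}$ lies in a single $\rho$-disk, and distinct $\mathcal{P}$-parts inside one $\mathcal{P}^{\triangleright}$-part are far apart. Through (\ref{inclusion-disks-partion-loop-flat}) this set is a neighbourhood of infinity in $\underline{\digamma}_{G,\mathcal{P}^{\triangleright}}/Transl_{\mathcal{P}^{\triangleright}}$. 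In $\underline{\digamma}_{G,\mathcal{P}}/Transl_{\mathcal{P}}$, however, the relative positions of the $\mathcal{P}$-parts are quotiented out, so the same set is essentially the compact core, not a neighbourhood of infinity as you assert. The cluster condition is therefore a boundary condition at infinity for the \emph{coarser} chain in terms of finer ones. The paper says exactly this after (\ref{matching-nuclear}) (``in terms of the partitions $\mathcal{P}'\prec\mathcal{P}$''), and the covering (\ref{covering-boundary-condition-asymptotic}) in Lemma \ref{coherent-existence-lemma} is organized the same way.

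With your order (coarser partitions first), three things go wrong:
\begin{itemize}
\item the coarsest chains are built with no control at infinity;
\item each finer chain is then dictated on its whole core by restrictions of coarser chains that need not be $Transl_{\mathcal{P}}$-invariant, so the translation and cluster conditions cannot both hold;
\item the real non-compact end of $\underline{\digamma}_{G,\mathcal{P}}/Transl_{\mathcal{P}}$, where a part breaks into distant sub-clusters, is left unprescribed.
\end{itemize}
The induction must go from finer to coarser partitions. On the regions indexed by $\mathcal{P}'\prec\mathcal{P}$ the chain is copied from $\underline{Z}^{\flat}_{G,m,\mathcal{P}'}$, with compatibility coming from non-overlapping properties as in the proof of Lemma \ref{coherent-existence-lemma}. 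Only the core, where all $\mathcal{P}$-parts are concentrated, is filled in.

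\textbf{The vanishing check goes in the vacuous direction.} You show that graphs with $\chi_{\vartriangle}=d=0$ receive nothing from graphs set to zero, but zero chains contribute nothing to anybody. What must be checked is that the graphs where you impose $\underline{Z}^{\flat}=0$ receive zero total boundary from the nonzero chains.
\begin{itemize}
\item For $d$ this is automatic: $\delta_e$ preserves $d$, and (\ref{cycle-condition-loop}) contains no $\delta_v$-terms.
\item For $\chi_{\vartriangle}$ it is not. $\delta_e$ lowers $\chi_{\vartriangle}$ whenever $e$ joins two vertices of $V_{\vartriangle}$, splits one (e.g.\ a chord on a Wilson line), or joins $V_{\vartriangle}$ to a component with $\chi_c\le 0$.
\end{itemize}
So the chains on graphs with $\chi_{\vartriangle}=0$ must be built so that these $\delta_e$-intersections vanish or cancel. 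Otherwise the cycle condition fails on the graphs with $\chi_{\vartriangle}<0$ that you declared zero.

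This is a real constraint on your filling step: the filling must take place in the complement of those diagonals. That complement need not be contractible, since the direction between two endpoints that must stay apart gives $S^2$-factors. Your contractibility argument, which you only state as something you ``try'', therefore does not settle the obstruction even in positive degree. You need either to prove vanishing of the relevant relative homology in this constrained space, or to run the marking induction of \cite{PSPCS1} (cf.\ the construction of $\underline{Y}_G$ around (\ref{trivial-cycle})) in a way that respects the forbidden diagonals.
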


\begin{proposition} \label{unicity-flat-proposition}
Let $Z^{\flat,0}$ and $Z^{\flat,1}$ be  flat $PS$-cycles satisfying the conditions   of  Proposition \ref{existence-flat-proposition}. There exists an isotopy of flat $PS$-cycles between  $Z^{\flat,0}$ are $Z^{\flat,1}$.
 \end{proposition}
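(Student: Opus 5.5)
We construct the isotopy $\tilde{Z}^{\flat}$ between $Z^{\flat,0}$ and $Z^{\flat,1}$ by induction on the partially ordered set $\mathfrak{G}^{part}$. We order pairs $(G,\mathcal{P})$ first by the order on graphs used in Lemma \ref{extension-PS-lemma-isotopy}. For a fixed $G$, we proceed from coarse to fine partitions, since the cluster condition (\ref{matching-nuclear-iso}) determines $\tilde{Z}^{\flat}_{G,m,\mathcal{P}}$ near infinity in terms of the $\tilde{Z}^{\flat}_{G,m,\mathcal{P}^{\triangleright}}$ with $\mathcal{P}\prec\mathcal{P}^{\triangleright}$. To start, we fix the isotopy $\tilde{Z}^{\flat,\circledcirc}$ of flat annulus data and set $\tilde{Z}^{\flat}_{G^{\heartsuit}} = \mathbf{w}^{\flat}\times\R$. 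On every $G$ with $H(G)=\emptyset$, $G\neq G^{\heartsuit}$, and on every $(G,m)$ with $\chi_{\vartriangle}(G)\neq 0$ or $d(G)\neq 0$, we set $\tilde{Z}^{\flat}$ to zero, as the normalizations in Proposition \ref{existence-flat-proposition} require. The ends $\lambda<-\lambda^{\dagger}$ and $\lambda>\lambda^{\dagger}$ are prescribed by $Z^{\flat,0}\times\R$ and $Z^{\flat,1}\times\R$.

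At the inductive step for $(G,m,\mathcal{P})$, the data already built determine two things. First, the boundary $\partial\tilde{Z}^{\flat}_{G,m,\mathcal{P}}$ is forced by the cycle condition (\ref{cycle-condition-loop}). Second, the chain is fixed outside a compact subset of the quotient by $Transl_{\mathcal{P}}$: the cluster condition fixes it on $\underline{D}^{\flat}_{\rho,\mathbf{z}}(\mathcal{P}^{\triangleright})\times\R$ for $\mathbf{z}\in Cut_{\mathcal{P}^{\triangleright},\eta}(Diag^{\flat}_{\mathcal{P}})$, and the two ends in $\lambda$ are fixed as above.

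We then work in $\underline{\digamma}_{G,\mathcal{P}}/Transl_{\mathcal{P}}$, which builds in the translation condition. We split into three cases, exactly as in the corresponding proof in \cite{PSPCS1}.
\begin{enumerate}
\item If $E^{ex}(G)\neq\emptyset$, forgetful compatibility (\ref{stable-forget-condition-flat})--(\ref{unstable-forget-condition-flat}) defines the chain directly from the lower graphs and the annulus-data isotopy.
\item If $E^{ex}(G)=\emptyset$ and $H(G)\neq\emptyset$, the prescribed boundary is a cycle relative to the fixed region. We fill it by a chain with $\rho^{\dagger}$-bounded support in the sense of the support condition, and then perturb it to achieve transversality to the partial diagonals indexed by $E^{in}(G,\mathcal{P})\setminus E_l$.
\item If $H(G)=\emptyset$, no chain is needed, but the boundary equation has to hold.
\end{enumerate}

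The main obstacle is the vanishing of the obstruction in the filling step. The relevant cycle lives in a space that is a product of $\R^3$ factors and of $\R$-factors along $w^{\flat}$, cut down by the support condition. I would show that this cycle has degree strictly above the dimension in which nontrivial relative classes can occur, modulo translations. The strategy is to imitate the dimension count of \cite{PSPCS1}. Each cluster $P\in\mathcal{P}^{v^{\sharp}}$ contributes only one translational direction, along $w^{\flat}$, instead of three; this loss is compensated by the half-edges $H_{v^{\sharp}}\cap P$, which are constrained to the line. So the stability condition $\chi_c-\frac12|H_c|<0$ should still give positive excess degree.

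The graphs with $H(G)=\emptyset$ are where the obstruction of Subsection \ref{obstruction-section} could appear. Here the normalization of Proposition \ref{existence-flat-proposition} forces both $Z^{\flat,0}_G$ and $Z^{\flat,1}_G$ to equal $\mathbf{w}^{\flat}$ for $G=G^{\heartsuit}$ and zero otherwise, so the obstruction number is $1-1=0$ at $G^{\heartsuit}$ and $0$ elsewhere. The contributions $\delta_{e'}\tilde{Z}^{\flat}_{G'}$ should vanish after $pr_*$ by the same degree count, since these chains are translation invariant of positive excess dimension.
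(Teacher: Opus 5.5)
Your skeleton (induction over graphs as in \cite{PSPCS1}, forgetful compatibility when $E^{ex}(G)\neq\emptyset$, obstructions only at $H(G)=\emptyset$, and the normalization of Proposition \ref{existence-flat-proposition} giving $1-1=0$ at $G^{\heartsuit}$) is what the paper means by ``proved in the same way''. However, two of your steps do not work as you set them up.

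First, you run the partitions in the wrong direction. Take (\ref{matching-nuclear}), or its one-parameter version (\ref{matching-nuclear-iso}), with $\mathcal{P}\prec\mathcal{P}^{\triangleright}$ and $\mathbf{z}\in Cut_{\mathcal{P}^{\triangleright},\eta}(Diag^{\flat}_{\mathcal{P}})$. Modulo $Transl_{\mathcal{P}}$, the region $\underline{D}^{\flat}_{\rho,\mathbf{z}}(\mathcal{P}^{\triangleright})$ is just the bounded core of $\underline{\digamma}_{G,\mathcal{P}}$, where the support condition already places the finer chain. Its image in $\underline{\digamma}_{G,\mathcal{P}^{\triangleright}}/Transl_{\mathcal{P}^{\triangleright}}$ is the non-compact end, where the $\mathcal{P}$-subclusters are more than $\eta$ apart. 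So the condition fixes the \emph{coarser} chain at infinity in terms of the finer one. The paper says exactly this: the cluster condition ``defines the chain outside a compact set in terms of the partitions $\mathcal{P}'\prec\mathcal{P}$'' (compare the covering (\ref{covering-boundary-condition-asymptotic})). In your coarse-to-fine order, nothing fixes the first chains at infinity, so the filling is not a compactly supported relative problem. The finer chains are then prescribed on their entire core by an arbitrary coarse chain, which has no reason to be invariant under the larger group $Transl_{\mathcal{P}}$. The induction has to go from fine to coarse.

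Second, the step you call the main obstacle is not settled by a dimension count, and the count you sketch fails. Modulo $Transl_{\mathcal{P}}$, a chain with $H(G)\neq\emptyset$ can be effectively zero-dimensional. Take $G^{\heartsuit}$ plus one vertex $u\in V_{\vartriangle}$ joined to $v^{\sharp}$ by an edge $e$, with the partition placing $u$ and the endpoint $h\in H_{v^{\sharp}}$ of $e$ on two different lines. For $m=\{\emptyset\}$ one has $|H(G)|+|m|=2=\dim Transl_{\mathcal{P}}$. The boundary of the isotopy is then an effectively $0$-dimensional cycle whose count no stability or degree argument controls.

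What forces it to vanish is the marking structure: the chains with marking $m\sqcup\{E(G)\}$ supply the cobordism, as in the construction of $\underline{Y}_G$ for (\ref{trivial-cycle}). This is also what makes the flat analogue of Lemma \ref{extension-PS-lemma-isotopy} unobstructed for $H(G)\neq\emptyset$ in general. In the example, the marking $\{\emptyset,\{e\}\}$ exhibits $Z^{\flat,i}_{G,\{\emptyset\},\mathcal{P}}$, $i=0,1$, as boundaries. Its $\partial_0$-term is a cut graph that forgetful compatibility reduces to a graph with $H=\emptyset$ and $G\neq G^{\heartsuit}$, where the chain vanishes. You should invoke that extension lemma at this step instead of a degree count. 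Your treatment of the $H(G)=\emptyset$ graphs, via the normalization and translation invariance, is in line with the intended argument.
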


\subsection{Flat-Approaching $PS$-cycles}

Fix an  an embedded oriented link
$$\mathbf{w} = ( w_v^{\sharp} )_{v \in V^{\sharp}} .$$

Let $\mathbf{fr} \in \mathfrak{Fr}(M,   \mathbf{w}  )$. Let   $Z^{\flat, \circledcirc}$ be a flat annulus data.



Fix a flat-annulus-data $ Z^{\flat, \circledcirc}$, a flat-$PS$-cycle $Z^{\flat} \in \mathcal{Z}^{\flat}( Z^{\flat, \circledcirc} )$ and a  flat-approaching annulus data $Z^{\wr, \circledcirc}$ converging to $(Z^{\flat, \circledcirc}, \mathbf{fr} ) $ in the sense of \cite{PSPCS1}.

A flat-approaching $PS$-cycles  $\mathcal{Z}^{\wr}$  converging to $(Z^{\flat} , \mathbf{fr})$ with annulus data $Z^{\wr, \circledcirc}$ is defined by a  collection of chains
\begin{equation} \label{asimptoticaly-flat-complex}
    ( Z^{\wr , (n)}_{G,m, \mathcal{P}} )_{G,m, \mathcal{P}}
\end{equation}
$$Z^{\wr , (n)}_{G,m, \mathcal{P}} \in C_{|H(G)| + |m| }(M^{H(G)} , \mathfrak{o}_{H(G)}  )^{\text{Aut}(G,m, \mathcal{P})}$$ 
such that :
\begin{itemize}
\item  
 transversality condition is satisfied;
\item   forgetful compatibility condition is satisfied;
\item support  condition is satisfied;
    \item cluster condition  is satisfied;
    \item convergence  condition is satisfied .
\end{itemize}

Below we explain the meaning of  the conditions of the definition which are an extension of the conditions considerate in \cite{PSPCS1}.

\subsubsection{Transversality condition}

We say that the transversality condition is satisfied if $Z^{\wr , (n)}_{G,m, \mathcal{P}}$ is transversal  to   $ \cap_{e \in E'} \pi_e^{-1}(Diag)$ for each subset $E' \subset E^{in }(G , \mathcal{P})  \setminus E^l$.

\subsubsection{Forgetful Compatibility} \label{forgetful-approching-section}

For $(G, \mathcal{P}) \in \mathfrak{G}^{part}$, the multi-loop space  $\underline{M}_{G,\mathcal{P}}$ associated to $(G, \mathcal{P})$  is defined as the set of arrays $( {w}_v , t_v)_{v \in V(\mathcal{P})}  $  where   
  $${w}_v : \mathbb{S}^1 \rightarrow M,  t_v : H_v  \hookrightarrow \mathbb{S}^1, \quad \forall v \in V(\mathcal{P}) .$$ 
  We require that each $t_v$ is compatible with the cyclic order of $H_v$;




We say that there (\ref{flat-complex}) is forgetful compatible if there exists a collections of  chains 
\begin{equation} \label{flat-complex-loop-approching}
    (\underline{Z}^{\wr , (n)}_{G,m, \mathcal{P}} )_{G,m, \mathcal{P}}
\end{equation}
$$\underline{Z}^{\wr , (n)}_{G,m, \mathcal{P}} \in C_{|H(G)| + |m| }(\underline{M}_{G,\mathcal{P}} , \mathfrak{o}_{H(G)}  )^{\text{Aut}(G,m)}$$
such that
$$ev_*( \underline{Z}^{\wr , (n)}_{G,m, \mathcal{P}}    )=Z^{\wr , (n)}_{G,m, \mathcal{P}}.$$
For each external edge $e \in E^{ex}(G)$ we require the following conditions:
\begin{itemize}
\item if $G$ is stable after removing $e $  we require
    \begin{equation} \label{stable-forget-condition-approching}    
     \underline{Z}_{G,m, \mathcal{P}}^{\wr , (n)} = \underline{Z}_{G',m'}^{\wr , (n)} \times_{\textbf{forget}_e} \underline{M}_{G,\mathcal{P}}  .
     \end{equation}
      \item in the case $G$ is unstable after removing $e $ and $e$ is removable  we require
 \begin{equation} \label{removible-forget-condition-approching}
 \underline{Z}_{G,m}^{\wr , (n)} = \underline{Z}_{G',m'}^{\wr , (n)} \times \underline{Z}_{G_e}^{\circledcirc} .
 \end{equation}     
  when $G_e \in Ann0$, and
 \begin{equation} \label{removible-forget-condition-approching2}
 \underline{Z}_{G,m}^{\wr , (n)} = \underline{Z}_{G',m'}^{\wr , (n)} \times \underline{Z}_{Disk0}  
 \end{equation}  
 when $G_e= Disk0$. 
 
     \item In the case  $e$ is not removible we require that $ \underline{Z}_{G,m}$ is close in the $C^1$-topology to $ \underline{Z}_{G',m'} \times \underline{Z}_{Disk0}^{}$. We write
 \begin{equation} \label{unstable-forget-condition-approching}
 \underline{Z}_{G,m}^{\wr , (n)} \approx \underline{Z}_{G',m'}^{\wr , (n)} \times \underline{Z}_{Disk0}^{} .
 \end{equation}
  \end{itemize}



 .


\subsubsection{Support Condition}   \label{support-condition-definition-approching}

Let $(G, \mathcal{P}) \in \mathfrak{G}^{part}$. For $\rho >0$ define the set  $\underline{M}_{G, \mathcal{P}}^{< \rho}$ as the set of elements
$( w_v , t_v)_v \in \underline{M}_{G, \mathcal{P}}$ with the following properties:
\begin{itemize}
 \item 
$ Diameter ( \sqcup_{v \in V_c} w_v(\mathbb{S}^1) ) < \rho$  for any $c \in  Comp_0(G)$  ;
\item
for any $v \in V(\mathcal{P},  v^{\sharp} ) $ there exist two finite arrays  
\begin{equation} \label{covering-support-approching}
(I_j)_{j \in J^0} , (I_j)_{j \in J^1}
\end{equation}  
 of  disjoint open connected subsets of $S^1$  with the following properties:
\begin{itemize}
\item  for any $j \in J^0$ we have  
 (up to parametrizations) $w_v= w_{v^{\sharp}}^{\sharp}$ on $I_j$ ;
\item 
let $P$ be the element of $ \mathcal{P}^{v^{\sharp}}$ corresponding to $v$. There exists a decomposition in disjoint sets $ P \cap V_{\vartriangle}= \sqcup_{j \in J_1}  V_{\vartriangle,j}$ with 
$ I_j  \cap Im(t_v)   \neq \emptyset$ if $V_{\vartriangle,j} = \emptyset$ and such that
$$Diam(w_v(I_j) \sqcup \bigsqcup_{v \in V_{\vartriangle,j} }w_v(\mathbb{S}^1)) < \rho_0$$
for any $j \in J^1$;
\item $S^1 = \sqcup_{j \in J^0 \sqcup J^1} I_j. $
\end{itemize}
\end{itemize}

\begin{definition} \label{support-approching}
We say that the support condition is satisfied for $(G,m, \mathcal{P})$ if there exists  a real positive number $\rho^{\dagger}$ such that 
$$ Supp(  \underline{Z}_{G,m, \mathcal{P}}^{\wr, (n)} )  \subset  \underline{M}_{G, \mathcal{P}}^{<\rho^{\dagger}/n}.$$
\end{definition}

For $z \in M$ and $\rho>0$ denote by $D_{\rho, z}$ the disk of radius $\rho$ with center $z$.



For  $\mathbf{z} = (z_v)_{v \in V^+(G)} \in  M^{V^+(G)}$
denote by  $\underline{D}_{\rho, \mathbf{z}}$
the set of elements $({w}_v,t_v)_v  \in  \underline{M}_{G,\mathcal{P}}$
such that


\begin{itemize}
\item  for any $v \in V(G)  \setminus V^{\sharp}$ we have
 $$  w_v(S^1) \subset  D_{\rho,z_v} ;$$
\item for any
 $v \in V(\mathcal{P},  v^{\sharp} ) $
 there exist two arrays $(I_j)_{j \in J^0}, (I_j)_{j \in J^1}$  as in (\ref{covering-support-approching})
 such that 
 $$  w_v(I_j) \subset  D_{\rho,z_v} \quad \forall j \in J^1.$$ 
 \end{itemize}






\subsubsection{Cluster Condition} \label{cluster-condition-iso-section}

Let $Diag_{\mathcal{P}}$ be  the set of $\mathbf{z} = (z_v)_{v \in V^+(G)} \in M^{V^+(G)}$ such that 
\begin{itemize}
\item
$z_{v_1} =z_{v_2}$ when $v_1 \sim_{\mathcal{P}} v_2$ ;
\item $z_v \in w_{v^{\sharp}}^{\sharp}$ when $v \in \sqcup_{P \in \mathcal{P}^{v^{\sharp}}} P$.
\end{itemize}
An element $\mathbf{z}  \in Diag_{\mathcal{P}}$  may be also considered as an array  $ (z_{v})_{v \in V(\mathcal{P}) } $.

Observe that
$$\mathcal{P}' \prec \mathcal{P} \Longleftrightarrow Diag_{  \mathcal{P}  } \subset Diag_{\mathcal{P}'} .$$

\begin{definition}

Let $\mathcal{P} $ be a partition of $G \in \mathfrak{G}$. For  $  \mathcal{P}^{\triangleright} $ with  $\mathcal{P}  \prec  \mathcal{P}^{\triangleright} $ and $\eta>0$  define 
$ Cut_{\mathcal{P}^{\triangleright} , \eta} ( Diag_{\mathcal{P} })$ as the set of $\mathbf{z}  \in Diag_{\mathcal{P} }$ such that
\begin{itemize}

\item for any $v^{\sharp} \in V^{\sharp}$ and $P^{\triangleright}  \in \mathcal{P}^{\triangleright, v^{\sharp} }$, the cyclic order  on $ w_{ v^{\sharp}}^{\sharp}$ of the array of points $(z_P)_{P \subset P^{\triangleright}}$ is compatible with the cyclic order of $ \mathcal{P}^{v^{\sharp}}$.

 \item  $d ( \mathbf{z}, Diag_{\mathcal{P}^{\triangleright, \prime}}  ) > \eta  $ for each $\mathcal{P}^{\triangleright, \prime}$ with $\mathcal{P}  \prec \mathcal{P}^{\triangleright, \prime} \preccurlyeq \mathcal{P}^{\triangleright} $.
\end{itemize}

\end{definition}


For $\mathcal{P} \prec \mathcal{P}^{\triangleright} $,
for each $\rho>0$ there exists  $\eta$ big enough such that
\begin{equation} \label{disjoint-disks-approching}
  \mathbf{z} \in  Cut_{\mathcal{P}^{\triangleright} , \eta} ( Diag_{\mathcal{P} }),  v_1 \nsim_{\mathcal{P}^{\triangleright}} v_2 \Longrightarrow  {D}_{\rho, z_{ v_1}} \cap {D}_{\rho, z_{ v_2}} = \emptyset.
 \end{equation}


 Denote by   $\underline{D}_{\rho, \mathbf{z}}(\mathcal{P}^{\triangleright})$ the set of elements $({w}_v,t_v)_v \in \underline{D}_{\rho, \mathbf{z}}$ such that the for each $v^{\sharp} \in V^{\sharp}$, $P \in V(\mathcal{P},  v^{\sharp} ) $ and ${P}^{\triangleright} \in \mathcal{P}^{\triangleright}$ with $P \subseteq  {P}^{\triangleright}$ the following happen : if $H_{v^{\sharp}} \cap P \neq H_{v^{\sharp}} \cap {P}^{\triangleright}$ then the array  $t_P$ is compatible with the ordering of $H_{v^{\sharp}} \cap P$ obtained from the cyclic ordering of $H_{v^{\sharp}}$.


Property (\ref{disjoint-disks-approching}) tells us that, for $\eta $ big enough, we have an inclusion of sets
\begin{equation} \label{inclusion-disks-partion-loop-approching}
     \underline{D}_{\rho, \mathbf{z}} (\mathcal{P}^{\triangleright}) \hookrightarrow   \underline{M}_{G, \mathcal{P}^{\triangleright} } 
\end{equation}
which is compatible with the cycle condition in the sense that $\delta_e \neq 0$ on   the chains of $ \underline{D}_{\rho, \mathbf{z}}$ for  $e \notin  E(G, \mathcal{P})$.

Let $(G,m,\mathcal{P}) \in \mathfrak{G}_*^{part} $, $\rho, \eta> 0$.
We say that the cluster condition is satisfied for $(G,m, \mathcal{P}, \rho, \eta)$ if 
\begin{equation} \label{matching-asintontically}
\underline{Z}^{\wr, (n)}_{G,m, \mathcal{P}} \cap \underline{D}_{\rho/n ,  \mathbf{z}}(\mathcal{P}^{\triangleright})  = \underline{Z}^{\wr, (n)}_{G,m, \mathcal{P}^{\triangleright}}  \cap \underline{D}_{\rho/n , \mathbf{z}}  \cap \underline{D}_{\rho/n ,  \mathbf{z}}(\mathcal{P}^{\triangleright})  ,
  \end{equation}
for each $\mathcal{P}^{\triangleright} $ with $\mathcal{P} \prec \mathcal{P}^{\triangleright} $ and $ \mathbf{z} \in  Cut_{\mathcal{P}^{\triangleright} , \eta/n} ( Diag_{\mathcal{P} }) $.

We say that  the cluster condition is satisfied for
$(G,m, \mathcal{P} , \rho)$ if there exists $\eta$ big enough such that the cluster condition is satisfied for
$(G,m,  \mathcal{P}, \rho, \eta)$ .





\subsubsection{Convergence Condition} \label{convergence-condition-asi-section}


Denote by $Dil_n$  the dilation of ratio $n$ of the tangent space on $\mathbf{z} $.

We say that the convergence condition is satisfied for $(G,m,\mathcal{P})$ if    for each $\rho>0$
  \begin{equation} \label{convergence} 
 Dil_n( \text{exp}^{-1}(\underline{D}_{\rho/n, \mathbf{z} } \cap \underline{Z}_{G,m,\mathcal{P}}^{\wr, (n)} ) ) \rightarrow \underline{D}_{\rho, \mathbf{0}}^{\flat} \cap  {\underline{Z}}^{\flat}_{G,m, \mathcal{P}} 
   \end{equation}
 uniformly for $\mathbf{z}  \in  Diag_{\mathcal{P}}$.  Here we have used $\mathbf{fr} $ to identify $T_zM$ with $\R^3$ for each $z \in M$.

We say that $Z$ satisfies the convergence condition if the convergence condition is satisfied for each $(G,m,\mathcal{P})$.

\subsubsection{Isotopies}

Let $\tilde{Z}^{\flat, \circledcirc}$ be an isotopy between the  flat annulus data ${Z}^{\flat, \circledcirc,0}$ and  ${Z}^{\flat, \circledcirc,1}$. Let $\tilde{Z}^{\flat} \in \tilde{\mathcal{Z}}(M|  \tilde{Z}^{\flat, \circledcirc})$ be an isotopy  between   the flat $PS$-cycles $Z^{\flat,0} \in \mathcal{Z}(M|  {Z}^{\flat, \circledcirc,0}),$ and $Z^{\flat,1} \in \mathcal{Z}(M|  {Z}^{\flat, \circledcirc,1})$.

Let $\mathbf{w}^0$ and  $\mathbf{w}^1$ be oriented embedded links. Let 
$$ \tilde{\mathbf{w}} = ( \mathbf{w}^{\lambda} )_{\lambda \in [0,1]}$$ 
be an isotopy between $\mathbf{w}^0$ and  $\mathbf{w}^1$.
Let $\mathbf{fr}^0$ and $\mathbf{fr}^1$ be frames of $M$ compatible with  $\mathbf{w}^0$ and $\mathbf{w}^1$ respectively. Let $\tilde{\mathbf{fr}}$ be an isotopy  between  $\mathbf{fr}^0$ and $\mathbf{fr}^1$ compatible with $\tilde{\mathbf{w}}$.

Let  ${Z}^{\wr, \circledcirc,0}$ and  ${Z}^{\wr , \circledcirc,1}$ be asymptotic flat annulus data converging to $({Z}^{\flat, \circledcirc,0}, \mathbf{fr}^0), ({Z}^{\flat, \circledcirc,1},\mathbf{fr}^1)$ respectively.

Let $\tilde{Z}^{\wr, \circledcirc}$ be an isotopy between  ${Z}^{\wr, \circledcirc,0}$ and  ${Z}^{\wr , \circledcirc,1}$ converging to $(   \tilde{Z}^{\flat, \circledcirc}, \tilde{\mathbf{fr}})$ . Let $Z^{\wr,0} \in \mathcal{Z}(M|  {Z}^{\circledcirc,0})$ and $Z^{\wr,1} \in \mathcal{Z}(M|  {Z}^{\circledcirc,1})$ converging  to $(Z^{\flat,0 }, \mathbf{fr}^0), (Z^{\flat, 1 },\mathbf{fr}^1)$ respectively.

An isotopy  
of  flat approching $PS$-cycles
  between $Z^{\wr,0}$ and $Z^{\wr,1}$ converging to $( \tilde{Z}^{\flat} ,\tilde{\mathbf{fr}}  )$   
   consists in a collection of chains
\begin{equation} \label{collection-chains-isotopy-approching}
\tilde{Z}^{\wr}=  ( \tilde{Z}_{G,m, \mathcal{P}}^{\wr} )_{G,m, \mathcal{P}} 
\end{equation}
$$\tilde{Z}_{G,m, \mathcal{P}}^{\wr} \in C_{|H(G)| + |m|+1} ( M^{H(G) \times \R } , \mathfrak{o}_{H(G)})^{\text{Aut}(G,m, \mathcal{P})}  $$
such that
\begin{itemize}
\item there exists  $\lambda^{\dagger} >0$  big enough such that
$$ \tilde{Z}^{\wr, <- \lambda^{\dagger}} =  Z^{\wr,0}  \times  \R_{<-\lambda^{\dagger}}  , \quad \tilde{Z}^{\wr, > \lambda^{\dagger}} =  Z^{\wr,1}  \times \R_{> \lambda^{\dagger}}   ;$$

 \item 
the transversality condition is satisfied;


\item the forgetful compatibility condition is satisfied.

\item the cycle condition is satisfied;

\item the support condition is satisfied;

\item the cluster condition is satisfied;

\item the convergence condition is satisfied.


\end{itemize}

We denote by $\tilde{\mathcal{Z}}^{\wr}(M|  \tilde{Z}^{\wr, \circledcirc})$ the space of isotopies of asymptotic flat $PS$-cycles.


In order to explain the conditions above we need to introduce the one parameter version $\tilde{\underline{M}}_{G,\mathcal{P}}$ of the multi-loop space of subsection \ref{forgetful-approching-section}.

For any $\lambda \in \R$ denote by ${\underline{M}}_{G, \mathcal{P}}^{\lambda}$  the multi-loop space of subsection \ref{forgetful-approching-section} defined using $\mathbf{w}^{\lambda}$.
Let $ \tilde{\underline{M}}_{G, \mathcal{P}}$ be the set of arrays $((w_v,t_v)_v, \lambda)$ with $(w_v,t_v)_v \in  {\underline{M}}_{G, \mathcal{P}}^{\lambda}$,

The support condition, the cluster condition and convergence condition are  the one parameter version of the analogous conditions for flat-approaching $PS$-cycles. 

For any real number $rho >0$ set
$$ \tilde{\underline{M}}_{G, \mathcal{P}}^{<\rho} = \bigsqcup_{\lambda}  ({\underline{M}}_{G, \mathcal{P}}^{\lambda, <\rho} \times \{\lambda\} ) \subset  \tilde{\underline{M}}_{G, \mathcal{P}} ,$$
that is, the set of arrays $((w_v,t_v)_v, \lambda)$ with $(w_v,t_v)_v \in  {\underline{M}}_{G, \mathcal{P}}^{\lambda, <\rho}$.

We say that the \emph{support condition} is satisfied for $(G,m, \mathcal{P}) \in \mathfrak{G}^{part}_*  $  if there exists  a  real positive number $\rho^{\dagger}$ such that
$$ Supp(  \tilde{\underline{Z}}_{G,m, \mathcal{P}}^{\wr, (n)} )  \subset  \tilde{\underline{M}}_{G, \mathcal{P}}^{<\rho^{\dagger}/n} \quad \forall n .$$


Set
$$\tilde{Diag}_{\mathcal{P}}= \bigsqcup_{\lambda} Diag_{\mathcal{P}}^{\lambda} \times \{ \lambda \},$$
where, for each $\lambda$, $Diag_{\mathcal{P}}^{\lambda}$ is defined using $\mathbf{w}^{\lambda}$.
For $ \tilde{\mathbf{z}} = (\mathbf{z}^{\lambda})_{\lambda}  $ and $\rho>0$ set
$$\tilde{\underline{D}}_{\rho, \tilde{\mathbf{z}} } =  \bigsqcup_{\lambda} (\underline{D}_{\rho, \mathbf{z}^{\lambda} } \times \{ \lambda \}) .$$
For  $\mathcal{P} \prec \mathcal{P}^{\triangleright} $ and $\eta >0$ set
$$  Cut_{\mathcal{P}^{\triangleright} , \eta} ( \tilde{Diag}_{\mathcal{P} })= \bigsqcup_{\lambda}  (  Cut_{\mathcal{P}^{\triangleright} , \eta} ( Diag_{\mathcal{P} }^{\lambda}) \times \{\lambda\}  )$$

Let $(G,m,\mathcal{P}) \in \mathfrak{G}_*^{part} $, $\rho, \eta> 0$.
We say that the cluster condition is satisfied for $(G,m, \mathcal{P}, \rho, \eta)$ if 
\begin{equation} \label{matching-asintontically-isotopy}  
\tilde{\underline{Z}}^{\wr, (n)}_{G,m, \mathcal{P}} \cap \tilde{\underline{D}}_{\rho/n, \tilde{\mathbf{z}} }(\mathcal{P}^{\triangleright}) = \tilde{\underline{Z}}^{\wr, (n)}_{G,m, \mathcal{P}^{\triangleright}}  \cap \tilde{\underline{D}}_{\rho/n, \tilde{\mathbf{z}} } ,
  \end{equation}
for each $\mathcal{P}^{\triangleright} $ with $\mathcal{P} \prec \mathcal{P}^{\triangleright} $ and $ \mathbf{z} \in  Cut_{\mathcal{P}^{\triangleright} , \eta/n} ( \tilde{Diag}_{\mathcal{P} }) $.

We say that  the \emph{cluster condition} is satisfied for
$(G,m, \mathcal{P} , \rho)$ if there exists $\eta$ big enough such that the cluster condition is satisfied for
$(G,m,  \mathcal{P}, \rho, \eta)$ .

We say that  the \emph{convergence condition} is satisfied for $(G,m, \mathcal{P})$ if
  \begin{equation} \label{convergence-loop-isotopy} 
 Dil_n( \text{exp}^{-1}(\underline{Z}_{G,m,\mathcal{P}}^{\wr, (n)}  \cap \tilde{\underline{D}}_{\rho/n, \tilde{\mathbf{z}} } )) \rightarrow     \tilde{\underline{Z}}^{\flat}_{G,m, \mathcal{P}} \cap (\underline{D}_{\rho, \mathbf{0}}^{\flat} \times \R)   
 \end{equation}
 uniformly for $\tilde{\mathbf{z}}  \in  \tilde{Diag}_{\mathcal{P}}$,    for each $\rho>0$ . In  (\ref{convergence-loop-isotopy})  we have used $\tilde{\mathbf{fr}}$ to identify the tangent space of $T_{(z,\lambda)}(M \times \R)$ with $\R^3 \times \R$ for each $(z, \lambda) \in M \times \R$.

\subsubsection{Existence and Unicity}

The next Propositions are extensions of the one appearing in \cite{PSPCS1} and they can be proved in the same way.

\begin{proposition}   \label{existence-asymptotic-proposition}
Let $Z^{\flat}$ be a flat $PS$-cycles. Let  $\mathbf{w}$ be oriented embedded links. Let $\mathbf{fr}$  be a frame of $M$ compatible with $\mathbf{w}$.
There exists an    flat-approaching $PS$-cycle $Z^{\wr}$ converging  to $(Z^{\flat }, \mathbf{fr})$.
\end{proposition}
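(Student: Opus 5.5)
The plan is to build $Z^{\wr}$ by the same graph-by-graph induction used for Lemma \ref{extension-PS-lemma}, run separately for each $n$. For each $(G,m,\mathcal{P}) \in \mathfrak{G}^{part}_*$ we order the triples by the partial order on graphs and, for a fixed graph, by the reverse of the order on admissible partitions. Inside $M$, the chain $\underline{Z}^{\wr,(n)}_{G,m,\mathcal{P}}$ is then forced in the region near clusters by data already constructed. Concretely, on each set $\underline{D}_{\rho/n,\mathbf{z}}(\mathcal{P}^{\triangleright})$ with $\mathbf{z} \in Cut_{\mathcal{P}^{\triangleright},\eta/n}(Diag_{\mathcal{P}})$ and $\mathcal{P} \prec \mathcal{P}^{\triangleright}$, the cluster condition (\ref{matching-asintontically}) prescribes it through the coarser partition. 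On the remaining, "nuclear" region near $Diag_{\mathcal{P}}$ itself, it is prescribed by transplanting the flat chain.

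The transplant is done as follows. For $\mathbf{z} \in Diag_{\mathcal{P}}$ we use $\mathbf{fr}$ to identify $T_{z}M$ with $\R^3$. Since $\mathbf{fr}$ is compatible with $\mathbf{w}$, the tangent line of each $w^{\sharp}_{v^{\sharp}}$ becomes a constant direction, which we rotate once and for all to the direction of $w^{\flat}$. We then set
$$\underline{Z}^{\wr,(n)}_{G,m,\mathcal{P}} \cap \underline{D}_{\rho/n,\mathbf{z}} := \exp \circ Dil_n^{-1}\big(\underline{Z}^{\flat}_{G,m,\mathcal{P}} \cap \underline{D}^{\flat}_{\rho,\mathbf{0}}\big),$$
corrected by a small perturbation that bends the straight line $w^{\flat}$ onto the actual link $w^{\sharp}$. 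The perturbation goes to zero as $n\to\infty$, since the curvature of $\mathbf{w}$ is negligible at scale $1/n$. Using the translation condition on $Z^{\flat}$, these local pieces depend only on $\mathbf{z}$ up to translation along $Transl_{\mathcal{P}}$. They therefore glue into a family over $Diag_{\mathcal{P}}$, a fibration over the configuration of points on $\mathbf{w}$ and in $M$. The cluster condition for $Z^{\flat}$ guarantees that this family agrees with the pieces defined from coarser partitions on the overlaps. The forgetful compatibility conditions (\ref{stable-forget-condition-approching})--(\ref{unstable-forget-condition-approching}) are imposed by the same rule as in \cite{PSPCS1}. In the stable and removable cases we define the chain by the displayed product formula. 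In the non-removable case we take a $C^1$-small perturbation of the product, chosen inductively. Transversality is achieved by a further generic perturbation supported away from the regions already fixed.

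What remains is the cycle condition (\ref{cycle-condition-loop}). Its boundary terms $\delta_{e'}$ and $\partial_i m$ involve only triples strictly smaller in the induction, or the same graph with a coarser partition, all of which are already defined. The support and convergence conditions hold by construction, with the support radius $\rho^{\dagger}/n$ inherited from $\rho^{\dagger}$ for $Z^{\flat}$ via $Dil_n$.

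The main obstacle is the overlap between the flat-transplanted region and the region fixed by coarser partitions. This is where the new feature of this paper enters: the partitions $\mathcal{P}^{v^{\sharp}}$ carry cyclic orders along the link, and vertices of $H_{v^{\sharp}}$ may slide along $w^{\sharp}$ between clusters. I would first check that the ordering constraint defining $\underline{D}_{\rho,\mathbf{z}}(\mathcal{P}^{\triangleright})$ matches on both sides of (\ref{inclusion-disks-partion-loop-approching}). Next I would take $\eta$ large enough, using (\ref{disjoint-disks-approching}), so that the disks are disjoint and the gluing is a product. The remaining gap is then a relative cycle supported in a compact region of each fibre. Since $H(G)\neq\emptyset$ in the relevant cases, it can be filled as in the essential-uniqueness argument following Lemma \ref{extension-isotopies-isotopies}, equation (\ref{trivial-cycle}). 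The graph $G^{\heartsuit}$ is set equal to $\mathbf{w}$ directly.
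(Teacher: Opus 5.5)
You have the right ingredients: graph induction, using $\mathbf{fr}$ and $Dil_n^{-1}$ to transplant the $Transl_{\mathcal{P}}$-invariant flat chain over $Diag_{\mathcal{P}}$, bending $w^{\flat}$ onto $w^{\sharp}_{v^{\sharp}}$, and forgetful compatibility as in \cite{PSPCS1}. But you run the induction on partitions in the wrong direction, and as a result part of every chain is never constructed. Since $\mathcal{P}\prec\mathcal{P}^{\triangleright}$ means that $\mathcal{P}^{\triangleright}$ is coarser, for $\mathbf{z}\in Cut_{\mathcal{P}^{\triangleright},\eta/n}(Diag_{\mathcal{P}})$ the set $\underline{D}_{\rho/n,\mathbf{z}}(\mathcal{P}^{\triangleright})$ is a neighbourhood of a point of $Diag_{\mathcal{P}}$. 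It therefore belongs to the nuclear region of $\mathcal{P}$, not to its complement, because (\ref{convergence}) is imposed uniformly for all $\mathbf{z}\in Diag_{\mathcal{P}}$. Seen in $\underline{M}_{G,\mathcal{P}^{\triangleright}}$ through (\ref{inclusion-disks-partion-loop-approching}), the same set consists of configurations where the sub-clusters of a block of $\mathcal{P}^{\triangleright}$ are more than $\eta/n$ apart, i.e.\ it lies at infinity for the coarse chain. So (\ref{matching-asintontically}) is a boundary condition at infinity for the coarser chain, expressed through the finer ones. This is the content of the remark after (\ref{matching-nuclear}): the cluster condition ``defines the chain outside a compact set in terms of the partitions $\mathcal{P}'\prec\mathcal{P}$''.

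Concretely, in your ordering a coarsest partition $\mathcal{P}$ receives only the transplant on $\bigcup_{\mathbf{z}\in Diag_{\mathcal{P}}}\underline{D}_{\rho/n,\mathbf{z}}$. Its support $\underline{M}^{<\rho^{\dagger}/n}_{G,\mathcal{P}}$, however, allows the vertices of one block to split into several small clusters at arbitrary, even macroscopic, distance along $w^{\sharp}_{v^{\sharp}}$. There your chain is undefined. Yet your rule copies the finer chains from it on $Cut_{\mathcal{P},\eta/n}(Diag_{\mathcal{P}'})$ at all separations, which is precisely where (\ref{convergence}) for $\mathcal{P}'$ should govern them. The truncated transplant also has an artificial boundary at separation of order $\rho/n$ that no term of (\ref{cycle-condition-loop}) cancels. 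Contrary to what you state, that condition involves only $\partial_i m$ and smaller graphs, never the same graph with a coarser partition. Filling via (\ref{trivial-cycle}) cannot repair this, since the missing part is non-compact.

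The fix is to reverse the order and build finer partitions first. Then choose $(\rho_{\mathcal{P}},\eta_{\mathcal{P}})$ satisfying (\ref{covering-boundary-condition-asymptotic}), (\ref{not-overlapping-condition-asymptotic}) and (\ref{disjoint-disks-asymptotic}). This covers $\underline{M}^{<\rho^{\dagger}/n}_{G,\mathcal{P}}$ by a nuclear piece within $\rho_{\mathcal{P}}/n$ of $Diag_{\mathcal{P}}$, together with pieces indexed by finer $\mathcal{P}'\prec\mathcal{P}$. On the latter, copy the already constructed $\underline{Z}^{\wr,(n)}_{G,m,\mathcal{P}'}$; their cluster conditions and (\ref{not-overlapping-condition-asymptotic}) make these prescriptions agree on overlaps. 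Use your rescaled flat chain only on the nuclear piece, which is compact modulo $Diag_{\mathcal{P}}$. The paper defers this proof to \cite{PSPCS1}, but it reuses exactly this set-up ``as in the proof of Proposition \ref{existence-asymptotic-proposition}'' in the proof of Lemma \ref{coherent-existence-lemma}.
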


\begin{proposition} \label{unicity-asymptotic-proposition}

Let $Z^{\flat, 0}, Z^{\flat, 1}$ be flat $PS$-cycles. Let $\mathbf{w}^0,\mathbf{w}^1$ be oriented embedded links. Let $\mathbf{fr}^0$ and $\mathbf{fr}^1$ be frames of $M$ compatible with $\mathbf{w}^0$ and  $\mathbf{w}^1$ respectively.

Let   $Z^{\wr,0}, Z^{\wr,1}$  be flat-approaching $PS$-cycles converging  to $(Z^{\flat,0 }, \mathbf{fr}^0), (Z^{\flat, 1 },\mathbf{fr}^1)$ respectively.

For each   $\tilde{Z}^{\flat}$ isotopy between  $Z^{\flat, 0}$ and $Z^{\flat, 1}$,   $ \tilde{\mathbf{w}}$ isotopy between $\mathbf{w}^0$ and  $\mathbf{w}^1$,  $\tilde{\mathbf{fr}}$  isotopy between   $\mathbf{fr}^0$ and $\mathbf{fr}^1$ compatible with  $ \tilde{\mathbf{w}}$, there exists an isotopy of flat-approaching $PS$-cycles $\tilde{Z}^{\wr}$ between $Z^{\wr,0}$ and  $Z^{\wr,1}$ converging to  $(\tilde{Z}^{\flat},  \tilde{\mathbf{fr}}  )$.




\end{proposition}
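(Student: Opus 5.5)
We construct the isotopy $\tilde{Z}^{\wr}$ by induction on the pairs $(G,\mathcal{P})$, following the scheme of the corresponding unicity result in \cite{PSPCS1}. We order $\mathfrak{G}^{part}$ lexicographically: first by the partial order on graphs used in the extension Lemmas, then, for fixed $G$, by decreasing order of admissible partitions with respect to $\preccurlyeq$. Maximal partitions come first, so that the cluster condition (\ref{matching-asintontically-isotopy}) for $(G,\mathcal{P})$ only involves the already constructed chains $\tilde{\underline{Z}}^{\wr,(n)}_{G,m,\mathcal{P}^{\triangleright}}$ with $\mathcal{P}\prec\mathcal{P}^{\triangleright}$. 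At the start we fix $\lambda^{\dagger}$ and set $\tilde{Z}^{\wr}$ equal to $Z^{\wr,0}\times\R_{<-\lambda^{\dagger}}$ and $Z^{\wr,1}\times\R_{>\lambda^{\dagger}}$. We also transport the data along $\tilde{\mathbf{w}}$ by an ambient isotopy of $M$ carrying $\mathbf{w}^0$ to $\mathbf{w}^{\lambda}$. This identifies $\tilde{\underline{M}}_{G,\mathcal{P}}$ with $\underline{M}^{0}_{G,\mathcal{P}}\times\R$, and it identifies $\tilde{Diag}_{\mathcal{P}}$ with $Diag^0_{\mathcal{P}}\times\R$.

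For a fixed $(G,m,\mathcal{P})$ we build the chain in three regions.

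\textbf{Near the cut diagonal.} On $\tilde{\underline{D}}_{\rho/n,\tilde{\mathbf{z}}}(\mathcal{P}^{\triangleright})$ with $\tilde{\mathbf{z}}\in Cut_{\mathcal{P}^{\triangleright},\eta/n}(\tilde{Diag}_{\mathcal{P}})$, the chain is forced by the cluster condition to agree with the previously built $\tilde{\underline{Z}}^{\wr,(n)}_{G,m,\mathcal{P}^{\triangleright}}$. Compatibility among different $\mathcal{P}^{\triangleright}$ follows from the cluster conditions already imposed on the larger partitions.

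\textbf{On the rest of the $\rho/n$-neighbourhood of $\tilde{Diag}_{\mathcal{P}}$.} Here we transplant the flat isotopy. We set the chain equal to $\exp\circ Dil_n^{-1}$ applied to $\tilde{\underline{Z}}^{\flat}_{G,m,\mathcal{P}}\cap(\underline{D}^{\flat}_{\rho,\mathbf{0}}\times\R)$, parametrized over $\tilde{Diag}_{\mathcal{P}}$. We use $\tilde{\mathbf{fr}}$ to identify tangent spaces with $\R^3$. The translation condition on $\tilde{Z}^{\flat}$ makes this independent of the base point. The compatibility of $\tilde{\mathbf{fr}}$ with $\tilde{\mathbf{w}}$ guarantees that the straight line $w^{\flat}$ corresponds to the tangent direction of $\mathbf{w}^{\lambda}$. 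Hence the support condition (with $J^0$-pieces lying on $\mathbf{w}^{\lambda}$) and the convergence condition (\ref{convergence-loop-isotopy}) hold up to an error that vanishes as $n\to\infty$. The flat cluster condition (\ref{matching-nuclear-iso}) ensures this definition glues with the previous region.

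\textbf{Away from these neighbourhoods.} Away from them and for $|\lambda|<\lambda^{\dagger}$, the cycle condition (\ref{cycle-condition-loop}) prescribes the boundary: the already built faces $\partial_i m$, the $\delta_{e'}$-terms from smaller graphs, and the two ends $Z^{\wr,0}$ and $Z^{\wr,1}$. We fill in the chain exactly as in Lemma \ref{extension-PS-lemma-isotopy}, choosing it generically for transversality and imposing forgetful compatibility via the product rules (\ref{stable-forget-condition-approching})--(\ref{unstable-forget-condition-approching}). The case $E^{ex}(G)\neq\emptyset$ is handled directly by forgetful compatibility, as in the remark following Lemma \ref{extension-isotopies-isotopies}.

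\textbf{Main obstacle.} The hard step is the filling in the last region. The prescribed boundary must be a cycle that is null-homologous in a space of multi-loops satisfying the support condition at scale $\rho^{\dagger}/n$, and this must hold uniformly in $n$. For $H(G)\neq\emptyset$ this works as in \cite{PSPCS1}: the relevant multi-loop spaces, localized near $\tilde{Diag}_{\mathcal{P}}$, are contractible in the fibre directions, so the obstruction vanishes.

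The delicate cases are the graphs with $H(G)=\emptyset$, in particular $G^{\heartsuit}$ and graphs with $V_{\vartriangle}\neq\emptyset$. Here, as in subsection \ref{obstruction-section}, the obstruction is a rational multiple of the limit-support multi-loop. For these graphs I would first try to show that both ends and the isotopy are forced. By Proposition \ref{existence-flat-proposition}, the flat chains vanish for such $G$ except $\underline{Z}^{\flat}_{G^{\heartsuit}}=\mathbf{w}^{\flat}$. By convergence, $\underline{Z}^{\wr,(n)}_{G^{\heartsuit}}$ is then close to $\mathbf{w}^0$, respectively $\mathbf{w}^1$. So we can simply take $\tilde{\underline{Z}}^{\wr,(n)}_{G^{\heartsuit}}$ to be the trace $\tilde{\mathbf{w}}$ of the link isotopy, suitably perturbed near the ends. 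The remaining obstruction coefficients then vanish because they are computed locally by the flat isotopy $\tilde{Z}^{\flat}$, which is unobstructed by Proposition \ref{unicity-flat-proposition}.
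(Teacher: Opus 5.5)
There is a genuine gap. Your generic filling of the region away from the $\rho/n$-neighbourhood of $\tilde{Diag}_{\mathcal{P}}$ breaks the convergence condition, and your coarse-to-fine ordering spreads that failure to the finer partitions. The identity (\ref{matching-asintontically-isotopy}) is symmetric, but the paper uses it as a boundary condition at infinity for the chain of $\mathcal{P}$ in terms of the \emph{finer} partitions $\mathcal{P}'\prec\mathcal{P}$. See the remark after (\ref{matching-nuclear}), and the covering (\ref{covering-boundary-condition-asymptotic}) in the proof of Lemma \ref{coherent-existence-lemma}; Lemma \ref{coherent-iso-existence-lemma}, its isotopy version, is said to follow the proof of this very proposition.

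In your scheme, over $\tilde{\mathbf z}\in Cut_{\mathcal{P}^{\triangleright},\eta/n}(\tilde{Diag}_{\mathcal{P}})$ the chain of $\mathcal{P}$ is copied from $\tilde{\underline{Z}}^{\wr,(n)}_{G,m,\mathcal{P}^{\triangleright}}$. The relevant configurations are those in which a $\mathcal{P}^{\triangleright}$-block has split into sub-clusters at mutual distance larger than $\eta/n$, possibly macroscopically far apart along the link. For $\mathcal{P}^{\triangleright}$ these configurations lie outside its $\rho/n$-core, which is exactly where you filled generically via Lemma \ref{extension-PS-lemma-isotopy}. Nothing forces that filling, rescaled by $Dil_n\circ\exp^{-1}$ around $\tilde{\mathbf z}$, to converge to $\tilde{\underline{Z}}^{\flat}_{G,m,\mathcal{P}}$. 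Yet (\ref{convergence-loop-isotopy}) for $\mathcal{P}$ must hold uniformly on all of $\tilde{Diag}_{\mathcal{P}}$.

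The coarse chain is already damaged as well. Convergence is required for \emph{every} $\rho>0$, so a generic chain placed just outside a fixed $\rho/n$-core violates it for larger radii. The paper says this explicitly in the proof of the theorem on normalized coherent classes: an extension produced by the argument of Lemma \ref{extension-PS-lemma} can be taken to satisfy the cluster condition, but in general not the convergence condition.

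The missing idea is that, with a frame of all of $M$, nothing needs to be filled. Treat the partitions of a fixed $(G,m)$ from finer to coarser. Choose constants $(\rho_{\mathcal{P}'},\eta_{\mathcal{P}'})$ so that, as in (\ref{covering-boundary-condition-asymptotic}), $\tilde{\underline{M}}^{<\rho^{\dagger}/n}_{G,\mathcal{P}}$ is covered by the $\rho_{\mathcal{P}}/n$-core around $\tilde{Diag}_{\mathcal{P}}$ together with the regions around the cut diagonals of the finer $\mathcal{P}'\prec\mathcal{P}$. These regions should overlap only for comparable partitions, as in (\ref{not-overlapping-condition-asymptotic}).

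On the core you put the transplanted $\tilde{\underline{Z}}^{\flat}_{G,m,\mathcal{P}}$, and on the other regions the already constructed $\tilde{\underline{Z}}^{\wr,(n)}_{G,m,\mathcal{P}'}$. The flat cluster condition (\ref{matching-nuclear-iso}) makes the pieces agree on overlaps. So the chain is uniquely determined, and it inherits the cycle, transversality, support, cluster and convergence conditions (for all $\rho$) from $\tilde{Z}^{\flat}$ and the finer chains. Near $\lambda=\pm\lambda^{\dagger}$ it must additionally be matched with $Z^{\wr,0}$ and $Z^{\wr,1}$, which agree with the transplanted flat cycles only asymptotically; your proposal leaves this point open too.

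All of the obstruction theory, including the graphs with $H(G)=\emptyset$, is contained in the existence of the flat isotopy $\tilde{Z}^{\flat}$ (Proposition \ref{unicity-flat-proposition}), and that isotopy is a hypothesis here. Your \emph{main obstacle} is therefore an artifact of the free filling. The arguments you sketch for it (contractibility in the fibre directions, obstruction coefficients computed locally by the flat isotopy) would not amount to a proof in any case.
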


\subsection{Proof of Theorem  \ref{coherent-theorem}} \label{proof-coherent-section}

In this section we prove Theorem \ref{coherent-theorem}. We shall characterize  $Z^{  \mathbf{w}, \mathbf{fr} }  $ extending the support and cluster conditions. 




We extend the support condition as follows. For $G \in \mathfrak{G}$ and $\rho >0$ define the set  $\underline{M}_{G}^{< \rho}$ as the set of elements
$( w_v , t_v)_v \in \underline{M}_{G}$ with the following properties:
\begin{itemize}
 \item 
$ Diameter ( \sqcup_{v \in V_c} w_v(\mathbb{S}^1) ) < \rho$  for any $c \in  Comp_0(G)$  ;
\item
There exists a decomposition in disjoint sets $ V_{\vartriangle}= \sqcup_{v^{\sharp} \in V^{\sharp}}  V_{\vartriangle}^{v^{\sharp}}$ such that the following happen.
For any $v^{\sharp} \in V^{\sharp}$ there exist two finite arrays 
\begin{equation} \label{covering-support}
(I_j)_{j \in J^0} , (I_j)_{j \in J^1}
\end{equation}  
of  disjoint open connected subsets of $ S^1$, with the following properties:
\begin{itemize}
\item for any $j \in J^0$ we have (up to parametrizations) $w_{v^{\sharp}}= w^{\sharp}_{v^{\sharp}}$ on $I_j$ ; 
\item
there exists a decomposition in disjoint sets $ V_{\vartriangle}^{v^{\sharp}}= \sqcup_{j \in J_1}  V_{\vartriangle,j}$  with 
$ I_j  \cap Im(t_v)   \neq \emptyset$ if $V_{\vartriangle,j} = \emptyset$ and such that
$$Diam(w_{v^{\sharp}}(I_j) \sqcup \bigsqcup_{v \in V_{\vartriangle,j} }w_v(\mathbb{S}^1)) < \rho_0$$
for any $j \in J^1$;
\item $ S^1 = \sqcup_{j \in J^0 \sqcup J^1} I_j. $
\end{itemize}

\end{itemize}

We say that the support  condition is satisfied if  for  $(G,m) \in \mathfrak{G}_*  $  if there exists  a  real positive number $\rho^{\dagger}$ such that
$$ Supp(  \underline{Z}_{G,m}^{( \mathbf{w}, \mathbf{fr} ), (n)} )  \subset  \underline{M}_{G}^{<\rho^{\dagger}/n} \quad \forall n .$$

In order to extend the cluster condition we define, for $(G, \mathcal{P})  \in \mathfrak{G}^{part}$ and $\eta>0$,
$ Cut_{\eta} ( Diag_{\mathcal{P} })$ as the set of $\mathbf{z}  \in Diag_{\mathcal{P} }$ such that
\begin{itemize}
\item for each $v^{\sharp} \in V^{\sharp}$,  the cyclic order  on $ w_{ v^{\sharp}}^{\sharp}$ of the array of points $(z_v)_{v \in V(\mathcal{P}, v^{\sharp})}$  is compatible with the cyclic order of $ \mathcal{P}^{v^{\sharp}}$;
 \item  $d ( \mathbf{z}, Diag_{\mathcal{P}^{\triangleright, \prime}}  ) > \eta  $ for each $\mathcal{P}^{\triangleright, \prime}$ with $\mathcal{P}  \prec \mathcal{P}^{\triangleright, \prime} $.
\end{itemize}

 Denote by   $\underline{D}_{\rho, \mathbf{z}}^{\ast}$ the set of elements $({w}_v,t_v)_v \in \underline{D}_{\rho, \mathbf{z}}$ such that the for each $v^{\sharp} \in V^{\sharp}$  and $P \in V(\mathcal{P},  v^{\sharp} )$ the following happen:  
 if $H_{v^{\sharp}} \cap P \neq H_{v^{\sharp}} $ then the array  $t_P$ is compatible with the ordering of $H_{v^{\sharp}} \cap P$ obtained from the cyclic ordering of $H_{v^{\sharp}}$.



For $\eta $ big enough, we have an inclusion of sets
\begin{equation} \label{inclusion-disks-partion-loop}
     \underline{D}_{\rho, \mathbf{z}}^{\ast} \hookrightarrow   \underline{M}_{G} 
\end{equation}
compatible with the cycle condition.



We require that for each $(G,m,\mathcal{P}) \in \mathfrak{G}_*^{part}$, and $\rho>0$ there exists $\eta$ big enough such that
\begin{equation} \label{matching-asintontically-coherent}  
\underline{Z}_{ (G,m) } ^{( \mathbf{w}, \mathbf{fr}),(n)} \cap \underline{D}_{\rho/n ,  \mathbf{z}}^{\ast}  = \underline{Z}_{G,m, \mathcal{P}}^{\wr, (n)}  \cap \underline{D}_{\rho/n , \mathbf{z}}^{\ast}
  \end{equation}
for each $ \mathbf{z} \in Cut_{\eta/n}( Diag_{\mathcal{P}})$ and $n$ big enough.

\begin{lemma} \label{coherent-existence-lemma}
Given a flat-approaching $PS$-cycle ${Z}_{}^{\wr}$, there exists a unique nice $PS$-cyclce ${Z}^{( \mathbf{w}, \mathbf{fr}) } \in \mathcal{Z}$ which satisfies the  support property and the cluster condition (\ref{matching-asintontically-coherent}).
\end{lemma}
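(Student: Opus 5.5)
We build ${Z}^{(\mathbf{w},\mathbf{fr})}$ by induction on the partial order of graphs used in the extension Lemmas, separately for each $n$. The pieces $\underline{Z}^{\wr,(n)}_{G,m,\mathcal{P}}$ then fix the chain near every stratum $Cut_{\eta/n}(Diag_{\mathcal{P}})$, and the remaining freedom is shown to be trivial.

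\textbf{Step 1 (patching).} Fix $(G,m)$ and $n$, and assume $\underline{Z}^{(\mathbf{w},\mathbf{fr}),(n)}_{G',m'}$ is defined for all $(G',m')\prec(G,m)$. By the support condition, every multi-loop in the support lies in $\underline{M}_G^{<\rho^\dagger/n}$. Any such element therefore determines a decomposition $V_\vartriangle=\sqcup_{v^\sharp}V_\vartriangle^{v^\sharp}$ and clusters $(I_j)_{j\in J^1}$ along each $w^\sharp_{v^\sharp}$. Recording which vertices and half-edges lie in a common $\rho^\dagger/n$-cluster, and the cyclic order of the clusters along $w^\sharp_{v^\sharp}$, gives an admissible partition $\mathcal{P}$ of $V^+(G)$ together with a point $\mathbf{z}\in Diag_{\mathcal{P}}$. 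The sets $\underline{D}^{\ast}_{\rho/n,\mathbf{z}}$ with $\mathbf{z}\in Cut_{\eta/n}(Diag_{\mathcal{P}})$ then cover $\underline{M}_G^{<\rho^\dagger/n}$, provided $\rho$ and $\eta$ are chosen so that at each stratum $\rho\gg\eta$ relative to the next coarser one.

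On each such set, (\ref{matching-asintontically-coherent}) forces
\[
\underline{Z}^{(\mathbf{w},\mathbf{fr}),(n)}_{G,m}=\underline{Z}^{\wr,(n)}_{G,m,\mathcal{P}} .
\]
We then show these local definitions agree on overlaps. An overlap comes from comparable partitions $\mathcal{P}\prec\mathcal{P}^\triangleright$, and there the required agreement is exactly the cluster condition (\ref{matching-asintontically}) of $Z^{\wr}$. Gluing gives a chain, and it is unique because the cover exhausts the support. This is what makes the statement an existence-and-uniqueness claim rather than a choice. Note that the trivial partition need not exist for these graphs, since $V_\vartriangle$ vertices must attach to some $v^\sharp$; this is why the whole support must be covered by the non-trivial strata.

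\textbf{Step 2 (PS-cycle axioms).} Transversality, forgetful compatibility and the cycle condition are local. On each patch they follow from the corresponding conditions for $Z^{\wr}_{G,m,\mathcal{P}}$, using that the inclusions (\ref{inclusion-disks-partion-loop}) are compatible with the operations $\delta_e,\delta_v$. For $\delta_e$ with $e\notin E(G,\mathcal{P})$, one checks that contracting such an edge moves the configuration into a patch of the smaller graph $\delta_eG$, where the inductive hypothesis already fixes the chain. The partitions must be transported along $\delta_e$ and $\delta_v$ using the obvious map on $V^+$.

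\textbf{Step 3 (niceness).} We verify that $(Z^{(\mathbf{w},\mathbf{fr}),(n)})_n$ is a nice $PS$-cycle. By support, the limit support consists of $\mathbf{w}$ itself, with vertices mapped to $V^\sharp$, together with shrinking loops; in particular it is finite. For the interpolating isotopies $\tilde Z^{int,(n)}$ we apply the same patching to an isotopy of flat-approaching cycles between consecutive scales $n-1$ and $n$. Such an isotopy is obtained from Proposition~\ref{unicity-asymptotic-proposition} with constant $\mathbf{w}$ and $\mathbf{fr}$, reparametrising the scale.

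\textbf{Main obstacle.} The hardest step is the consistency in Step 1: the local definitions must agree on triple and higher overlaps, and the covering constants must be chosen uniformly in $n$. I would first choose a descending sequence of constants $\eta_k\ll\rho_k$ indexed by the depth of the partition poset, and prove by induction on depth that every point of $\underline{M}_G^{<\rho^\dagger/n}$ lies in a patch $\underline{D}^\ast_{\rho_k/n,\mathbf{z}}$ with $\mathbf{z}\in Cut_{\eta_k/n}$, for a unique maximal $\mathcal{P}$. This can be done using the disjoint-disk property (\ref{disjoint-disks-approching}).
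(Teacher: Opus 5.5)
Your proposal follows essentially the same route as the paper: induction on $(G,m)$; constants $(\rho_{\mathcal{P}},\eta_{\mathcal{P}})$ chosen so that the regions $\underline{M}_{G}(\mathcal{P},\rho_{\mathcal{P}}/n,\eta_{\mathcal{P}}/n)$ cover $\underline{M}_G^{<\rho^{\dagger}/n}$ and overlap only for comparable partitions; $\underline{Z}^{(\mathbf{w},\mathbf{fr}),(n)}_{G,m}$ defined by patching the $\underline{Z}^{\wr,(n)}_{G,m,\mathcal{P}}$, with compatibility coming from the cluster condition of $Z^{\wr}$; and uniqueness because the patches exhaust the support. One small correction to Step 2: edges $e\notin E(G,\mathcal{P})$ contribute nothing on a patch, since their endpoints lie in disjoint disks, and the boundary terms in the cycle condition for $(G,m)$ are the $\delta_{e'}\underline{Z}^{(\mathbf{w},\mathbf{fr})}_{G',m'}$ with $\delta_{e'}G'=G$; these are defined earlier in the induction and agree with $Z^{\wr}$ on the patches because the constants are also chosen so that these earlier chains satisfy (\ref{matching-asintontically-coherent}), whereas $\delta_e G$ comes later in the induction, not earlier.
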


\begin{proof}

The existence of $\underline{Z}_{  \mathbf{w}, \mathbf{fr} } $ can be proved adapting the proof of  Proposition \ref{existence-asymptotic-proposition}.




Let $(G,m) \in \mathfrak{G}_*$. Assume  we have defined   $ \underline{Z}^{  ( \mathbf{w}, \mathbf{fr})}_{\prec (G,m) }  $.

Take $\rho^{\dagger}$ big enough such that the support condition of radius $\rho^{\dagger}$ is satisfied for  $ \underline{Z}^{  ( \mathbf{w}, \mathbf{fr})}_{(G',m')}  $  when $(G',m') \prec (G,m)$ and for  $\underline{Z}^{\wr}_{(G',m')} $   when $(G',m') \preccurlyeq (G,m)$.



As in the proof of  Proposition \ref{existence-asymptotic-proposition} we may define real positive numbers 
 $( \rho_{\mathcal{P}},   \eta_{\mathcal{P}})_{\mathcal{P}}$ such that  

\begin{itemize}

\item \begin{equation} \label{covering-boundary-condition-asymptotic}
    \underline{M}_{G,\mathcal{P}}^{ <  \rho^{\dagger}/n }  \subset   \underline{M}_{G, \mathcal{P}}( \rho_{\mathcal{P}/n} ) \sqcup  \bigsqcup_{\mathcal{P}' \prec \mathcal{P}}  \underline{M}_{G, \mathcal{P}}(\mathcal{P}', \rho_{\mathcal{P}'}/n,   \eta_{\mathcal{P}'}/n ) ;
\end{equation} 
\item \begin{equation} \label{not-overlapping-condition-asymptotic}
 \underline{M}_{G,   \mathcal{P}}(\mathcal{P}_1, \rho_{\mathcal{P}_1/n},   \eta_{\mathcal{P}_1}/n ) \cap  \underline{M}_{G,  \mathcal{P} }(\mathcal{P}_2, \rho_{\mathcal{P}_2}/n,   \eta_{\mathcal{P}_2} /n ) \neq \emptyset \Longrightarrow
  \mathcal{P}_1 \preccurlyeq \mathcal{P}_2 \text{   or   }  \mathcal{P}_2 \preccurlyeq \mathcal{P}_1,
  \end{equation}
when $\mathcal{P}_1 \prec \mathcal{P}$ and $\mathcal{P}_2 \prec \mathcal{P}$. 
\item 
\begin{equation} \label{not-overlapping-condition-asymptotic-final}
 \underline{M}_{G}(\mathcal{P}_1, \rho_{\mathcal{P}_1/n},   \eta_{\mathcal{P}_1}/n ) \cap  \underline{M}_{G }(\mathcal{P}_2, \rho_{\mathcal{P}_2}/n,   \eta_{\mathcal{P}_2} /n ) \neq \emptyset \Longrightarrow
  \mathcal{P}_1 \preccurlyeq \mathcal{P}_2 \text{   or   }  \mathcal{P}_2 \preccurlyeq \mathcal{P}_1,
  \end{equation}
for each $\mathcal{P}_1,\mathcal{P}_2 $;

\item
\begin{equation} \label{disjoint-disks-asymptotic}
  \mathbf{z}' \in Cut_{\mathcal{P}, \eta_{\mathcal{P}'}} ( Diag_{\mathcal{P}'}), v_1 \sim_{\mathcal{P}} v_2 ,   v_1 \nsim_{\mathcal{P}'} v_2 \Longrightarrow  {D}_{\rho_{ \mathcal{P}'}, z_{ v_1}'} \cap  {D}_{\rho_{ \mathcal{P}'}, z_{ v_2}'} = \emptyset .
\end{equation} 
for $\mathcal{P}' \prec \mathcal{P}$. 
Moreover we assume

\begin{equation} \label{disjoint-disks-asymptotic-final}
  \mathbf{z} \in Cut_{ \eta_{\mathcal{P}}} ( Diag_{\mathcal{P}}),    v_1 \nsim_{\mathcal{P}} v_2 \Longrightarrow  {D}_{\rho_{ \mathcal{P}}, z_{ v_1}} \cap  {D}_{\rho_{ \mathcal{P}}, z_{ v_2}} = \emptyset .
\end{equation}

\item
 $\underline{Z}^{\wr}$ satisfies the cluster condition for
\begin{equation}  \label{matching-preview-asymptotic}
    (G',m', \mathcal{P},  \rho_{ \mathcal{P}}, \eta_{ \mathcal{P}})
    \end{equation}
when $(G',m') \preccurlyeq (G,m)$    ;

\item
  $ \underline{Z}^{  ( \mathbf{w}, \mathbf{fr})}_{\prec G}$ satisfies the cluster condition (\ref{matching-asintontically-coherent}) for
\begin{equation}  \label{matching-preview-asymptotic-final}
    (G',m', \mathcal{P},  \rho_{ \mathcal{P}}, \eta_{ \mathcal{P}})
    \end{equation}
    when $(G',m') \prec (G,m)$    .


\end{itemize}



Condition (\ref{covering-boundary-condition-asymptotic}) implies
\item \begin{equation} \label{covering-boundary-condition-asymptotic-end}
    \underline{M}_{G}^{ <  \rho^{\dagger}/n }  \subset    \bigsqcup_{\mathcal{P}}  \underline{M}_{G}(\mathcal{P}, \rho_{\mathcal{P}}/n,   \eta_{\mathcal{P}}/n ) .
\end{equation}

There exists a unique  chain $\underline{Z}^{  ( \mathbf{w}, \mathbf{fr}), (n)}_{G,m } $  with support on $ \underline{M}_{G}^{< \rho^{\dagger}/n}$ such that for each $ \mathcal{P}$ we have
\begin{equation} \label{definition-condititon-final}
 \underline{Z}_{G,  m  }^{( \mathbf{w}, \mathbf{fr}), (n)}  =  \underline{Z}_{G,  m  , \mathcal{P}}^{\wr, (n)} 
 \end{equation}
on $ \underline{M}_{G} (\mathcal{P}, \rho_{G,m, \mathcal{P}},   \eta_{G,m,\mathcal{P}} )     $.
Properties ( \ref{matching-preview-asymptotic}) and  (\ref{not-overlapping-condition-asymptotic}) imply that the conditions are compatible for different $\mathcal{P}$.

$ \underline{Z}_{G,  m }^{( \mathbf{w}, \mathbf{fr}),(n)}$ satisfies the transversality condition.

Conditions (\ref{matching-preview-asymptotic}),    (\ref{matching-preview-asymptotic-final}) 
and the cycle condition for $\underline{Z}^{\wr}$ tells us that  $ \underline{Z}^{( \mathbf{w}, \mathbf{fr}), (n)}$ satisfies the cycle condition for $(G,m)$.
.

The cluster condition follows from property    (\ref{definition-condititon-final}) as in \cite{PSPCS1}.


\end{proof}

We now consider isotopies. As before, let $\mathbf{w}^0$ and  $\mathbf{w}^1$ be oriented embedded links and let
$$ \tilde{\mathbf{w}} = ( \mathbf{w}^{\lambda} )_{\lambda \in [0,1]}$$ 
be an isotopy between $\mathbf{w}^0$ and  $\mathbf{w}^1$.
Let $\mathbf{fr}^0$ and $\mathbf{fr}^1$ be frames of $M$ compatible with  $\mathbf{w}^0$ and $\mathbf{w}^1$ respectively and let $\tilde{\mathbf{fr}}$ be an isotopy  between  $\mathbf{fr}^0$ and $\mathbf{fr}^1$ compatible with $\tilde{\mathbf{w}}$.

Define $ \tilde{\underline{M}}_{G}$ as the set of arrays $((w_v,t_v)_v, \lambda)$ with $(w_v,t_v)_v \in  {\underline{M}}_{G}^{\lambda}$,
where, for each $\lambda \in \R$, the space ${\underline{M}}_{G}^{\lambda}$ is defined using $\mathbf{w}^{\lambda}$.

Set 
$$ \tilde{\underline{M}}_{G}^{<\rho^{\dagger}} = \bigsqcup_{\lambda}  ({\underline{M}}_{G}^{\lambda, <\rho^{\dagger}} \times \{\lambda\} )  \subset  \tilde{\underline{M}}_{G},$$
i.e., the set of arrays $((w_v,t_v)_v,\lambda)$ such that $(w_v,t_v)_v \in  {\underline{M}}_{G}^{\lambda, <\rho_0}$.

We say that the support condition is satisfied for $(G,m) \in \mathfrak{G}_*  $  if there exists  a  real positive number $\rho^{\dagger}$ such that
$$ Supp(   \underline{Z}_{G,m}^{( \mathbf{w}, \mathbf{fr} ), (n)} )  \subset  \tilde{\underline{M}}_{G}^{<\rho^{\dagger}/n} \quad \forall n .$$



For each partition $\mathcal{P}  $ and  real positive $\eta >0$ set
$$  Cut_{ \eta} ( \tilde{Diag}_{\mathcal{P} })= \bigsqcup_{\lambda} Cut_{\eta} ( Diag_{\mathcal{P} }^{\lambda} ) \times \{ \lambda \}   $$

Let $(G,m,\mathcal{P}) \in \mathfrak{G}_*^{part} $, $\rho, \eta> 0$.
We say that the cluster condition is satisfied for $(G,m, \mathcal{P}, \rho, \eta)$ if 
\begin{equation} \label{matching-asintontically-isotopy-final}  
 \underline{Z}_{G,m}^{( \mathbf{w}, \mathbf{fr} ), (n)} \cap \tilde{\underline{D}}_{\rho/n, \tilde{\mathbf{z}} }^{\ast} = \tilde{\underline{Z}}^{\wr, (n)}_{G,m, \mathcal{P}}  \cap \tilde{\underline{D}}_{\rho/n, \tilde{\mathbf{z}} }^{\ast} ,
  \end{equation}
for each $\mathcal{P}$ and $ \tilde{\mathbf{z}} \in  Cut_{ \eta/n} ( \tilde{Diag}_{\mathcal{P} }) $.

We say that  the cluster condition is satisfied for
$(G,m, \mathcal{P} , \rho)$ if there exists $\eta$ big enough such that the cluster condition is satisfied for
$(G,m,  \mathcal{P}, \rho, \eta)$ .

\begin{lemma}   \label{coherent-iso-existence-lemma}



Let   $Z^{\wr,0}, Z^{\wr,1}$  be flat-approaching $PS$-cycles and 
let $\underline{Z}^{  \mathbf{w}^0, \mathbf{fr}^0 }$ and $\underline{Z}^{  \mathbf{w}^1, \mathbf{fr}^1 } $
be the corresponding coherent cycles as in Lemma \ref{coherent-existence-lemma}.

To an isotopy $\tilde{Z}^{\wr}$  of flat-approaching $PS$-cycles between  $Z^{\wr,0} $ and $Z^{\wr,1}$ corresponds an isotopy $\underline{\tilde{Z}}^{  \mathbf{w}, \mathbf{fr} } $ between  
$\underline{Z}^{  \mathbf{w}, \mathbf{fr} }$ and $\underline{Z}^{  \mathbf{w}, \mathbf{fr} } $.


\end{lemma}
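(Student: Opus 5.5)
The plan is to repeat the construction of Lemma \ref{coherent-existence-lemma} in the one-parameter setting, replacing $\underline{M}_G$, $Diag_{\mathcal{P}}$ and $\underline{D}^{\ast}_{\rho,\mathbf{z}}$ by their families $\tilde{\underline{M}}_G$, $\tilde{Diag}_{\mathcal{P}}$, $\tilde{\underline{D}}^{\ast}_{\rho,\tilde{\mathbf{z}}}$ over $\lambda\in\R$. We take as definition of $\underline{\tilde{Z}}^{\mathbf{w},\mathbf{fr}}$ the unique sequence of chains on $\tilde{\underline{M}}_G$ that satisfies the one-parameter support condition and the cluster condition (\ref{matching-asintontically-isotopy-final}) with respect to $\tilde{Z}^{\wr}$. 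We then check that it is an isotopy of nice $PS$-cycles with the prescribed ends.

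We proceed by induction on $(G,m)\in\mathfrak{G}_*$ with respect to the partial order. Assume $\underline{\tilde{Z}}^{\mathbf{w},\mathbf{fr}}_{\prec(G,m)}$ has been constructed. Choose $\rho^{\dagger}$ so that the support condition of radius $\rho^{\dagger}/n$ holds for the lower terms and for $\tilde{Z}^{\wr}_{(G',m')}$ with $(G',m')\preccurlyeq(G,m)$. Then choose constants $(\rho_{\mathcal{P}},\eta_{\mathcal{P}})_{\mathcal{P}}$ satisfying the one-parameter analogues of (\ref{covering-boundary-condition-asymptotic})–(\ref{matching-preview-asymptotic-final}), uniformly in $\lambda$. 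This is possible because the isotopies $\tilde{\mathbf{w}}$ and $\tilde{\mathbf{fr}}$ are constant for $|\lambda|\gg 0$, so only a compact range of $\lambda$ has to be controlled. Since $\tilde{\underline{M}}_G^{<\rho^{\dagger}/n}$ is covered by the regions $\tilde{\underline{M}}_G(\mathcal{P},\rho_{\mathcal{P}}/n,\eta_{\mathcal{P}}/n)$, we define $\underline{\tilde{Z}}^{(n)}_{G,m}$ on each region to be $\tilde{Z}^{\wr,(n)}_{G,m,\mathcal{P}}$. These local definitions agree on overlaps: the non-overlapping property forces $\mathcal{P}_1\preccurlyeq\mathcal{P}_2$ or the reverse, and the cluster condition for $\tilde{Z}^{\wr}$ in (\ref{matching-asintontically-isotopy}) identifies the two definitions. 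This gives existence and uniqueness at step $(G,m)$.

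Next we verify the axioms. Transversality and forgetful compatibility are local, so they are inherited from $\tilde{Z}^{\wr}$. The cycle condition follows as in Lemma \ref{coherent-existence-lemma} from the cycle condition of $\tilde{Z}^{\wr}$ together with the inductive cluster conditions. For the end condition, for $\lambda<-\lambda^{\dagger}$ we have $\tilde{Z}^{\wr}=Z^{\wr,0}\times\R$, so the restricted chain satisfies the support and cluster conditions that characterize $\underline{Z}^{\mathbf{w}^0,\mathbf{fr}^0}$. The uniqueness part of Lemma \ref{coherent-existence-lemma} then gives $\underline{\tilde{Z}}=\underline{Z}^{\mathbf{w}^0,\mathbf{fr}^0}\times\R_{<-\lambda^{\dagger}}$, and the same argument works at $+\lambda^{\dagger}$. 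Finiteness of $\lim\tilde{supp}$ follows from the support condition of radius $\rho^{\dagger}/n$. Indeed, as $n\to\infty$ the components in $Comp_0$ and the vertices not attached to the link shrink, while the remaining loops converge in $C^0$ to the family $(\mathbf{w}^{\lambda},\lambda)$. This is a finite union of families of the form (\ref{basic-family-loops}), and these families are constant for $|\lambda|\gg0$.

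The main obstacle is the uniform choice of $(\rho_{\mathcal{P}},\eta_{\mathcal{P}})$. When $\mathbf{w}^{\lambda}$ moves, the distances $d(\mathbf{z},Diag_{\mathcal{P}'})$ and the disjointness of the disks $D_{\rho,z_v}$ around points of $\mathbf{w}^{\lambda}$ depend on $\lambda$. To handle this, I would first use compactness of $[0,1]$ and the embeddedness of every $\mathbf{w}^{\lambda}$ to bound the injectivity and tubular radius of the links from below. I would then choose $\eta_{\mathcal{P}}$ large relative to $\rho_{\mathcal{P}}$ in the rescaled units $1/n$, exactly as in the non-parametric case, and take $n$ large enough that $\rho^{\dagger}/n$ is below that radius for all $\lambda$.
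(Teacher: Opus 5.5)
Your proposal is correct and follows essentially the same route as the paper: induction on $(G,m)$, patching the chains $\tilde{\underline{Z}}^{\wr,(n)}_{G,m,\mathcal{P}}$ over the cover of $\tilde{\underline{M}}_G^{<\rho^{\dagger}/n}$ by the regions $\tilde{\underline{M}}_G(\mathcal{P},\rho_{\mathcal{P}}/n,\eta_{\mathcal{P}}/n)$, compatibility on overlaps from the non-overlapping property and the cluster condition of $\tilde{Z}^{\wr}$, and the cycle condition inherited from $\tilde{Z}^{\wr}$. The differences are minor: you get the product ends from the uniqueness in Lemma \ref{coherent-existence-lemma}, whereas the paper builds the endpoint cluster conditions (\ref{matching-inductive-lemma-asi-flat-start-end}) and (\ref{matching-preview-asymptotic-final-iso-end}) directly into the choice of $(\rho_{\mathcal{P}},\eta_{\mathcal{P}})$, which amounts to the same thing; you also explicitly check finiteness of the limit support, which the paper leaves implicit.
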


\begin{proof}

This claim can be proved as in the proof of  Proposition \ref{unicity-asymptotic-proposition}.



Let $(G,m) \in \mathfrak{G}_* $. Assume we have defined $\tilde{Z}_{\prec (G,m)}^{ ( \mathbf{w}, \mathbf{fr})}$.

Take $\rho^{\dagger}$ big enough such that the support property of radius $\rho^{\dagger}$ is satisfied for  $ \tilde{\underline{Z}}^{  ( \mathbf{w}, \mathbf{fr})}_{G,m }  $ for $(G',m'))$ when $(G',m') \prec (G,m)$, for  $\underline{\tilde{Z}}^{\wr} $  for $(G',m')$ when $(G',m') \preccurlyeq (G,m)$,   for ${Z}_{}^{\wr}$ for $(G',m')$ when  $(G',m') \preccurlyeq (G,m)$.


As in the proof of  Proposition \ref{unicity-asymptotic-proposition} we may define real positive numbers 
 $( \rho_{\mathcal{P}},   \eta_{\mathcal{P}})_{\mathcal{P}}$ such that


\begin{itemize}





\item Conditions (\ref{covering-boundary-condition-asymptotic}), (\ref{not-overlapping-condition-asymptotic}),
 (\ref{not-overlapping-condition-asymptotic-final})  , (\ref{disjoint-disks-asymptotic}) and  (\ref{disjoint-disks-asymptotic-final}) are satisfied;


 \item $\underline{\tilde{Z}}^{\wr}$ satisfies the cluster condition for
\begin{equation}  \label{matching-preview-asymptotic-iso}
    (G',m', \mathcal{P},   \rho_{ \mathcal{P}}, \eta_{ \mathcal{P}})
    \end{equation}
for $(G',m') \preccurlyeq (G,m)$   ;


\item
  $ \tilde{\underline{Z}}^{  ( \mathbf{w}, \mathbf{fr})}$ satisfies the cluster condition
  for
\begin{equation}  \label{matching-preview-asymptotic-final-iso}
    (G',m', \mathcal{P},  \rho_{ \mathcal{P}}, \eta_{ \mathcal{P}})
    \end{equation}
    when $(G',m') \prec (G,m)$ ;

\item $Z^{\wr, 0}$ and $ Z^{\wr, 1} $ satisfies the cluster condition for
\begin{equation}  \label{matching-inductive-lemma-asi-flat-start-end}
    (G',m',  \mathcal{P}, \rho_{ \mathcal{P}}, \eta_{ \mathcal{P}})
\end{equation}
for $(G',m') \preccurlyeq (G,m)$  ;

\item
  $ \underline{Z}^{  ( \mathbf{w}_0, \mathbf{fr}_0)},  \underline{Z}^{  ( \mathbf{w}_1, \mathbf{fr}_1)} $ satisfies the cluster condition (\ref{matching-asintontically-coherent}) for
\begin{equation}  \label{matching-preview-asymptotic-final-iso-end}
    (G',m', \mathcal{P},  \rho_{ \mathcal{P}}, \eta_{ \mathcal{P}})
    \end{equation}
    when $(G',m') \preccurlyeq (G,m)$ .

\end{itemize}



There exists a unique chain $\underline{\tilde{Z}}^{  ( \mathbf{w}, \mathbf{fr}), (n)}_{G,m } $   with support on $ \tilde{\underline{M}}_{G}^{< \rho^{\dagger}} $ such that for each $ \mathcal{P}$ we have
\begin{equation} \label{definition-condititon-final-iso}
 \underline{\tilde{Z}}_{G,  m }^{( \mathbf{w}, \mathbf{fr}), (n)}  =  \underline{\tilde{Z}}_{G,  m  , \mathcal{P}}^{\wr, (n)} 
 \end{equation}
on $ \tilde{\underline{M}}_{G, {\mathcal{P}} }(\mathcal{P}, \rho_{G,m, \mathcal{P}},   \eta_{G,m,\mathcal{P}} )      $ .
Properties ( \ref{matching-preview-asymptotic-iso}) and  (\ref{not-overlapping-condition-asymptotic}) imply that the conditions are compatible for different $\mathcal{P}$.


 $ \underline{\tilde{Z}}_{G,  m  , \mathcal{P}}^{ ( \mathbf{w}, \mathbf{fr}), (n)}$ satisfies the transversality condition.

Conditions (\ref{matching-preview-asymptotic-iso}),    (\ref{matching-preview-asymptotic-final-iso}) 
and the cycle condition for $ \underline{\tilde{Z}}^{\wr, (n)}$ tell us that  $ \underline{\tilde{Z}}^{( \mathbf{w}, \mathbf{fr}), (n)}$ satisfies the cycle condition for $(G,m)$.



From  (\ref{matching-inductive-lemma-asi-flat-start-end}) we have 
\begin{equation}  \label{start-end-cluster}
{ {\underline{\tilde{Z}}}}_{G,m   }^{( \mathbf{w}, \mathbf{fr}),(n), < -\lambda^{\dagger}} = \underline{Z}_{G,m }^{( \mathbf{w}, \mathbf{fr}), 0, (n)} \times \R_{<-\lambda^{\dagger}}, \quad {\underline{\tilde{Z}}}_{G,m  }^{( \mathbf{w}, \mathbf{fr}), (n), > \lambda^{\dagger}} =  \underline{Z}_{G,m }^{( \mathbf{w}, \mathbf{fr}),1 , (n)} \times \R_{>\lambda^{\dagger}}
\end{equation}
for $\lambda^{\dagger} \gg 0$ and $n \gg 0$.

The cluster condition follows from property    (\ref{definition-condititon-final-iso}) as in \cite{PSPCS1}.

\end{proof}




\section{Reduced $PSH$}
 
 

We now define a quotient $PS$-Homology which can be easily described in terms of  module over the ring of formal power series $\Q[[\hbar, a]]$, in the formal variables $\hbar$ and $a$.

Consider on the set of the graphs the equivalence relation generated by the relation
 \begin{multline} \label{equivalence-zero-component}
       ( \chi_{\vartriangle} , d_{\vartriangle} , V_{\vartriangle}, Comp_0 , (V_c, d_c , g_c)_c , (H_v )_v  , E ) \sim  \\   
( \chi_{\vartriangle} + \chi_{c'} , d_{\vartriangle} + d_{c'} , V_{\vartriangle}, Comp_0 \setminus c' , (V_c,d_c, g_c)_c , (H_v )_v  , E ).
 \end{multline}
if $c' \in Comp_0$ is a component with $V_{c'} = \emptyset$,

Let $\mathcal{Z}^{null}$  be the set of $PS$-cycles $Z$ with the property that
$$ \sum_{[G] = T} Z_{G,m}=0$$
for each equivalence class of graphs $T$, and $m$ a marking of $T$.


Set
$$   PSH^{red}(M| \underline{Z}^{\circledcirc}) = PSH(M|  \underline{Z}^{\circledcirc})/\mathcal{Z}^{null}. $$

The action of $\hbar^{k_0} a^{d_0}$ for  $k_0,d_0 \in \Z_{\geq 0} $ on $PSH$ is inducted by the action on $\mathfrak{G}$ defined by 
$$ \chi_{\vartriangle} (\hbar^{k_0} a^{d_0} G) = \chi_{\vartriangle} (G) - k_0,  d_{\vartriangle} (\hbar^{k_0} a^{d_0} G) = d_{\vartriangle}(G) + d_0 .$$


\subsection{Embedded Links}

We now study the space $PSH(M| \mathbf{w} , \underline{Z}^{\circledcirc , \diamond}  )$ when  
$$ \mathbf{w} = (w_v^{\sharp})_{v \in V^{\sharp}}$$ is an embedded link.

In the this situation, the obstructions of subsection \ref{obstruction-section} simplifies: it is concentrated on the set of decorated graphs $G$ with
$$V(G)= V^{\sharp}, H(G)=\emptyset ,$$
and it is given by
$$\mathfrak{Obs}_G^{(n)} =  \underline{Z}^{1,(n)}_G -  \underline{Z}^{0,(n)}_G + pr_*( \sum_{(G', e')}  \delta_{e'} \tilde{\underline{Z}}_{G'}^{(n)} +   \sum_{(G', v')} \delta_{v'} \tilde{\underline{Z}}_{G'}^{(n)}) .$$

\begin{proposition} \label{one-dimension}
Let $ \underline{Z}^{\circledcirc, \diamond} $ be a nice annulus data. 
If $ \mathbf{w}$ is an embedded link  then $PSH^{red}(M| \mathbf{w} , \underline{Z}^{\circledcirc , \diamond}  )$ is a rank one free module over $\Q[[\hbar, a]]$.
\end{proposition}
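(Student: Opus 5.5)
We prove Proposition \ref{one-dimension} by obstruction theory on graphs, using the induction over the partial order on $\mathfrak{G}$ exactly as in the proofs of Proposition \ref{transfer-annulus} and Lemma \ref{extension-nice}. The generator will be the class $[Z^{\mathbf{w}}]$ of a nice $PS$-cycle whose only nonzero component with $H(G)=\emptyset$ is the graph $G^{\heartsuit}$, that is, $V_{\vartriangle}=V^{\sharp}$ and $\chi_{\vartriangle}=d_{\vartriangle}=0$, with $\underline{Z}_{G^{\heartsuit}}=\mathbf{w}$. Its existence follows from Lemma \ref{extension-nice}, applied inductively starting from $\underline{Z}_{G^{\heartsuit}}=\mathbf{w}$. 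For example, we can take the coherent cycle of Theorem \ref{coherent-theorem}, after an auxiliary compatible frame has been chosen locally.

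\textbf{Spanning.} Let $Z^{\diamond}$ be any nice $PS$-cycle with limit support $\mathbf{w}$. We construct inductively an isotopy with finite limit support between $Z^{\diamond}$ and $\sum_{k,d} c_{k,d}\,\hbar^k a^d Z^{\mathbf{w}}$, where the coefficients $c_{k,d}\in\Q$ are determined along the way. For graphs with $H(G)\neq\emptyset$, Lemma \ref{extension-isotopies-inductive} extends the isotopy with no obstruction, and the limit support is kept unchanged as in the proof of Lemma \ref{extension-nice}. By the obstruction lemma of subsection \ref{obstruction-section}, the only obstructions occur at graphs with $H(G)=\emptyset$ and $V(G)=V^{\sharp}$. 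Such a graph is determined by $\chi_{\vartriangle}$, $d_{\vartriangle}$, and a collection of components $c$ with $V_c=\emptyset$. After passing to $PSH^{red}$, the relation (\ref{equivalence-zero-component}) absorbs these components into $(\chi_{\vartriangle},d_{\vartriangle})$. Hence the equivalence classes of obstruction graphs are exactly the graphs $\hbar^k a^d G^{\heartsuit}$ with $k,d\geq 0$.

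The obstruction at $\hbar^k a^d G^{\heartsuit}$ is a rational multiple $\mathfrak{obs}\,\mathbf{w}$ of the link, and by the obstruction lemma this number is independent of $n$. We kill it by choosing $c_{k,d}=\mathfrak{obs}$, processing the pairs $(k,d)$ in increasing order. This is consistent because adding $\hbar^k a^d Z^{\mathbf{w}}$ only affects graphs that are larger than or equal to $\hbar^k a^d G^{\heartsuit}$ in the order. Since each graph needs only finitely many steps, the infinite sum is a well-defined formal power series in $\Q[[\hbar,a]]$. This shows that $[Z^{\mathbf{w}}]$ generates $PSH^{red}(M|\mathbf{w},\underline{Z}^{\circledcirc,\diamond})$.

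\textbf{Freeness.} Suppose $\sum c_{k,d}\hbar^k a^d [Z^{\mathbf{w}}]=0$ and let $(k_0,d_0)$ be minimal with $c_{k_0,d_0}\neq 0$. We want an invariant of isotopy classes that detects this coefficient. The natural candidate is the total obstruction number $\mathfrak{obs}_{(G,\mathbf{w})}$ at the classes $T=[\hbar^k a^d G^{\heartsuit}]$, computed from $\sum_{[G]=T}\underline{Z}_G$. By the obstruction lemma, an isotopy of $Z$ to $0$ forces this number to vanish, which would contradict $c_{k_0,d_0}\neq 0$.

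\textbf{Main obstacle.} The hard part is showing that this coefficient is truly invariant. Concretely, we must check that the boundary terms $\delta_{e'}\tilde Z_{G'}$ and $\delta_{v'}\tilde Z_{G'}$ cannot produce a multiple of $\mathbf{w}$ at $V(G)=V^{\sharp}$, $H(G)=\emptyset$. By the definition of $\delta_v$, such terms come from vertices $v\in V_{\vartriangle}$ whose loops shrink to points. They therefore yield chains of the form $\mathbf{w}\times$(shrinking loop) with total limit support $\mathbf{w}$. We also need their signed count to vanish, or to be absorbed by the $\mathcal{Z}^{null}$ quotient. The argument I would try is a degree count: the chain $\mathfrak{Obs}_G$ is a $0$-dimensional class supported near the point $\mathbf{w}$ of the multi-loop space, and the boundary contributions integrate to zero because they arise as boundaries of $1$-chains concentrated near $\mathbf{w}$. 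This is the same mechanism by which the obstruction lemma shows independence of $n$.
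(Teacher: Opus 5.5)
Your spanning half is the paper's surjectivity argument: induct on graphs, note that obstructions occur only at $H(G)=\emptyset$, $V(G)=V^{\sharp}$, and absorb each obstruction $r\mathbf w$ via $q\leadsto q+r\hbar^{k}a^{d}$. The gap is in freeness, at exactly the step you flag as the main obstacle, and the degree-count heuristic you propose there is false: the $\delta$-contributions to $\mathfrak{Obs}_G$ at $G=\hbar^{k_0}a^{d_0}G^{\heartsuit}$ do not vanish in general.

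\begin{itemize}
\item The term $\delta_{v'}\tilde{\underline Z}_{G'}$ is only one face of the $1$-chain $\tilde{\underline Z}_{G'}$. Its other faces are the ends $\underline Z^1_{G'}-\underline Z^0_{G'}$ and the $\delta$-terms into $G'$. So its count is transported from $G'$, not cancelled.
\item Concretely, let $G'$ be $\hbar^{k}a^{d-1}G^{\heartsuit}$ with one extra small loop $v'$, $H_{v'}=\emptyset$. A cycle carrying a nonzero $0$-chain on $G'$ is isotopic, by shrinking $w_{v'}$ to a point, to one carrying that weight on $\delta_{v'}G'=\hbar^{k}a^{d}G^{\heartsuit}$. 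This isotopy has finite limit support.
\item Hence the number you read off from $\sum_{[G]=T}\underline Z_G$ is not an isotopy invariant. The classes of (\ref{equivalence-zero-component}) only merge components with $V_c=\emptyset$ and never see $G'$, so the $\mathcal Z^{null}$ quotient cannot absorb this either.
\item Likewise, $\delta_{e'}\tilde{\underline Z}_{G'}$ is a transverse intersection of $\tilde{\underline Z}_{G'}$ with the diagonal, not the boundary of a $1$-chain near $\mathbf w$.
\item The $n$-independence in the obstruction lemma is a different statement. If the $\delta$-counts always vanished, $\mathfrak{obs}$ would reduce to $\underline Z^1_G-\underline Z^0_G$ and there would be nothing to prove.
\end{itemize}

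The missing idea is the paper's resummation. For $G$ with no insertion-free vertex outside $V^{\sharp}$, set $\tilde{\underline Z}^{\star}_G=\sum_{G'}\pi_*\tilde{\underline Z}_{G'}$, summing over all $G'$ with $\delta_{V^{\star}(G')}G'=G$ and forgetting those loops. In this sum the $\delta_{v'}$-terms cancel, which gives (\ref{cycle-star}): $\partial\tilde{\underline Z}^{\star}_G=\sum_{(G',e')}\delta_{e'}\tilde{\underline Z}^{\star}_{G'}$. The paper then handles the $\delta_{e'}$-terms with the shear $(t,s)\mapsto(t+s,s)$. This produces $\tilde{\underline Z}^{\star,\prime}_{G'}$ with $\partial\tilde{\underline Z}^{\star,\prime}_{G'}=\sum\delta_{e''}\tilde{\underline Z}^{\star,\prime}_{G''}-\tilde{\underline Z}^{\star}_{G'}$. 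Together these exhibit the leading coefficient $\underline Z_{\hbar^{\kappa_0}a^{d_0}G^{\heartsuit}}$, a nonzero multiple of the point $\mathbf w$, as the boundary of $pr_*\tilde{\underline Z}^{\star}_{G}+\sum\delta_{e'}pr_*\tilde{\underline Z}^{\star,\prime}_{G'}$. That chain is supported near $\mathbf w$, so this is impossible. The quantity that detects $c_{k_0,d_0}$ is therefore the $\star$-sum, not the class sum you proposed. On $qZ^{\heartsuit}$ the $\star$-sum still reads off $c_{k_0,d_0}$, by minimality of $(k_0,d_0)$, but only the $\star$-sum survives along the isotopy.
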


\begin{proof}

Denote by $G^{\heartsuit}$ the decorated graph with $ H(G^{\heartsuit})= \emptyset, \chi_{\vartriangle}=0, d_{\vartriangle}=0 , Comp_0= \emptyset, D_{\vartriangle} = \emptyset, V_{\vartriangle}= V^{\sharp}$ .
Fix a nice $PS$-cycle $Z^{\heartsuit} \in \mathcal{Z}(\underline{Z}^{\circledcirc , \diamond}, \mathbf{w} ) $ with  
\begin{equation} \label{generator-prop2}
\underline{Z}^{\heartsuit}_{G^{\heartsuit}}= \mathbf{w}.
\end{equation}
The existence of $\underline{Z}^{\heartsuit}$ can be seen applying recursively Lemma \ref{extension-nice}.

We claim that the map
$$    \Q[[\hbar, a]] \rightarrow PSH^{red}(M| \mathbf{w} , \underline{Z}^{\circledcirc , \diamond}  )$$
\begin{equation} \label{iso-map}
  q(\hbar, a) \mapsto  q(\hbar, a)  Z^{\heartsuit} 
\end{equation}
is an isomorphism.

We first  prove that  the map (\ref{iso-map}) is injective. Fix  $ q(\hbar, a) \in    \Q[[\hbar, a]]  $ and assume that there exists an isotopy $\tilde Z$ between $q(\hbar,a) Z^{\heartsuit}$ and a null $PS$-cycle $Z^{null} \in \mathcal{Z}^{null}$.

Let  $r \hbar^{\kappa_0} a^{d_0}$ be the leading term of  $q$. In particular $ Z_{\prec \hbar^{\kappa_0} a^{d_0} G^{\heartsuit}} =0$. 



For each $G \in \mathfrak{G}$ denote by $V^{\star}(G)$ the set of $v \in V(G)$ with $v \notin V^{\sharp}$ and $H_v = \emptyset$. Denote by $\mathfrak{G}^{\star}$ the set of $G \in \mathfrak{G}$ with  $V^{\star}(G) = \emptyset$.


For each $ G \in  \mathfrak{G}^{\star}  $ set
$$\tilde{\underline{Z}}_G^{\star} = \sum_{G'} \pi_*( \tilde{\underline{Z}}_{G'} )$$
where the sum run over the graphs $G' \in \mathfrak{G}$ such that $\delta_{V^{\star}(G')} G' =G$ and $\pi$ denotes the obvious  projection $\mathfrak{L}_{\hat{V}(G')}(M) \rightarrow \mathfrak{L}_{\hat{V}(G)}(M)$.


The cycle condition for $\tilde{\underline{Z}}$  tells us 
\begin{equation} \label{cycle-star}
  \partial  \tilde{\underline{Z}}_{G}^{\star}  =    \sum_{(G', e')}  \delta_{e'} \tilde{\underline{Z}}_{G'}^{\star} . 
  \end{equation}
The terms with $\delta_{v'}$ are cancelled in the sum.


Define the chain $  \tilde{\underline{Z}}_{G'}^{\star, \prime}  $ taking the image under the map $(t,s) \mapsto (t+s,s) $ of the chain $\tilde{\underline{Z}}_{G'}^{\star} \times \R_s$, intersecting with the set  $\mathfrak{L}_{\hat{V}(G')}(M) \times \R_t \times \R_{ s \geq 0 }$ and projecting in the $\R_s$-direction. Equation (\ref{cycle-star}) yields
$$   \partial  \tilde{\underline{Z}}_{G'}^{\star, \prime}  =    \sum_{(G'', e'')}  \delta_{e''} \tilde{\underline{Z}}_{G''}^{\star, \prime} -  \tilde{\underline{Z}}_{G'}^{\star}. $$
From the last expression we obtain
$$  \underline{Z}_{ \hbar^{\kappa_0} a^{d_0} G^{\heartsuit}} = \partial ( pr_* (\tilde{\underline{Z}}^{\star}_{ \hbar^{\kappa_0} a^{d_0} G^{\heartsuit}})+ \sum_{G' ,e'}   \delta_{e'} pr_* ( {\tilde{\underline{Z}}}_{G'}^{\star, \prime}  )  ) .$$
We obtain a contradiction since by definition
$   \underline{Z}_{ \hbar^{\kappa_0} a^{d_0} G^{\heartsuit}} \neq 0 $ and the chain we are making the boundary of has support close to $\mathbf{w}$.






Now we prove that the map (\ref{iso-map}) is surjective. We need to show that for each $Z \in \mathcal{Z}$  there exists a formal power series  $q(\hbar,a) \in \Q[[\hbar ,a]]$ such that $Z$ and $q(\hbar, a)  Z^{\heartsuit}$ are isotopic up a null $PS$-cycle.  The formal power series $q$ is constructed using an inductive argument on the graphs.
Assume we have defined $q$  and $\tilde{Z}_{\prec  G}^{(n)}  \in \tilde{\mathcal{Z}}_{\prec G} $ isotopy between   $ Z_{\prec  G}^{(n)}$ and $ (q(\hbar,a) Z^{\heartsuit,(n)})_{\prec G} $.  
The obstruction to define $\tilde{Z}_{\preccurlyeq G} $ are concentrated on the graphs $G$ with  $H(G) = \emptyset$, $V(G) = V^{\sharp}$. Let $r \in \Q$ such that  $\mathfrak{Obs}_G = r \mathbf{w}$, up to small isotopy. The replacement  
   $$ q  \leadsto q + r a^{d(G)}  \hbar^{\kappa(G)} $$ 
makes the obstruction to vanishe.

Finally, an argument similar to the one used in the proof of  the injectivity above yields the independence of  $q$ by $n$, at least for $n$ big enough.

\end{proof}

\subsection{Singular Links} \label{multi-loop-equivalence-section}

We now extend the result of the preview section to singular links.

\subsubsection{Twist Operation on multi-loops}


Let $\mathbf{w}= (w_v)_{v \in V}$ be a multi-loop. 
Let $(v_1,t_1), (v_2,t_2) \in V \times \mathbb{S}^1$   with $ (v_1,t_1) \neq (v_2,t_2)$ and $w_{v_1}(t_1)= w_{v_2}(t_2)$. Define the multi-loop $  \mathbf{w}' =(w_{v'}')_{v' \in V'}$ as follows: 
\begin{itemize}
\item if $v_1 \neq v_2$,  $ V'$ is obtained replacing $v_1$ and $v_2$ by an element $v_0$. $w'_{v_0}$ is obtained gluing $\mathbb{S}^1_{v_1}$ with   $\mathbb{S}^1_{v_2}$ at the points $t_1$ and $t_2$  in the only orientation preserving way ;
\item if $v_1=v_2 $ , $  V'$ is obtained replacing $v_1$ by two elements $v_1', v_2'$. $w'_{v_1'}$ and $w'_{v_2'}$ are obtained   attaching $t_1$ and $t_2$ in $\mathbb{S}^1_{v_1}$ and splitting it in two circles in the only orientation preserving way.
\end{itemize}
We say that the multi-loops  $ \mathbf{w}$ and $  \mathbf{w}'$ are related by twist.

\subsubsection{$PSH$} \label{singular-subsection-PSH}

Let $\mathfrak{w}$ be a finite set of multi-loops closed by twist relation.  We denote by $\mathcal{Z}(M| \mathfrak{w} , \underline{Z}^{\circledcirc, \circ})$ 
the space of nice $PS$-cycles whose limit support is  contained in $\mathfrak{w}$.
 We denote by $PSH^{red}(M|\mathfrak{w}, \underline{Z}^{\circledcirc, \diamond})$ the elements of $PSH^{red}(M| \underline{Z}^{\circledcirc, \diamond})$  represented by   $\mathcal{Z}(M| \mathfrak{w} , \underline{Z}^{\circledcirc, \circ})$.


\begin{lemma} \label{crossing-dimension}
Let $\mathfrak{w}$ be a finite set of multi-loops closed by twist relation. $PSH^{red}(M| \mathfrak{w}, U^{\blacktriangle})$  is a rank $| \mathfrak{w} |$ free module over $\Q[[\hbar, a]]$.
\end{lemma}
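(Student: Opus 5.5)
The plan is to run the proof of Proposition \ref{one-dimension} once for each multi-loop of $\mathfrak{w}$ simultaneously. For each $\mathbf{w}' = (w'_v)_{v \in V'} \in \mathfrak{w}$, let $G^{\heartsuit}_{\mathbf{w}'}$ be the decorated graph with $H = \emptyset$, $\chi_{\vartriangle}= d_{\vartriangle}=0$, $Comp_0 = \emptyset$ and $V_{\vartriangle} = V'$. Applying Lemma \ref{extension-nice} recursively, fix a nice $PS$-cycle $Z^{\heartsuit}_{\mathbf{w}'}$ with limit support $\{\mathbf{w}'\}$ and $\underline{Z}^{\heartsuit}_{\mathbf{w}', G^{\heartsuit}_{\mathbf{w}'}} = \mathbf{w}'$. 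We then claim that
$$ \Q[[\hbar,a]]^{\mathfrak{w}} \rightarrow PSH^{red}(M|\mathfrak{w}, U^{\blacktriangle}), \qquad (q_{\mathbf{w}'})_{\mathbf{w}'} \mapsto \sum_{\mathbf{w}'} q_{\mathbf{w}'}(\hbar,a) Z^{\heartsuit}_{\mathbf{w}'} $$
is an isomorphism. Proposition \ref{transfer-annulus} lets us work with any nice annulus data representing $U^{\blacktriangle}$.

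\emph{Surjectivity.} Take $Z \in \mathcal{Z}(M|\mathfrak{w}, \underline{Z}^{\circledcirc,\diamond})$ and build the isotopy by induction on graphs, as in Proposition \ref{one-dimension}. By the analysis of subsection \ref{obstruction-section}, obstructions are concentrated on graphs $G$ with $H(G) = \emptyset$. Since the limit support lies in $\mathfrak{w}$, the obstruction up to small isotopy has the form $\sum_{\mathbf{w}' \in \lim supp} \mathfrak{obs}_{(G,\mathbf{w}')} \mathbf{w}'$, and only graphs with $V(G)$ in bijection with the components of some $\mathbf{w}' \in \mathfrak{w}$ contribute. For each such $\mathbf{w}'$ we kill its coefficient by the replacement $q_{\mathbf{w}'} \leadsto q_{\mathbf{w}'} + \mathfrak{obs}_{(G,\mathbf{w}')} \hbar^{\kappa(G)} a^{d(G)}$. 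The distinct multi-loops are separated in $C^0$, so these corrections do not interact.

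Closure under twist matters at this step. In an isotopy, the operations $\delta_e$ on an edge joining two points of loops that approach a double point of some $\mathbf{w}'$ produce chains supported near the twisted multi-loop. Because $\mathfrak{w}$ is closed under twist, every such limit-support contribution stays inside $\mathfrak{w}$ and is absorbed by the corresponding generator. Independence of $q$ from $n$ then follows as in Proposition \ref{one-dimension}.

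\emph{Injectivity.} This is the main obstacle. Suppose $\sum q_{\mathbf{w}'} Z^{\heartsuit}_{\mathbf{w}'}$ is isotopic with finite limit support to a null cycle. Take the minimal leading monomial $r\hbar^{\kappa_0} a^{d_0}$ over all $q_{\mathbf{w}'}$, attached to some $\mathbf{w}_0$. Form $\tilde{\underline{Z}}^{\star}$ as in the proof of Proposition \ref{one-dimension}, so that the $\delta_{v'}$ terms cancel, then shear to get $\tilde{\underline{Z}}^{\star,\prime}$. This writes $\sum_{\mathbf{w}'} r_{\mathbf{w}'} \mathbf{w}'$ in degree $\hbar^{\kappa_0} a^{d_0}$ as a boundary of a chain supported near $\bigcup \mathfrak{w}$.

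The difficulty is that twist terms $\delta_{e'}$ can now connect different $\mathbf{w}'$, so we cannot conclude that each coefficient vanishes separately. I would first try localizing near each $\mathbf{w}'$. A twist changes the number of components, hence the vertex set of $G^{\heartsuit}$, so the graphs $G^{\heartsuit}_{\mathbf{w}'}$ for distinct $\mathbf{w}'$ are distinct with $H=\emptyset$. The boundary equation should therefore split by graph: the $G^{\heartsuit}_{\mathbf{w}_0}$ component reads $r\mathbf{w}_0 = \partial(\cdot)$, with the $\delta_{e'}$ contributions coming from graphs with $H \neq \emptyset$ projected to chains of dimension too small to cancel the $1$-cycle $\mathbf{w}_0$. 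Pairing with a $1$-cocycle dual to $\mathbf{w}_0$ in a small neighbourhood excluding the other multi-loops would then give $r=0$, a contradiction.

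If several $\mathbf{w}'$ share a vertex count, I would distinguish them by their $C^0$-neighbourhoods, which are disjoint apart from double points. The remaining check is that the local pairing is unaffected near those double points.
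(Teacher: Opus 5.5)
Your architecture is the paper's: one generator $Z^{\heartsuit}_{\mathbf{w}'}$ per element of $\mathfrak{w}$, surjectivity by killing the obstruction coefficient of each $\mathbf{w}'$ separately (this half is fine), and injectivity by the $\star$/shear argument of Proposition \ref{one-dimension}. The gap is in how you localize the injectivity step.

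For $H(G)=\emptyset$ the chain $\underline{Z}_G$ has dimension $|H(G)|+|m|=0$. It is a $0$-chain on the multi-loop space $\mathfrak{L}_G(M)$, and each $\mathbf{w}'$ is a \emph{point} of that space. The shear identity writes $\underline{Z}_G=\partial\bigl(pr_*(\tilde{\underline{Z}}^{\star}_G)+\sum\delta_{e'}pr_*(\tilde{\underline{Z}}^{\star,\prime}_{G'})\bigr)$, where the chain inside is a $1$-chain on $\mathfrak{L}_G(M)$. So the twist pieces are not ``of dimension too small''. They are exactly $1$-chains, and they are harmless only because they sit inside $\partial(\cdot)$.

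More seriously, your proposed invariant cannot separate the generators. Pairing the image of $\mathbf{w}_0$ in $M$ with a $1$-cocycle supported near it fails because a twist does not change the underlying oriented $1$-cycle in $M$. Hence two elements of $\mathfrak{w}$ related by two twists have the same number of components and identical images. For example, let a loop $\gamma$ have double points $\gamma(a)=\gamma(b)$ and $\gamma(c)=\gamma(d)$ with $a<c<b<d$. Twisting at the first splits it, and twisting at the second re-merges it into a different one-component loop running through the same four arcs in another cyclic order. No neighbourhood or cocycle in $M$ tells these apart. So the ``remaining check'' you defer actually fails, and splitting by vertex count does not rescue it.

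The repair is to never leave $\mathfrak{L}_G(M)$. There, distinct elements of $\mathfrak{w}$ with the same vertex count are distinct points at positive $C^0$ distance (up to reparametrization). Because the isotopy has finite limit support, for $n$ large every chain in the identity above is supported in the $\epsilon$-neighbourhood of a finite set of points. This includes each twist piece, which lies near a single twisted multi-loop. For $\epsilon$ small each simplex lies in one ball, so the identity splits ball by ball. In the ball around $\mathbf{w}_0$ it says that $r\,\mathbf{w}_0$ is the boundary of a $1$-chain, and taking the augmentation gives $r=0$. In this form the twist terms ``connecting different $\mathbf{w}'$'' are no obstacle at all, which is why the paper can say the one-generator argument extends directly.
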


\begin{proof}
Write $\mathfrak{w} = \{ \mathbf{w}_j \}_{j \in J}$.


For each $j \in J$, take a $PS$-cycle $Z^{ \heartsuit, j}$  with the property  (\ref{generator-prop2}) using the multi-loop $\mathbf{w}_j $. 
The argument of the proof of Proposition  \ref{one-dimension} can be easily extended to show that $\{ Z^{ \heartsuit ,j} \}_{j \in J}$ is a basis for the module $PSH(M| \mathfrak{w}, U^{\blacktriangle})$.



\end{proof}

\subsection{Transfer Map}

We now consider isotopies of links .  
Let  $\tilde{\mathfrak{w}}=(\mathfrak{w}^{\lambda})_{{\lambda} \in [0,1]}$ be a one parameter family of finite sets of multi-loops closed by twist relation. 
We say that  $\tilde{\mathfrak{w}}=(\mathfrak{w}^{\lambda})_{{\lambda} \in [0,1]}$ is piece-wise smooth if  it is union of a finite set of families of the type (\ref{basic-family-loops}).
We denote by 
$ \tilde{\mathcal{Z}}(M| \tilde{\mathfrak{w}}, \tilde{\underline{Z}}^{\circledcirc, \diamond})$ the set of elements of 
$ \tilde{Z}^{\diamond} \in \tilde{\mathcal{Z}}(M| \tilde{\underline{Z}}^{\circledcirc, \diamond})$ whose limit support is contained in $\tilde{\mathfrak{w}} $.  

We say that a  piecewise-smooth one parameter family   $\tilde{\mathfrak{w}}= (\mathfrak{w}^{\lambda})_{{\lambda} \in [0,1]}$ has increasing singularities if is finite union of 
families  (\ref{basic-family-loops}) with $b=1$.


 
 

 \begin{lemma} \label{deformation-cycles}
 

 Let $\tilde{U}^{\blacktriangle}$ be an isotopy between Euler Structures  $U^{\blacktriangle,0}$  and  $U^{\blacktriangle, 1}$ and let  $\tilde{\mathfrak{w}}= (\mathfrak{w}^{\lambda})_{{\lambda} \in [0,1]}$ be a  piecewise-smooth   one parameter family  of equivalence classes of multi-loops with increasing singularities.  

 
To  the pair $(\tilde{\mathfrak{w}}, \tilde{U}^{\blacktriangle})$ it is associated canonically a map in $PSH$:
 \begin{equation} \label{transfer-map-loops}
 transfer_{  ( \tilde{\mathfrak{w}},  \tilde{U}^{\blacktriangle})  }   : PSH(M|  \mathfrak{w}^0,U^{\blacktriangle,0}  )  \rightarrow PSH(M|  \mathfrak{w}^1, U^{\blacktriangle,1}  ) .
  \end{equation}

 Let $\tilde{U}^{\blacktriangle, \prime }$ be another isotopy between  $U^{\blacktriangle,0}$  and  $U^{\blacktriangle, 1}$ and
  $\tilde{\mathfrak{w}}^{ \prime }$ be  another isotopy between  $\mathfrak{w}^0$  and  $\mathfrak{w}^1$.
If there exists an isotopy of isotopies between $\tilde{U}^{\blacktriangle}$  and $\tilde{U}^{\blacktriangle, \prime }$ and between 
  $\tilde{\mathfrak{w}}$ and $\tilde{\mathfrak{w}}^{ \prime }$, 
we have
$$transfer_{( \tilde{\mathfrak{w}}, \tilde{U}^{\blacktriangle}) }  =  transfer_{ (\tilde{\mathfrak{w}}^ { \prime },  \tilde{U}^{\blacktriangle, \prime } )} .$$

 \end{lemma}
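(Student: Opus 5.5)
The plan is to mimic the proof of Proposition \ref{transfer-annulus}, now letting the limit support move along $\tilde{\mathfrak{w}}$. Since Euler Structures correspond to annulus data up to contractible choices, I will first lift $\tilde{U}^{\blacktriangle}$ to an isotopy of nice annulus data $\tilde{Z}^{\circledcirc,\diamond}$ between nice annulus data representing $U^{\blacktriangle,0}$ and $U^{\blacktriangle,1}$. Two such lifts are related by an isotopy of isotopies, and by Proposition \ref{transfer-annulus} the choice only affects the result up to canonical identification. Given $Z^{0,\diamond}\in\mathcal{Z}(M|\mathfrak{w}^0,\underline{Z}^{\circledcirc,0,\diamond})$, I will build $Z^{1,\diamond}$ and an isotopy $\tilde{Z}^{\diamond}\in\tilde{\mathcal{Z}}(M|\tilde{\mathfrak{w}},\tilde{\underline{Z}}^{\circledcirc,\diamond})$ by induction on graphs. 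For each $n$, Lemma \ref{extension-isotopies-inductive} applied to $\underline{Z}^{0,(n)}_{\preccurlyeq G}$ and $\tilde{\underline{Z}}^{(n)}_{\prec G}$ produces $\underline{Z}^{1,(n)}_{\preccurlyeq G}$ and $\tilde{\underline{Z}}^{(n)}_{\preccurlyeq G}$. The interpolating isotopies $\tilde{Z}^{int,(n)}$ will be constructed simultaneously using Lemma \ref{extension-isotopies-isotopies}, exactly as in Lemma \ref{extension-nice}.

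The new point is to control the limit support. The requirement is $\lim\tilde{supp}(\tilde{Z}_{G,m})\subset\tilde{\mathfrak{w}}$ and $\lim supp(Z^1)\subset\mathfrak{w}^1$. I will make the extension chains in Lemma \ref{extension-isotopies-inductive} concentrate, as $n\to\infty$, near the moving multi-loops $\mathbf{w}^\lambda$. Concretely, I will transport the chains $\underline{Z}^{0,(n)}_{G,m}$ along an ambient isotopy of $M$ that realizes each smooth family (\ref{basic-family-loops}) and is supported in small neighbourhoods of it. The increasing-singularities hypothesis means that every family of $\tilde{\mathfrak{w}}$ persists up to $\lambda=1$: new singular (twisted) multi-loops may appear as $\lambda$ increases, but none disappear. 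Hence no component of the limit support of the transported chain is lost before $\lambda=1$. At a singular time, where a crossing appears, I will add to the support the twisted multi-loop, which is allowed because $\mathfrak{w}^\lambda$ is closed under twist. I then correct the chain so that the cycle condition holds, using the extension lemmas on a neighbourhood of the crossing.

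For canonicity, take two choices $(Z^1,\tilde{Z})$ and $(Z^{1\prime},\tilde{Z}')$, possibly coming from isotopic data $(\tilde{\mathfrak{w}}',\tilde{U}^{\blacktriangle,\prime})$. I will apply Lemma \ref{extension-isotopies-isotopies} inductively to build an isotopy of isotopies whose limit support lies in the two-parameter family interpolating $\tilde{\mathfrak{w}}$ and $\tilde{\mathfrak{w}}'$. Its restriction to the $\lambda=1$ edge is an isotopy between $Z^1$ and $Z^{1\prime}$ with finite limit support contained in $\mathfrak{w}^1$, so the $PSH$-classes agree. The same argument applied to an isotopy between two representatives of $[Z^0]$ shows that the map is well defined on classes. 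Linearity is immediate from the construction.

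The hard step is the obstruction theory of subsection \ref{obstruction-section} at graphs with $H(G)=\emptyset$. These obstructions are now weighted sums $\sum\mathfrak{obs}_{(G,\mathbf{w})}\mathbf{w}$ over $\mathbf{w}$ in the limit support, and Lemma \ref{extension-isotopies-inductive} only gives extensions with no control of support. My approach is to not require the obstruction to vanish. Instead, at each such $G$ I will absorb $\mathfrak{obs}_{(G,\mathbf{w})}$ into the definition of $\underline{Z}^{1}_{G}$, which is allowed since $Z^1$ is free. This is legitimate because $\mathfrak{obs}$ is independent of $n$, so the correction is a constant multiple of a multi-loop in $\mathfrak{w}^1$ for all large $n$. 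The delicate part, which I expect to be the main obstacle, is checking that this term stays consistent across singular times, where the basis of Lemma \ref{crossing-dimension} jumps in rank.
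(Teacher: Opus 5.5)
Your proposal follows the paper's proof: adapt the proof of Proposition \ref{transfer-annulus} by applying Lemma \ref{extension-isotopies-inductive} graph by graph, with the extensions chosen so that $\lim \tilde{supp}(\tilde{Z}) \subset \tilde{\mathfrak{w}}$, and obtain canonicity from Lemma \ref{extension-isotopies-isotopies}, with limit support in the two-parameter family $(\mathfrak{w}^{\lambda,\lambda'})$. The step you single out as the main obstacle is not really one: since $Z^1$ is free there is nothing to cancel at graphs with $H(G)=\emptyset$, and what justifies absorbing the $\delta$-terms into $\underline{Z}^1_G$ is the increasing-singularities hypothesis rather than $n$-independence of $\mathfrak{obs}$, because that hypothesis lets every contribution created at an intermediate time (including the twisted multi-loops born at a crossing) be continued forward along its family up to $\lambda=1$, so the rank only jumps up and $\underline{Z}^1_G$ simply acquires coefficients on the new elements of $\mathfrak{w}^1$.
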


In the above statement the notion of isotopy of isotopies of piecewise-smooth   one parameter family  of equivalence classes of multi-loops with increasing singularities means the following.

We say that   $(  \mathfrak{w}^{{\lambda},{\lambda'}} )_{{\lambda},{\lambda'}}$  is piece-wise smooth  if it is union of a finite number of sets of the type
  \begin{equation}   \label{basic-two-family-loops}
    (\mathbf{w}^{{\lambda},{\lambda'}})_{{\lambda} \in [a,b], {\lambda'} \in [0,1]}
    \end{equation}
     piece-wise smooth.

By an isotopy of isotopies between 
  $\tilde{\mathfrak{w}}$ and $\tilde{\mathfrak{w}}^{ \prime }$ we mean a  two-parameter family  $(  \mathfrak{w}^{{\lambda},{\lambda'}} )_{{\lambda},{\lambda'}}$ piece-wise smooth such that  
  \begin{itemize}
       \item in the sets  (\ref{basic-two-family-loops}) we can assume $b=1$;       
        \item   $ \mathfrak{w}^{0,{\lambda'}}= \mathfrak{w}^{0}$ for each ${\lambda'} \in [0,1]$ ;
       \item $ \mathfrak{w}^{1,{\lambda'}}= \mathfrak{w}^1$  for each ${\lambda'} \in [0,1]$;
       \item  $ (\mathfrak{w}^{{\lambda},0})_{\lambda}= \tilde{\mathfrak{w}}$,  $ (\mathfrak{w}^{{\lambda},1})_{\lambda}= \tilde{\mathfrak{w}}^{\prime}$.
     \end{itemize}

     

 \begin{proof}
 
The existence of the map (\ref{transfer-map-loops}) follows from the following claim:  for each $Z^{0 , \diamond} \in \mathcal{Z}(M|    \mathfrak{w}^0 ,{\underline{Z}}^{\circledcirc, \diamond,0}  )$ there exist $Z^{1, \diamond} \in \mathcal{Z}(M|  \mathfrak{w}^1,{\underline{Z}}^{\circledcirc, \diamond,1}  )$ and  $\tilde{Z}^{\diamond} \in \tilde{ \mathcal{Z}}(M| \tilde{\mathfrak{w}}, \tilde{\underline{Z}}^{\circledcirc, \diamond}  )$ with $\tilde{Z}^{\diamond}$ isotopy between $Z^{0 , \diamond}$ and $Z^{1 , \diamond}$. Moreover the homology class $[Z^{1,\diamond}] \in PSH(M|  \mathfrak{w}^1,  U^{\blacktriangle,1}   )$ is determined by the homology class $[Z^{0, \diamond}] \in PSH(M|  \mathfrak{w}^0,  U^{\blacktriangle,0}  )$. 
 
 The claim may be proved adapting the  proof of Lemma \ref{transfer-annulus}.  The isotopy $\tilde{Z}$ obtained from Lemma  \ref{extension-isotopies-inductive}  may be constructed satisfying the condition $\lim \tilde{Supp} (\tilde{Z})  \subset  \tilde{\mathfrak{w}}$. 

Now let $\tilde{Z}^{\diamond, \prime} \in \tilde{ \mathcal{Z}}(M| \tilde{\mathfrak{w}}^{\prime}, \tilde{\underline{Z}}^{\circledcirc, \diamond, \prime}  )$ defined as before using $ \tilde{\mathfrak{w}}^{\prime}$ and  $ \tilde{\underline{Z}}^{\circledcirc, \diamond, \prime}$. 
Using Lemma \ref{extension-isotopies-isotopies} we can construct an isotopy of isotopies  between $\tilde{Z}$ and $\tilde{Z}^{\prime}$ with limit support contained in
 $(  \mathfrak{w}^{t,s} )_{t,s}$.

 \end{proof}

\section{Normalized-$PSH$}   \label{wilson-section}

In the preview sections we associated a $PSH$-class to a frame $\mathbf{fr}$ of the manifold $M$. 
In  this section we introduce a suitable quotient of $PSH$ which allows to associate a class to 
each triple $(\mathbf{w}, \nu,  U^{\blacktriangle})$ consisting of a framed link $(\mathbf{w}, \nu)$ and a compatible Euler Structure $ U^{\blacktriangle}$ of $M$.

 
 Let $\mathfrak{G}^{null}$ be the set of graphs $G^{} \in \mathfrak{G}^{}$ with at least a component $c \in Comp_0(G^{}) $  with $V_{c} = \emptyset$.
 Let $\mathcal{Z}^{null}$ be the set of $PS$-cycles $Z$ with support  in  $\mathfrak{G}^{ null}$.   

Define the normalized $PSH$ as 
$$   PSH^{\intercal}(M| \underline{Z}^{\circledcirc}) = PSH(M|  \underline{Z}^{\circledcirc})/\mathcal{Z}^{null}. $$










In normalized $PSH$  the obstruction
  of subsection \ref{obstruction-section} 
is considered  up to elements of  $\mathcal{Z}^{null}$.
 Hence the obstruction is concentrated on decorated graphs $G$ with 
\begin{equation} \label{obstruction-normalized}
H(G)= \emptyset ,  Comp_0(G)= \emptyset.
\end{equation}

The next two propositions are proved as Proposition \ref{one-dimension} and Proposition \ref{crossing-dimension}.

\begin{proposition} \label{one-dimension-normalized}
Let $ \underline{Z}^{\circledcirc, \diamond} $ be a nice annulus data. 
When $ \mathbf{w}$ is an embedded link , $PSH^{\intercal}(M| \mathbf{w} , \underline{Z}^{\circledcirc , \diamond}  )$ is a rank one free module over $\Q[[\hbar, a]]$.
\end{proposition}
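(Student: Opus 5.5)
I will follow the proof of Proposition \ref{one-dimension} essentially verbatim, replacing the quotient by $\mathcal{Z}^{null}$ of Section 4 with the larger quotient defining $PSH^{\intercal}$. The candidate generator is the same. Let $G^{\heartsuit}$ be the graph with $H(G^{\heartsuit})=\emptyset$, $\chi_{\vartriangle}=d_{\vartriangle}=0$, $Comp_0=\emptyset$ and $V_{\vartriangle}=V^{\sharp}$. Applying Lemma \ref{extension-nice} recursively, I take a nice $PS$-cycle $Z^{\heartsuit}$ with $\underline{Z}^{\heartsuit}_{G^{\heartsuit}}=\mathbf{w}$ and limit support equal to $\{\mathbf{w}\}$. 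The claim is that the map $q(\hbar,a)\mapsto q(\hbar,a)Z^{\heartsuit}$ from $\Q[[\hbar,a]]$ to $PSH^{\intercal}(M|\mathbf{w},\underline{Z}^{\circledcirc,\diamond})$ is an isomorphism. Here the action of $\hbar^{k_0}a^{d_0}$ shifts $\chi_{\vartriangle}$ and $d_{\vartriangle}$ as in Section 4.

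\textbf{Surjectivity.} The plan is an induction over graphs using the obstruction theory of subsection \ref{obstruction-section}. Suppose $q$ and an isotopy $\tilde Z_{\prec G}$ between $Z_{\prec G}$ and $(qZ^{\heartsuit})_{\prec G}$ have been built, modulo $\mathcal{Z}^{null}$. By (\ref{obstruction-normalized}), the only nonzero obstructions occur on graphs with $H(G)=\emptyset$ and $Comp_0(G)=\emptyset$. Graphs with a component $c$ having $V_c=\emptyset$ lie in $\mathfrak{G}^{null}$ and are discarded. Any other component would have vertices with empty $H_v$, and those are removed through the $\delta_v$-cancellation argument used for (\ref{cycle-star}). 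Since the limit support lies in $\{\mathbf{w}\}$ and $\mathbf{w}$ is embedded, the surviving obstructed graphs are exactly $\hbar^{\kappa}a^{d}G^{\heartsuit}$. On such a graph the obstruction is $\mathfrak{Obs}_G=r\mathbf{w}$ up to a small isotopy, for some $r\in\Q$. Replacing $q$ by $q+r\hbar^{\kappa(G)}a^{d(G)}$ kills it, and Lemma \ref{extension-isotopies-inductive} then extends the isotopy to $G$. Independence of $q$ from $n$ will follow from injectivity, applied to the isotopy relating consecutive stages.

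\textbf{Injectivity.} Suppose $qZ^{\heartsuit}$ is isotopic to an element of $\mathcal{Z}^{null}$, and let $r\hbar^{\kappa_0}a^{d_0}$ be the leading term of $q$. I form the summed chains $\tilde{\underline{Z}}^{\star}_G$ by pushing forward along the forgetful projection over vertices in $V^{\star}$. Because the $\delta_v$ terms cancel in the sum, these chains satisfy (\ref{cycle-star}). Next I apply the shearing construction $(t,s)\mapsto(t+s,s)$ to obtain the chains $\tilde{\underline{Z}}^{\star,\prime}$. This expresses $\underline{Z}_{\hbar^{\kappa_0}a^{d_0}G^{\heartsuit}}=r\mathbf{w}$ as the boundary of a chain supported near $\mathbf{w}$, plus contributions that come from graphs in $\mathfrak{G}^{null}$.

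The main obstacle is showing that those null contributions cannot affect the component of the boundary on $G^{\heartsuit}$-type graphs. The difficulty is that $\delta_e$ can glue a vertexless component $c$ to $\vartriangle$ (the case $v_1\in V_{\vartriangle}$, $v_2\in V_c$), which would carry null cycles into non-null ones. I would first check that a component with $V_c=\emptyset$ has no half-edges, so no edge $e$ with $\delta_e$ can touch it. Then $\mathfrak{G}^{null}$ is closed under the operations appearing in the cycle condition, and the null part splits off as a subcomplex. Granting this, the non-null part gives $r\mathbf{w}=\partial(\text{chain supported near }\mathbf{w})$ in dimension one. This is impossible for a nonzero multiple of an embedded oriented link, so $r=0$, contradicting the choice of leading term.
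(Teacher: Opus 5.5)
Your proposal is correct and is essentially the paper's proof: the paper only says that this statement is proved as Proposition~\ref{one-dimension}. You add the observation that a vertexless component carries no half-edges, so $\delta_e$ and $\delta_v$ map $\mathfrak{G}^{null}$ into itself, and the cycle conditions on non-null graphs (including the $\star$/shearing argument at $\hbar^{\kappa_0}a^{d_0}G^{\heartsuit}$) never involve null graphs; this is the justification the paper leaves implicit for running that argument modulo $\mathcal{Z}^{null}$. One harmless correction: the normalized quotient is not ``larger'' than the reduced one of Section 4, since it kills graphs with vertexless components rather than identifying them via (\ref{equivalence-zero-component}), but nothing in your argument depends on that claim.
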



\begin{proposition} \label{crossing-dimension-normalized}
Let $\mathfrak{w}$ be a finite set of multi-loops closed by the twist operation. $PSH^{\intercal}(M| \mathfrak{w}, U^{\blacktriangle})$  is a rank $| \mathfrak{w} |$ free module over $\Q[[\hbar, a]]$.
\end{proposition}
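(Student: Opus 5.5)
The plan is to mimic the proof of Lemma \ref{crossing-dimension}, which itself extends Proposition \ref{one-dimension}, now working modulo $\mathcal{Z}^{null}$ of Section \ref{wilson-section}.

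\textbf{Basis candidates.} Write $\mathfrak{w}=\{\mathbf{w}_j\}_{j\in J}$. For each $j$, let $G^{\heartsuit,j}$ be the graph with $H=\emptyset$, $\chi_{\vartriangle}=d_{\vartriangle}=0$, $Comp_0=\emptyset$ and $V_{\vartriangle}$ indexing the components of $\mathbf{w}_j$. Applying Lemma \ref{extension-nice} recursively, choose a nice $PS$-cycle $Z^{\heartsuit,j}$ with $\underline{Z}^{\heartsuit,j}_{G^{\heartsuit,j}}=\mathbf{w}_j$ and limit support contained in $\mathfrak{w}$. The claim is that the $\Q[[\hbar,a]]$-linear map
\[
(q_j)_j \mapsto \sum_j q_j(\hbar,a)\, Z^{\heartsuit,j}
\]
from $\Q[[\hbar,a]]^{J}$ to $PSH^{\intercal}(M|\mathfrak{w},U^{\blacktriangle})$ is an isomorphism.

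\textbf{Surjectivity.} Take $Z$ with limit support in $\mathfrak{w}$. Build $q_j$ and an isotopy $\tilde{Z}$ inductively over graphs, using Lemma \ref{extension-isotopies-inductive}. By the discussion preceding Proposition \ref{one-dimension-normalized}, modulo $\mathcal{Z}^{null}$ the obstruction of subsection \ref{obstruction-section} is concentrated on graphs with $H(G)=\emptyset$ and $Comp_0(G)=\emptyset$. For such graphs, the limit support forces $\mathfrak{Obs}_G=\sum_j r_j\mathbf{w}_j$ up to small isotopy, with $r_j\in\Q$. The replacement
\[
q_j \leadsto q_j + r_j\, a^{d(G)}\hbar^{-\chi(G)}
\]
kills each term. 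Independence of $n$ follows from the lemma in subsection \ref{obstruction-section}.

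\textbf{Injectivity.} Suppose $\sum_j q_j Z^{\heartsuit,j}$ is isotopic, via an isotopy with limit support in $\mathfrak{w}$, to an element of $\mathcal{Z}^{null}$. Take a leading monomial $r\hbar^{\kappa_0}a^{d_0}$ of some $q_j$, and repeat the construction of $\tilde{\underline{Z}}^{\star}$ and $\tilde{\underline{Z}}^{\star,\prime}$ from Proposition \ref{one-dimension}. Summing over vertices with $H_v=\emptyset$ cancels the $\delta_{v'}$ terms. This writes $r\mathbf{w}_j$, plus contributions near the other $\mathbf{w}_{j'}$, as a boundary of a chain supported near $\mathfrak{w}$. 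Localizing near $\mathbf{w}_j$ then yields a contradiction.

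\textbf{Main obstacle.} The hardest step is the localization in the injectivity argument. The cobordism chains may move limit support between distinct $\mathbf{w}_j$ through twist degenerations, since $\mathfrak{w}$ is closed under twist. I would first try to show that, in the isotopy with finite limit support, components near distinct $\mathbf{w}_j$ stay $C^0$-separated for $n$ large; this uses that $\lim Supp$ is unchanged along small isotopies. It should give independent boundary relations near each $\mathbf{w}_j$, hence linear independence of the classes $[Z^{\heartsuit,j}]$.
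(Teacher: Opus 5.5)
Your proposal takes the paper's route: the paper proves this statement exactly as Proposition \ref{one-dimension} and Lemma \ref{crossing-dimension}, using the basis $\{Z^{\heartsuit,j}\}_{j\in J}$, surjectivity by absorbing into the $q_j$ the obstructions (which, modulo $\mathcal{Z}^{null}$, are concentrated on graphs with $H(G)=\emptyset$, $Comp_0(G)=\emptyset$), and injectivity via the $\tilde{\underline{Z}}^{\star}$, $\tilde{\underline{Z}}^{\star,\prime}$ boundary argument, where the paper leaves the localization near each $\mathbf{w}_j$ implicit. Your localization worry resolves as you suggest, because on a fixed graph the finitely many limit-support multi-loops are $C^0$-separated and twists only move support between different graphs through $\delta_{e'}$; for the same reason, take the leading monomial minimal jointly over all $j$ rather than for a single $q_j$, since $\delta_{e'}$ at a double point ties the $\hbar^{k}$-term of $q_{j'}$, for a twist $\mathbf{w}_{j'}$ of $\mathbf{w}_j$, to the graph carrying the $\hbar^{k+1}$-term of $q_j$.
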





Now we have the following


\begin{lemma} \label{lemma-canonical-one-dimensional}
  If $\mathbf{w}_{empty}= \emptyset$ is the empty  link 
\begin{equation} \label{zero-MCH}
PSH^{\intercal}(M|  \mathbf{w}_{empty},U)= \Q[[\hbar, a]]   
\end{equation}
canonically, i.e., there exists a canonical generator of  $PSH^{\intercal}(M|  \mathbf{w}_{empty},U)$.
\end{lemma}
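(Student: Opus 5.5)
The plan is to apply Proposition \ref{one-dimension-normalized} to the empty link, which gives that $PSH^{\intercal}(M| \mathbf{w}_{empty},U)$ is free of rank one over $\Q[[\hbar,a]]$. It then remains to exhibit a canonical generator. For $\mathbf{w}_{empty}$ we have $V^{\sharp}=\emptyset$. The graph $G^{\heartsuit}$ used in the proof of Proposition \ref{one-dimension} becomes the graph with $V(G)=\emptyset$, $H(G)=\emptyset$, $Comp_0=\emptyset$ and $\chi_{\vartriangle}=d_{\vartriangle}=0$. The chain $\underline{Z}_{G^{\heartsuit}}$ lives on $\mathfrak{L}_{G^{\heartsuit}}M$, which is a point. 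So the normalization (\ref{generator-prop2}) reads $\underline{Z}_{G^{\heartsuit}}=1$, and this is a condition that refers to no auxiliary choice.

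The steps are as follows. First, I choose any nice $PS$-cycle $Z^{\heartsuit}$ with limit support empty and with $\underline{Z}^{\heartsuit}_{G^{\heartsuit}}=1$. Such a cycle exists by recursive application of Lemma \ref{extension-nice}, starting from the value $1$ on $G^{\heartsuit}$ and the value $0$ on the other graphs with $H(G)=\emptyset$ and $Comp_0(G)=\emptyset$.

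Second, I show that the class of $Z^{\heartsuit}$ in $PSH^{\intercal}$ does not depend on this choice. Let $Z^{\heartsuit}$ and $Z^{\heartsuit\prime}$ be two such cycles. I build an isotopy between them inductively on graphs, using Lemma \ref{extension-isotopies-inductive} and the obstruction theory of subsection \ref{obstruction-section}. In the normalized setting the obstructions sit only on graphs with $H(G)=\emptyset$ and $Comp_0(G)=\emptyset$, by (\ref{obstruction-normalized}). Since the link is empty, every such graph is of the form $\hbar^{k}a^{d}G^{\heartsuit}$, and its limit support is a single point. So each obstruction is a rational number $\mathfrak{obs}_{G}$.

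At the first graph $G^{\heartsuit}$ the two cycles agree, so the obstruction vanishes. The real difficulty is at the graphs $\hbar^{k}a^{d}G^{\heartsuit}$ with $(k,d)\neq(0,0)$. There the obstruction receives contributions $pr_*(\delta_{e'}\tilde{\underline{Z}}_{G'}+\delta_{v'}\tilde{\underline{Z}}_{G'})$ from the isotopy already built on lower graphs, and I do not claim these vanish in general. Instead I use the freedom in the isotopy: the isotopy built on lower graphs can be modified by a closed correction. Since the module is free of rank one, the difference $[Z^{\heartsuit\prime}]-[Z^{\heartsuit}]$ equals $q\,[Z^{\heartsuit}]$ for some $q\in\Q[[\hbar,a]]$. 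The task is to prove $q=0$.

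For this I would run the injectivity argument of Proposition \ref{one-dimension}, applied to the isotopy between $Z^{\heartsuit\prime}$ and $(1+q)Z^{\heartsuit}$. Both cycles have coefficient $1$ on $G^{\heartsuit}$, so the leading term of $q$ would lie on some $\hbar^{k}a^{d}G^{\heartsuit}$ with $(k,d)\neq(0,0)$. The star construction $\tilde{\underline{Z}}^{\star}$ collapses the vertices $V^{\star}$ and cancels the $\delta_{v'}$ terms. It then writes the leading coefficient as a boundary of a chain on a point in degree $0$, which forces the coefficient to vanish. I expect this step to be the main obstacle. One must check that for the empty link the $\delta_{v'}$ cancellation and the degree count still give a contradiction, so that contributions of components with $V_c=\emptyset$ are killed by passing to $\mathcal{Z}^{null}$ rather than shifting $q$.

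Once $q=0$ is established, $[Z^{\heartsuit}]$ is a generator determined by the condition $\underline{Z}_{G^{\heartsuit}}=1$ alone. This gives the canonical identification (\ref{zero-MCH}).
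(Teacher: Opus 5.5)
Your first step (rank one, via Proposition~\ref{one-dimension-normalized}) and the existence of a cycle with $\underline{Z}^{\heartsuit}_{G^{\heartsuit}}=1$ are fine. The canonicity argument, however, has a genuine gap, essentially the one you flag yourself.

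Under your normalization, the cycles and the isotopy may be nonzero on graphs with $V_{\vartriangle}(G)\neq\emptyset$. These are small loops with $H_v=\emptyset$ in the $\vartriangle$-part, which an empty limit support allows. So it is not true that every graph with $H(G)=\emptyset$ and $Comp_0(G)=\emptyset$ is some $\hbar^k a^d G^{\heartsuit}$. When such a loop is shrunk, $\delta_{v'}$ lands on a graph $\hbar^{k}a^{d+1}G^{\heartsuit}$, which is not in $\mathfrak{G}^{null}$. These terms can therefore genuinely contribute to the obstruction and shift $q$.

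The detour through the injectivity argument of Proposition~\ref{one-dimension} does not repair this. That argument needs the cycle to vanish on all graphs $\prec \hbar^{k_0}a^{d_0}G^{\heartsuit}$. But $Z^{\heartsuit\prime}-(1+q)Z^{\heartsuit}$ is in general nonzero on lower graphs with $H(G)\neq\emptyset$, since your two choices are unrelated there. What you would actually need is that $Z\mapsto \sum_{k,d}\hbar^k a^d\,\underline{Z}_{\hbar^k a^d G^{\heartsuit}}$ is invariant under isotopy modulo $\mathcal{Z}^{null}$. That is equivalent to the statement you are trying to prove.

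The missing idea is to strengthen the normalization so that the obstruction vanishes outright and no $q$ ever appears. The paper imposes three conditions:
\begin{itemize}
\item $\underline{Z}^{\heartsuit}_{G^{\heartsuit}}=1$;
\item $\underline{Z}^{\heartsuit}_G=0$ for every $G\neq G^{\heartsuit}$ with $H(G)=\emptyset$;
\item crucially, $Z^{\heartsuit}_{G,m}\neq 0\Rightarrow V_{\vartriangle}(G)=\emptyset$.
\end{itemize}
The last condition is preserved by the $\delta$-operations, so it can be imposed throughout the inductive construction, for the isotopy as well.

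For two such cycles the obstruction then sits on graphs with $H(G)=V(G)=\emptyset$, and every term vanishes modulo $\mathcal{Z}^{null}$:
\begin{itemize}
\item the terms $\underline{Z}^{1}_G-\underline{Z}^{0}_G$ cancel;
\item no $\delta_{e'}$ term can produce a graph without vertices;
\item if $\delta_{v'}G'=G$, then $v'$ is the only vertex of $G'$, so it lies in $V_c$ for some $c\in Comp_0(G')$; hence $G$ has a component with $V_c=\emptyset$ and belongs to $\mathfrak{G}^{null}$.
\end{itemize}
Thus any two normalized representatives are isotopic modulo $\mathcal{Z}^{null}$, which is exactly the canonicity of the generator.
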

\begin{proof}
The proof of the Lemma is a similar to the  proof of Lemma \ref{one-dimension}. 
We consider a $PS$-cycle $Z^{\heartsuit}$ such that
\begin{equation} \label{normalized-condition2}
 H(G) =\emptyset, \underline{Z}_G^{\heartsuit} \neq 0 \Longrightarrow  G= G^{\heartsuit} ,
\end{equation}
\begin{equation} \label{normalized-condition3}
  {\underline{Z}}^{\heartsuit}_{G^{\heartsuit}} = 1.
\end{equation}
In the inductive construction of $Z^{\heartsuit}$ we can moreover assume  
\begin{equation} \label{normalized-condition1}
Z^{\heartsuit}_{G,m} \neq 0 \Longrightarrow V_{\vartriangle}(G)= \emptyset  .
\end{equation}



Let $Z^{0, \heartsuit} , Z^{1, \heartsuit}$ be nice $PS$-cycles satisfying the above conditions. 
 From (\ref{obstruction-normalized})  we know that the obstruction to the existence of the isotopy between $Z^{0, \heartsuit}$ and $Z^{1, \heartsuit}$ is concentrated in the graphs $G$ with $H(G) = \emptyset$, $V(G) = \emptyset$, and it is given by 
$$\mathfrak{obs}_G^{(n)} = \underline{Z}^{1,(n)}_G -  \underline{Z}^{0,(n)}_G + pr_*(   \sum_{(G', v')} \delta_{v'} \tilde{\underline{Z}}_{G'}^{(n)})  .$$
Condition  (\ref{normalized-condition1}) implies that  the terms with $\delta_{e'}$ are absent.

Condition (\ref{normalized-condition2}) and (\ref{normalized-condition3}) imply $ \underline{Z}^{1,(n)}_G =  \underline{Z}^{0,(n)}_G$.


Let $(G',v')$ be such that $\delta_{v'}G'=G$ and $\delta_{v'} \tilde{\underline{Z}}_{G'}^{(n)} \neq 0$.  Condition  (\ref{normalized-condition1}) tells us that $v' \in V_0(G')$ . It follows that 
$G \in \mathfrak{G}^{nul}$.

   \end{proof}

The map 
$$PSH^{\intercal}(M|  \mathbf{w}_{empty}) \rightarrow \Q[[\hbar, a]] $$
is written explicitly as
$$Z  \mapsto \sum_{k,d \geq 0}  \hbar^{k} a^{d} \underline{Z}_{ \hbar^{k} a^{d} G^{\heartsuit}} .$$
Here we consider  $ \underline{Z}_{ \hbar^{k} a^{d} G^{\heartsuit}} \in \Q$.


\subsection{Coherent Cycles}



A Coherent $PSH$-class, defined as in Section  \ref{coherent-section}, gives in particular a normalized $PSH$-class. The construction of the $PS$-cycle depends on the choice of a frame of $M$ compatible with the link. However the associated  normalized-$PSH$-class depends only on the inducted frame of $\mathbf{w}$.  Indeed, it is possible to define a more  general notion of Coherent Cycle depending on a frame of $\mathbf{w}$ which leads to a well defined normalized $PSH$-class. 


\subsubsection{Framed Links and Euler Structures}

Let 
$$\mathbf{w}= (w_v^{\sharp} )_{v \in V^{\sharp}}$$ 
be an oriented embedded link.



A frame of  $\mathbf{w}$ is usually defined as a section of $T M |_{\mathbf{w}}$ never belonging to the tangent space of $\mathbf{w}$. In this paper we prefer to
 consider a frame $\nu$ of  $\mathbf{w}$ as a map  $\nu: \R^3 \rightarrow \Gamma(T M |_{\mathbf{w}})$ such that
\begin{itemize}
\item $\nu((1,0,0)), \nu((0,1,0)) , \nu((0,0,1))$ are sections everywhere linearly independent on $\mathbf{w}$;
\item  $\nu((1,0,0))$ is everywhere tangent to $\mathbf{w}$ and in positive direction. 
\end{itemize}
$ \nu((0,1,0))$ defines a frame in the usual sense. Indeed, the  correspondence 
\begin{equation} \label{two-frames}
 \nu \mapsto \nu((0,1,0))
 \end{equation}
  determines an equivalence between the isotopies classes of the two definitions of frames (observe that the fiber of the map (\ref{two-frames}) is a contractible space.)


It is well known, and easy to check, that the set of isotopy classes of frames of a knot is a torsor on $\Z$. Hence the frames of $\mathbf{w}$ is a torsor on $\Z^{V^{\sharp}}$.

We say that an Euler Structure $U^{\blacktriangle}$ is compatible with $\nu$ if 
the restriction of $U^{\blacktriangle}$ to $\mathbf{w}$  is constant when written in the trivialization of $T M  $ defined by $\nu$, i.e., $U^{\blacktriangle}= \nu(\mathbf{a})$ for some $\mathbf{a} \in \R^3$. 




Fix a small tubular neighborhood $B_r(\mathbf{w}) $ of $ \mathbf{w}$. 
Any diffeomorphism 
\begin{equation} \label{diffeo-tubular}
 B_r(\mathbf{w})  \cong V^{\sharp} \times D^2_r \times S^1 
\end{equation}
defines in particular a frame of $ \mathbf{w}$. The space of diffeomorphisms  (\ref{diffeo-tubular}) compatible with a fixed frame is a contractible space.

Fix a diffeomorphism  (\ref{diffeo-tubular}) compatible with $\nu$. 
We consider  $M \setminus \mathbf{w}$ as a open manifold with collar at infinity $B_r(\mathbf{w})  \setminus \mathbf{w}$ . 
The Euler structures on  $M \setminus \mathbf{w}$ compatible with the frame of the collar are identified  with the Euler Structures on $M$ compatible with $\nu$.



Denote by $  \mathfrak{FrEul}(M \setminus \mathbf{w})$ the set of isotopy classes of Euler structures on  $M \setminus \mathbf{w}$ compatible with the frame of the collar. This object  is a particular case of Framed Euler structures introduced in \cite{Boundary-States}.   
$\mathfrak{FrEul}(M \setminus \mathbf{w})$ is a torsor on $H_1(M \setminus \mathbf{w}, \Z)$. 


The obvious map 
\begin{equation} \label{framed-not-framed}
    \mathfrak{FrEul}(M \setminus \mathbf{w}) \rightarrow  \mathfrak{Eul}(M)
\end{equation}
 is compatible with the map $H_1(M \setminus \mathbf{w}, \Z) \rightarrow H_1(M, \Z)$.
The group $ \text{Ker} \{ H_1( \partial B_r(\mathbf{w}) , \Z) \rightarrow H_1(B_r(\mathbf{w}), \Z)   \} \cong \Z^{V^{\sharp}}$ acts transitively on the fibers of (\ref{framed-not-framed}).



\subsubsection{Euler Alignanents}

Given an Euler Structure $U$ on $M$, we now define a suitable deformation  of $U$ to an Euler Structure $U^{\ast}$ compatible with $\nu$ such that $U^{\ast}$ admits a preferred isotopy with $U$, canonically determinate up to isotopy of isotopies. Up to small isotopies $U^{\ast}$ is determinate by an extra geometric data which we now define.


\begin{definition}
Let $U$ be an Euler Structure. An Euler Alignment of an embedded oriented link for $  \mathbf{w}$ is an isotopy between the unit section of  $T \mathbf{w} $ and $U |_{\mathbf{w}}$ as sections of $TM |_{\mathbf{w}}$. 
\end{definition}

\begin{lemma}
The set of isotopy classes of Euler Alignment of $\mathbf{w}$  is a torsor on $\Z^{V^{\sharp}}$ .
\end{lemma}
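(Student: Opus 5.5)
The plan is to reduce the statement to a componentwise homotopy computation and then read off the torsor structure. Since $\mathbf{w} = (w_v^{\sharp})_{v \in V^{\sharp}}$ is a disjoint union of embedded circles, an Euler Alignment is the same thing as an independent choice, for each $v \in V^{\sharp}$, of an isotopy between the unit tangent section $\tau_v$ of $T w_v^{\sharp}$ and $U|_{w_v^{\sharp}}$. Isotopies of alignments also decompose componentwise. It therefore suffices to prove that for a single knot $K = w_v^{\sharp}$ the set of isotopy classes is a torsor on $\Z$; the product over $V^{\sharp}$ then gives a torsor on $\Z^{V^{\sharp}}$.

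For a single component we trivialize. Since $M$ is oriented, $TM|_K$ is trivial, and we fix a trivialization $TM|_K \cong K \times \R^3$. We take $U$ to be nonvanishing, as an Euler structure in vector-field form, and normalize both sections to unit length. Then $\tau_v$ and $U|_K$ become maps $K \cong S^1 \to S^2$. An isotopy between them, taken through nonvanishing sections, is a homotopy $H : S^1 \times [0,1] \to S^2$ with prescribed ends, and an isotopy class of alignments is a homotopy class of such $H$ relative to $S^1 \times \{0,1\}$.

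Existence follows because $S^2$ is simply connected, so any two maps $S^1 \to S^2$ are homotopic and the set of alignments is nonempty. For the classification we use the standard fact that, once the two ends are fixed, homotopy classes of paths from $\tau_v$ to $U|_K$ in the mapping space $\mathrm{Map}(S^1,S^2)$ form a torsor over $\pi_1(\mathrm{Map}(S^1,S^2), \tau_v)$. Concretely, the group acts by concatenating a loop based at $\tau_v$ in front of the path, and this action is free and transitive. It remains to compute the group. The free loop space fibration $\Omega S^2 \to LS^2 \to S^2$ gives the exact sequence
$$\pi_2(\Omega S^2) \to \pi_1(\Omega S^2) \to \pi_1(LS^2) \to \pi_1(S^2)=0 .$$
Here $\pi_1(\Omega S^2) \cong \pi_2(S^2) \cong \Z$. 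The boundary map $\pi_2(S^2) \to \pi_1(\Omega S^2)$ is the Whitehead product with the class of the base loop, and that class is trivial in $\pi_1(S^2)$, so the boundary map vanishes. Hence $\pi_1(LS^2, \tau_v) \cong \Z$ at every basepoint, since $LS^2$ is connected. The action has a geometric description: insert a degree-$k$ map $S^1\times[0,1]/\partial \to S^2$, i.e. add $k$ full twists.

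The main obstacle is justifying this identification, in particular that the result does not depend on the chosen trivialization of $TM|_K$ or on the basepoint. For the trivialization, two choices differ by a loop in $SO(3)$, which acts by a homeomorphism of the mapping space and so preserves torsor structures. For the basepoint, we need canonicity of $\Z$ itself, not only of the abstract group. I would handle this by identifying the $\Z$ intrinsically as $\pi_2(S^2) = H_2(S^2)$, oriented by the orientation of $M$; this is compatible with the torsor of frames of $\mathbf{w}$ used earlier. If that route is awkward, the fallback is to pass to the complement: an isotopy avoiding $\tau_v$ pushes into $\tau_v^{\perp}$. The resulting difference class then lies in $H_1$ of the unit normal circle bundle, which is $\Z$, matching the $\mathrm{Ker}\{H_1(\partial B_r(\mathbf{w}))\to H_1(B_r(\mathbf{w}))\}\cong\Z^{V^{\sharp}}$ description used for $\mathfrak{FrEul}(M\setminus\mathbf{w})$.
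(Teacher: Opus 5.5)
Your argument is correct. Note that the paper gives no proof of this lemma. It is asserted in the same way as the preceding claim that isotopy classes of frames of a knot form a $\Z$-torsor, and the only nearby homotopy-theoretic reasoning is the use of $\pi_1(S^2)=0$ in Remark~\ref{remark-eurler-frame}. So you are supplying an argument rather than reproducing one. The steps are all sound:
\begin{itemize}
\item the componentwise reduction;
\item the trivialization of $TM|_K$;
\item nonemptiness from $\pi_1(S^2)=0$;
\item the identification of alignments with $\pi_0$ of the path space from $\tau_v$ to $U|_K$ in $\mathrm{Map}(S^1,S^2)$, a torsor over $\pi_1(\mathrm{Map}(S^1,S^2),\tau_v)$;
\item the computation $\pi_1(LS^2)\cong\Z$ via the evaluation fibration.
\end{itemize}
One small fix: the first term of your displayed exact sequence should be $\pi_2(S^2)$, not $\pi_2(\Omega S^2)$, as your text already says.

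A lighter route, which also makes the canonical generator (needed for $\kappa^{+1}$ in Lemma~\ref{lemma-twist-kappa}) transparent, is relative obstruction theory. On one component an alignment is a map $S^1\times[0,1]\to S^2$ with prescribed restriction to $S^1\times\{0,1\}$. The domain is $2$-dimensional and $S^2$ is simply connected, so homotopy classes rel boundary form a torsor over $H^2(S^1\times[0,1],S^1\times\{0,1\};\pi_2(S^2))\cong\Z$. The difference of two alignments is the degree of the glued map $F\colon S^1\times S^1\to S^2$, with sign fixed by the orientation of the unit sphere of $T_pM$ induced by $M$. This avoids the Whitehead-product step and the basepoint discussion entirely.

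Independence of the trivialization is then immediate. A loop $g\colon S^1\to SO(3)$ replaces $F$ by $(t,s)\mapsto g(t)F(t,s)$. This has the same degree, because $H^1(SO(3);\mathbb{R})=H^2(SO(3);\mathbb{R})=0$ forces the action map $SO(3)\times S^2\to S^2$ to pull back the area class to $1\otimes[\omega]$.

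You should drop your fallback via $\tau_v^{\perp}$. It is not needed, and as written it does not work. An alignment starts at $\tau_v$, so it cannot avoid $\tau_v$. Also, $H_1$ of the normal circle bundle of a knot is $\Z^2$, of which only the meridian direction is relevant. The workable version of that idea counts passages through $-\tau_v$, and the difference of two such counts is again the degree above.
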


\begin{remark} \label{remark-eurler-frame}
Fix a point $\mathbf{a}$ of the two-dimentional sphere  $S^2$.
It is easy to check that the isotopy classes of Euler Alignments are identified with the isotopy classes of isotopies between $\nu (\mathbf{a})$ and $U |_{\mathbf{w}}$.  Indeed any path of the two-dimentional sphere  $S^2$ between $(1,0,0)$  and $\mathbf{a}$   yields an isotopy between $\nu( \mathbf{a})$ and $T \mathbf{w}$. Any such isotopy yields a bijection 
$$ \{  \text{ isotopies between }T \mathbf{w} \text{ and } U |_{\mathbf{w}} \} /\sim \rightarrow   \{ \text{ isotopies between }\nu( \mathbf{a}) \text{ and } U |_{\mathbf{w}}  \}  /\sim $$
where in both the sides we quotient by the  isotopy of isotopies.
 Since $S^2$ is simply connected the bijection does not depend on the path we have chosen.  
\end{remark}









\begin{proposition} \label{euler-structure-compatible-lemma}
Let $( \mathbf{w}, \nu )$ be a framed link and  let $U^{\blacktriangle}$ be an Euler structure.

To each isotopy $\kappa$ between $T \mathbf{w} $ and $U |_{\mathbf{w}}$ it is associated an Euler Structure  $U^{\kappa}$ compatible with $( \mathbf{w}, \nu )$ with the property that
there is an  isotopy $\tilde{U}^{\kappa}$  between   $U^{\blacktriangle}$ and $U^{\kappa}$ canonically determinate up to isotopy of isotopies.

If $\kappa^1$ and $\kappa^2$ are isotopic, the associated Euler structures $U^{\kappa,1}, U^{\kappa,2}$ are isotopic with isotopy trivial outside $B_{r/2}(\mathbf{w}) $.


\end{proposition}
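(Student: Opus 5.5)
The plan is to treat the Euler structure locally near $\mathbf{w}$ as a nonvanishing vector field, and to build $U^{\kappa}$ by splicing a constant field (in the trivialization $\nu$) near $\mathbf{w}$ with $U$ far away, using $\kappa$ in the collar between them. Concretely, I fix the diffeomorphism (\ref{diffeo-tubular}) compatible with $\nu$ and a point $\mathbf{a}\in S^2$. By Remark \ref{remark-eurler-frame}, $\kappa$ determines (up to isotopy of isotopies) an isotopy $\kappa_{\mathbf{a}}=(\kappa_{\mathbf{a}}^s)_{s\in[0,1]}$ between $\nu(\mathbf{a})$ and $U|_{\mathbf{w}}$ as sections of $TM|_{\mathbf{w}}$. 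After a small isotopy of $U$ (possible since $U_0$ is $C^0$-close to the diagonal, and the $U^{\blacktriangle}=(U_0,U_1)$ formalism can be realized by a vector field away from the finitely many points of $U_1$), I may assume $U_1$ avoids $B_r(\mathbf{w})$ and that $U$ on $B_r(\mathbf{w})$ is the pullback of $U|_{\mathbf{w}}$ along the projection $V^{\sharp}\times D^2_r\times S^1\to V^{\sharp}\times S^1$, using the trivialization $\nu$ to transport vectors.

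Next I define $U^{\kappa}$ radially. On $B_{r/2}(\mathbf{w})$ I set $U^{\kappa}=\nu(\mathbf{a})$. On the annular region $r/2\le |x|\le r$ I set $U^{\kappa}(\theta,x)=\kappa_{\mathbf{a}}^{\phi(|x|)}(\theta)$, with $\phi$ a smooth cutoff going from $0$ at $r/2$ to $1$ at $r$. Outside $B_r(\mathbf{w})$ I set $U^{\kappa}=U$. This field is nonvanishing, so it defines an Euler structure with $U_1$ unchanged, and it is compatible with $(\mathbf{w},\nu)$ by construction. The isotopy $\tilde U^{\kappa}$ is obtained by shrinking the region where $\kappa$ acts: $\tilde U^{\kappa,\lambda}(\theta,x)=\kappa_{\mathbf{a}}^{\psi_\lambda(|x|)}(\theta)$, where $\psi_\lambda$ interpolates from $\psi_0\equiv 1$ (giving $U$) to $\psi_1=\phi$ (giving $U^{\kappa}$).

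For canonicity I track every choice: the path in $S^2$, the tubular diffeomorphism, the cutoffs, the preliminary small isotopy, and the representative of $\kappa$. Each ranges over a contractible or simply connected space, namely the space of diffeomorphisms compatible with $\nu$, cutoff functions, and $S^2$ via Remark \ref{remark-eurler-frame}. A one-parameter family of choices therefore produces a two-parameter family of Euler structures, which is an isotopy of isotopies between the resulting $\tilde U^{\kappa}$.

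For the last claim, suppose $\kappa^1$ and $\kappa^2$ are isotopic through $(\kappa^t)_t$. Inserting $\kappa^t$ into the annular formula gives a family of Euler structures, all compatible with $\nu$, joining $U^{\kappa,1}$ to $U^{\kappa,2}$. Its support lies in the annulus. To get an isotopy trivial outside $B_{r/2}$ as the statement requires, I would rescale: place the interpolation annulus inside $B_{r/2}$ from the start, i.e. replace $r$ by $r/4$ in the construction, which the canonicity step allows.

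The main obstacle I expect is making the vector-field description rigorous in the chain-level formalism $(U_0,U_1)$. The construction has to be translated into a three-chain $U_0$ near $Diag_M$ given by a graph of a small multiple of the field, with transversality to the diagonal, and one must check that isotopies of fields induce isotopies in $\mathfrak{Eul}(M)^{\blacktriangle}$. I would handle this by working with fields away from $U_1$, as arranged in the first step.
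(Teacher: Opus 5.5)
Your proposal is correct and follows essentially the paper's construction: view $U$ on the tube $B_r(\mathbf{w})$ as a map to $S^2$ via the $\nu$-compatible trivialization, splice $\kappa$ in radially so that the value on $\mathbf{w}$ is constant in $\nu$, and obtain the isotopy back to $U^{\blacktriangle}$ by undoing the splice. The only difference is the layering. The paper puts $\kappa$ (running from $T\mathbf{w}$ at the core to $U|_{\mathbf{w}}$) on the inner tube $B_{r/2}(\mathbf{w})$, and squeezes $U$ into the outer annulus by a collapsing map $f\colon D^2_r\to D^2_r$; this plays the role of your preliminary radially-constant isotopy, and the isotopy to $U$ comes from $f\simeq \mathrm{id}$. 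As a result, the support claim for isotopic $\kappa^1,\kappa^2$ is immediate in the paper, whereas you need your rescaling step to move the $\kappa$-annulus inside $B_{r/2}(\mathbf{w})$.
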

\begin{proof}

Using the frame of $B_r(\mathbf{w}) $ defined by (\ref{diffeo-tubular}), we can view $U$  on $B_r(\mathbf{w}) $ as a map  
$$U|_{B_r(\mathbf{w}) }: B_r(\mathbf{w})  \rightarrow S^2 .$$

 Using the frame $ \nu $ we may consider $\kappa$ as a map
$$\kappa : [0,1] \times \mathbf{w} \rightarrow S^2.$$

Denote by $D^2_r$ the disk of radius $r$ in $\R^2$.
Fix a map $f : D^2_r  \rightarrow  D^2_r$  with the following properties 
\begin{itemize}
 \item $f$ is the identity on $\partial D^2_r$ ;
 \item  $f(D^2_{r/2}) = \{ 0 \} \in D^2_{r/2}$;
 \item $f$ is injective on $ D^2_{r} \setminus    D^2_{r/2}$.
  \end{itemize}
 It is easy to check that  $f$ exists and it is uniquely determined up to isotopy.

Define $U^{\kappa}$ as follows:
\begin{itemize}
\item $U^{\kappa}=U$ outside $B_r(\mathbf{w}) $;
\item $U^{\kappa}=U \circ f$ on $B_r(\mathbf{w})  \setminus B_{r/2}(\mathbf{w}) $;
\item $U^{\kappa}=\kappa \circ (radius \times Id)$ on $B_{r/2}(\mathbf{w}) \cong V^{\sharp} \times D^2_{r/2}  \times S^1 $, where $radius : D^2_{r/2} \rightarrow [0,1]$ is the (suitable rescaled) radius function.
\end{itemize} 
 
Any isotopy between $f$ and the identity map of $D^2_r$ yields an isotopy between $U^{\kappa}$ and $U$.  This isotopy is uniquely determinate up to isotopy of isotopies. The Proposition follows.
 
\end{proof}

\subsubsection{Definition of Flat-Approching-$PS$-Cycle}

In the definition of  flat-approaching $PS$-cycles we formulate a weaker version of the convergence condition    (\ref{convergence}) that yields a well-defined normalized $PSH$-class  and that can be formulated using a frame of the link instead of a frame of $M$ .

 As above, fix a tubular neighborhood $ B_r(\mathbf{w}) $ of  $\mathbf{w}$ and endow $ B_r(\mathbf{w}) $ with a frame compatible with $( \mathbf{w}, \nu )$ as in (\ref{diffeo-tubular}).
 
Fix  a flat $PS$-cycle   $Z^{\flat}$ . 
 We say that  $\mathcal{Z}^{\wr}$ converges to $( \mathbf{w}, \nu  , Z^{\flat} )$ if  for each $(G,m,\mathcal{P})$  such that for each $R' < R^{\dagger}$
the convergence condition (\ref{convergence})  is satisfied uniformly for 
$\mathbf{z} = (z_v)_{v \in V^+(G)} \in Diag_{{\mathcal{P}}}$ with 
$$z_v \in  B_{R'}(\mathbf{w}) \quad \forall v \in  V^+(G) .$$

\subsubsection{Existence and Unicity}

We now extend Theorem  \ref{coherent-theorem} to normalized $PSH$.

\begin{theorem}
 Let $( \mathbf{w}, \nu ) $ be a framed link  and let $ U^{\blacktriangle}$ be
an Euler Structure compatible with $( \mathbf{w}, \nu ) $. To the triple $(\mathbf{w}, \nu  ,  U^{\blacktriangle})$  it is associated canonically a normalized $PSH$-class 
\begin{equation} \label{coherent-class}
[Z^{( \mathbf{w}, \nu  ,  U^{\blacktriangle}) }] \in PSH^{\intercal}(M| \mathbf{w},  U^{\blacktriangle} ).
\end{equation}

 Let $ \tilde{\mathbf{w}}$  be an  isotopy of embedded oriented links between  $\mathbf{w}^0, \mathbf{w}^1$ . 
  Let $\nu  ^0$ and $ \nu  ^1$ be frames of    $\mathbf{w}^0, \mathbf{w}^1$ and let $U^{\blacktriangle,0},U^{\blacktriangle,1} $ be Euler Structures compatible with  $\nu  ^0, \nu  ^1$ respectively.
Let  $\tilde{\nu  }$ be an isotopy between  $\nu  ^0$ and $ \nu  ^1$ and
let $\tilde{U}^{\blacktriangle}$ be an isotopy  between   $U^{\blacktriangle,0}$ and $U^{\blacktriangle,1} $ compatible with $\tilde{\nu  }$. We have
\begin{equation} \label{transfer-coherent-normalized}
Transfer_{ \tilde{\mathbf{w}}, \tilde{nu}, \tilde{U}^{\blacktriangle}}( [Z^{( \mathbf{w}^0, \nu  ^0,  U^{\blacktriangle,0}) }  ]  )   =   [Z^{( \mathbf{w}^1, \nu  ^1,  U^{\blacktriangle,1}) }   ].
\end{equation}
\end{theorem}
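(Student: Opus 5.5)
The plan is to repeat the proof of Theorem \ref{coherent-theorem}, using the weakened convergence condition that only refers to the tubular neighbourhood $B_r(\mathbf{w})$ with the frame given by (\ref{diffeo-tubular}), and then to show that passing to $PSH^{\intercal}$ kills every dependence on data away from $\mathbf{w}$.

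\textbf{Existence.} First I fix a flat $PS$-cycle $Z^{\flat}$ as in Proposition \ref{existence-flat-proposition}. Next I construct a flat-approaching $PS$-cycle $Z^{\wr}$ converging to $(\mathbf{w},\nu,Z^{\flat})$ in the weak sense. This follows the proof of Proposition \ref{existence-asymptotic-proposition}. Inside $B_{R'}(\mathbf{w})$, $R'<R^{\dagger}$, the chains are built by rescaling $Z^{\flat}$ with the frame of (\ref{diffeo-tubular}); this is possible because $U^{\blacktriangle}$ is compatible with $\nu$, so the annulus data are constant in that frame near $\mathbf{w}$. Outside $B_{R'}(\mathbf{w})$ there is no convergence requirement, so the chains there are obtained from the extension Lemmas \ref{extension-PS-lemma} and \ref{extension-isotopies-inductive}.

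Lemma \ref{coherent-existence-lemma} then goes through unchanged, because it uses only the support and cluster conditions, which are local near $Diag_{\mathcal{P}}$ and hence near $\mathbf{w}$ for the partitions involving $V^{\sharp}$. It produces a nice $PS$-cycle $Z^{(\mathbf{w},\nu,U^{\blacktriangle})}$, and I take its class in $PSH^{\intercal}(M|\mathbf{w},U^{\blacktriangle})$.

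\textbf{Independence of choices.} The choices are $Z^{\flat}$, $Z^{\wr}$, the tubular diffeomorphism, and the extension away from $\mathbf{w}$. Two choices of $Z^{\flat}$ are isotopic by Proposition \ref{unicity-flat-proposition}, and the space of tubular diffeomorphisms compatible with $\nu$ is contractible. An analogue of Proposition \ref{unicity-asymptotic-proposition} with the weak convergence condition then gives an isotopy of flat-approaching cycles, and Lemma \ref{coherent-iso-existence-lemma} turns it into an isotopy of coherent cycles. The one new point is that away from $\mathbf{w}$ the two approaching cycles need not be related by a convergent isotopy. There I build the isotopy by the inductive extension argument of Subsection \ref{obstruction-section}, and I must show every obstruction vanishes in $PSH^{\intercal}$.

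By (\ref{obstruction-normalized}), obstructions only occur on graphs with $H(G)=\emptyset$ and $Comp_0(G)=\emptyset$. Among these:
\begin{itemize}
\item Graphs with $V(G)=V^{\sharp}$ are controlled near $\mathbf{w}$, where the convergent isotopy exists. The obstruction coefficient is then the same rational number as in the flat model, so it is zero.
\item The pure vacuum graphs ($V(G)=\emptyset$) are handled exactly as in Lemma \ref{lemma-canonical-one-dimensional}: their contributions lie in $\mathcal{Z}^{null}$.
\end{itemize}

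\textbf{Transfer identity (\ref{transfer-coherent-normalized}).} I take a one-parameter version of the same construction along $(\tilde{\mathbf{w}},\tilde{\nu},\tilde{U}^{\blacktriangle})$. The tubular frames are transported along the isotopy, and the one-parameter flat-approaching cycle is built as in Proposition \ref{unicity-asymptotic-proposition}. Lemma \ref{coherent-iso-existence-lemma} then gives an isotopy between the two coherent cycles whose limit support lies in $\tilde{\mathbf{w}}$. By Lemma \ref{deformation-cycles} this isotopy computes the transfer map, so (\ref{transfer-coherent-normalized}) follows.

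\textbf{Main obstacle.} The step I expect to be hardest is showing that the obstruction coefficient on the graphs $G$ with $V(G)=V^{\sharp}$, $H(G)=\emptyset$, $Comp_0(G)=\emptyset$ is determined purely by the local flat data near $\mathbf{w}$. I would first prove a localization statement: the obstruction chain $pr_*(\sum\delta_{e'}\tilde{\underline{Z}}_{G'})$ with $Comp_0(G')=\emptyset$ is, by the support condition, concentrated on the $\rho^{\dagger}/n$-neighbourhood of $\mathbf{w}$. Components $c\in Comp_0(G')$ that lie away from $\mathbf{w}$ are exactly the ones that factor off, by the factorization-type argument, into elements of $\mathcal{Z}^{null}$.
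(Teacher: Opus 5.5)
There is a genuine gap at the step you flag, and the localization you propose does not close it. In the presence of a convergence condition imposed only inside $B_R(\mathbf{w})$, you cannot treat (\ref{obstruction-normalized}) and the reduction to $V(G)=V^{\sharp}$ as a black box. Obstructions on graphs with $H(G)=\emptyset$ and vertices $v_0\in V(G)\setminus V^{\sharp}$ are eliminated by shrinking the loop $w_{v_0}$ (the argument of Proposition \ref{one-dimension}), and that shrinking must be compatible with convergence in the tube. The missing idea is the paper's rule (\ref{pick-condition}): if some such loop lies in $B_R(\mathbf{w})$, shrink one of those. Only with this rule does the remaining case, where all these loops lie in the tube, close:
\begin{itemize}
\item the terms $\underline{Z}^{1,(n)}_G-\underline{Z}^{0,(n)}_G$ vanish by (\ref{convergence-implication}), which comes from $\underline{Z}^{\flat}_G=0$ for $G\neq G^{\heartsuit}$, $H(G)=\emptyset$;
\item the $\delta_{e'}$-terms, and the $\delta_{v'}$-terms whose loop collapses inside the tube, are excluded by convergence to the flat model;
\item a $\delta_{v'}$-term whose loop $w_{v'}$ collapses outside the tube forces, by (\ref{pick-condition}), $V_0(G')=\{v'\}$, hence $G=\delta_{v'}G'\in\mathfrak{G}^{null}$.
\end{itemize}
Your bullet for $V(G)=V^{\sharp}$ ignores exactly these last contributions, and for them no comparison with $Z^{\flat}$ is possible. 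You also omit graphs with extra vertices whose loops lie partly inside and partly outside the tube.

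Two of your other tools are not available. The support condition you invoke for localization is a property of coherent and flat-approaching cycles, not of an isotopy produced by free extension away from the tube. It also never confines the $Comp_0$ components, which are the ones being shrunk. The ``factorization-type argument'' has no support in the paper: it only defines the factorization property and never proves that these cycles or isotopies satisfy it.

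A smaller gap is in the existence step. Taking rescaled $Z^{\flat}$ inside $B_{R'}(\mathbf{w})$ and an arbitrary extension outside does not give chains satisfying the cycle condition, because the two pieces need not match across $\partial B_{R'}(\mathbf{w})$. The paper instead proceeds as follows:
\begin{itemize}
\item it takes a global extension $\underline{Z}^{\wr,\prime}_G$ from the proof of Lemma \ref{extension-PS-lemma}, which already satisfies the cluster condition, and a tubular extension $\underline{Z}^{\wr,tub}_G$;
\item it writes their difference on the tube in the form (\ref{trivial-cycle}), with chains $\underline{Y}_{G,m}$ satisfying the cluster condition;
\item it defines $\underline{Z}^{\wr}_{G,m}$ by (\ref{extension-modified}); the case $E^{ex}(G)\neq\emptyset$ is automatic by forgetful compatibility.
\end{itemize}
The same correction handles the graphs with $H(G)\neq\emptyset$ in the isotopy step. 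So the $H(G)=\emptyset$ analysis above is the one genuinely new ingredient, and it is the one your proposal lacks.
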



\begin{proof}





Pick a collection of small real positive numbers $ (R_{G,m}^{\dagger})_{G,m} $ with $ R_{G,m}^{\dagger} < R_{G',m'}^{\dagger} $ when $(G',m') \prec (G, m) $.




Fix a flat $PS$-cycle $Z^{\flat}$. 



As in Lemma \ref{coherent-existence-lemma} , to construct $ Z^{(\mathbf{w}, \nu, U^{\blacktriangle}) }$ we need to prove the existence of a flat-approaching $PS$-cycle $Z^{\wr}$ converging to $(Z^{\flat},\mathbf{w})$.

 To prove the existence of $Z^{\wr}$ we use an inductive argument as in the proof of Lemma \ref{extension-PS-lemma} and Proposition \ref{existence-asymptotic-proposition} (see \cite{PSPCS1}). 
  Assume we have defined $\underline{Z}^{\wr}_{\prec G}$.
 We can apply the argument of the proof of Lemma \ref{extension-PS-lemma} (see (\cite{PSPCS1}))  to obtain an extension $\underline{Z}^{\wr, \prime}_{G}$.  We may assume that $\underline{Z}^{\wr, \prime}_{G}$ satisfies the cluster condition (note that $\underline{Z}^{prel}$ used in the proof satisfies the cluster condition) but however $\underline{Z}^{\wr, \prime}_{G}$ does not satisfies in general the converge condition. 
 From another hand the argument of the proof  Proposition \ref{existence-asymptotic-proposition} yields  an extension $\underline{Z}^{\wr, tub}_{G}$ on $\mathfrak{L}_{\hat{V}(G)}(B_{R}(\mathbf{w})) $ such that the cluster and convergence conditions are satisfied.  
In order to satisfies the convergence condition for $\underline{Z}^{\wr, \prime}_{G}$ we want to modify it such that it agrees with  $\underline{Z}^{\wr, tub}_{G}$ on $\mathfrak{L}_{\hat{V}(G)}(B_{R}(\mathbf{w}))$.
In the case $E^{ex}(G) \neq \emptyset$ this step is trivial since we use forgetful compatibility. Let us assume $E^{ex}(G) = \emptyset$. 
 On $\mathfrak{L}_{\hat{V}(G)}(B_{R}(\mathbf{w}))$ we may define inductively chains  $\underline{Y}_G$ associated to   $\underline{Z}^{\wr, tub}_{G} - \underline{Z}^{\wr, \prime}_{G}$ as in (\ref{trivial-cycle}) .  We may inductively assume that $\underline{Y}_G$ satisfies the cluster condition. 
 We define the extension which satisfies the convergence condition using the formula
 \begin{equation} \label{extension-modified}
 \underline{Z}^{\wr}_{G,m}  = \underline{Z}^{\wr, \prime}_{G,m} + \partial^{red}  \underline{Y}_{G,m}  -   \sum_{0 \leq i \leq |m|} (-1)^i {\underline{Y}}_{G, \partial_i m}
 \end{equation}
 where we consider  the chains $ \underline{Y}_{G,m}$ as chains on $\mathfrak{L}_{\hat{V}(G)}(M)$, 
Observe that the convergence condition implies 
\begin{equation} \label{convergence-implication}
G \neq G^{\heartsuit}, H(G)= \emptyset,  V(G)=V_{\vartriangle}(G)   \Rightarrow \underline{Z}_G^{\wr, (n)} = 0   \text{   for   } n \gg 0.
\end{equation}



Now let  $Z^{0, \wr},Z^{1, \wr}$ be flat-approaching $PS$-cycles as above converging to  $(Z^{0, \flat},\mathbf{w}^0, \nu^0  ), (Z^{1, \flat},\mathbf{w}^1, \nu^1)$ respectively.
Let $\tilde{\mathbf{w}}, \tilde{\nu}$ and $\tilde{U}^{\blacktriangle}$ be isotopies as stated in the Theorem. 


Let  $ \tilde{Z}^{\flat}$ be an isotopy of flat $PS$-cycles between $Z^{0, \flat}$ and $Z^{1, \flat}$,
In order to prove identity  (\ref{transfer-coherent-normalized}) we need to define an isotopy of coherent $PS$-cycles  $\tilde{Z}^{(\tilde{\mathbf{w}}, \tilde{\nu},  \tilde{U}^{\blacktriangle}) }$ between $Z^{(\mathbf{w}^0, \nu^0  ,  U^{\blacktriangle,0}) } $ and $Z^{(\mathbf{w}^1, \nu^1,  U^{\blacktriangle,1}) } $.  For this we need to prove   the existence of an isotopy  $\tilde{Z}^{ \wr}$   between $Z^{0, \wr}$ and $Z^{1, \wr}$ converging to $(\tilde{Z}^{\flat}, \tilde{\mathbf{w}}, \tilde{\nu})$ (see Lemma \ref{coherent-iso-existence-lemma}). We proceed again inductively as follows.




Assume we have defined $\tilde{Z}^{ \wr}_{\prec G}$ . 
In the case $H(G) \ne \emptyset$, the same argument used for (\ref{extension-modified}) applied to isotopies yields $\tilde{\underline{Z}}^{ \wr}_{\preccurlyeq G}$  (here we use the proof of Lemma \ref{extension-PS-lemma-isotopy} and Proposition  \ref{unicity-asymptotic-proposition}).
Hence we  need to consider only the case  $H(G) = \emptyset$.

Consider an element 
$(w_v)_v$
 belonging to the support of 
$$\mathfrak{Obs}_G^{(n)} =  \underline{Z}^{1,(n)}_G -  \underline{Z}^{0,(n)}_G + pr_*( \sum_{(G', e')}  \delta_{e'} \tilde{\underline{Z}}_{G'}^{(n)}|_{(-\lambda^{\dagger}, \lambda^{\dagger})} +   \sum_{(G', v')} \delta_{v'} \tilde{\underline{Z}}_{G'}^{(n)}|_{(-\lambda^{\dagger}, \lambda^{\dagger})   }) .$$
(See subsection \ref{obstruction-section} for the notation.)

Set 
$$V_{0,tub}= \{ v \in V_0 (G) | supp (w_v) \subset B_{R}(\mathbf{w})  \}.$$

Assume first  $V_{0,tub} \neq V_0(G)$.
In this case we would like to apply the argument used to eliminate the obstructions of the proof of Proposition \ref{one-dimension}, i.e.,  
we pick an element $v_0 \in V(G) \setminus V^{\sharp}$ and and we shrink $w_{v_0}$.
However, in order to make the operation  compatible with the convergence condition, in this case we cannot pick $v_0$ completely arbitrarily but we make the following assumption:
\begin{equation} \label{pick-condition}
V_{0,tub} \neq \emptyset  \Rightarrow v_0 \in V_{0,tub}.
\end{equation} 




Now assume $V_{0,tub} = V_0(G)$.
Relation (\ref{convergence-implication}) tells us that the terms involving $\underline{Z}^{1,(n)}_{G} $ and $\underline{Z}^{0,(n)}_{G}$ are trivial. Hence we have to consider the following two cases:
\begin{itemize} 
\item $((w_v)_v) \in Supp(pr_*(\delta_{e'} \tilde{\underline{Z}}_{G'}) )$ for some  $(G',e')$ with $G'/e'=G$. Let $p \in Supp(pr_*(\tilde{\underline{Z}}_{G'}))$ be the corresponding element.  From the assumption we obtain $ p \in \mathfrak{L}_{\hat{V}(G)} (B_{R'}(\mathbf{w})) $. The convergences condition yields a contradiction. 
\item $(w_v)_v \in Supp(\delta_{v'} \tilde{\underline{Z}}_{G'} )$ for some $(G',v')$ with $G'/v'=G$. Here we have two subcases:
\begin{itemize}
\item $Supp(w_{v'}) \subset B_{R_{G}}(\mathbf{w})$:  let $p \in Supp(pr_*(\tilde{\underline{Z}}_{G', \emptyset}))$ be the corresponding element.   Again we have $ p \in  \mathfrak{L}_{\hat{V}(G)} (B_{R}(\mathbf{w}))$ and the convergences condition yields a contradiction; 
\item $Supp(w_{v'}) \nsubseteq B_{R_{G}}(\mathbf{w})$:  condition (\ref{pick-condition}) tells us that $V_{0,tub} =\emptyset$. So $V_0(G') = \{ v' \} $.  Since $G= \delta_{v'}G'$ we have $V_0(G)= \emptyset$ and $Comp_0(G) \neq \emptyset$. It follows that $G \in \mathfrak{G}^{nul}$.
\end{itemize}
\end{itemize}






 


\end{proof}

Now we consider Euler Structures not necessarily compatible with $(\mathbf{w}, \nu)$ . We use Proposition \ref{euler-structure-compatible-lemma}  and we set for any Euler Alignment $\kappa$
\begin{equation} \label{coherent-class2}
[Z^{(\mathbf{w}, \nu, \kappa) }] \in PSH^{\intercal}(M| \mathbf{w},  U^{\blacktriangle} )
\end{equation}
$$[Z^{(\mathbf{w}, \nu, \kappa) }] = [transfer_{\tilde{U}^{\blacktriangle}}(Z^{(\mathbf{w}, \nu, U^{\kappa} )})] .$$
Here $\tilde{U}^{\kappa}$ is the preferred isotopy between $U^{\kappa}$ and $U$ canonically defined up to isotopies of isotopies.

\subsection{Skein}

The usual definition of Skein involves oriented embedded links equipped  with an isotopy class of frames. In the context of this paper we need to consider also the extra geometric data given by an isotopy class of Euler Alignments and consider an extended version of the Skein relations.

In this section we show that the $PSH$-classes  (\ref{coherent-class})  satisfy an extended version of the Skein relations.
After including singular links we prove  that the Skein  is isomorphic to the normalized $PSH$.

 

\subsubsection{Extended Skein Relations}


We start defining a set of formal power series associated to relations of the $PSH$-classes  (\ref{coherent-class}). After we show that these formal power series are universals, that is do not depend on the geometric data we have fixed to define the  $PSH$-classes.

Let $\mathbf{w}_{\times}$ be an oriented link with exactly one crossing singularity.
Denote by $\mathbf{w}_+$ the embedded  link obtained smoothing the singularity by overcrossing , by $\mathbf{w}_-$ the embedded  link obtained smoothing the singularity by undercrossing and by $\mathbf{w}_0$ the embedded link obtained removing the crossing point in the only orientation-preserving way.


Let $\xi_{\times}$ be a section of $T M|_{\mathbf{w}_{\times}}$ never tangent to $\mathbf{w}_{\times}$ on the smooth points and not belonging to the plane of the crossing on the crossing point. 
Up to small isotopy, $\xi_{\times}$ admits canonical deformations to sections  $\xi_+,\xi_-,\xi_0$  of $T M|_{\mathbf{w}_{+}},T M|_{\mathbf{w}_{-}},T M|_{\mathbf{w}_{0}}$ respectively, never tangent to    $\mathbf{w}_+$, $\mathbf{w}_-$, $\mathbf{w}_0$. This process yield frames $\nu_+,\nu_-,\nu_0$  of $\mathbf{w}_+$, $\mathbf{w}_-$, $\mathbf{w}_0$ respectively.


Let $\kappa_{\times}'$ be an isotopy between  $v_{\times}$ and $U|_{\mathbf{w}_{\times}}$. 
Up to small isotopies, $\kappa_{\times}'$  can be deformed canonically  to isotopies $\kappa_{+}'$,
$\kappa_{-}'$, $\kappa_{0}'$ between  $v_+,v_-,v_0$ and $U|_{\mathbf{w}_{+}}, U|_{\mathbf{w}_{-}},U|_{\mathbf{w}_{0}}$.
Using remark \ref{remark-eurler-frame}   we obtain Euler Alignments  $\kappa_{+}$,$\kappa_{-}$, $\kappa_{0}$ for  $\mathbf{w}_+$, $\mathbf{w}_-$, $\mathbf{w}_0$ respectively.

\begin{lemma}   \label{skein-relation-lemma} 

There exist  two formal power series $A(\hbar,a), \beta(\hbar,a) \in \Q[[\hbar, a]]$ such that there exists a small isotopy of nice $PS$-cycles between
$\beta Z^{(\mathbf{w}_+ , \nu_+, \kappa_+)  } -  \beta^{-1} Z^{(\mathbf{w}_- , \nu_-, \kappa_-  )}$ and $ A  Z^{(\mathbf{w}_0 , \nu_0,  \kappa_0 ) }$, canonical up to isotopy of isotopies.


The leading terms of $A$ and $\beta$ are given by
$$ \beta = 1 + .... , \thickspace A = \hbar(1 + ....) $$  
and the following identities hold
\begin{equation} \label{simmetry-opposite}
\beta(-\hbar,a)  = \beta(\hbar,a)^{-1},  \bigskip A(-\hbar,a) = - A(\hbar,a).
\end{equation}
\end{lemma}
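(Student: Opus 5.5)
We will work in the normalized space $PSH^{\intercal}(M|\mathfrak{w}, U^{\blacktriangle})$, where $\mathfrak{w}=\{\mathbf{w}_\times,\mathbf{w}_0'\}$ is the finite set of multi-loops generated by $\mathbf{w}_\times$ under the twist relation. Here $\mathbf{w}_0'$ denotes the multi-loop obtained by twisting $\mathbf{w}_\times$ at the crossing point, which as a subset of $M$ agrees with $\mathbf{w}_0$ up to small isotopy.

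\textbf{Step 1: transfer to the singular link.} There are two piecewise-smooth families with increasing singularities, $\tilde{\mathfrak{w}}_+$ and $\tilde{\mathfrak{w}}_-$. Each pushes $\mathbf{w}_\pm$ through the crossing to $\mathbf{w}_\times$, with the singular multi-loops allowed for $\lambda$ near $1$. Lemma \ref{deformation-cycles} gives transfer maps from $PSH^{\intercal}(M|\mathbf{w}_\pm,U)$ into $PSH^{\intercal}(M|\mathfrak{w},U)$. Similarly, the trivial family $\mathbf{w}_0\leadsto\mathbf{w}_0'$ maps the class of $\mathbf{w}_0$ into the same module. Together with the Euler Alignments $\kappa_\pm,\kappa_0$, obtained by deforming $\kappa'_\times$ as explained before the statement, this places the three classes $[Z^{(\mathbf{w}_\pm,\nu_\pm,\kappa_\pm)}]$ and $[Z^{(\mathbf{w}_0,\nu_0,\kappa_0)}]$ in a common free module. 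By Proposition \ref{crossing-dimension-normalized}, that module has rank $|\mathfrak{w}|=2$, with basis the generators $Z^{\heartsuit,\times}$ and $Z^{\heartsuit,0}$ of Lemma \ref{crossing-dimension}.

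\textbf{Step 2: expand in the basis.} We write
\[
[Z_\pm]=p_\pm Z^{\heartsuit,\times}+r_\pm Z^{\heartsuit,0},\qquad [Z_0]=r_0Z^{\heartsuit,0},
\]
with coefficients in $\Q[[\hbar,a]]$. The claim $r_0$ is invertible is obtained as follows. $Z_0$ has limit support only $\mathbf{w}_0$, and by Proposition \ref{one-dimension-normalized} together with the convergence implication (\ref{convergence-implication}), its coefficient on $G^{\heartsuit}$ is $1$ at leading order. The same argument gives $p_\pm=1+O(\hbar)$. We then set $\beta^2=p_-/p_+$, taking the square root with constant term $1$, and $A=(\beta r_+-\beta^{-1}r_-)/r_0$. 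With these choices the combination $\beta Z_+-\beta^{-1}Z_--AZ_0$ vanishes in $PSH^{\intercal}$, and the vanishing is realized by a small isotopy, meaning one with limit support in $\mathfrak{w}$.

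\textbf{Step 3: universality, which is the main obstacle.} We must show that $p_\pm$, $r_\pm$ and $r_0$ depend only on local data at the crossing. The plan is to use the cluster and convergence conditions for the flat-approaching cycles. Near the crossing point, the difference between the two transfers is computed by the flat model in $\R^3$: two straight lines crossing, with the annulus data and the frame induced from $\kappa_\times$. The obstructions $\mathfrak{obs}_{(G,\mathbf{w})}$ of subsection \ref{obstruction-section}, concentrated on graphs with $H(G)=\emptyset$ and $Comp_0=\emptyset$, are then integrals localized in a ball of radius $\rho/n$ around the crossing. After dilation, these are flat-space quantities independent of $M$, $U$ and the global link. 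A further subtlety is the canonicity up to isotopy of isotopies; this follows from Lemma \ref{extension-isotopies-isotopies} together with the uniqueness parts of Propositions \ref{unicity-flat-proposition} and \ref{unicity-asymptotic-proposition}.

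\textbf{Step 4: leading terms and symmetry.} The leading term of $A$ comes from the single-edge graph with $\chi=-1$ joining the two strands at the crossing. Its obstruction is the linking-type integral of the propagator over the local crossing, which gives $A=\hbar(1+\dots)$. The symmetry (\ref{simmetry-opposite}) is obtained by applying an orientation-reversing reflection of the local $\R^3$ model. This reflection exchanges $\mathbf{w}_+$ and $\mathbf{w}_-$ and changes the sign of the propagator, which amounts to $\hbar\mapsto-\hbar$ via the factor $\hbar^{-\chi}$ and the edge count. Comparing the resulting relation with the original one yields $\beta(-\hbar,a)=\beta(\hbar,a)^{-1}$ and $A(-\hbar,a)=-A(\hbar,a)$.
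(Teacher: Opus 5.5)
Your Steps 1--2 follow the paper's route. You place the three coherent classes in the rank-two module of Proposition \ref{crossing-dimension-normalized}, expand them, and solve for $\beta$ and $A$. The leading terms come, as in the paper, from $\mathbf{w}_+\simeq\mathbf{w}_-$ as multi-loops and from the graph $G^{\heartsuit}$ with one edge added.

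However, the claim $[Z_0]=r_0Z^{\heartsuit,0}$ is unjustified. $Z_0$ has limit support $\{\mathbf{w}_0\}$ only before the transfer. The transfer into $\mathfrak{w}=\{\mathbf{w}_\times,\mathbf{w}_0'\}$ runs into the configuration where the two arcs touch. The same $\delta_{e'}$-term of the one-edge graph that gives $Z_\pm$ a $Z^{\heartsuit,0}$-component can give the transferred $Z_0$ a component $s_0Z^{\heartsuit,\times}$. In fact $s_0$ is not even intrinsic: Lemma \ref{crossing-dimension} fixes $Z^{\heartsuit,0}$ only through $\underline{Z}_{G^{\heartsuit}}$, so $Z^{\heartsuit,0}+cZ^{\heartsuit,\times}$ with $c\in\hbar\Q[[\hbar,a]]$ is an equally valid generator. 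When $s_0\neq0$, your choice $\beta^2=p_-/p_+$ does not kill the $Z^{\heartsuit,\times}$-component of $\beta Z_+-\beta^{-1}Z_--AZ_0$.

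The repair is easy. Either solve the full $2\times 2$ system, or do as the paper does and use $Z_-,Z_0$ as the basis. This only needs $p_-r_0-r_-s_0$ to be a unit, which holds because $r_-=O(\hbar)$. Then write $Z_+=pZ_-+qZ_0$ and take $\beta=p^{-1/2}$, $A=\beta q$.

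Your Step 3 is not part of this lemma, which concerns fixed geometric data. Universality is the separate Proposition stated after Lemma \ref{lemma-unknot}. The paper proves it by gluing along a small ball around the crossing (Lemma \ref{gluing-lemma-coherent}), not by localizing obstructions. As you state it, Step 3 is also false: $p_\pm,r_\pm,r_0$ depend on the arbitrary generators $Z^{\heartsuit,\times},Z^{\heartsuit,0}$ and on the global link. Only $\beta$ and $A$ can be universal. Note also that obstructions in $PSH$ are rational coefficients of $0$-chains of multi-loops, not integrals of a propagator, which enters only the partition function. The leading term of $A$ is the signed count $pr_*(\delta_{e'}\tilde{\underline{Z}}_{G'})$ for the one-edge graph at the crossing.

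As for (\ref{simmetry-opposite}), the paper's proof does not address it, but your reflection argument does not establish it either. In $PSH^{\intercal}$ there is no propagator and no weight $\hbar^{-\chi}$. You would need an orientation-reversal map on nice $PS$-cycles that intertwines the $\Q[[\hbar,a]]$-action with $\hbar\mapsto-\hbar$. It must also send the three coherent classes to those of the mirrored data, with $+$ and $-$ exchanged. Since $\delta_e$ preserves $\chi(G)$, a sign attached to each diagonal intersection can be absorbed by $(-1)^{|E^{in}(G)|}$, which commutes with the $\hbar$-action. So your bookkeeping does not by itself produce $\hbar\mapsto-\hbar$. Moreover, comparing the mirrored relation with the original still requires universality, i.e. Step 3.
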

\begin{proof}


From Proposition \ref{crossing-dimension-normalized} we know   that $  PSH(M, \mathbf{w}_{\times})$ is a rank two module over $\Q[[\hbar, a]]$. Moreover
$  Z^{(\mathbf{w}_- , \nu_-, \kappa_-  )}, Z^{(\mathbf{w}_0 , \nu_0,  \kappa_0 ) }$
is a pair of generators. It follows that there exist two formal power series $p,q$ such that
$$Z^{(\mathbf{w} , \nu_+, \kappa_+)  } =  p Z^{(\mathbf{w} , \nu_-, \kappa_-  )} + q  Z^{(\mathbf{w} , \nu_0,  \kappa_0 ) }$$
with
$$p = 1 + .... $$
$$  q = \hbar(1 + ....) .$$
The first relation follows from the fact that $\mathbf{w}_+$ and $\mathbf{w}_-$ are isotopic as multi-loops.
The second relation follows considering  the graph obtained adding an edge to $G^{\heartsuit}$.


Set 
$$\beta = p^{\frac{1}{2}}, \thickspace A = \beta^{-1}q.$$ 



\end{proof}

Denote by $\nu^{+1}$  the frame of $\mathbf{w}$ obtained twisting a frame $\nu$ by $+1$. 

\begin{lemma}   \label{twist-frame-lemma} 
There exists a formal power series $\alpha(\hbar, a) \in \Q[[\hbar, a]] $ such that
\begin{equation}
[ Z^{(\mathbf{w} , \nu^{+1},  \kappa )}] = \alpha(\hbar,a)  [Z^{(\mathbf{w} , \nu,  \kappa ) }].
\end{equation}
The leading term is given by
$$ \alpha = 1+ . . .    \quad  . $$

\end{lemma}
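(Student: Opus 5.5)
The plan follows the proof of Lemma \ref{skein-relation-lemma}, with the rank-two module of the crossing replaced by the rank-one module of an embedded link.

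\textbf{Step 1: existence of $\alpha$.} Both $\nu$ and $\nu^{+1}$ are frames of the same embedded link $\mathbf{w}$, and $\kappa$ is the same Euler Alignment. So the classes $[Z^{(\mathbf{w},\nu^{+1},\kappa)}]$ and $[Z^{(\mathbf{w},\nu,\kappa)}]$ lie in the same module $PSH^{\intercal}(M|\mathbf{w},U^{\blacktriangle})$. By Proposition \ref{one-dimension-normalized} this module is free of rank one over $\Q[[\hbar,a]]$.

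\textbf{Step 2: $[Z^{(\mathbf{w},\nu,\kappa)}]$ is a generator.}
\begin{itemize}
\item In the proof of Proposition \ref{one-dimension} the generator is any nice $PS$-cycle $Z^{\heartsuit}$ with $\underline{Z}^{\heartsuit}_{G^{\heartsuit}}=\mathbf{w}$.
\item The surjectivity argument there shows that a class is a generator exactly when its coefficient $q$ has invertible constant term.
\item By construction of the coherent class, the flat $PS$-cycle of Proposition \ref{existence-flat-proposition} has $\underline{Z}^{\flat}_{G^{\heartsuit}}=\mathbf{w}^{\flat}$.
\item The convergence condition and (\ref{convergence-implication}) force $\underline{Z}^{(\mathbf{w},\nu,U^{\kappa})}_{G^{\heartsuit}}=\mathbf{w}$, with no other contributions on graphs with $H(G)=\emptyset$, $V(G)=V_{\vartriangle}(G)$ at order $\hbar^0a^0$.
\item Transfer along $\tilde U^{\kappa}$ (Lemma \ref{deformation-cycles}) does not change this leading term, since no obstruction appears at lowest order.
\end{itemize}
Hence $[Z^{(\mathbf{w},\nu,\kappa)}]=q_\nu Z^{\heartsuit}$ with $q_\nu=1+\dots$, and similarly $q_{\nu^{+1}}=1+\dots$. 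Setting $\alpha=q_{\nu^{+1}}q_\nu^{-1}$ gives the identity, and $\alpha=1+\dots$ follows immediately.

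\textbf{Step 3: the constant term, which is the main obstacle.} One has to check that the constant term of $q_{\nu^{+1}}$ is really $1$ and is not affected by changing the frame. I would argue this in two parts.
\begin{itemize}
\item Changing $\nu$ to $\nu^{+1}$ changes the compatible Euler structure $U^{\kappa}$ only inside $B_{r/2}(\mathbf{w})$, via Proposition \ref{euler-structure-compatible-lemma}.
\item Any obstruction arising when comparing the two coherent classes is produced by the obstruction classes $\mathfrak{obs}_{(G,\mathbf{w})}$ on graphs with $H(G)=\emptyset$ and $Comp_0(G)=\emptyset$.
\item For $G=G^{\heartsuit}$ such an obstruction would need a chain $\delta_{e'}\tilde{\underline Z}_{G'}$ or $\delta_{v'}\tilde{\underline Z}_{G'}$ with $\chi(G')=\chi(G^{\heartsuit})$, $d(G')=0$.
\item An edge contraction lowers $\chi$, and $\delta_{v'}$ raises $d$. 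So these contributions only enter at order $\hbar^{\ge1}$ or $a^{\ge1}$, exactly as in the leading-term argument for $\beta=1+\dots$ in Lemma \ref{skein-relation-lemma}.
\end{itemize}
Therefore the order-zero coefficient is fixed and $\alpha=1+\dots$.

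Finally, the same obstruction-on-$G^{\heartsuit}$ argument as in the injectivity part of Proposition \ref{one-dimension} shows that $\alpha$ does not depend on $n$.
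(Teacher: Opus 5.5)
Your route is the paper's. Its entire proof applies Proposition~\ref{one-dimension-normalized} with $Z^{\heartsuit}=Z^{(\mathbf{w},\nu,\kappa)}$ and says nothing about why $\alpha=1+\dots$, so your Steps 2--3 supply the leading-term argument the paper omits.

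One step in that argument is wrong. In Step 3 you claim that an edge contraction lowers $\chi$. With the paper's definitions, $\chi(\delta_e G)=\chi(G)$ in every subcase: $\chi_{\vartriangle}+\sum_c\chi_c$ drops by exactly one, and so does $|E^{in}|$. So the statement that $\delta_{e'}$-terms only enter at order $\hbar^{\ge1}$ cannot be obtained by counting $\chi$. Indeed, the condition $\chi(G')=\chi(G^{\heartsuit})$ that you write down is always satisfied.

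The quantity that actually drops is $\chi_{\vartriangle}$. Suppose $\delta_{e'}G'=G$ with $H(G)=\emptyset$ and $Comp_0(G)=\emptyset$. Then $H(G')=e'$, and one of two things holds:
\begin{itemize}
\item $\chi_{\vartriangle}(G)=\chi_{\vartriangle}(G')-1$; or
\item $\chi_{\vartriangle}(G)=\chi_{\vartriangle}(G')+\chi_c-1$ for a component $c$ carrying a single half-edge, so $\chi_c\le 0$ by stability.
\end{itemize}
Since $\chi_{\vartriangle}(G')\le 0$, either way $\chi_{\vartriangle}(G)\le -1$. Also $d(\delta_{v'}G')=d(G')+1$. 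Hence no term $\delta_{e'}\tilde{\underline{Z}}_{G'}$ or $\delta_{v'}\tilde{\underline{Z}}_{G'}$ can land on $G^{\heartsuit}$, and $G^{\heartsuit}\notin\mathfrak{G}^{null}$.

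It follows that the coefficient of $\mathbf{w}$ in $\underline{Z}_{G^{\heartsuit}}$ is an invariant of the class in $PSH^{\intercal}(M|\mathbf{w},U^{\blacktriangle})$. This fact is what justifies your Step 2 assertion that transfer along $\tilde{U}^{\kappa}$ keeps that coefficient equal to $1$. Combined with your convergence argument that the coherent cycle has $\underline{Z}_{G^{\heartsuit}}=\mathbf{w}$, it gives $q_\nu(0,0)=q_{\nu^{+1}}(0,0)=1$, hence $\alpha=q_{\nu^{+1}}q_\nu^{-1}=1+\dots$.

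Your appeal to the $\beta=1+\dots$ argument of Lemma~\ref{skein-relation-lemma} is also not apt. The paper derives that leading term from $\mathbf{w}_\pm$ being isotopic as multi-loops, not from a $\chi$/$d$ count.
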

\begin{proof}
The existence of $\alpha$ follows from Proposition \ref{one-dimension-normalized}   taking $Z^{\heartsuit}= Z^{\mathbf{w} , \nu,  \kappa  }$. 


\end{proof}

Denote by $\kappa^{ +1}$ the twisting of the Euler Alignment $\kappa$.

\begin{lemma}  \label{lemma-twist-kappa}
    There exists an  formal power series $\theta(\hbar, a)   \in \Q[[\hbar, a]]  $ such that 
    $$[Z^{(\mathbf{w} , \nu, \kappa^{ +1}) }] = \theta(\hbar ,a ) [Z^{(\mathbf{w} , \nu, \kappa )} ].$$   
    The leading term is given by
    $$\theta = \hbar a +  . . .    \quad $$
\end{lemma}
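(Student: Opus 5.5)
Existence of $\theta$ follows the same route as Lemma \ref{twist-frame-lemma}. By Proposition \ref{one-dimension-normalized}, $PSH^{\intercal}(M| \mathbf{w}, U^{\blacktriangle})$ is a rank one free $\Q[[\hbar,a]]$-module. A class whose coefficient on the leading graph $G^{\heartsuit}$ is $1$ generates it. I would take $Z^{\heartsuit}=Z^{(\mathbf{w},\nu,\kappa)}$ as generator. Since $[Z^{(\mathbf{w},\nu,\kappa^{+1})}]$ lies in the same module (both are transfers along isotopies of Euler structures with limit support $\mathbf{w}$), there is a unique $\theta$ with $[Z^{(\mathbf{w},\nu,\kappa^{+1})}]=\theta\,[Z^{(\mathbf{w},\nu,\kappa)}]$. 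The substance of the lemma is therefore the leading term.

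To compute the leading term I would unwind the definition (\ref{coherent-class2}). The two classes are $transfer_{\tilde{U}^{\kappa}}(Z^{(\mathbf{w},\nu,U^{\kappa})})$ and $transfer_{\tilde{U}^{\kappa^{+1}}}(Z^{(\mathbf{w},\nu,U^{\kappa^{+1}})})$. By Proposition \ref{euler-structure-compatible-lemma}, $U^{\kappa}$ and $U^{\kappa^{+1}}$ agree outside $B_{r/2}(\mathbf{w})$. Inside, they differ by one full extra turn of the map $\kappa:[0,1]\times\mathbf{w}\to S^2$. Composing $\tilde U^{\kappa^{+1}}$ with the inverse of $\tilde U^{\kappa}$ gives a loop of Euler structures. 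Its effect is to change $U_1$ by the class of a meridian-type cycle concentrated near $\mathbf{w}$. In the formulation $U^{\blacktriangle}=(U_0,U_1)$, this is the action of the generator of the kernel $\Z^{V^{\sharp}}$ of $\mathfrak{FrEul}(M\setminus\mathbf{w})\to\mathfrak{Eul}(M)$.

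So I would compare $Z^{(\mathbf{w},\nu,U^{\kappa})}$ transported around this loop with $Z^{(\mathbf{w},\nu,U^{\kappa})}$ itself. I would run the obstruction argument of the surjectivity part of Proposition \ref{one-dimension}, now in the normalized setting where obstructions sit on graphs with $H(G)=\emptyset$ and $Comp_0(G)=\emptyset$. In low order, the transport changes the annulus data via the forgetful compatibility at $Ann0$/$Disk0$. This produces, at the vertex $v^{\sharp}$, the operation $\delta_{v'}$ on a small loop that sweeps around $\mathbf{w}$ once. That creates a contribution on a graph with $d_{\vartriangle}=1$ and $\chi_{\vartriangle}=-1$, i.e. the coefficient $\hbar a$.

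The main obstacle is showing that the constant term of $\theta$ vanishes and that the first nonzero coefficient is exactly $\hbar a$ with coefficient $1$. This requires a careful count, via the obstruction $\mathfrak{obs}_{(G,\mathbf{w})}$, of the degree of the map $[0,1]\times S^1\to S^2$ given by the extra twist. The ingredient is that the Euler structure change enters only through $U_1$ intersecting the tubular region, and that the $G^{\heartsuit}$ coefficient is the linking of $\mathbf{w}$ with that $U_1$-change. I would first try to identify this linking number with the degree $+1$ of the twist.
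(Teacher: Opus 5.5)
Your existence argument is the paper's entire proof: the paper takes $Z^{\heartsuit}=Z^{(\mathbf{w},\nu,\kappa)}$ in Proposition~\ref{one-dimension-normalized} and says nothing about the leading term.

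The real problem is the step you call the main obstacle, namely showing that the constant term of $\theta$ vanishes. It cannot be done, because $\theta$ is forced to be a unit with $\theta(0,0)=1$. Your own first paragraph already implies this.

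No pair $(G',e')$ or $(G',v')$ satisfies $\delta_{e'}G'=G^{\heartsuit}$ or $\delta_{v'}G'=G^{\heartsuit}$. Any such pair would require one of the following:
\begin{itemize}
\item $\chi_{\vartriangle}(G')=1$;
\item $d_{\vartriangle}(G')=-1$;
\item a component of $Comp_0$ surviving in $G^{\heartsuit}$;
\item a one-vertex genus-zero component with $d_c=0$ and $|H_c|=1$, which is unstable.
\end{itemize}
Hence, in any isotopy, the cycle condition at $G^{\heartsuit}$ forces the $0$-chains at the two ends to have equal total coefficient. Null cycles never involve $G^{\heartsuit}$, and $(q\,Z)_{G^{\heartsuit}}=q(0,0)\,\underline{Z}_{G^{\heartsuit}}$.

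Every coherent class has $\underline{Z}_{G^{\heartsuit}}=\mathbf{w}$; this is exactly what lets the paper use $Z^{(\mathbf{w},\nu,\kappa)}$ as $Z^{\heartsuit}$. The transfers along $\tilde U^{\kappa}$ and $\tilde U^{\kappa^{+1}}$ preserve this coefficient. So $[Z^{(\mathbf{w},\nu,\kappa^{+1})}]$ is also a generator, and $\theta\equiv 1$ modulo $(\hbar,a)$. Two further checks give the same conclusion:
\begin{itemize}
\item Twisting back by $-1$ gives $\theta'$ with $\theta'\theta=1$ in a free module.
\item The relation $A r=\beta\alpha\theta-(\beta\alpha\theta)^{-1}$ stated later in the paper requires $\theta^{-1}$.
\end{itemize}
So the displayed leading term can only be read as $\theta=1+\hbar a+\dots$, with $\hbar a$ the first correction.

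Your heuristic then conflates two different coefficients. The $G^{\heartsuit}$-coefficient is rigidly $1$ and cannot be a linking number. A degree or linking count attached to the extra turn of $\kappa$ could only appear in the coefficient of $\hbar a\,G^{\heartsuit}$, the graph with $V=V^{\sharp}$, $H=\emptyset$, $\chi_{\vartriangle}=-1$, $d_{\vartriangle}=1$. That is, it would appear in $\mathfrak{obs}$ on that graph. To compute it you would have to identify which terms $\delta_{e'}\tilde{\underline{Z}}_{G'}$ and $\delta_{v'}\tilde{\underline{Z}}_{G'}$ land there, and actually evaluate them. Your sketch does neither.

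Also, $\tilde U^{\kappa^{+1}}$ composed with the inverse of $\tilde U^{\kappa}$ is not a loop but a path from $U^{\kappa}$ to $U^{\kappa^{+1}}$. What must be compared is the transfer of $Z^{(\mathbf{w},\nu,U^{\kappa})}$ along this path with $Z^{(\mathbf{w},\nu,U^{\kappa^{+1}})}$, not with $Z^{(\mathbf{w},\nu,U^{\kappa})}$ itself.

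The paper gives no computation of the first-order term either. What your argument and the paper's actually establish is existence of $\theta$, together with $\theta(0,0)=1$ by the invariance above; the $\hbar a$ claim remains unproved in both.
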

\begin{proof}

As before the existence of $\theta$ follows from Proposition \ref{one-dimension-normalized}   taking $Z^{\heartsuit}= Z^{\mathbf{w} , \nu,  \kappa }$. 

\end{proof}

Let $B$ a small ball in $M$.   We fix and  identification of $B$ with $\R^3$.  
Let  $\mathbf{w}_{unknot}$ be a circle in $\R^2 $ with center in the origin. 
$\mathbf{w}_{unknot}$ admits a frame $\nu_{unknot}$ canonical up to isotopies ( let $\nu_{unknot} ((0,1,0))$ be defined by the radial vectors pointing toward the origin.)

 Use the identification $B= \R^3$  to  trivialize the tangent bundle of $B$.   Taking $B$ small enough, we may assume that  the Euler structure $U$ is close  to a not zero vector $\xi_{eul} \in \R^3$ everywhere in $B$.
Since $ \nu_{unknot} ((0,0,1))$ is a constant section we have in particular an isotopy between $ \nu_{unknot} ((0,0,1))$ and $U$ canonical up to isotopy of isotopies. 
 Using remark \ref{remark-eurler-frame}   we obtain an Euler Allignament $\kappa_{unknot}$ .




\begin{lemma} \label{lemma-unknot}
Assume that the unknot $\mathbf{w}_{unknot}$ is unlinked with the link $\mathbf{w}$.

There exists a  formal power series $r(\hbar, a) \in \Q[[\hbar,a]]$ such that  
$Z^{\mathbf{w} \sqcup \mathbf{w}_{unknot} ,\nu \sqcup \nu_{unknot},  \kappa \sqcup \kappa_{unknot} } $ and $r(\hbar, a) Z^{\mathbf{w}, \nu  , \kappa } $ are small isotopic as nice $PS$-cycles.
(In the case of $\mathbf{w}$ is the empty link,   we let $ Z^{\mathbf{w}, \nu  , \kappa } $ be the canonical generator of $PSH^{\intercal}(M| \mathbf{w}_{\empty},   U^{\blacktriangle}  )$ given in Lemma \ref{lemma-canonical-one-dimensional}.)

The leading term of $r(\hbar, a)$ is given by 
$$r = 1+  . . .    \quad $$

\end{lemma}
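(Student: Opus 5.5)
The plan follows the pattern of Lemmas \ref{twist-frame-lemma} and \ref{lemma-twist-kappa}. The link $\mathbf{w}\sqcup\mathbf{w}_{unknot}$ is embedded, so by Proposition \ref{one-dimension-normalized} the module $PSH^{\intercal}(M|\mathbf{w}\sqcup\mathbf{w}_{unknot},U^{\blacktriangle})$ is free of rank one. However, the statement compares classes supported on two \emph{different} links. So the first step is to shrink the unknot, and the point is to show that this costs only a scalar.

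\textbf{Step 1 (shrinking the unknot).} Since $\mathbf{w}_{unknot}$ is unlinked from $\mathbf{w}$ and sits in the small ball $B$, we can rescale the circle in $B\cong\R^3$ toward the origin. This gives a piecewise smooth family $\tilde{\mathfrak{w}}$ of multi-loops. Along it, $\mathbf{w}$ stays fixed and the circle shrinks to a point, so in the limit the unknot component becomes a constant loop.

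Under the nice-$PS$-cycle formalism, a shrunk loop attached to a vertex $v\in V_{\vartriangle}$ with no half-edges is exactly what $\delta_v$ absorbs into $d_{\vartriangle}$, i.e.\ into powers of $a$. The frame $\nu_{unknot}$ and the alignment $\kappa_{unknot}$ are constant in the trivialization of $B$, so they are compatible with the rescaling. Lemma \ref{deformation-cycles} (with increasing singularities, $b=1$) then gives a transfer map carrying $[Z^{\mathbf{w}\sqcup\mathbf{w}_{unknot},\ldots}]$ to a class whose limit support is $\{\mathbf{w}\}$, i.e.\ into $PSH^{\intercal}(M|\mathbf{w},U^{\blacktriangle})$.

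By Proposition \ref{one-dimension-normalized} (or Lemma \ref{lemma-canonical-one-dimensional} for the empty link) this image equals $r\,[Z^{\mathbf{w},\nu,\kappa}]$ for a unique $r\in\Q[[\hbar,a]]$. Because the isotopy only moves the unknot component inside $B$, the obstruction computation of Subsection \ref{obstruction-section} shows it is small away from $B$. So $r$ is read off from the obstruction coefficients exactly as in the surjectivity part of the proof of Proposition \ref{one-dimension}.

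\textbf{Step 2 (smallness of the isotopy).} The lemma claims a \emph{small} isotopy, i.e.\ one with finite limit support. I would argue that Step 1 is in fact realized inside the construction of the coherent cycle. By the cluster condition (\ref{matching-asintontically-coherent}), near $\mathbf{w}$ the cycle $Z^{\mathbf{w}\sqcup\mathbf{w}_{unknot}}$ agrees with the flat-approaching data for $\mathbf{w}$. Near $B$ it factorizes as a product with the cycle for the unknot alone, using the factorization property and the $\boxtimes$ product. The unknot factor is a normalized class for a link contained in a ball, and its $PSH^{\intercal}$-class is a scalar multiple of the canonical generator of $PSH^{\intercal}(M|\mathbf{w}_{empty},U)$. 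That scalar is $r$, and the factorization then yields the small isotopy. Canonicity up to isotopy of isotopies follows from Lemma \ref{extension-isotopies-isotopies}.

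\textbf{Step 3 (leading term).} The leading coefficient is obtained as in Lemma \ref{skein-relation-lemma}. At order $\hbar^0a^0$ the only relevant graph is $G^{\heartsuit}$ with $V_{\vartriangle}=V^{\sharp}\sqcup\{v_{unknot}\}$. Shrinking $w_{v_{unknot}}$ applies $\delta_v$, which raises $d_{\vartriangle}$ and so produces only terms divisible by $a$. The leading contribution therefore comes from the constant term, which is $1$ by the normalization (\ref{normalized-condition3}).

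I expect the main obstacle to be Step 2. The factorization must be made compatible with the convergence condition near $\mathbf{w}$ and with the normalized quotient by $\mathcal{Z}^{null}$, and one must check that the relevant obstruction graphs ($H(G)=\emptyset$, $Comp_0(G)=\emptyset$) only see the unknot through $\delta_v$-terms. I would handle this by choosing the tubular radius $R$ smaller than the distance from $B$ to $\mathbf{w}$, so that the pick condition (\ref{pick-condition}) always selects the unknot vertex.
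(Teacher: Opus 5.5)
Your Step 1 is the paper's proof. You contract $\mathbf{w}_{unknot}$ inside $B$, transfer as in Lemma \ref{deformation-cycles} into $PSH^{\intercal}(M|\mathbf{w},U^{\blacktriangle})$, and read off $r$ because $Z^{\mathbf{w},\nu,\kappa}$ generates this rank-one module.

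The paper also argues that $r$ is independent of $\nu,\kappa$ (the transfer is equivariant under twisting $\nu$ and $\kappa$), and independent of the embedding of the unknot and of $\tilde{\mathbf{w}}$ (isotopies of isotopies). You only address the last point. The first is what makes (\ref{unknot-skein}) a meaningful relation.

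Your Step 2 is unnecessary, and as written it does not work.
\begin{itemize}
\item Smallness is automatic. In the paper's usage a \emph{small} isotopy is one whose limit support stays $C^0$-close. This is how Lemma \ref{skein-relation-lemma} relates classes supported on $\mathbf{w}_{\pm}$ and $\mathbf{w}_0$, and why the singular Skein only asks $\mathbf{w}^{\ast}$ to be $C^0$-close to a constant loop when $\mathfrak{w}=\emptyset$. Since $B$ is small, the contraction of Step 1 already is such an isotopy.
\item The factorization you invoke is not available. $\mathbf{fact}([Z])=[Z]\boxtimes[Z^{\divideontimes}]$ splits off the vacuum part on $\mathfrak{G}^{\divideontimes}$ (graphs with $V_{\vartriangle}=\emptyset$). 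It does not separate one link component from another: all Wilson-loop vertices share the single block $V_{\vartriangle}$, with one $\chi_{\vartriangle}$ and $d_{\vartriangle}$, and may be joined by edges. Moreover, it is a definition, not something proved for coherent classes.
\item The step is circular. An unknot factor would lie in $PSH^{\intercal}(M|\mathbf{w}_{unknot},U^{\blacktriangle})$. Comparing it with the generator of Lemma \ref{lemma-canonical-one-dimensional} needs exactly the shrinking transfer of Step 1.
\item Your use of (\ref{pick-condition}) is backwards. It picks $v_0\notin V^{\sharp}$ among loops inside $B_R(\mathbf{w})$ whenever there are any. With $R$ below the distance to $B$, it would avoid the unknot vertex, not select it.
\end{itemize}

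The genuine gap is Step 3. The paper's proof gives no argument for the leading term either, and your reasoning contradicts its own conclusion.
\begin{itemize}
\item The chain of $Z^{\mathbf{w}\sqcup\mathbf{w}_{unknot},\ldots}$ on the graph $G^{\heartsuit}$ of $\mathbf{w}$ (with $V_{\vartriangle}=V^{\sharp}$, $H=\emptyset$, $\chi=d=0$) vanishes by (\ref{convergence-implication}).
\item Nothing is transported onto that graph. A $\delta_{v'}$-term raises $d_{\vartriangle}$. A $\delta_{e'}$-term landing there would need $\chi_{\vartriangle}=1$ beforehand, or would leave a $Comp_0$-component or unpaired half-edges.
\item The only $\hbar^0$ datum is the graph with $V_{\vartriangle}=V^{\sharp}\sqcup\{v_{unknot}\}$. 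As you say yourself, removing $v_{unknot}$ goes through $\delta_{v_{unknot}}$. So in the inductive computation of the coefficient (surjectivity part of Proposition \ref{one-dimension}) it contributes to $a\,G^{\heartsuit}$.
\item Hence your bookkeeping gives a vanishing $\hbar^0a^0$ coefficient and $r=\pm a+\ldots$. This is consistent with $a$ playing the role of $N$ through the factor $N^{d(G)}$ in (\ref{partition-definition-isotopies}).
\item Invoking (\ref{normalized-condition3}) is a non sequitur. It normalizes the generator of $PSH^{\intercal}(M|\mathbf{w}_{empty},U)$, not the coefficient of the transferred unknot class.
\end{itemize}
So $r=1+\ldots$ is not established by your argument. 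It cannot be obtained along these lines without a different normalization.
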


\begin{proof}

Consider an isotopy $\tilde{\mathbf{w}}$ with support in $B$
 contracting $\mathbf{w}_{unknot}$ to a point . The same argument of the proof of Lemma \ref{deformation-cycles}   
 yields a $PSH$-class
$$ transfer_{\tilde{\mathbf{w}}} (Z^{\mathbf{w} \sqcup \mathbf{w}_{unknot} ,\nu \sqcup \nu_{unknot},  \kappa \sqcup \kappa_{unknot} }) \in PSH^{\intercal}(M|  \mathbf{w}  , U^{\blacktriangle}   )$$
and so defines a  $r(\hbar, a) \in \Q[[\hbar,a]]$ since $Z^{\mathbf{w}, \nu  , \kappa }$  is a generator of $PSH^{\intercal}(M|  \mathbf{w}  , U^{\blacktriangle}   )$ as  $ \Q[[\hbar,a]]$  module.

$r(\hbar, a) $ does not depend on $\nu$ and $\kappa$ since the transfer map is equivariant with respect to the twist of $\nu$ and $\kappa$.

The fact 
that  $r(\hbar, a)$ does not depend on the embedding of $\mathbf{w}_{unknot}$ and on  $\tilde{\mathbf{w}}$ follows considering isotopies of isotopies.



\end{proof}

We claim that the formal power series appearing in  Lemmas   \ref{skein-relation-lemma}, \ref{twist-frame-lemma},  \ref{lemma-twist-kappa}, \ref{lemma-unknot} are universal, i.e.,  they do not depend on the geometric data of the problem, such as  the manifold $M$, the links and the Euler Structure.
Below we show this claim using a gluing argument based on the results of  \cite{PSPCS3}.
Let us recall briefly the results of  \cite{PSPCS3} about $PS$-homology on Open Manifolds and Gluing of open Coherent $PS$-cycles.

First we consider the cylinder  $\Sigma \times \R$, where $\Sigma$ is a compact oriented surfaces. We fix a finite subset  of points $Punct \subset \Sigma$. We fix an Euler Structure  $ U^{\blacktriangle, cyl}$ invariant by translation in the $\R$-factor.  Using the invariance in the $\R$-direction we define cylinder-$PS$-cycles and  we show that there exists a  special cylinder-$PS$-cycle $Z^{\spadesuit}$ canonically defined up to isotopies.

Now let $M^{open}$ be an open manifold equipped with a collar at infinity $\Sigma \times \R^+$.  
We consider Euler Structures  $ U^{\blacktriangle, open}$ extending $ U^{\blacktriangle, cyl}$. We consider open links whose restriction to $\Sigma \times \R^+$ agrees with $Punct \times \R^+$.  We consider frames of open links  invariant by translation in the $\R^+$-factor.
To  $Z^{\spadesuit}$ we associate a boundary condition for $PS$-cycles  and using this we define open-$PSH$ which we denote as  $PSH(M^{open}| Z^{\spadesuit}, U^{\blacktriangle, open})$.





We also need to consider the case where  the boundary of $M^{open}$ is endowed with opposite orientation, i.e., we have a collar $-\Sigma \times \R^+$. In this situation we define anti-boundary conditions defined by $Z^{\spadesuit}$.

Now, let $M$ be a compact manifold  obtained as gluing of  two oriented open manifolds $M^{in}, M^{out}$ with $\partial M^{in}= -\partial M^{out}= \Sigma$:
\begin{equation} \label{gluing-manifold-formula}
M = M^{in} \sqcup_{\Sigma \times [0,1]} M^{out} .
\end{equation} 

Assume that the Euler Structure  $U^{\blacktriangle}$ is compatible with (\ref{gluing-manifold-formula}) in the sense that  it  is the gluing of open Euler structures of $M^{in}$ and $ M^{out}$:
$$  U^{\blacktriangle}=U^{\blacktriangle,in} \sqcup_{U^{\blacktriangle,neck}}  U^{\blacktriangle,out} .$$

In \cite{PSPCS3} we define the gluing map of open-$PSH$-classes
\begin{equation} \label{gluing-classes}
PSH(M^{in}| Z^{\spadesuit}, U^{\blacktriangle,in} )  \times PSH(M^{out}| Z^{\spadesuit}, U^{\blacktriangle,out} ) \rightarrow PSH(M| U^{\blacktriangle}),
\end{equation}
$$ [ Z^{in}] \times   [ Z^{out}]  \mapsto   [ Z^{in}]   \Cup_{Z^{\spadesuit}}   [ Z^{out}] .$$ 
The product is compatible with the action of $\Q[[g_s,a]]$.

Next we extend the notion of Coherent Cycle in the context of open manifolds. For this we define  Open-flat-$PS$-cycles are adapting the notion of boundary condition. 
An important difference with the closed case is that there is no a special class of isotopies of open-flat-$PS$-cycle (this is a consequence of the incompatibility between the boundary condition and the translation invariance). So to define a Coherent $PSH$-class we need to fix a open flat $PS$-cycle $Z^{\flat,open}$.  We can then associate to any framed open link $(\mathbf{w}^{open}, \nu^{open})$ compatible with $U^{\blacktriangle,open}$ a normalized-open-$PSH$-class 
$$ [ Z^{(\mathbf{w}^{open}, \nu^{open}, U^{\blacktriangle,open})}] \in PSH^{\intercal}(M^{open}| Z^{\spadesuit}, U^{\blacktriangle,open}).$$

 
In \cite{PSPCS3} we discuss the gluing of  Coherent Cycles. In the context of (\ref{gluing-manifold-formula}) let $\mathbf{w}$ be a link on $M$ which gluing of two  open links  :
\begin{equation} \label{gluing-links}
 \mathbf{w}=\mathbf{w}^{in} \sqcup_{Punct \times [0,1]}  \mathbf{w}^{out} .
 \end{equation}
Equip $\mathbf{w}$ with a frame  $\nu$ which is the gluing of  open frames  $\nu^{in} , \nu^{out} $:
\begin{equation} \label{gluing-framed-links}
 \nu=\nu^{in} \sqcup_{\nu^{neck}}  \nu^{out} .
 \end{equation}

 For our purpose it is important to compare the $PSH$-class obtained making the gluing (\ref{gluing-classes}) of two open coherent $PSH$-classes with the canonical coherent $PSH$ defined for closed manifolds.  
For this in \cite{PSPCS3} it is introduced the notion of gluing-flat-$PS$-cycle.  There is natural map between the space of flat-$PS$-cycles considered in the preview sections and the space of gluing-flat-$PS$-cycles which yields an equivalence on the isotopies classes:
 $$ \mathcal{Z}^{\flat, gluing}/isotopies =   \mathcal{Z}^{\flat}/isotopies = \Q[[g_s,a]] .$$
The construction of a $PSH$-class associate to a flat-$PS$-cycle extends to any gluing-flat-$PS$-cycle. This correspondence is compatible with the action of $\Q[[g_s,a]]$.
 
 Analogously to (\ref{gluing-classes}) we may consider the gluing of open flat $PS$-cycles with the opposite boundary conditions and obtain a gluing flat $PS$-cycle;
  \begin{equation} \label{gluing-flat}
Z^{\flat, gluing} =  Z^{\flat, in} \Cup_{Z^{\flat, \spadesuit}} Z^{\flat, out}, 
 \end{equation} 
 
 We have the following fundamental 
 \begin{lemma} (\cite{PSPCS3})  \label{gluing-lemma-coherent}
 Use  $Z^{\flat, in}, Z^{\flat, out}$ to define Coherent $PS$-Cycles $Z^{(\mathbf{w}^{in}, \nu^{in}, U^{\blacktriangle,out})}, Z^{(\mathbf{w}^{out}, \nu^{out}, U^{\blacktriangle, out})}$. The gluing  
  \begin{equation} \label{gluing-coherent}
     [ Z^{(\mathbf{w}^{in}, \nu^{in}, U^{\blacktriangle,out})}]  \Cup_{Z^{\spadesuit}} [Z^{(\mathbf{w}^{out}, \nu^{out}, U^{\blacktriangle, out})}] \in   PSH(M| U^{\blacktriangle})
       \end{equation}  
 agrees with the $PSH$ class defined using $Z^{\flat, gluing} $.
In particular (\ref{gluing-coherent}) differs from the canonical  coherent $PSH$-class  $  [ Z^{(\mathbf{w}, \nu, U^{\blacktriangle})}]  $  by the element of   $\Q[[g_s,a]]$ associated to (\ref{gluing-flat}).
 \end{lemma}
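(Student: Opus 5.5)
The strategy is to glue in the flat-approaching $PS$-cycles themselves, and then appeal to the uniqueness statements already proved. By Lemma \ref{coherent-existence-lemma}, the coherent cycle on $M$ is determined by a flat-approaching $PS$-cycle $Z^{\wr}$ through the cluster condition (\ref{matching-asintontically-coherent}) and the support condition. So it suffices to show two things. First, the open coherent cycles $Z^{(\mathbf{w}^{in},\nu^{in},U^{\blacktriangle,in})}$ and $Z^{(\mathbf{w}^{out},\nu^{out},U^{\blacktriangle,out})}$, glued along $Z^{\spadesuit}$, form a $PS$-cycle on $M$. Second, this glued cycle satisfies the support and cluster conditions with respect to a flat-approaching cycle whose flat limit is the gluing-flat-$PS$-cycle $Z^{\flat,gluing}=Z^{\flat,in}\Cup_{Z^{\flat,\spadesuit}}Z^{\flat,out}$ of (\ref{gluing-flat}). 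Since the construction of Lemma \ref{coherent-existence-lemma} extends verbatim to gluing-flat-$PS$-cycles, this gives the first claim.

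\textbf{Step 1: gluing the flat-approaching data.} I glue the open flat-approaching cycles $Z^{\wr,in}$ and $Z^{\wr,out}$ using the same neck construction as in (\ref{gluing-classes}). On the neck $\Sigma\times[0,1]$ both satisfy the (anti-)boundary condition defined by $Z^{\spadesuit}$, which is translation invariant. The gluing therefore produces chains $Z^{\wr,glued}_{G,m,\mathcal{P}}$ satisfying the transversality, forgetful-compatibility and cycle conditions. The cycle condition is checked graph by graph: the boundary terms coming from the collar cancel exactly because the boundary and anti-boundary conditions are opposite.

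\textbf{Step 2: convergence of the glued data.} I check that $Z^{\wr,glued}$ converges to $(Z^{\flat,gluing},\mathbf{w},\nu)$. Away from the neck this is just the convergence of the open pieces. On the neck, after the dilation $Dil_n$, the link looks like $Punct\times\R$, and the rescaled limit of the pieces is precisely the gluing of $Z^{\flat,in}$ and $Z^{\flat,out}$ along $Z^{\flat,\spadesuit}$. This is the definition of $Z^{\flat,gluing}$. I expect this step to be the main obstacle. One has to show that the translation condition of the flat pieces and the cluster condition (\ref{matching-asintontically}) are compatible with the neck region, uniformly in $\mathbf{z}\in Diag_{\mathcal{P}}$ near $\Sigma$. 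My first attempt is to choose the neck length growing with $n$ slower than the scale $\rho/n$, so that each cluster disk $\underline{D}_{\rho/n,\mathbf{z}}$ lies either entirely in a region where one open piece is defined or entirely inside the cylinder, where $Z^{\spadesuit}$ controls the chains. Applying Lemma \ref{coherent-existence-lemma} to $Z^{\wr,glued}$ then yields a cycle that coincides with the glued coherent cycles, by the uniqueness part of that lemma. This proves that (\ref{gluing-coherent}) is the class defined using $Z^{\flat,gluing}$.

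\textbf{Step 3: comparison with the canonical class.} Using $\mathcal{Z}^{\flat,gluing}/isotopies=\mathcal{Z}^{\flat}/isotopies=\Q[[g_s,a]]$, the isotopy class of $Z^{\flat,gluing}$ corresponds to an element $c\in\Q[[g_s,a]]$, and $Z^{\flat,gluing}$ is isotopic to $c$ times a standard flat cycle as in Proposition \ref{existence-flat-proposition}. Proposition \ref{unicity-asymptotic-proposition} and Lemma \ref{coherent-iso-existence-lemma}, together with the compatibility of the construction with the $\Q[[g_s,a]]$-action, turn this isotopy of flat data into an isotopy of coherent cycles. Hence (\ref{gluing-coherent}) equals $c\,[Z^{(\mathbf{w},\nu,U^{\blacktriangle})}]$, which is the second claim.
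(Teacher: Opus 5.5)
Your plan is consistent with the paper. The paper gives no argument beyond saying the lemma ``follows easily from the constructions'' of [PSPCS3], which is what your Steps 1--2 spell out, and it gets the ``in particular'' clause exactly as in your Step 3, from $\mathcal{Z}^{\flat, gluing}/isotopies=\mathcal{Z}^{\flat}/isotopies=\Q[[g_s,a]]$ and the $\Q[[g_s,a]]$-compatibility of the construction. Two minor corrections: since only link frames $\nu$ are given, the isotopy in Step 3 should come from (a gluing-flat analogue of) the normalized existence/uniqueness theorem rather than Proposition~\ref{unicity-asymptotic-proposition} and Lemma~\ref{coherent-iso-existence-lemma}, which need frames of $M$, so the comparison with $[Z^{(\mathbf{w},\nu,U^{\blacktriangle})}]$ holds in $PSH^{\intercal}$; and no $n$-dependent neck should be needed in Step 2, because the decomposition $\Sigma\times[0,1]$ is fixed and for large $n$ every cluster disk of radius $\rho/n$ already lies in $M^{in}$ or $M^{out}$.
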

 The Lemma follows easily from the constructions.  For our purpuse we need to consider frames not necessary compatible with $U^{\blacktriangle}$ and Euler Alignments.

First we fix an isotopy $\kappa^{cyl}$ on $Punct \times \R$   between $T (Punct \times \R)$ and $U$  invariant under translations in the $\R$ factor.
 Using the transfer map we associate to $\kappa^{cyl}$ a boundary condition on open $PS$-cycles.



Given an open Manifolds $M^{open}$ and an open link $\mathbf{w}^{open}$ ,  we consider an  Euler Alignment $\kappa^{open}$ for $\mathbf{w}^{open}$  which agrees with $\kappa^{cyl}$ on the collar $Punct \times \R^+$.
Analogously to (\ref{coherent-class2}) we obtain
$$[Z^{(\mathbf{w}^{open}, \nu^{out}, \kappa^{open}) }] \in PSH^{\intercal}(M| \mathbf{w},  U^{\blacktriangle}, Z^{\spadesuit} ). $$

Now consider a compact manifold $M$ described as gluing of two manifolds as in (\ref{gluing-manifold-formula}).   Let  $\kappa$ be an Euler Alignment compatible with the gluing:
$$ \kappa = \kappa^{in} \sqcup_{\kappa^{cyl}} \kappa^{out} .$$
  Lemma (\ref{gluing-lemma-coherent}) tells as that  
\begin{equation} \label{gluing-coherent-kappa}
 [ Z^{(\mathbf{w}^{in}, \nu^{in},  \kappa^{in})}]  \Cup_{Z^{\spadesuit}} [Z^{(\mathbf{w}^{out}, \nu^{out},  \kappa^{out})}]. 
  \end{equation} 
 agrees with the $PSH$ class associated to $ (\mathbf{w} , \nu, \kappa )$ using $Z^{\flat, gluing} $.

After this background we can prove the following  

\begin{proposition}
The formal power series appearing in Lemmas   \ref{skein-relation-lemma}, \ref{twist-frame-lemma}, \ref{lemma-twist-kappa},\ref{lemma-unknot} are universal.
\end{proposition}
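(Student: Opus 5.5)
The plan is a locality argument based on the gluing formalism of \cite{PSPCS3}. Each of the relations in Lemmas \ref{skein-relation-lemma}, \ref{twist-frame-lemma}, \ref{lemma-twist-kappa}, \ref{lemma-unknot} is produced by a modification of the data supported in a small ball $B$. In the crossing change, the frame twist, the twist of the Euler Alignment, and the removal of an unlinked unknot, the geometric data outside $B$ is fixed. So I would decompose $M$ as in (\ref{gluing-manifold-formula}), with $M^{in} = B$ (a ball with collar, $\Sigma = S^2$) and $M^{out} = M \setminus B$. The links decompose as in (\ref{gluing-links}), the frames as in (\ref{gluing-framed-links}), and the Euler Alignments as $\kappa = \kappa^{in} \sqcup_{\kappa^{cyl}} \kappa^{out}$. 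Up to isotopy of $U^{\blacktriangle}$, which is harmless by Lemma \ref{deformation-cycles} and Proposition \ref{euler-structure-compatible-lemma}, we may assume that $U^{\blacktriangle}$ is constant (close to $\xi_{eul}$) on $B$, so that it is a gluing $U^{\blacktriangle,in} \sqcup U^{\blacktriangle,out}$.

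First step: define the series locally. Inside the open manifold $B$ with the fixed open flat $PS$-cycle $Z^{\flat,in}$, the open analogues of Propositions \ref{one-dimension-normalized} and \ref{crossing-dimension-normalized} give relations between open normalized classes. Concretely:
\[
\beta^{in}[Z^{in}_+] - (\beta^{in})^{-1}[Z^{in}_-] = A^{in}[Z^{in}_0], \qquad [Z^{in}_{\nu^{+1}}] = \alpha^{in}[Z^{in}_{\nu}],
\]
together with the analogous local relations defining $\theta^{in}$ and $r^{in}$ (for $r^{in}$, the open link in $B$ is the union of trivial arcs and the unknot, compared with the trivial arcs alone). These series depend only on the standard local model (ball, standard tangle, constant Euler structure, $\kappa^{cyl}$, $Z^{\flat,in}$, $Z^{\spadesuit}$). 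They do not depend on $M$, on $\mathbf{w}$ outside $B$, or on $U^{\blacktriangle,out}$. I would establish these with the same obstruction argument as in Proposition \ref{one-dimension}, run in the open setting with the boundary condition from $Z^{\spadesuit}$.

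Second step: glue. Apply the gluing map (\ref{gluing-classes}) with the fixed class $[Z^{(\mathbf{w}^{out},\nu^{out},\kappa^{out})}]$. The product is $\Q[[\hbar,a]]$-linear, so the local relations become relations among the glued classes. By Lemma \ref{gluing-lemma-coherent} and (\ref{gluing-coherent-kappa}), each glued class equals the canonical coherent class $[Z^{(\mathbf{w}_\ast,\nu_\ast,\kappa_\ast)}]$ multiplied by the element $c \in \Q[[\hbar,a]]$ attached to $Z^{\flat,gluing} = Z^{\flat,in}\Cup Z^{\flat,out}$. The key point is that $c$ is the same for every term of a given relation: the flat data depend only on the decomposition and the chosen open flat cycles, not on which local tangle sits in $B$. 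Also, $c$ is invertible since its leading term is $1$. Dividing by $c$ and using the uniqueness of coefficients from Propositions \ref{one-dimension-normalized} and \ref{crossing-dimension-normalized} (the coherent classes involved are free generators), we get $\beta=\beta^{in}$, $A=A^{in}$, $\alpha=\alpha^{in}$, $\theta=\theta^{in}$, $r=r^{in}$. These are independent of the global data.

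Main obstacle: justifying that the gluing constant $c$ is independent of the local tangle, i.e.\ that one pair $Z^{\flat,in}, Z^{\flat,out}$ can serve for $\mathbf{w}_+,\mathbf{w}_-,\mathbf{w}_0$ (and for twisted or untwisted frames) simultaneously. Open flat cycles have no canonical isotopy class, so this is not automatic. I would first try to use that the flat cycle only sees the tangent model $\R^3$ with the straight line $w^\flat$ near the boundary sphere, where all the local tangles coincide with $Punct\times\R^+$. The choice of $Z^{\flat,in}$ should therefore be made once, independently of the interior tangle, and the identification $\mathcal{Z}^{\flat,gluing}/\text{isotopies} = \Q[[g_s,a]]$ then shows that $c$ is common to all terms. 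A secondary point is independence of the identification of $B$ and of its position. Any two such choices are connected by isotopies, and Lemma \ref{deformation-cycles} together with isotopies of isotopies shows that the local series do not change.
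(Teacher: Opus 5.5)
Your overall architecture is the same as the paper's. You localize each relation in a small ball $B=M^{in}$, extend the Lemmas to the open setting, glue with a fixed out-part via Lemma \ref{gluing-lemma-coherent} and (\ref{gluing-coherent-kappa}), and read off coefficients by freeness. For Lemmas \ref{twist-frame-lemma}, \ref{lemma-twist-kappa}, \ref{lemma-unknot} your argument is also the paper's: all terms use the same gluing-flat-$PS$-cycle (\ref{gluing-flat}), so the constant cancels.

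\textbf{The gap is the crossing relation of Lemma \ref{skein-relation-lemma}.} This is the point you flagged as the main obstacle, and your proposed resolution does not work. The paper states explicitly that the gluing-flat-$PS$-cycle is the same for $\mathbf{w}_+$ and $\mathbf{w}_-$ but \emph{not} for $\mathbf{w}_0$.

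Agreement of the tangles near $\partial B$ does not rescue a single choice of $Z^{\flat,in}$. Flat cycles are indexed by $\mathfrak{G}^{part}$, that is, by $V^{\sharp}$ and the ordered blocks $\mathcal{P}^{v^{\sharp}}$ along each component. This is global component data. The oriented smoothing reconnects the four punctures of $B$ differently, so the glued link gains or loses a component.

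Hence there are two constants, $c_\pm$ and $c_0$. The identification $\mathcal{Z}^{\flat,gluing}/\text{isotopies}=\Q[[g_s,a]]$ attaches a number to each, but it does not say they coincide. Writing $[Z_\ast]$ for $[Z^{(\mathbf{w}_\ast,\nu_\ast,\kappa_\ast)}]$, your gluing step actually yields
\[
\beta^{in}[Z_+]-(\beta^{in})^{-1}[Z_-]=A^{in}\,c_0\,c_\pm^{-1}\,[Z_0].
\]
Freeness of $[Z_-],[Z_0]$ then gives $\beta=\beta^{in}$, which is universal. For $A$ it only gives $A=A^{in}\,c_0\,c_\pm^{-1}$.

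\textbf{The missing step is to control the discrepancy between $c_0$ and $c_\pm$.} The paper uses the $\Q[[g_s,a]]$-action on the (open/gluing) flat cycles to show this discrepancy does not depend on the choices of $Z^{\flat,in}$ and $Z^{\flat,out}$. It then concludes that the discrepancy vanishes, and only at that point does $A$ become universal. Without an argument of this kind, your proof establishes universality of $\alpha$, $\theta$, $r$ and $\beta$, but not of $A$.
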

\begin{proof}

Up to isotopy, we may assume that each framed link considered in the Lemmas \ref{skein-relation-lemma}, \ref{twist-frame-lemma}, \ref{lemma-twist-kappa}, \ref{lemma-unknot}  is of the type (\ref{gluing-framed-links}), with $M^{in}$ given by a small ball $B$.

In the context of 
Lemmas   \ref{skein-relation-lemma} we take for $B$ a small ball surrounding the crossing singularity . 
We assume that there exists a triple $ (\mathbf{w}^{out}, \nu^{out}, \kappa^{out})$ on $M^{out}$ such that

$$  \mathbf{w}_+=\mathbf{w}_+^{in} \sqcup_{\mathbf{w}^{neck}}  \mathbf{w}^{out} ,   \mathbf{w}_-=\mathbf{w}_-^{in} \sqcup_{\mathbf{w}^{neck}}  \mathbf{w}^{out} ,   \mathbf{w}_0=\mathbf{w}_0^{in} \sqcup_{\mathbf{w}^{neck}}  \mathbf{w}^{out}.$$

$$  \nu_+=\nu_+^{in} \sqcup_{\nu^{neck}}  \nu^{out} ,   \nu_-=\nu_-^{in} \sqcup_{\nu^{neck}}  \nu^{out} ,   \nu_0= \nu_0^{in} \sqcup_{ \nu^{neck} } \nu^{out}$$

$$  \kappa_+=\kappa_+^{in} \sqcup_{\kappa^{neck}}  \kappa^{out} ,   \kappa_-=\kappa_-^{in} \sqcup_{\kappa^{neck}}  \kappa^{out} ,   \kappa_0=\kappa_0^{in} \sqcup_{\kappa^{neck}}  \kappa^{out}.$$


In the context 
of Lemma  \ref{twist-frame-lemma},  we assume that $\nu$ and $\nu^{+1}$ agrees on $M^{out}$.

In the context of
 Lemma  \ref{lemma-twist-kappa},  we assume that $\kappa$ and $\kappa^{+1}$ agree outside a small ball $M^{out}$.

In the context of 
Lemma  \ref{lemma-unknot} we take the support of knot  contained on $B$.




Lemmas   \ref{skein-relation-lemma}, \ref{twist-frame-lemma}, \ref{lemma-twist-kappa}, \ref{lemma-unknot} can be easily extended  to open manifolds. We claim that relation ( \ref{gluing-coherent-kappa}) implies that the formal power series obtained from the links of $M$ coincide with the formal power series obtained from the corresponding links of $M^{in}$.
For the relations in Lemma  \ref{twist-frame-lemma}, \ref{lemma-twist-kappa}, \ref{lemma-unknot}   the claim follows immediately from Lemma  \ref{gluing-lemma-coherent} and  (\ref{gluing-coherent-kappa}) since for each relation the involved gluing coherent $PS$-cycles  are defined using the same gluing-flat-$PS$-cycle  (\ref{gluing-flat}). 
For the relation in Lemma \ref{skein-relation-lemma} the gluing-flat-$PS$-cycles (\ref{gluing-flat}) are the same for the case $+$ and $-$ but not for the case $0$, and so we have two elements of $\Q[[g_s,a]]$. Using the action $\Q[[g_s,a]]$ it is easy to see that the difference between them does not depend on $Z^{\flat, in}, Z^{\flat, out}$ and therefore it must be zero. Hence the claim follows as for the other relations.



\end{proof}

Let $Skein(M)[[\hbar,a]]^+$ be the set of formal power series in the formal variables $\hbar,a$ with coefficients triples
\begin{equation} \label{skein-generator}
(\mathbf{w} , \nu, \kappa)
\end{equation}  
as in Lemma  \ref{euler-structure-compatible-lemma},
modulo the relations 
\begin{equation} \label{crossing-skein}
 \beta(\hbar,a) (\mathbf{w}_+ , \nu_+, \kappa_+  )-  \beta(\hbar,a)^{-1} (\mathbf{w}_- , \nu_-, \kappa_-  ) \sim A(\hbar,a) (\mathbf{w}_0 , \nu_0, \kappa_0 )
 \end{equation}
\begin{equation} \label{twisting-skein}
 (\mathbf{w} , \nu^{+1}, \kappa  ) \sim \alpha(\hbar,a) (\mathbf{w} , \nu, \kappa )
 \end{equation} 
 \begin{equation} \label{shift-euler-skein}
  (\mathbf{w} , \nu, \kappa^{ +1} ) \sim  \theta(\hbar ,a ) (\mathbf{w} , \nu, \kappa )
  \end{equation}   
 \begin{equation} \label{unknot-skein}
( \mathbf{w} \sqcup  \mathbf{w}_{unknot}  , \nu \sqcup \nu_{unknot} , \kappa \sqcup \kappa_{unknot}     ) \sim   r(\hbar, a)( \mathbf{w} , \nu  , \kappa  ).
  \end{equation} 
The coefficients appearing in the relations are not independent but they are related by the formula:
$$A(\hbar,a)    r(\hbar, a)=   \beta(\hbar,a)  \alpha(\hbar,a)  \theta(\hbar ,a ) - (   \beta(\hbar,a)  \alpha(\hbar,a)  \theta(\hbar ,a ))^{-1} .$$
A similar relation is satisfied in the standard Skein relations where the Euler Structure is not considered and so the coefficient $ \theta(\hbar,a)$ does not appear. The proof in our situation uses the same argument of the standard case, the coefficient  $ \theta(\hbar ,a ) $ appears  with a similar argument used for the coefficient  $ \alpha(\hbar,a)$.

Define a map of $\Q[[\hbar,a]]$-moduli
\begin{equation} \label{skein-map-smooth}
 Skein(M)[[\hbar,a]]^+  \rightarrow PSH^{\intercal}(M| U^{\blacktriangle}) .
 \end{equation}
 $$  (\mathbf{w} , \nu, \kappa ) \mapsto [  Z^{ (\mathbf{w} , \nu, \kappa )}].$$
The Lemmas above tell us that (\ref{skein-map-smooth}) is well defined and it is compatible with the isotopies of both the sides.

\subsubsection{Singular Skein}

The map (\ref{skein-map-smooth}) is injective but not surjective.  However it is surjective up to small isotopies, that is good enough for the applications.   In order to obtain an actual isomorphism in this section we extend  $Skein(M)[[\hbar,a]]^+$ to include  singular links in a suitable sense. 


Consider  arrays
\begin{equation} \label{framed-smoothing}
    (\mathbf{w}^{\ast} , \nu^{\ast},  \kappa^{\ast}, \mathfrak{w})   
\end{equation}
where
$(\mathbf{w}^{\ast} , \nu^{\ast},  \kappa^{\ast} )$ is a triple as in (\ref{skein-generator}),  $\mathfrak{w}$ is a finite set of multi-loops closed by twist relation as in Subsection \ref{singular-subsection-PSH}.  
 We require that the oriented link $\mathbf{w}^{\ast}$ is close in the $C^0$-topology to an element  $\mathbf{w} \in \mathfrak{w}$ .   In the special case $ \mathfrak{w} = \emptyset$ we assume $\mathbf{w}^{\ast}$ is close in the $C^0$-topology to a constant loop.

We consider two arrays (\ref{framed-smoothing}) equivalent
\begin{equation} 
\label{isotopy-skein-equivalence-singular}
( \mathbf{w}^{\ast}, \nu^{\ast},  \kappa^{\ast},  \mathfrak{w}) \sim   ( \mathbf{w}^{\ast, \prime}, \nu^{\ast, \prime},  \kappa^{\ast, \prime}, \mathfrak{w})  
 \end{equation}
if $\mathbf{w}^{\ast}$ is isotopic to $\mathbf{w}^{\ast, \prime}$ with isotopy close in the $C^0$-topology to an element  $\mathbf{w}$ of   $ \mathfrak{w}$, $\nu^{\ast}$ is isotopic to $ \nu^{\ast, \prime}$ and $ \kappa^{\ast, \prime}$ is isotopic to $ \kappa^{\ast}$.


Define the singular Skein $ \overline{Skein}(M)[[\hbar,a]]^+$ as  the space of formal power series with coefficients on the arrays 
(\ref{framed-smoothing}) modulo   the following  equivalence relations: 
\begin{itemize}
\item
\begin{equation} \label{crossing-skein-equivalence-singular}
  \beta (  \mathbf{w}^{\ast}_+,  \nu_+^{\ast}, \kappa^{\ast}_+,\mathfrak{w})-  \beta^{-1} ( \mathbf{w}^{\ast}_- ,  \nu_-^{\ast}, \kappa^{\ast}_- , \mathfrak{w}) \sim A (\mathbf{w}^{\ast}_0 , \nu_0^{\ast}, \kappa^{ \ast}_0,  \mathfrak{w})
 \end{equation}
 where $ (\mathbf{w}^{\ast}_+, \nu_+^{\ast}, \kappa^{\ast}_+ ), (\mathbf{w}^{\ast}_-,  \nu_-^{\ast}, \kappa^{\ast}_- ) ,  ( \mathbf{w}^{\ast}_0  , \nu_0^{\ast}, \kappa^{ \ast}_0 )$ are related as in Lemma \ref{skein-relation-lemma} with  $\mathbf{w}_{\times}^{\ast}$  close (compatibly with the crossing singularity) in the $C^0$-topology to an element  $\mathbf{w}_{\times}$ of   $ \mathfrak{w}$ ;
 \item
 \begin{equation} \label{twisting-skein-equivalence-singular}
 ( \mathbf{w}^{\ast}, \nu^{\ast, +1}, \kappa^{\ast}, \mathfrak{w}) \sim \alpha(\hbar,a) ( \mathbf{w}^{\ast}, \nu^{\ast}, \kappa^{ \ast}, \mathfrak{w} );
 \end{equation} 
\item
\begin{equation} \label{shift-euler-skein-equivalence-singular}
  ( \mathbf{w}^{\ast}, \nu^{\ast}, \kappa^{ \ast, +1} ,  \mathfrak{w}) \sim  \theta(\hbar ,a ) ( \mathbf{w}^{\ast}, \nu^{\ast}, \kappa^{\ast},   \mathfrak{w});
  \end{equation}  
  \item
 \begin{equation} \label{unknot-skein-equivalence-singular}
  (  \mathbf{w}^{\ast} \sqcup \mathbf{w}_{unknot}, \nu^{\ast} \sqcup \nu_{unknot} , \kappa^{\ast} \sqcup \kappa_{unknot} , \mathfrak{w}) \sim   r(\hbar, a)(  \mathbf{w}^{\ast}, \nu^{\ast}  , \kappa^{\ast},  \mathfrak{w} ) .
  \end{equation}
 \end{itemize}
Above we have used the obvious extensions of the notations of   Lemmas    \ref{skein-relation-lemma}, \ref{twist-frame-lemma}, \ref{lemma-twist-kappa}, \ref{lemma-unknot}.

Isotopies of $ \overline{Skein}(M)[[\hbar,a]]^+$ are defined straightforwardly considering deformations of $\mathfrak{w}$ in (\ref{framed-smoothing}).

\begin{lemma}
There exists a natural map of $\Q[[\hbar,a]]$-moduli
\begin{equation} \label{singular-skein-map}
\overline{Skein}(M)[[\hbar,a]]^+ \rightarrow PSH^{\intercal}(M| U^{\blacktriangle}).
\end{equation}
\end{lemma}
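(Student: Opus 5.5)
The map is built on generators and then shown to respect each relation defining $\overline{Skein}(M)[[\hbar,a]]^+$. To an array $(\mathbf{w}^{\ast},\nu^{\ast},\kappa^{\ast},\mathfrak{w})$ as in (\ref{framed-smoothing}) we assign the normalized class $[Z^{(\mathbf{w}^{\ast},\nu^{\ast},\kappa^{\ast})}]$ given by (\ref{coherent-class2}). We then view this class inside $PSH^{\intercal}(M|\mathfrak{w},U^{\blacktriangle})$.

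Since $\mathbf{w}^{\ast}$ is $C^0$-close to some $\mathbf{w}\in\mathfrak{w}$, the coherent cycle has limit support close to $\mathbf{w}$. More precisely, we choose a short isotopy $\tilde{\mathbf{w}}$ of multi-loops from $\mathbf{w}^{\ast}$ to $\mathbf{w}$ that stays $C^0$-close to $\mathbf{w}$ and has increasing singularities (singularities only appear at $\lambda=1$). Lemma \ref{deformation-cycles}, together with the transfer along the preferred isotopy $\tilde{U}^{\kappa}$ of Proposition \ref{euler-structure-compatible-lemma}, then yields a well-defined class in $PSH^{\intercal}(M|\mathfrak{w},U^{\blacktriangle})\subset PSH^{\intercal}(M|U^{\blacktriangle})$. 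The case $\mathfrak{w}=\emptyset$ is handled by shrinking $\mathbf{w}^{\ast}$ to a constant loop, as in the proof of Lemma \ref{lemma-unknot}. We extend the assignment $\Q[[\hbar,a]]$-linearly; convergence is automatic because the target modules are free of finite rank by Proposition \ref{crossing-dimension-normalized}.

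Independence of choices comes next. Two short isotopies to $\mathbf{w}$, both $C^0$-close to $\mathbf{w}$, are related by an isotopy of isotopies that also stays close to $\mathbf{w}$ and keeps increasing singularities. The second half of Lemma \ref{deformation-cycles} then shows the transfer does not depend on the isotopy chosen. The same argument handles the equivalence (\ref{isotopy-skein-equivalence-singular}). Isotopies of $\nu^{\ast}$ and $\kappa^{\ast}$ are covered by the transfer identity (\ref{transfer-coherent-normalized}), again composed with $\tilde{U}^{\kappa}$.

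Compatibility with the four relations is checked one by one.
\begin{itemize}
\item Relations (\ref{twisting-skein-equivalence-singular}), (\ref{shift-euler-skein-equivalence-singular}) and (\ref{unknot-skein-equivalence-singular}) hold at the level of the smooth triples by Lemmas \ref{twist-frame-lemma}, \ref{lemma-twist-kappa} and \ref{lemma-unknot}. The power series there are universal by the Proposition preceding the definition of $Skein(M)[[\hbar,a]]^+$.
\item The transfer is $\Q[[\hbar,a]]$-linear, so it carries these identities into $PSH^{\intercal}(M|\mathfrak{w},U^{\blacktriangle})$. For the unknot relation we additionally take the unknot in a small ball, so that shrinking it commutes with the transfer to $\mathfrak{w}$.
\item For (\ref{crossing-skein-equivalence-singular}), Lemma \ref{skein-relation-lemma} gives a small isotopy between $\beta Z^{+}-\beta^{-1}Z^{-}$ and $AZ^{0}$. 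This isotopy has limit support near $\mathbf{w}^{\ast}_{\times}$, which is close to $\mathbf{w}_{\times}\in\mathfrak{w}$.
\end{itemize}

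The main obstacle is the crossing relation. There, $\mathbf{w}_+^{\ast}$ and $\mathbf{w}_-^{\ast}$ are transferred toward $\mathbf{w}_{\times}$, while $\mathbf{w}_0^{\ast}$ is transferred toward its twist partner in $\mathfrak{w}$. We must show that the transferred small isotopy is still an isotopy with finite limit support contained in $\mathfrak{w}$. To do this, we take the isotopies of the three links to be the canonical deformations near the crossing ball used to define $\nu_{\pm,0}$ and $\kappa_{\pm,0}$. We then assemble them with the isotopy of Lemma \ref{skein-relation-lemma} into a two-parameter family of multi-loops with increasing singularities. Closure of $\mathfrak{w}$ under twist is exactly what guarantees the limit points of this family lie in $\mathfrak{w}$. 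Lemma \ref{extension-isotopies-isotopies} then produces the required isotopy of isotopies, so the relation holds in $PSH^{\intercal}(M|U^{\blacktriangle})$.
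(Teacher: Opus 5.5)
Your proposal is correct and follows the same route as the paper. The map is defined by transferring the coherent class along a small isotopy from $\mathbf{w}^{\ast}$ to an element of $\mathfrak{w}$, using Lemma \ref{deformation-cycles}. Well-definedness comes from that isotopy being unique up to isotopy of isotopies, and compatibility with the relations comes from Lemmas \ref{skein-relation-lemma}, \ref{twist-frame-lemma}, \ref{lemma-twist-kappa} and \ref{lemma-unknot}, with the crossing relation handled through the singular link $\mathbf{w}^{\ast}_{\times}$.

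The paper does that last step more economically than your two-parameter assembly. The relation of Lemma \ref{skein-relation-lemma} is already obtained in the rank-two module of Proposition \ref{crossing-dimension-normalized}, so one just applies the transfer map to the singular link $\mathbf{w}^{\ast}_{\times}$.

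One small correction: your convergence remark should rest on the graph-wise finiteness of the $\hbar^{k}a^{d}$-action, not on finite rank. Different coefficients of a single series may carry different sets $\mathfrak{w}$, so the sum need not lie in one finite-rank module.
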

\begin{proof}
The map  (\ref{singular-skein-map}) is defined applying the transfer map  (\ref{transfer-map-loops}) to the coherent cycle associate to $ ( \mathbf{w}^{\ast}, \nu^{\ast}, \kappa^{\ast})$:
\begin{equation} \label{skein-map-singular}
  ( \mathbf{w}^{\ast}, \nu^{\ast},  \kappa^{\ast}, \mathfrak{w}) \mapsto transfer_{\tilde{\mathbf{w}}^{\ast}} (  Z^{( \mathbf{w}^{\ast}, \nu^{\ast}, \kappa^{\ast})}) 
 \end{equation}
where $\tilde{\mathbf{w}}^{\ast}$ is a small isotopy  between  $\mathbf{w}^{\ast}$ and an element $ \mathbf{w} \in \mathfrak{w}$ .
The map (\ref{skein-map-singular}) is well defined since $\tilde{\mathbf{w}}^{\ast}$ is uniquely determined up to isotopy of isotopies.

Lemmas  \ref{skein-relation-lemma}, \ref{twist-frame-lemma}, \ref{lemma-twist-kappa}, \ref{lemma-unknot}   tell us  that the map ( \ref{skein-map-singular}) is compatible with the equivalence relations (\ref{isotopy-skein-equivalence-singular}), (\ref{crossing-skein-equivalence-singular}), (\ref{twisting-skein-equivalence-singular}), (\ref{unknot-skein-equivalence-singular}).
In the case of  (\ref{crossing-skein-equivalence-singular}) we  apply the transfer map to  the singular link $\mathbf{w}_{\times}^{\ast}$. 
  
\end{proof}

\begin{theorem}
The map (\ref{singular-skein-map}) is an isomorphism compatible with isotopies.
\end{theorem}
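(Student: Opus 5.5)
The strategy is to compare both sides stratum by stratum, indexed by the finite sets $\mathfrak{w}$ of multi-loops closed under twist, and to reduce the isomorphism to a rank count over $\Q[[\hbar,a]]$ on each stratum. Every nice $PS$-cycle has finite limit support, and that support is contained in some $\mathfrak{w}$ closed under twist. Moreover, isotopies with finite limit support preserve the limit support. It follows that $PSH^{\intercal}(M|U^{\blacktriangle})$ is the union, in fact the colimit, of the submodules $PSH^{\intercal}(M|\mathfrak{w},U^{\blacktriangle})$. On the skein side, $\overline{Skein}(M)[[\hbar,a]]^+$ is filtered in the same way by the last entry $\mathfrak{w}$ of the arrays (\ref{framed-smoothing}). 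The map (\ref{skein-map-singular}) sends the $\mathfrak{w}$-piece into $PSH^{\intercal}(M|\mathfrak{w},U^{\blacktriangle})$, because the transfer along a small isotopy with increasing singularities (Lemma \ref{deformation-cycles}) keeps the limit support inside $\mathfrak{w}$. It therefore suffices to prove that, for each fixed $\mathfrak{w}$, the restricted map is an isomorphism, and that these isomorphisms are compatible with the inclusions $\mathfrak{w}\subset\mathfrak{w}'$.

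\emph{Surjectivity for fixed $\mathfrak{w}$.} By Proposition \ref{crossing-dimension-normalized}, $PSH^{\intercal}(M|\mathfrak{w},U^{\blacktriangle})$ is free of rank $|\mathfrak{w}|$, with basis $\{Z^{\heartsuit,j}\}$ indexed by $\mathbf{w}_j\in\mathfrak{w}$. For each $\mathbf{w}_j$, choose an embedded smoothing $\mathbf{w}_j^{\ast}$ that is $C^0$-close to $\mathbf{w}_j$. This is done by resolving each crossing of $\mathbf{w}_j$ in a fixed way, for instance by the $+$ resolution, and by choosing arbitrary $\nu_j^{\ast},\kappa_j^{\ast}$. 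I then claim that the images $\mathrm{transfer}(Z^{(\mathbf{w}_j^{\ast},\nu_j^{\ast},\kappa_j^{\ast})})$ form a basis. To check this, expand each image in the basis $\{Z^{\heartsuit,i}\}$ and show that the change-of-basis matrix is triangular with leading terms $1$. The triangular order is the partial order on $\mathfrak{w}$ given by the number of twist operations, i.e.\ the number of crossings. The diagonal entries are $1$ because the coherent cycle satisfies $\underline{Z}_{G^{\heartsuit}}=\mathbf{w}^{\ast}$ at leading order, as in the proof of Lemma \ref{skein-relation-lemma}, where $p=1+\dots$. The off-diagonal entries into multi-loops obtained by twisting carry at least one power of $\hbar$, coming from the graph with one added edge, exactly as for the coefficient $q=\hbar(1+\dots)$. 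A triangular matrix with unit diagonal over $\Q[[\hbar,a]]$ is invertible, so the map is surjective onto the $\mathfrak{w}$-stratum.

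\emph{Injectivity for fixed $\mathfrak{w}$.} Using the skein relations, I show that the $\mathfrak{w}$-piece of $\overline{Skein}(M)[[\hbar,a]]^+$ is generated by the same $|\mathfrak{w}|$ elements $(\mathbf{w}_j^{\ast},\nu_j^{\ast},\kappa_j^{\ast},\mathfrak{w})$. Any array is equivalent by (\ref{isotopy-skein-equivalence-singular}) to one whose link is a smoothing of some $\mathbf{w}_j$. Crossing changes at the singular points are handled by (\ref{crossing-skein-equivalence-singular}), since $\beta$ is invertible, and they produce terms supported near twisted multi-loops, which are again in $\mathfrak{w}$. Changes of frame and alignment are handled by (\ref{twisting-skein-equivalence-singular}) and (\ref{shift-euler-skein-equivalence-singular}). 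Small trivial components are handled by (\ref{unknot-skein-equivalence-singular}), which is what occurs for the constant loop case $\mathfrak{w}=\emptyset$. Together with Lemma \ref{lemma-canonical-one-dimensional}, this shows that the skein stratum is generated by $|\mathfrak{w}|$ elements. These generators map to a basis of a free module of rank $|\mathfrak{w}|$, so the map is injective.

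\emph{Compatibility with isotopies.} Isotopies of $\overline{Skein}$ deform $\mathfrak{w}$, and the map is built from transfer maps. Compatibility therefore follows from the functoriality statement in Lemma \ref{deformation-cycles}, namely that the transfer depends only on the isotopy of isotopies class, together with the transfer statement (\ref{transfer-coherent-normalized}) for coherent classes.

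\emph{Main obstacle.} The hardest step is the generation argument in the injectivity part. One must show that iterating crossing changes at the singular points terminates inside the finite set $\mathfrak{w}$, and that the ambiguity in the choice of smoothings is fully absorbed by the four relations. I would argue by induction on the number of crossings of the element of $\mathfrak{w}$ nearby. In parallel, I would verify that the triangularity of the leading-order computation is genuinely strict, meaning that no $\hbar^0$ contributions appear between distinct elements of $\mathfrak{w}$. For that I would use the limit support obstruction computation of Subsection \ref{obstruction-section}.
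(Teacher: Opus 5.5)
Your route differs from the paper's, and the difference is exactly where your argument has a gap. The paper never compares ranks stratum by stratum. Surjectivity is read off from Lemma~\ref{lemma-smoothing-cycles}: any nice cycle is small-isotopic to one whose limit support consists of embedded links, and such a cycle lies in the image. Injectivity comes from Lemma~\ref{lemma-isotopy-smoothing}: an isotopy between the smoothed cycles can be perturbed so that its limit support is a family of embedded links with finitely many transverse crossing events. Each event is an instance of (\ref{crossing-skein-equivalence-singular}), so the $PS$-isotopy itself becomes an isotopy in $\overline{Skein}(M)[[\hbar,a]]^+$, and $\mathfrak{w}$ is allowed to move along the way.

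You instead need a normal-form statement: that the piece of the skein with $\mathfrak{w}$ held fixed is spanned by $|\mathfrak{w}|$ chosen smoothings. This is not established, and your first reduction toward it is false. $C^0$-closeness does not control local knotting or the tangle type inside small balls. For example, take an array whose link is $\mathbf{w}_j$ with a tiny trefoil tied into one strand, or with an arbitrary twist region at a double point. Such an array is not equivalent under (\ref{isotopy-skein-equivalence-singular}) to any crossing-resolution of $\mathbf{w}_j$. Undoing it requires crossing changes at points that are not crossing singularities of any element of $\mathfrak{w}$. There, (\ref{crossing-skein-equivalence-singular}) with $\mathfrak{w}$ frozen does not obviously apply, even though both arrays map into the same module $PSH^{\intercal}(M|\mathfrak{w},U^{\blacktriangle})$. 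So injectivity on a fixed stratum is precisely the point in doubt, and the missing idea is the paper's: work with generic families of links, not normal forms.

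Your proposed induction also does not run as stated. Twisting is an involution at each double point: it merges or splits components without changing the image. So resolving a crossing near the touching point of a twisted loop, wherever the relation applies, produces an $A$-term near the untwisted loop, which has \emph{more} crossings. The parameter you induct on need not decrease.

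The same involution undermines the triangular order in your surjectivity step. The image of a smoothing of a twisted loop can have a component along the basis vector of the untwisted loop, and vice versa. What you actually need, and what suffices over the local ring $\Q[[\hbar,a]]$, is that the change-of-basis matrix is congruent to the identity modulo $(\hbar,a)$. That has to be proved directly rather than deduced from an ordering of $\mathfrak{w}$.

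Finally, $PSH^{\intercal}(M|U^{\blacktriangle})$ is not the union of the strata $PSH^{\intercal}(M|\mathfrak{w},U^{\blacktriangle})$ with $\mathfrak{w}$ finite. Limit supports are finite only for each $(G,m)$, and small isotopies preserve them. So a class such as $\sum_k \hbar^k Z^{\heartsuit,\mathbf{w}_k}$ with pairwise distinct $\mathbf{w}_k$ lies in no single stratum; the paper itself only claims density. Your reduction to fixed $\mathfrak{w}$ would therefore need an additional completion argument. The paper's smoothing lemma avoids this by working graph by graph, with no global bound on the limit support.
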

\begin{proof}
The surjectivity follows from Lemma    \ref{lemma-smoothing-cycles}.

To prove injectivity consider  two equivalent nice $PS$-cycles $Z^{\diamond,0}$ and $Z^{\diamond,1}$. And let $Z^{ \diamond,0, \ast}, Z^{\diamond, 1, \ast}$ be nice $PS$-cycles related  to $Z^{\diamond,0}$ and $Z^{\diamond,1}$ as in Lemma    \ref{lemma-smoothing-cycles}  .  
$Z^{ \diamond, 0, \ast}$ and $ Z^{\diamond, 1, \ast}$ may be seen as elements of $\overline{Skein}(M)[[\hbar,a]]^+$.

Lemma \ref{lemma-isotopy-smoothing} applied to $Z^{\diamond, 0, \ast}, Z^{\diamond, 1,\ast}$ tells us that there exists an isotopy $\tilde{Z}^{\diamond, \ast}$ between $Z^{\diamond, 0, \ast}, Z^{\diamond, 1,\ast}$. The isotopy of nice $PS$-cycles $\tilde{Z}^{\diamond, \ast}$ defines an isotopy in $\overline{Skein}(M)[[\hbar,a]]^+$ between $Z^{ \diamond, 0, \ast}$ and $ Z^{ \diamond, 1, \ast}$.
\end{proof}




\subsubsection{Smoothing the limit-support}


\begin{lemma} \label{lemma-smoothing-cycles}
For each nice $PS$-cycle $Z^{\diamond}$ there exists a nice $PS$-cycle $Z^{\diamond, \ast}$, and an isotopy $\tilde{Z}^{\diamond}$ between $Z^{\diamond}$ and $Z^{\diamond, \ast}$ such that 
\begin{itemize}
\item $\lim supp({Z}^{\diamond, \ast})$ consists of embedded links; 
\item $\lim supp(\tilde{Z}^{\diamond})$ is close in $C^0$-topology to $\lim supp (Z^{\diamond})$.
\end{itemize}
\end{lemma}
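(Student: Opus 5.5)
The plan is to perturb each multi-loop in the limit support to a nearby embedded link, and then use the transfer map of Lemma \ref{deformation-cycles} to move $Z^{\diamond}$ along these perturbations. First, by definition $\lim supp(Z^{\diamond})$ is a finite set of multi-loops $\{\mathbf{w}^j\}_{j\in J}$. Close $\mathfrak{w}=\lim supp(Z^{\diamond})$ under the twist relation; this still gives a finite set. For each $\mathbf{w}^j$, a generic $C^0$-small perturbation (transversality in dimension three: $1+1<3$) yields an embedded link $\mathbf{w}^{j,\ast}$. We choose a piecewise smooth family $(\mathbf{w}^{j,\lambda})_{\lambda\in[0,1]}$ with $\mathbf{w}^{j,0}=\mathbf{w}^j$ and $\mathbf{w}^{j,1}=\mathbf{w}^{j,\ast}$, staying $C^0$-close to $\mathbf{w}^j$ and embedded for $\lambda>0$. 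Since the singularities occur only at $\lambda=0$, the reversed family has increasing singularities in the sense required by Lemma \ref{deformation-cycles}. We use the forward direction, singular to smooth, and therefore need the family to be admissible in that direction.

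Second, we construct the isotopy itself, following the proof of Lemma \ref{deformation-cycles} and Proposition \ref{transfer-annulus}: an induction on graphs using Lemma \ref{extension-isotopies-inductive} level by level in $n$. At step $G$ we start from the restriction $\tilde{Z}_{\prec G}$. Near each $\mathbf{w}^j$ we obtain $\tilde{Z}_{G}$ by pushing forward $Z_G\times\R$ under the family of diffeomorphisms isotopic to the identity that is supported in a small neighbourhood of $\mathbf{w}^j$ and moves the loop. Away from singular points this is literally an ambient isotopy. At a crossing or tangency point there is no ambient diffeomorphism, so instead we locally reparametrize the loops $w_v$, $v\in I$, to follow $\mathbf{w}^{j,\lambda}$. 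We then correct the failure of the cycle condition with chains $\underline{Y}_G$ as in (\ref{trivial-cycle}), supported near $\mathbf{w}^{j,\lambda}$. This keeps $\lim \tilde{supp}(\tilde{Z}^{\diamond})\subset\bigcup_j\{(\mathbf{w}^{j,\lambda},\lambda)\}$, which is $C^0$-close to $\lim supp(Z^{\diamond})$, as the second bullet requires. The endpoint $Z^{\diamond,\ast}$ then has limit support $\{\mathbf{w}^{j,\ast}\}$, all embedded, which gives the first bullet. Niceness of $Z^{\diamond,\ast}$ follows by transporting the interpolating isotopies $\tilde{Z}^{int,(n)}$ along the same construction.

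The main obstacle is the graphs with $H(G)=\emptyset$, where subsection \ref{obstruction-section} shows an obstruction $\mathfrak{obs}_{(G,\mathbf{w})}$ can appear. Here, unlike Lemma \ref{deformation-cycles}, we only need existence of an isotopy with a free endpoint $Z^{\diamond,\ast}$, not a prescribed one. So the plan is to apply Lemma \ref{extension-isotopies-inductive}, which produces $Z^{1}_{\preccurlyeq G}$ together with the isotopy and never meets an obstruction, while keeping control of the limit support.

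I expect the delicate point to be showing that this control is preserved. The extension chains must be built from the perturbed loops only, possibly including twists of $\mathbf{w}^{j,\lambda}$ that are again close to elements of $\mathfrak{w}$. I would try first to define them via the pushforward construction above, so that their support automatically lies near the family. If new singular multi-loops appear through twists at $\lambda=0$, I would perturb them as well; this is possible because $\mathfrak{w}$ is closed under twists and finite.
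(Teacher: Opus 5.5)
Your plan matches the paper's proof in substance. Both use an induction on graphs with a free endpoint, and the actual smoothing happens at the graphs with $H(G)=\emptyset$: there the chains are $0$-dimensional and are simply moved along chosen $C^0$-small families of multi-loops that are embedded for positive time. For $H(G)\neq\emptyset$ the paper just re-extends with Lemma \ref{extension-PS-lemma} and connects with Lemma \ref{extension-PS-lemma-isotopy} without enlarging the limit support, so your ambient push-forward and reparametrization at crossings is more than is needed. One correction: niceness only gives finiteness of $\lim supp(Z_{G,m})$ for each $(G,m)$, not of the total limit support or its twist closure, so the smoothing families should be chosen graph by graph, as the paper does.
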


\begin{proof}

Assume we have defined $Z^{\diamond, \ast}_{\prec G}$ and $\tilde{Z}^{\diamond}_{\prec G}$.



Consider  first the case $H(G) = \emptyset$. For each $\mathbf{w} \in \lim supp (Z^{\diamond}_{G})$ take an isotopy $\tilde{\mathbf{w}}^{\ast}= (\mathbf{w}^s)_{s \in [0,1]} $ of multi-loops, such that 
\begin{itemize}
\item  $\mathbf{w}^0 =\mathbf{w} $; 
\item $\mathbf{w}^s$ is  close in the $C^0$-topology to $\mathbf{w}$ for each $s \in [0,1]$;
\item $\mathbf{w}^s$ is an embedded link for  each $s \in (0,1]$.
\end{itemize}
Let $\tilde{Supp}^{\ast}$ be the set of the isotopies $\tilde{\mathbf{w}}^{\ast}$ obtained in this way .
 
Using $\tilde{Supp}^{\ast}$ we can define  $Z^{\diamond, \ast, (n)}_{ G}$ and $\tilde{Z}^{\diamond, (n)}_{ G}$ for $n$ big enough such that
$$ \lim \tilde{supp} (\tilde{Z}^{\diamond}_{G})= \tilde{Supp}^{\ast} .$$


Now consider the case $H(G) \neq \emptyset$. Use Lemma  \ref{extension-PS-lemma} to define $Z^{\diamond, \ast}_{ G}$ and Lemma  \ref{extension-PS-lemma-isotopy} to define $\tilde{Z}^{\diamond}_{ G}$. The proof of the Lemmas tells us that the chains can be taken  
 such that 
 $$\lim supp (Z^{\diamond, \ast}_{\prec G})= \lim supp (Z^{\diamond, \ast}_{\preccurlyeq G}) , \lim \tilde{supp} (\tilde{Z}^{\diamond}_{\prec G})= \lim \tilde{supp} (\tilde{Z}^{\diamond }_{\preccurlyeq G}).$$



\end{proof}

\begin{lemma} \label{lemma-isotopy-smoothing}
Let $Z^{\diamond,0}, Z^{\diamond,1}$ be nice $PS$-cycles. Let  $Z^{\diamond , 0, \ast} , Z^{\diamond, 1, \ast}$ be nice $PS$-cycles related to  $Z^{\diamond, 0}, Z^{\diamond, 1}$  as in Lemma \ref{lemma-smoothing-cycles} . For each isotopy $\tilde{Z}^{\diamond}$ between $Z^{\diamond, 0}$ and $ Z^{\diamond, 1}$, there exists an isotopy  $\tilde{Z}^{\diamond, \ast}$ between   $Z^{\diamond, 0, \ast}$ and $ Z^{\diamond, 1, \ast}$ such that $\lim \tilde{supp}({\tilde{Z}^{\diamond, \ast}})$
is the union of a finite set of  one-parameter families of multi-loops  (\ref{basic-family-loops}) which are  embedded links crossing transversally and close in  $C^0$-topology to  $\lim \tilde{supp}(\tilde{Z}^{\diamond})$.


\end{lemma}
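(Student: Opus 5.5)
The plan is to build $\tilde{Z}^{\diamond,\ast}$ by induction over the graphs in the order $\prec$, mimicking the proof of Lemma \ref{lemma-smoothing-cycles} but in the one-parameter setting. Concretely, I will concatenate three isotopies. The first runs from $Z^{\diamond,0,\ast}$ back to $Z^{\diamond,0}$ (the inverse of the isotopy given by Lemma \ref{lemma-smoothing-cycles}). The second is $\tilde{Z}^{\diamond}$. The third runs from $Z^{\diamond,1}$ to $Z^{\diamond,1,\ast}$. This concatenation is already an isotopy between the right endpoints, but its limit support passes through the singular multi-loops of $\lim\tilde{supp}(\tilde{Z}^{\diamond})$. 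The task is to deform it, by an isotopy of isotopies fixed at the ends, into an isotopy whose limit support consists of transversally crossing embedded links. Only the limit support constrains us, so it suffices to perturb the one-parameter families of multi-loops and then rebuild the chains around the perturbed families.

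First step: perturb the families. For each basic family $(\mathbf{w}^\lambda,\lambda)$ of type (\ref{basic-family-loops}) in $\lim\tilde{supp}$ of the concatenation, I choose a $C^0$-close perturbation $(\mathbf{w}^{\lambda,\ast},\lambda)$ with the following properties. For generic $\lambda$, $\mathbf{w}^{\lambda,\ast}$ is an embedded link. At finitely many values of $\lambda$ it has a single transversal double point. At the two ends it agrees with the smoothed limit supports of $Z^{\diamond,0,\ast}$ and $Z^{\diamond,1,\ast}$. This is standard genericity for one-parameter families of maps $S^1\to M$ in dimension three: codimension-one singularities are transversal double points. The closing step of the construction must respect the twist-closure of the sets $\mathfrak{w}$, so whenever the family crosses, both resolutions appear among the families. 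The two-parameter family interpolating between the original and perturbed families is close in $C^0$ to $\lim\tilde{supp}(\tilde{Z}^{\diamond})$.

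Second step: construct the chains inductively. Assume $\tilde{Z}^{\diamond,\ast}_{\prec G}$ is already defined. If $H(G)\neq\emptyset$, I apply Lemma \ref{extension-PS-lemma-isotopy} at each $n$ with the already fixed endpoints. As in the proof of Lemma \ref{lemma-smoothing-cycles}, the limit support does not grow, because the extension only adds small loops. If $H(G)=\emptyset$, I define $\tilde{Z}^{\diamond,\ast,(n)}_G$ by transporting the $G$-component of the concatenated isotopy along the perturbing two-parameter family, using the transfer construction of Lemma \ref{deformation-cycles}. I then check the obstruction of subsection \ref{obstruction-section}. I expect it to vanish because it is computed from a cycle that is isotopic, with small limit support, to the corresponding obstruction of the concatenation, and that obstruction vanishes.

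Main obstacle: the $H(G)=\emptyset$ step at the parameter values where the perturbed family has a double point. There, the twist relation forces the limit support to jump between families. I need the rational coefficients $\mathfrak{obs}_{(G,\mathbf{w})}$ to be locally constant along each perturbed family and to match across the crossings. To handle this, I would use the basis statement of Lemma \ref{crossing-dimension}, applied near each crossing to the twist-closed set of multi-loops. It identifies the local $PSH$ with a free module on the smoothings, so the coefficients can be continued through the crossing. Finally, independence of $n$ for $n$ large follows from the same argument used for injectivity in Proposition \ref{one-dimension}.
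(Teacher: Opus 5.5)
Your architecture is close to the paper's. The square formed by $\tilde{Z}^{\diamond}$, the two smoothing isotopies of Lemma \ref{lemma-smoothing-cycles} and the new isotopy is exactly what the paper fills. The split into $H(G)=\emptyset$ (prescribe $C^0$-close generic families) versus $H(G)\neq\emptyset$ (extension lemmas that do not enlarge the limit support) is also the same.

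The gap is in the $H(G)=\emptyset$ step. Your induction carries only $\tilde{Z}^{\diamond,\ast}_{\prec G}$, and you build $\tilde{Z}^{\diamond,\ast}_{G}$ with both ends $Z^{\diamond,0,\ast}_G$, $Z^{\diamond,1,\ast}_G$ fixed. So the obstruction $\mathfrak{Obs}_G$ of subsection \ref{obstruction-section} really has to be shown to vanish, and neither argument you offer does this.

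\begin{itemize}
\item Comparing with the obstruction of the concatenation requires a chain-level isotopy of isotopies between the concatenation and $\tilde{Z}^{\diamond,\ast}_{\prec G}$, on all graphs $\prec G$. You never construct one: at $H(G)\neq\emptyset$ you invoke only Lemma \ref{extension-PS-lemma-isotopy}, not its isotopy-of-isotopies analogue.
\item Even granting that comparison, it only yields a filling of $\mathfrak{Obs}_G$ whose limit support sweeps out the interpolating two-parameter family. The statement demands limit support lying on the perturbed families.
\item Lemma \ref{crossing-dimension} cannot close this either. It is a statement about $PSH$, i.e. about classes already taken modulo isotopies. It says nothing about whether a particular partial isotopy $\tilde{Z}^{\diamond,\ast}_{\prec G}$ extends, nor about the limit support of the extension.
\item A smaller point: where a family crosses, what must branch off is the twisted multi-loop produced by the $\delta_{e'}$-terms, whose smoothing is $\mathbf{w}_0$. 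It is not ``both resolutions''.
\end{itemize}

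The missing idea is to make the isotopy of isotopies part of the induction, with the new isotopy as a \emph{free} side; this removes the obstruction altogether. The paper constructs $\tilde{Z}^{\diamond,\ast}$ simultaneously with $\tilde{\tilde{Z}}^{\diamond}$, whose sides are $\tilde{Z}^{\diamond}$, the two smoothing isotopies and $\tilde{Z}^{\diamond,\ast}$.

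For $H(G)=\emptyset$, the paper attaches to each $(\mathbf{w},t)\in\lim\tilde{supp}(\tilde{Z}^{\diamond}_G)$ a smoothing $(\mathbf{w}^{s})_{s\in[0,1]}$. This smoothing lies in $\tilde{Supp}^{\ast,0}$ for $t\ll 0$ and in $\tilde{Supp}^{\ast,1}$ for $t\gg 0$. The resulting two-parameter family consists generically of embedded links, with transversal crossings only along a codimension-one locus in the $(s,t)$-plane. The paper then builds $\tilde{\tilde{Z}}^{\diamond}_G$ with this limit support and reads off $\tilde{Z}^{\diamond,\ast}_G$ as the $s=1$ side. Because that side is free, as in Lemmas \ref{extension-isotopies-inductive} and \ref{extension-isotopies-isotopies}, nothing has to vanish. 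The crossings and twisted branches are absorbed into the choice of the two-parameter family.

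For $H(G)\neq\emptyset$, the paper uses Lemma \ref{extension-PS-lemma-isotopy} together with its analogue for isotopies of isotopies.

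Your ``transport along the perturbing two-parameter family'' is essentially this construction. If you keep the transported chain as $\tilde{\tilde{Z}}^{\diamond}_G$ and take the new isotopy to be its free boundary side, the obstruction check and the appeal to Lemma \ref{crossing-dimension} become unnecessary.
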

\begin{proof}


Using induction on the graphs, we shall define $\tilde{Z}^{\diamond, \ast}$ and an isotopy of isotopies $\tilde{\tilde{Z}}^{\diamond}$ between 
  $\tilde{Z}^{\diamond}$ ,
 $\tilde{Z}^{\diamond,0, \ast}$,
  $\tilde{Z}^{\diamond,1 ,\ast }$ ,
  $\tilde{Z}^{\diamond, \ast}$.

 Assume we have defined $\tilde{Z}^{\diamond, \ast}_{\prec G}$ and an isotopy of isotopies $\tilde{\tilde{Z}}^{\diamond}_{\prec G}$ between 
  $\tilde{Z}^{\diamond}_{\prec G}$ ,
 $\tilde{Z}^{\diamond,0, \ast}_{\prec G}$,
  $\tilde{Z}^{\diamond,1, \ast }_{\prec G}$ ,
  $\tilde{Z}^{\diamond, \ast}_{\prec G}$.


 Consider  first the case $H(G) = \emptyset$. Let $\tilde{Supp}^{\ast,0} , \tilde{Supp}^{\ast,1}$ be the sets corresponding  to  ${Z}^{\diamond,0}_{ G}, {Z}^{\diamond,1}_{ G}  $
  as in the proof of Lemma \ref{lemma-smoothing-cycles}.
 
 For each for $(\mathbf{w},t) \in \lim \tilde{supp} (\tilde{Z}^{\diamond}_{ G})$ take an isotopy $\tilde{\mathbf{w}}^{\ast}= (\mathbf{w}^s)_{s \in [0,1]} $ of multi-loops such that 
\begin{itemize}
\item  $\mathbf{w}^0 =\mathbf{w} $; 
\item $\mathbf{w}^s$ is  close in the $C^0$-topology to $\mathbf{w}$ for each $s \in [0,1]$;
\item  $\tilde{\mathbf{w}}^{\ast} \in \tilde{Supp}^{\ast,0}$  if $t \ll 0$,  $\tilde{\mathbf{w}}^{\ast} \in \tilde{Supp}^{\ast,1}$  if $t \gg 0$;
\item to each one parameter family (\ref{basic-family-loops}) associated to $\tilde{supp} (\tilde{Z}^{\diamond}_{ G})$ corresponds a two parameter family $(\mathbf{w}^{s,t})_{s \in [0,1],a \leq t \leq b}$ which consists  generically of embedded links  crossing transversally in a codimension-one locus of the space of the parameters $t,s$.    
\end{itemize}

Let $\tilde{\tilde{Supp}}^{\ast}$ be the set of pairs $(\tilde{\mathbf{w}}^{\ast},t)$  obtained in this way.
 
Using $\tilde{\tilde{Supp}}^{\ast}$ we can define  $\tilde{Z}^{\diamond, \ast, (n)}_{ G}$ and $\tilde{\tilde{Z}}^{\diamond, (n)}_{ G}$ for $n$ big enough such that
$$  \lim \tilde{supp} (\tilde{\tilde{Z}}^{\diamond, \ast}_{ G}) = \tilde{\tilde{Supp}}^{\ast}.$$




Now consider the case $H(G) \neq \emptyset$. Use Lemma  \ref{extension-PS-lemma-isotopy} and its analogous for isotopy of isotopies to define $\tilde{Z}^{\diamond, \ast}_{\preccurlyeq G}$ and   $\tilde{\tilde{Z}}^{\diamond }_{\preccurlyeq G}$. The proof of the Lemmas tells us that  the chains  can be taken    
 such that 
 $$\lim \tilde{supp} (\tilde{Z}^{\diamond, \ast}_{\prec G})= \lim \tilde{supp} (\tilde{Z}^{\diamond, \ast}_{\preccurlyeq G}) , \lim  \tilde{\tilde{supp}} (\tilde{\tilde{Z}}^{\diamond}_{\prec G})= \lim \tilde{\tilde{supp}} (\tilde{\tilde{Z}}^{\diamond }_{\preccurlyeq G}).$$
\end{proof}

\end{document}